\newcommand{\assign}{:=}
\newcommand{\backassign}{=:}
\newcommand{\divides}{\mathrel{|}}
\newcommand{\dottimes}{\mathbin{\dot{\times}}}
\newcommand{\ggeq}{\geq\!\!\!\geq}
\newcommand{\glossaryentry}[3]{\item[{#1}\hfill]#2\dotfill#3}
\newcommand{\infixand}{\text{ and }}
\newcommand{\infixor}{\text{ or }}
\newcommand{\leangle}{\mathrel{\angle}}
\newcommand{\lebar}{\mathrel{\Yleft}}
\newcommand{\legeangle}{\mathrel{\substack{\leangle\\\anglege}}}
\newcommand{\leqangle}{\mathrel{\angle \llap {\raisebox{-1ex}{$-$}}}}
\newcommand{\lleq}{\leq\!\!\!\leq}
\newcommand{\mathe}{\mathrm{e}}
\newcommand{\matheuler}{\gamma}
\newcommand{\mathpi}{\pi}
\newcommand{\nin}{\not\in}
\newcommand{\nobracket}{}
\newcommand{\nocomma}{}
\newcommand{\nosymbol}{}
\newcommand{\op}[1]{#1}
\newcommand{\pplus}{\mathbin{+\!\!\!\!+}}
\newcommand{\suchthat}{:}
\newcommand{\tmabbr}[1]{#1}
\newcommand{\tmaffiliation}[1]{\\ #1}
\newcommand{\tmdummy}{$\mbox{}$}
\newcommand{\tmem}[1]{{\em #1\/}}
\newcommand{\tmemail}[1]{\\ \textit{Email:} \texttt{#1}}
\newcommand{\tmmathbf}[1]{\ensuremath{\boldsymbol{#1}}}
\newcommand{\tmop}[1]{\ensuremath{\operatorname{#1}}}
\newcommand{\tmrsub}[1]{\ensuremath{_{\textrm{#1}}}}
\newcommand{\tmstrong}[1]{\textbf{#1}}
\newcommand{\tmtextbf}[1]{\text{{\bfseries{#1}}}}
\newcommand{\tmtextit}[1]{\text{{\itshape{#1}}}}
\newcommand{\tmtextrm}[1]{\text{{\rmfamily{#1}}}}
\newcommand{\tmtextup}[1]{\text{{\upshape{#1}}}}
\newcommand{\udots}{{\mathinner{\mskip1mu\raise1pt\vbox{\kern7pt\hbox{.}}\mskip2mu\raise4pt\hbox{.}\mskip2mu\raise7pt\hbox{.}\mskip1mu}}}
\newenvironment{descriptioncompact}{\begin{description} }{\end{description}}
\newenvironment{enumeratealpha}{\begin{enumerate}[a{\textup{)}}] }{\end{enumerate}}
\newenvironment{enumeratenumeric}{\begin{enumerate}[1.] }{\end{enumerate}}
\newenvironment{enumerateroman}{\begin{enumerate}[i.] }{\end{enumerate}}
\newenvironment{itemizedot}{\begin{itemize} }{\end{itemize}}
\newenvironment{proof}{\noindent\textbf{Proof\ }}{\hspace*{\fill}$\Box$\medskip}
\newenvironment{proof*}[1]{\noindent\textbf{#1\ }}{\hspace*{\fill}$\Box$\medskip}
\newenvironment{theglossary}[1]{\begin{list}{}{\setlength{\labelwidth}{6.5em}\setlength{\leftmargin}{7em}\small} }{\end{list}}
\newtheorem{theorem}{Theorem}[section]
\newtheorem{corollary}[theorem]{Corollary}
\newtheorem{definition}[theorem]{Definition}
{\theorembodyfont{\rmfamily}\newtheorem{example}[theorem]{Example}}
\newtheorem{lemma}[theorem]{Lemma}
\newcommand{\nonconverted}[1]{\mbox{}}
\newtheorem{proposition}[theorem]{Proposition}
{\theorembodyfont{\rmfamily}\newtheorem{remark}[theorem]{Remark}}
{\theorembodyfont{\rmfamily}\newtheorem{warning}{Warning}}
\providecommand{\xLeftrightarrow}[2][]{\mathop{\Longleftrightarrow}\limits_{#1}^{#2}}
\newcommand{\open}{\textrm{?}}
\begin{document}

\title{Surreal numbers as hyperseries}

\author{
  Vincent Bagayoko
  \tmaffiliation{imj-prg}, France
  \tmemail{bagayoko@imj-prg.fr}
  \and
  Joris van der Hoeven
  \tmaffiliation{CNRS, LIX}, France
  \tmemail{vdhoeven@lix.polytechnique.fr}
}

\maketitle

\begin{abstract}
  Surreal numbers form the ultimate extension of the field of real numbers
  with infinitely large and small quantities and in particular with all
  ordinal numbers. Hyperseries can be regarded as the ultimate formal device
  for representing regular growth rates at infinity. In this paper, we show
  that any surreal number can naturally be regarded as the value of a
  hyperseries at the first infinite ordinal $\omega$. This yields a remarkable
  correspondence between two types of infinities: numbers and growth rates.
\end{abstract}

\section{Introduction}

\subsection{Toward a unification of infinities}\label{unif-inf-sec}

At the end of the 19-th century, two theories emerged for computations with
infinitely large quantities. The first one was due to
du~Bois-Reymond~{\cite{DBR1870,DBR1875,DBR1877}}, who developed a~``calculus
of infinities'' to deal with the growth rates of functions in one real
variable at infinity. The second theory of ``ordinal numbers'' was proposed by
Cantor~{\cite{Cantor}} as a~way to count beyond the natural numbers and to
describe the sizes of sets in his recently introduced set theory.

Du Bois-Reymond's original theory was partly informal and not to the taste of
Cantor, who misunderstood it~{\cite{Fish}}. The theory was firmly
grounded and further developed by Hausdorff and Hardy. Hausdorff formalized
du~Bois-Reymond's ``orders of infinity'' in Cantor's set-theoretic
universe~{\cite{Fel02}}. Hardy focused on the computational aspects
and introduced the differential field of \tmtextit{logarithmico-exponential
functions}~{\cite{H1910,H1912}}: such a function is constructed from the real
numbers and an indeterminate $x$ (that we think of as tending to infinity)
using the field operations, exponentiation, and the logarithm. Subsequently,
this led to the notion of a Hardy field~{\cite{Bou61}}.

As to Cantor's theory of ordinal numbers, Conway proposed a dramatic
generalization in the 1970s. Originally motivated by game theory, he
introduced the proper class~$\mathbf{No}$ of {\tmem{surreal
numbers}}~{\cite{Con76}}, which simultaneously contains the set $\mathbb{R}$
of all real numbers and the class $\mathbf{On}$ of all ordinals. This class
comes with a natural ordering and arithmetic operations that turn
$\mathbf{No}$ into a non-Archimedean real closed field. In particular, $\omega
+ \mathpi$, $\omega^{- 1}$, $\sqrt{\omega}$, $\omega^{\omega} - 3 \omega^2$
are all surreal numbers, where $\omega$ stands for the first infinite ordinal.

Conway's original definition of surreal numbers is somewhat informal and draws
inspiration from both Dedekind cuts and von Neumann's construction of the
ordinals:

  \begin{quote}'If $L$ and $R$ are any two sets of (surreal) numbers, and no member of $L$
  is~$\geqslant$ any member of~$R$, then there is a (surreal) number $\{ L|R
  \}$. All (surreal) numbers are constructed in this way.'\end{quote}

{\noindent}The notation~$\{ \nosymbol | \nosymbol \}$ is called
{\tmem{Conway's bracket}}. Conway proposed to consider $\{ L|R \}$ as the
{\tmem{simplest}} number between $L$ and $R$. In general, one may define a
partial ordering $\sqsubset$ on~$\mathbf{No}$ by setting $a \sqsubset b$ (we
say that $a$ is {\tmem{simpler}} than $b$) if there exist $L$ and $R$ with $b
= \{ L|R \}$ and {$a \in L \cup R$}. Conway's bracket is uniquely determined
by this simplicity relation.

The ring operations on~$\mathbf{No}$ are defined in a recursive way that is
both very concise and intuitive: given $x = \{ x_L |x_R \}$ and $y = \{ y_L
|y_R \}$, we define
\begin{eqnarray*}
  0 & \assign & \{ \nosymbol | \nosymbol \}\\
  1 & \assign & \{ 0| \nosymbol \}\\
  - x & \assign & \{ - x_R \divides - x_L \}\\
  x + y & \assign & \{ x_L + y, x + y_L |x_R + y, x + y_R \}\\
  xy & \assign & \left\{ x' y + xy' - x' y', x'' y + xy'' - x'' y'' \right| \\ & & \left.x' y + xy'' -
  x' y'', x'' y + xy' - x'' y' \right\}\\
  &  & \qquad \left( x' \in x_L, \; x'' \in x_R, \; y' \in y_L, \; y'' \in
  y_R \right) .
\end{eqnarray*}
It is quite amazing that these definitions coincide with the traditional
definitions when $x$ and $y$ are real, but that they also work for the ordinal
numbers and beyond. Subsequently, Gonshor also showed how to extend the real
exponential function to $\mathbf{No}$~{\cite{Gon86}} and this extension
preserves all first order properties of $\exp$~{\cite{vdDE01}}. Simpler
accounts and definitions of $\exp$ can be found in {\cite{MM17,Ber20}}.

The theory of Hardy fields focuses on the study of growth properties of germs
of actual real differentiable functions at infinity. An analogue formal theory
arose after the introduction of {\tmem{transseries}} by Dahn and
Göring~{\cite{DG87}} and, independently, by Ecalle~{{\cite{Ec92,Ec16}}}.
Transseries are a natural generalization of the above definition of Hardy's
logarithmico-exponential functions, by also allowing for infinite sums (modulo
suitable precautions to ensure that such sums make sense). One example of a
transseries is
\[ f = \mathe^{\mathe^x + 2 \frac{\mathe^x}{x} + 6 \frac{\mathe^x}{x^2} +
   \cdots} + \mathe^{\mathe^{\mathpi \log \log x} - \sqrt{\log x}} - \sqrt{7}
   + \frac{1}{\log x} + \frac{1}{(\log x)^2} + \cdots . \]
In particular, all transseries can be written as generalized series $f =
\sum_{\mathfrak{m} \in \mathfrak{T}} f_{\mathfrak{m}} \mathfrak{m}$ with real
coefficients $f_{\mathfrak{m}} \in \mathbb{R}$ and whose (trans)monomials
$\mathfrak{m} \in \mathfrak{T}$ are exponentials of other (generally
``simpler'') transseries. The support $\tmop{supp} f \assign \{ \mathfrak{m}
\in \mathfrak{T} \suchthat f_{\mathfrak{m}} \neq 0 \}$ of such a series should
be {\tmem{well based}} in the sense that it should be well ordered for the
opposite ordering of the natural ordering $\preccurlyeq$ on the group of
transmonomials $\mathfrak{T}$. The precise definition of a transseries depends
on further technical requirements on the allowed supports. But for all
reasonable choices, ``the'' resulting field $\mathbb{T}$ of transseries
possesses a lot of closure properties: it is ordered and closed under
derivation, composition, integration, and functional
inversion~{\cite{Ec92,vdH:phd,vdDMM01}}; it also satisfies an intermediate
value property for differential polynomials~{\cite{vdH:ln,vdH:hivp}}.

It turns out that surreal numbers and transseries are similar in many
respects: both~$\mathbf{No}$ and $\mathbb{T}$ are real closed fields that are
closed under exponentiation and taking logarithms of positive elements.
Surreal numbers too can be represented uniquely as Hahn series
$\sum_{\mathfrak{m} \in \mathbf{Mo}} a_{\mathfrak{m}} \mathfrak{m}$ with real
coefficients $a_{\mathfrak{m}} \in \mathbb{R}$ and monomials in a suitable
multiplicative subgroup $\mathbf{Mo}$ of $\mathbf{No}^{>}$. Any transseries $f
\in \mathbb{T}$ actually naturally induces a surreal number $f (\omega) \in
\mathbf{No}$ by substituting $\omega$ for $x$ and the map $f \longmapsto f
(\omega)$ is injective~{\cite{BM19}}.

But there are also differences. Most importantly, elements of $\mathbb{T}$ can
be regarded as functions that can be derived and composed. Conversely, the
surreal numbers $\mathbf{No}$ come equipped with the Conway bracket. In fact,
it would be nice if any surreal number could naturally be regarded as the
value $f (\omega)$ of a unique transseries $f$ at $\omega$. Indeed, this would
allow us to transport the functional structure of $\mathbb{T}$ to the surreal
numbers. Conversely, we might equip the transseries with a Conway bracket and
other exotic operations on the surreal~numbers. The second author conjectured
the existence of such a correspondence between $\mathbf{No}$ and a suitably
generalized field of the transseries~{\cite[page~16]{vdH:ln}}; see
also~{\cite{vdH:icm}} for a more recent account.

Now we already observed that at least {\tmem{some}} surreal numbers $a \in
\mathbf{No}$ can be written uniquely as $a = f (\omega)$ for some transseries
$f \in \mathbb{T}$. Which numbers and what kind of functions do we miss? Since
a perfect correspondence would induce a Conway bracket on $\mathbb{T}$, it is
instructive to consider subsets $L, R \subseteq \mathbb{T}$ with $L < R$ and
examine which natural growth orders might fit between $L$ and $R$.

One obvious problem with ordinary transseries is that there is no
transseries that grows faster than all iterated exponentials $x, \mathe^x,
\mathe^{\mathe^x}, \ldots$. Consequently, there exists no transseries $f \in
\mathbb{T}$ with $f (\omega) = \{ \omega, \mathe^{\omega},
\mathe^{\mathe^{\omega}}, \ldots | \nosymbol \}$. A natural candidate for a
function that grows faster than any iterated exponential is the first
{\tmem{hyperexponential}} $E_{\omega}$, which satisfies the functional
equation
\[ E_{\omega} (x + 1) = \exp E_{\omega} (x) . \]
It was shown by Kneser~{\cite{Kn49}} that this equation actually has a real
analytic solution on~$\mathbb{R}^{>}$. A natural hyperexponential $E_{\omega}$
on $\mathbf{No}^{>, \succ} \assign \{ c \in \mathbf{No} \suchthat c
>\mathbb{R} \}$ was constructed more recently in~{\cite{BvdHM:surhyp}}. In
particular, $E_{\omega} (\omega) = \{ \omega, \mathe^{\omega},
\mathe^{\mathe^{\omega}}, \ldots | \nosymbol \}$.

More generally, one can formally introduce the sequence
$(E_{\alpha})_{\alpha \in \mathbf{On}}$ of {\tmem{hyperexponentials}} of
arbitrary strengths $\alpha$, together with the sequence $(L_{\alpha})_{\alpha
\in \mathbf{On}}$ of their functional inverses, called
{\tmem{hyperlogarithms}}. Each $E_{\omega^n}$ with $n \in \mathbb{N}^{>}$
satisfies the equation
\[ E_{\omega^n} (x + 1) = E_{\omega^{n - 1}} (E_{\omega^n} (x)) \]
and there again exist real analytic solutions to this
equation~{\cite{Schm01}}. The function $E_{\omega^{\omega}}$ does not satisfy
any natural functional equation, but we have the following infinite product
formula for the derivative of every hyperlogarithm~$L_{\alpha}$:
\[ L_{\alpha}' (x) = \prod_{\beta < \alpha} \frac{1}{L_{\beta} (x)} . \]
We showed in~{\cite{vdH:hypno}} how to define $E_{\alpha} (a)$ and $L_{\alpha}
(a)$ for any $\alpha \in \mathbf{On}$ and $a \in \mathbf{No}^{>, \succ}$.

The traditional field $\mathbb{T}$ of transseries is not closed under
hyperexponentials and hyperlogarithms, but it is possible to define
generalized fields of {\tmem{hyperseries}} that do enjoy this additional
closure property. Hyperserial grow rates were studied from a formal point of
view in~{\cite{Ec92,Ec16}}. The first systematic construction of hyperserial
fields of strength $\alpha < \omega^{\omega}$ is due to
Schmeling~{\cite{Schm01}}. In this paper, we will rely on the more recent
constructions from~{\cite{vdH:loghyp,BvdHK:hyp}} that are fully general. In
particular, the surreal numbers $\mathbf{No}$ form a hyperserial field in the
sense of~{\cite{BvdHK:hyp}}, when equipped with the hyperexponentials and
hyperlogarithms from~{\cite{vdH:hypno}}.

A less obvious problematic cut $L < R$ in the field of transseries
$\mathbb{T}$ arises by taking
\begin{eqnarray*}
  L & = & \left\{ \sqrt{x}, \sqrt{x} + \mathe^{\sqrt{\log x}}, \sqrt{x} +
  \mathe^{\sqrt{\log x} + \mathe^{\sqrt{\log \log x}}}, \ldots \right\}\\
  R & = & \left\{ 2 \sqrt{x}, \sqrt{x} + \mathe^{2 \sqrt{\log x}}, \sqrt{x} +
  \mathe^{\sqrt{\log x} + \mathe^{2 \sqrt{\log \log x}}}, \ldots \right\} .
\end{eqnarray*}
Here again, there exists no transseries $f \in \mathbb{T}$ with $L < f < R$.
This cut has actually a natural origin, since any ``tame'' solution of the
functional equation
\begin{equation}
  f (x) = \sqrt{x} + \mathe^{f (\log x)} \label{funceq-f-example}
\end{equation}
lies in this cut. What is missing here is a suitable notion of ``nested
transseries'' that encompasses expressions like
\begin{equation}
  f = \sqrt{x} + \mathe^{\sqrt{\log x} + \mathe^{\sqrt{\log \log x} +
  \mathe^{\udots}}} . \label{nested-f-example}
\end{equation}
This type of cuts were first considered in~{\cite[Section~2.7.1]{vdH:phd}}.
Subsequently, the second author and his former PhD student Schmeling developed
an abstract notion of generalized fields of
transseries~{\cite{vdH:gentr,Schm01}} that may contain nested transseries.
However, it turns out that expressions like~(\ref{nested-f-example}) are
ambiguous: one may construct fields of transseries that contain arbitrarily
large sets of pairwise distinct solutions to~(\ref{funceq-f-example}).

In order to investigate this ambiguity more closely, let us turn to the
surreal numbers. The above cut $L < R$ induces a cut $L (\omega) < R (\omega)$
in $\mathbf{No}$. Nested transseries solutions~$f$ to the functional
equation~(\ref{funceq-f-example}) should then give rise to surreal numbers $f
(\omega)$ with $L (\omega) < f (\omega) < R (\omega)$ and such that $f
(\omega) - \sqrt{\omega}, \log \left( f (\omega) - \sqrt{\omega} \right) -
\mathe^{\sqrt{\log \omega}}, \ldots$ are all monomials in~$\mathbf{Mo}$.
In~{\cite[Section~8]{BvdH19}}, we showed that those numbers $f (\omega)$
actually form a class $\mathbf{Ne}$ that is naturally parameterized by a
surreal number ($\mathbf{Ne}$ forms a so-called {\tmem{surreal
substructure}}). Here we note that analogue results hold when replacing
Gonshor's exponentiation by Conway's $\omega$\mbox{-}map $a \in \mathbf{No}
\longmapsto \omega^a$ (which generalizes Cantor's $\omega$-map when {$a \in
\mathbf{On}$}). This was already noted by Conway himself~{\cite[pages
34--36]{Con76}} and further worked out by
Lemire~{\cite{Lemire96,Lemire97,Lemire99}}. Section~\ref{section-nested-series} of
the present paper will be devoted to generalizing the result from
{\cite[Section~8]{BvdH19}} to nested hyperseries.

Besides the two above types of superexponential and nested cuts, no other
examples of ``cuts that cannot be filled'' come naturally to our mind. This
led the second author to conjecture~{\cite[page 16]{vdH:ln}} that there exists
a~field~$\mathbb{H}$ of suitably generalized hyperseries in $x$ such that each
surreal number can uniquely be represented as the value~$f (\omega)$ of a
hyperseries~{$f \in \mathbb{H}$} at $x = \omega$. In order to prove this
conjecture, quite some machinery has been developed since: a systematic theory
of surreal substructures~{\cite{BvdH19}}, sufficiently general notions of
hyperserial fields~{\cite{vdH:loghyp,BvdHK:hyp}}, and definitions of
$(E_{\alpha})_{\alpha \in \mathbf{No}}$ on the surreals that
give~$\mathbf{No}$ the structure of a hyperserial
field~{{\cite{BvdHM:surhyp,vdH:hypno}}}.

Now one characteristic property of generalized hyperseries in $\mathbb{H}$
should be that they can uniquely be described using suitable expressions that
involve $x$, real numbers, infinite summation, hyperlogarithms,
hyperexponentials, and a way to disambiguate nested expansions. The main goal
of this paper is to show that any surreal number can indeed be described
uniquely by a hyperserial expression of this kind in $\omega$. This
essentially solves the conjecture from {\cite[page 16]{vdH:ln}} by thinking of
hyperseries in $\mathbb{H}$ as surreal numbers in which we replaced $\omega$
by $x$. Of course, it remains desirable to give a formal construction of
$\mathbb{H}$ that does not involve surreal numbers and to specify the precise
kind of properties that our ``suitably generalized'' hyperseries should
possess. We intend to address this issue in a forthcoming paper.

Other work in progress concerns the definition of a derivation and a
composition on~$\mathbb{H}$. Now Berarducci and Mantova showed how to define a
derivation on $\mathbf{No}$ that is compatible with infinite summation and
exponentiation~{\cite{BM18}}. In~{\cite{vdH:bm,vdH:mt}}, it was shown that
there actually exist many such derivations and that they all satisfy the same
first order theory as the ordered differential field $\mathbb{T}$. However, as
pointed out in~{\cite{vdH:icm}}, Berarducci and Mantova's derivation does not
obey the chain rule with respect to $E_{\omega}$. The hyperserial derivation
that we propose to construct should not have this deficiency and therefore be
a better candidate for {\tmem{the}} derivation on $\mathbf{No}$ with respect
to $\omega$.

\subsection{Outline of our results and contributions}

In this paper, we will strongly rely on previous work
from~{\cite{BvdH19,vdH:loghyp,BvdHK:hyp,BvdHM:surhyp,vdH:hypno}}. The main
results from these previous papers will be recalled in
Sections~\ref{subsection-well-based-series}, \ref{subsection-surreal-numbers},
and~\ref{subsection-surreal-substructures}. For the sake of this introduction,
we start with a few brief reminders.

The field of {\tmem{logarithmic hyperseries}} $\mathbb{L}$ was defined and
studied in~{\cite{vdH:loghyp}}. It is a field of Hahn series
$\mathbb{L}=\mathbb{R} [[\mathfrak{L}]]$ in the sense of~{\cite{Hahn1907}}
that is equipped with a logarithm {$\log : \mathbb{L}^{>} \longrightarrow
\mathbb{L}$}, a derivation {$\partial : \mathbb{L} \longrightarrow
\mathbb{L}$}, and a composition $\circ : \mathbb{L} \times \mathbb{L}^{>,
\succ} \longrightarrow \mathbb{L}$. Moreover, for each ordinal $\alpha \in
\mathbf{On}$, it contains an element $\ell_{\alpha}$ such that
\begin{eqnarray*}
  \ell_1 \circ f & = & \log f \text{}\\
  \ell_{\omega^{\mu + 1}} \circ \ell_{\omega^{\mu}} & = & \ell_{\omega^{\mu +
  1}} - 1\\
  \log \ell_{\alpha}' & = & - \sum_{\beta < \alpha} \ell_{\beta + 1} .
\end{eqnarray*}
for all $f \in \mathbb{L}^{>, \succ}$ and all ordinals $\alpha, \mu$.
Moreover, if the Cantor normal form of $\alpha$ is given by $\alpha = \sum_{i
= 1}^p \omega^{\mu_i} n_i$ with $\mu_1 < \cdots < \mu_p$, then we have
\begin{eqnarray*}
  \ell_{\alpha} & = & \ell_{\omega^{\mu_1}}^{\circ n_1} \circ \cdots \circ
  \ell_{\omega^{\mu_p}}^{\circ n_p} .
\end{eqnarray*}
The derivation and composition on $\mathbb{L}$ satisfy the usual rules of
calculus and in particular a formal version of Taylor series expansions.

In~{\cite{BvdHK:hyp}}, Kaplan and the authors defined the concept of a
{\tmem{hyperserial field}} to be a~field $\mathbb{T}=\mathbb{R}
[[\mathfrak{T}]]$ of Hahn series with a logarithm {$\log : \mathbb{T}^{>}
\longrightarrow \mathbb{T}$} and a composition law {$\circ : \mathbb{L} \times
\mathbb{T}^{>, \succ} \longrightarrow \mathbb{T}$}, such that various natural
compatibility requirements are satisfied. For every ordinal $\alpha$, we then
define the {\tmem{hyperlogarithm}} $L_{\alpha}$ of strength $\alpha$ by
$L_{\alpha} : \mathbb{T}^{>, \succ} \longrightarrow \mathbb{T}^{>, \succ} ; {f
\longmapsto \ell_{\alpha} \circ f}$. We showed in {\cite{vdH:hypno}} how to
define bijective hyperlogarithms $L_{\alpha} : \mathbf{No}^{>, \succ}
\longrightarrow \mathbf{No}^{>, \succ}$ for which $\mathbf{No}$ has the
structure of a hyperserial field. For every ordinal $\alpha$, the functional
inverse $E_{\alpha} : \mathbf{No}^{>, \succ} \longrightarrow \mathbf{No}^{>,
\succ}$ of $L_{\alpha}$ is called the {\tmem{hyperexponential}} of strength
$\alpha$.

The main aim of this paper is to show that any surreal number $a \in
\mathbf{No}$ is not just an abstract hyperseries in the sense
of~{\cite{vdH:hypno}}, but that we can regard it as a hyperseries in $\omega$.
We will do this by constructing a suitable unambiguous description of $a$ in
terms of $\omega$, the real numbers, infinite summation, the
hyperexponentials, and the hyperlogarithms.

If $a = f (\omega)$ for some ordinary transseries $f$, then the idea would be
to expand $a$ as a linear combination of monomials, then to rewrite every
monomial as an exponential of a transseries, and finally to recursively expand
these new transseries. This process stops whenever we hit an iterated
logarithm of $\omega$.

In fact, this transserial expansion process works for any surreal number $a
\in \mathbf{No}$. However, besides the iterated logarithms (and exponentials)
of $\omega$, there exist other monomials $\mathfrak{a} \in \mathbf{Mo}^{\succ}
\assign \{ \mathfrak{m} \in \mathbf{Mo} \suchthat \mathfrak{m} \succ 1 \}$
such that $L_n (\mathfrak{a})$ is a monomial for all $n \in \mathbb{N}$. Such
monomials are said to be {\tmem{log\mbox{-}atomic}}. More generally, given
$\mu \in \mathbf{On}$, we say that $\mathfrak{a}$ is {\tmem{$L_{<
\omega^{\mu}}$-atomic}} if $L_{\alpha} (\mathfrak{a}) \in \mathbf{Mo}$ for all
$\alpha < \omega^{\mu}$. We write $\mathbf{Mo}_{\omega^{\mu}}$ for the set of
such numbers. If we wish to further expand an $L_{< \omega^{\mu}}$-atomic
monomial $\mathfrak{a}$ as a hyperseries, then it is natural to pick $\mu$
such that $\mathfrak{a}$ is not $L_{< \omega^{\mu + 1}}$-atomic, to
recursively expand {$b \assign L_{\omega^{\mu}}$}, and then to write
$\mathfrak{a}= E_{\omega^{\mu}} (b)$.

Unfortunately, the above idea is slightly too simple to be useful. In order to
expand monomials as hyperseries, we need something more technical. In
Section~\ref{section-well-nestedness}, we show that every non-trivial monomial
$\mathfrak{m} \in \mathbf{Mo} \setminus \{ 1 \}$ has a unique expansion of
exactly one of the two following forms:
\begin{equation}
  \mathfrak{m}= \mathe^{\psi}  (L_{\beta} (\omega))^{\iota}
  \label{eq-ex-st-expansion-1},
\end{equation}
where $\mathe^{\psi} \in \mathbf{Mo}$, $\iota \in \{ - 1, 1 \}$, and $\beta
\in \mathbf{On}$, with $\tmop{supp} \psi \succ \log (L_{\beta} (\omega))$; or
\begin{equation}
  \mathfrak{m}= \mathe^{\psi}  (L_{\beta} (E_{\alpha} (u)))^{\iota}
  \label{eq-ex-st-expansion-2},
\end{equation}
where $\mathe^{\psi} \in \mathbf{Mo}$, $\iota \in \{ - 1, 1 \}$, $\beta \in
\mathbf{On}, \alpha \in \omega^{\mathbf{On}}$ with $\beta \omega < \alpha$,
$\tmop{supp} \psi \succ \log (L_{\beta} (E_{\alpha} (u)))$, and where
$E_{\alpha} u$ lies in $\mathbf{Mo}_{\alpha} \setminus L_{< \alpha} 
\mathbf{Mo}_{\alpha \omega}$. Moreover, if $\alpha = 1$ then it is imposed
that $\psi = 0$, $\iota = 1$, and that $u$ cannot be written as $u = \varphi +
\varepsilon \mathfrak{b}$ where $\varphi \in \mathbf{No}$, $\varepsilon \in \{
- 1, 1 \}$, $\mathfrak{b} \in \mathbf{Mo}_{\omega}$, and~{$\mathfrak{b} \prec
\tmop{supp} \varphi$}.

After expanding $\mathfrak{m}$ in the above way, we may pursue with the
recursive expansions of $\psi$ and $u$ as hyperseries. Our next objective is
to investigate the shape of the recursive expansions that arise by doing so.
Indeed, already in the case of ordinary transseries, such recursive expansions
may give rise to nested expansions like
\begin{equation}
  \sqrt{\omega} + \mathe^{\sqrt{\log \omega} + \mathe^{\sqrt{\log \log \omega}
  + \mathe^{\udots}}} \label{well-nested-example}
\end{equation}
One may wonder whether it is also possible to obtain expansions like
\begin{equation}
  \sqrt{\omega} + \mathe^{\sqrt{\log \omega} + \mathe^{\sqrt{\log \log \omega}
  + \mathe^{\udots \ddots} + \log \log \log \omega} + \log \log \omega} + \log
  \omega . \label{ill-nested-example}
\end{equation}
Expansions of the forms~(\ref{well-nested-example})
and~(\ref{ill-nested-example}) are said to be {\tmem{well-nested}} and
{\tmem{ill-nested}}, respectively. The axiom {\tmstrong{T4}} for fields of
transseries in~{\cite{Schm01}} prohibits the existence of ill-nested
expansions. It was shown in~{\cite{BM18}} that $\mathbf{No}$ satisfies this
axiom {\tmstrong{T4}}.

The definition of hyperserial fields in~{\cite{vdH:hypno}} does not contain a
counterpart for the axiom~{\tmstrong{T4}}. The main goal of section 4 is to
generalize this property to hyperserial fields and prove the following
theorem:

\begin{theorem}
  \label{th-well-nested}Every surreal number is well-nested.
\end{theorem}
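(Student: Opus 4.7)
The plan is to argue by contradiction, exploiting the well-foundedness of the simplicity relation $\sqsubset$ on $\mathbf{No}$. First I would make the notion of an ill-nested surreal number precise in the hyperserial setting: applying the unique expansion (\ref{eq-ex-st-expansion-1}) or (\ref{eq-ex-st-expansion-2}) to each monomial of $\tmop{supp} a$, and then recursively to the coefficients $\psi$ and nested arguments $u$ that arise, one obtains an expansion tree; the number $a$ is well-nested when every infinite branch eventually stabilizes at iterated hyperlogarithms of $\omega$, and ill-nested otherwise. I would check that this notion behaves well under the elementary surreal operations involved, namely the extraction of coefficients, leading monomials and summands, and the application of the $L_{\alpha}$, $E_{\alpha}$ occurring in (\ref{eq-ex-st-expansion-2}).

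Suppose for contradiction that the class of ill-nested surreal numbers is non-empty, and pick an ill-nested $a \in \mathbf{No}$ which is $\sqsubset$-minimal. Write $a = \{ L | R \}$ in canonical form, so every element of $L \cup R$ is strictly simpler than $a$ and hence well-nested by minimality. The strategy is now to trace an infinite bad branch starting at $a$ and produce from it a strictly simpler ill-nested surreal, contradicting this minimality. Such a bad branch unfolds as a sequence $b_0 = a, b_1, b_2, \ldots$ of surreal numbers (the successive coefficients $\psi$ and arguments $u$ inside hyperexponentials), where each $b_{n+1}$ is obtained from $b_n$ by one of the decompositions (\ref{eq-ex-st-expansion-1}) or (\ref{eq-ex-st-expansion-2}).

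The core technical step is to show that for some $n \geq 1$, the number $b_n$ must be $\sqsubset$-comparable with an element of $L \cup R$. For this I would use the hyperserial structure on $\mathbf{No}$ recalled in Sections~\ref{subsection-well-based-series}--\ref{subsection-surreal-substructures}, together with the description of hyperserial nested numbers as a surreal substructure parameterized by a single simpler surreal, to be established in Section~\ref{section-nested-series}. The intuition is that the Conway cut defining $a$ cannot be strictly finer than the hyperserial cut produced by truncating the bad branch at a sufficient depth $n$: otherwise the approximations from $L$ and $R$ would already force $a$ to coincide with a number lying inside the nested surreal substructure, whose simplest representative is strictly simpler than $a$. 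The corresponding element of $L \cup R$ then inherits the tail of the bad branch from $b_n$, so it is itself ill-nested and strictly simpler than $a$, yielding the contradiction.

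The hard part will be precisely this transfer of ill-nestedness from the minimal counterexample $a$ to a strictly simpler number. In the purely transserial setting of \cite{BM18}, one uses that ill-nestedness is witnessed by a cofinal sequence of monomials in $\tmop{supp} a$ and that the simplest representative of an exponential monomial $\mathe^{\psi}$ is controlled by that of $\psi$. In our hyperserial setting, the analogous control has to accommodate the decomposition (\ref{eq-ex-st-expansion-2}), where a monomial factors as $L_{\beta} \circ E_{\alpha}$ for a possibly transfinite $\alpha$. I expect this to require the $L_{<\omega^{\mu}}$-atomic analysis of Section~\ref{section-well-nestedness}, the simplicity analysis of $E_{\alpha}$ and $L_{\alpha}$ from \cite{vdH:hypno}, and an induction on the ordinal measuring the hyperserial depth at which the bad branch first diverges from the iterated logarithms of $\omega$.
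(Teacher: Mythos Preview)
Your overall frame---take a $\sqsubseteq$-minimal ill-nested $a$ and derive a contradiction---matches the paper, but the substance of the argument has three genuine gaps.

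First, your working definition of well-nestedness is wrong. A number is well-nested not when every infinite branch ``eventually stabilizes at iterated hyperlogarithms of $\omega$'', but when every infinite path has only finitely many \emph{bad indices} in the precise sense of Definition~\ref{def-good-path}: an index $i$ is bad when $\mathfrak{m}_{P,i}$ fails to be the $\preccurlyeq$-minimum of $\tmop{supp} u_{P,i}$, or $\beta_{P,i}\neq 0$, or $r_{P,i}\notin\{-1,1\}$, or $\mathfrak{m}_{P,i}\in\tmop{supp}\psi_{P,i}$. Good infinite paths exist and need never reach $\omega$; they are exactly what gives rise to the nested numbers of Section~\ref{section-nested-series}.

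Second, your proposed use of ``the description of hyperserial nested numbers as a surreal substructure\ldots\ to be established in Section~\ref{section-nested-series}'' is circular: Theorem~\ref{th-eventually-nested} in that section invokes Theorem~\ref{th-well-nested} in its final step.

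Third, the paper does not work through the canonical cut $a=\{L\mid R\}$ at all, and your claim that some $b_n$ along the bad branch must be $\sqsubset$-comparable with an element of $L\cup R$ has no clear mechanism. What the paper actually does is locate the \emph{first} bad index $i$ along a bad path $P$, build at level $i$ a modified number $d_i$ (essentially by stripping off the feature that makes $i$ bad), and then pull $d_i$ back through levels $i-1,\ldots,0$ via a dedicated descent lemma (Lemma~\ref{lem-subpath-descent}) to obtain $d_0\sqsubset a$ that still carries a bad path. That descent is the technical heart: it requires a bespoke relation $\lesssim$ (Section~\ref{subsection-nested-truncation-relations}) generalizing Berarducci--Mantova's nested truncation, together with the deconstruction lemmas of Section~\ref{subsection-decomposition} governing how $\sqsubseteq$ interacts with $+$, inversion, $\exp$, and $E_\alpha$, and with Lemma~\ref{lem-subpath-expansion} to control when $P$ shares a subpath with the intermediate $\varphi_k,\psi_k$. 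None of this machinery is visible in your sketch, and the cut-based route you suggest does not supply a substitute for it.
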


Now there exist surreal numbers for which the above recursive expansion
process leads to a nested expansion of the form~(\ref{well-nested-example}).
In {\cite[Section~8]{BvdH19}}, we proved that the class~$\mathbf{Ne}$ of such
numbers actually forms a {\tmem{surreal substructure}}. This means that
$(\mathbf{No}, \leqslant, \sqsubseteq)$ is isomorphic to $(\mathbf{Ne},
\leqslant, \sqsubseteq_{\mathbf{Ne}})$ for the restriction
$\sqsubseteq_{\mathbf{Ne}}$ of $\sqsubseteq$ to $\mathbf{Ne}$. In particular,
although the nested expansion~(\ref{well-nested-example}) is inherently
ambiguous, elements in $\mathbf{Ne}$ are naturally parameterized by surreal
numbers~in~$\mathbf{No}$.

The main goal of Section~\ref{section-nested-series} is to prove a hyperserial
analogue of the result from~{\cite[Section~8]{BvdH19}}. Now the
expansion~(\ref{well-nested-example}) can be described in terms of the
sequence $\sqrt{\omega}, \sqrt{\log \omega}, \sqrt{\log \log \omega}, \ldots$.
More generally, in Section~\ref{section-nested-series} we the define the
notion of a {\tmem{nested sequence}} in order to describe arbitrary nested
hyperserial expansions. Our main result is the following:

\begin{theorem}
  \label{th-nested-numbers}Any nested sequence $\Sigma$ induces a surreal
  substructure $\mathbf{Ne}$ of nested hyperseries.
\end{theorem}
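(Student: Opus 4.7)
The plan is to follow the blueprint of [BvdH19, Section~8]---which handled the purely transserial case---and lift it to arbitrary hyperserial nested sequences. Fix a nested sequence $\Sigma$. By definition, $\Sigma$ records at each depth $n \in \mathbb{N}$ the data $(\psi_n, \iota_n, \alpha_n, \beta_n)$ that should appear in the $n$-th recursive step of the expansion provided by Section~\ref{section-well-nestedness} (equations~(\ref{eq-ex-st-expansion-1}) and (\ref{eq-ex-st-expansion-2})). A surreal number lies in $\mathbf{Ne}$ precisely when its iterated hyperserial expansion agrees with $\Sigma$ at every depth.

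First I would show that the class $\mathbf{Ne}_n$ of surreals agreeing with $\Sigma$ up to depth $n$ is itself a surreal substructure. Each increment of $n$ corresponds to a composite of elementary operations: subtracting the prescribed leading part $\psi_n$, inverting a monomial factor according to $\iota_n$, and then either applying a hyperlogarithm $L_{\beta_n}$ or composing with $E_{\alpha_n}$ before $L_{\beta_n}$. Each of these elementary operations is known to send surreal substructures to surreal substructures: translation is trivially such, and the same holds for $L_\beta$ and $E_\alpha$ as restrictions of the bijections of $\mathbf{No}^{>,\succ}$ constructed in [vdH:hypno]. Hence each $\mathbf{Ne}_n$ carries a canonical isomorphism $\Xi_n : \mathbf{No} \to \mathbf{Ne}_n$, and the chain $\mathbf{Ne}_0 \supseteq \mathbf{Ne}_1 \supseteq \cdots$ is a descending chain of convex surreal substructures.

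Next I would prove that $\mathbf{Ne} = \bigcap_n \mathbf{Ne}_n$ is again a surreal substructure, isomorphic to $\mathbf{No}$ via a map $\Xi$ that sends $a$ to the simplest surreal whose position in each $\mathbf{Ne}_n$ is that of $\Xi_n(a)$. Well-definedness of $\Xi$ requires two inputs: non-emptiness of the intersection of the cuts associated with the $\Xi_n(a)$, and preservation of order and simplicity. Non-emptiness is the main obstacle: a naïve depth-by-depth choice of simplest representative need not produce a bona fide surreal, since the simplicities accumulated across depths may fail to have a common upper bound in the simplicity hierarchy. To overcome this, for each $a \in \mathbf{No}$ I would manufacture explicit bounds $L_a, R_a \subseteq \mathbf{No}$ such that $\{L_a \mid R_a\} \in \mathbf{Ne}$ is automatically positioned according to $a$, the bounds being built from truncations of $\Sigma$ as in the transserial case but incorporating hyperlogarithms of arbitrary ordinal strength. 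Here Theorem~\ref{th-well-nested} is crucial: it guarantees that the candidate limit surreal admits an honest hyperserial expansion, which must then match $\Sigma$ by construction. Once these bounds are available, the fixed-point machinery for decreasing chains of surreal substructures from [BvdH19] delivers the desired isomorphism $\mathbf{No} \to \mathbf{Ne}$, completing the proof.
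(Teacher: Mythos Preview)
Your outline diverges from the paper's argument and contains a genuine gap at the intersection step. The paper does not realise $\mathbf{Ne}$ as an intersection $\bigcap_n \mathbf{Ne}_n$ of depth-$n$ approximants. Instead it first shows that the larger class $\mathbf{Ad}$ of $\Sigma$-\emph{admissible} numbers is a convex surreal substructure (as the explicit cut $(L|R)$ of Proposition~\ref{prop-Ad-identification}), and then exhibits a function group $\mathcal{G}=\{\Psi_{i,g}:i\in\mathbb{N},\ g\in\mathcal{E}_{\alpha_i}\}$ acting on $\mathbf{Ad}$. The general fact that $\mathbf{Smp}_{\mathcal{G}}$ is always a surreal substructure (Proposition~\ref{prop-group-substructure}, from \cite{BvdH19}) then reduces the theorem to the identity $\mathbf{Ne}=\mathbf{Smp}_{\mathcal{G}}$, which is Theorem~\ref{th-3}; the two inclusions are obtained from the simplicity comparison of Lemma~\ref{lem-simplicity} and the cofinality estimate of Lemma~\ref{lem-reduction}.

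The step you flag as ``the main obstacle'' is indeed where your plan breaks. There is no general fixed-point machinery in \cite{BvdH19} for decreasing $\omega$-chains of surreal substructures: such an intersection may be empty or may fail to contain simplest elements for all its internal cuts, and the transserial proof in \cite[Section~8]{BvdH19} already goes through function groups rather than intersections. Your proposed remedy---manufacturing bounds $L_a,R_a$ and invoking Theorem~\ref{th-well-nested}---does not close the gap: well-nestedness says that every surreal number has only good infinite paths, which constrains how \emph{existing} numbers expand but yields no existence statement for a number with a prescribed infinite nested expansion, nor any control over simplicity. The missing idea is precisely the convex partition of $\mathbf{Ad}$ into $\mathcal{G}$-halos, each of which automatically contains a unique simplest element, and the verification that these simplest elements are exactly the nested numbers. (A minor point: the coding sequences in Definition~\ref{def-coding-sequence} carry data $(\varphi_i,\varepsilon_i,\psi_i,\iota_i,\alpha_i)$, not $(\psi_n,\iota_n,\alpha_n,\beta_n)$; the $\varphi_i$ and $\varepsilon_i$ are essential and there is no $\beta$ in the sequence.)
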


In Section~\ref{section-numbers-as-hyperseries}, we reach the main goal of
this paper, which is to uniquely describe any surreal number as a generalized
hyperseries in $\omega$. This goal can be split up into two tasks. First of
all, we need to specify the hyperserial expansion process that we informally
described above and show that it indeed leads to a hyperserial expansion in
$\omega$, for any surreal number. This will be done in
Section~\ref{tree-expansion-sec}, where we will use labeled trees in order to
represent hyperserial expansions. Secondly, these trees may contain infinite
branches (also called paths) that correspond to nested numbers in the sense of
Section~\ref{section-nested-series}. By Theorem~\ref{th-nested-numbers}, any
such nested number can uniquely be identified using a surreal parameter. By
associating a surreal number to each infinite branch, this allows us to
construct a unique {\tmem{hyperserial description}} in $\omega$ for any
surreal number and prove our main result:

\begin{theorem}
  \label{th-hyperserial-representation}Every surreal number has a unique
  hyperserial description. Two numbers with the same hyperserial description
  are equal.
\end{theorem}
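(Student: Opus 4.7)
\noindent\textbf{Proof proposal.\ } My plan is to build the hyperserial description of $a\in\mathbf{No}$ recursively, using a labeled tree whose nodes record the decompositions (\ref{eq-ex-st-expansion-1})/(\ref{eq-ex-st-expansion-2}) of the monomials occurring in $a$, and whose infinite branches are tagged with surreal parameters supplied by Theorem~\ref{th-nested-numbers}. First I would fix a ``base case'': a hyperserial description is a syntactic tree whose leaves are real numbers or the symbol $\omega$, whose internal nodes encode infinite sums $\sum_{\mathfrak{m}}a_{\mathfrak{m}}\mathfrak{m}$, multiplications, the hyperexponentials $E_{\alpha}$, and the hyperlogarithms $L_{\beta}$, and whose infinite branches carry a surreal ``nesting parameter''. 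The iterated logarithms $L_n(\omega)$ serve as terminal monomials on which recursion halts.

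Next I would describe the expansion procedure. Writing $a=\sum_{\mathfrak{m}\in\mathbf{Mo}}a_{\mathfrak{m}}\mathfrak{m}$ as a Hahn series, each monomial $\mathfrak{m}\neq 1$ has a unique expansion of the form (\ref{eq-ex-st-expansion-1}) or (\ref{eq-ex-st-expansion-2}) (this is the unique decomposition established in Section~\ref{section-well-nestedness}). Applying this to every monomial and then recursively to the resulting exponents $\psi$ and to the arguments $u$ produces a well-defined labeled tree $T(a)$. Whenever the expansion hits $\mathfrak{m}=L_n(\omega)^{\pm 1}$ for some $n\in\mathbb{N}$, the branch terminates. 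The non-terminating case is precisely an infinite path along which exponentials and hyperlogarithms are peeled off forever; by Theorem~\ref{th-well-nested} every such path is well-nested, so it matches the shape of a nested sequence in the sense of Section~\ref{section-nested-series}.

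For each infinite branch $\pi$ of $T(a)$, I would associate the nested sequence $\Sigma_\pi$ read off from the exponents and arguments encountered along $\pi$. By Theorem~\ref{th-nested-numbers}, the class $\mathbf{Ne}_{\Sigma_\pi}$ of nested numbers with this shape is a surreal substructure, hence canonically parameterized by $\mathbf{No}$: there is a unique $s_\pi\in\mathbf{No}$ identifying the subtree rooted at $\pi$. I would label the branch $\pi$ with $s_\pi$, and then recursively replace the label $s_\pi$ by its own hyperserial description. The key finiteness point is that $s_\pi$ is strictly simpler than $a$ in a well-founded sense derived from the surreal-substructure isomorphism $(\mathbf{No},\sqsubseteq)\cong(\mathbf{Ne}_{\Sigma_\pi},\sqsubseteq_{\mathbf{Ne}_{\Sigma_\pi}})$, so well-founded induction on $\sqsubseteq$ allows the construction to stop. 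This well-foundedness step is the main obstacle: one must verify that passing from $a$ to the parameters $s_\pi$ of its infinite branches strictly decreases simplicity, and that the total transfinite recursion over $\sqsubseteq$-descending chains of parameters terminates.

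Finally, uniqueness follows by unwinding the construction. Two numbers with the same hyperserial description must have identical Hahn-series decompositions at every finite node (by uniqueness of Hahn-series expansions and of the decompositions (\ref{eq-ex-st-expansion-1})/(\ref{eq-ex-st-expansion-2})) and identical surreal parameters on every infinite branch (by the canonical parameterization from Theorem~\ref{th-nested-numbers}); since the description determines the number at leaves ($\omega$ and reals) and at labeled infinite branches, an easy induction on the depth, followed by a simplicity-induction on the parameters, forces the two numbers to coincide. Conversely, the procedure outputs a single description for each $a\in\mathbf{No}$, so the map $a\mapsto\text{description}(a)$ is a bijection onto its image, which proves both parts of Theorem~\ref{th-hyperserial-representation}.
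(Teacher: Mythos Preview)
Your construction of the tree $T(a)$ matches the paper's approach (Lemma~\ref{unique-tree-expansion-lem}), but there are two genuine gaps.

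First, the paper's hyperserial description is simply the pair $(T,\xi)$, where $\xi$ assigns to each infinite path its nested rank as a \emph{raw surreal number}; these parameters are not further expanded. Your extra recursion ``replace $s_\pi$ by its own hyperserial description'' hinges on the claim that $s_\pi\sqsubset a$, which you correctly flag as the main obstacle and which is in fact unjustified. The isomorphism $\Xi_{\mathbf{Ne}}$ transports $\sqsubseteq$ on $\mathbf{No}$ to the \emph{restriction} $\sqsubseteq|_{\mathbf{Ne}}$; it says nothing about how the sign sequence of $c$ compares to that of $\Xi_{\mathbf{Ne}}(c)$ in the ambient simplicity order. There is no reason your transfinite recursion should terminate, and the paper sidesteps the issue entirely by stopping at $\xi$. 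You should also note that the coding sequence $\Sigma_\pi$ read off an infinite path is a priori only admissible, not nested; the paper passes to a tail $(\Sigma_\pi)_{\nearrow k}$ via Theorem~\ref{th-eventually-nested} (using Theorem~\ref{th-well-nested} to get good indices) before applying Theorem~\ref{th-nested-numbers}, and defines $\xi$ with the appropriate sign correction $\sigma_{P,k}$.

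Second, your uniqueness argument via ``induction on the depth'' fails because $T$ may have infinite height along nested branches: there are no leaves there from which to propagate values upward, so a bottom-up induction has nowhere to start. The paper argues top-down by contradiction. Given $a\neq a'$ with the same description $(T,\xi)$ and evaluations $v,v'$, one repeatedly descends to a child where $v\neq v'$; this produces an infinite path $\nu$ in $T$. After passing to a suitable tail, the coding sequences read from $a$ and from $a'$ along this path coincide (both are read from the same tree $T$), so $v(\nu_{i_n})$ and $v'(\nu_{i_n})$ lie in the same nested substructure $\mathbf{Ne}$ and both equal $\Xi_{\mathbf{Ne}}(\sigma_{;n}\xi_\nu)$ for the shared parameter $\xi_\nu$, contradicting $v(\nu_{i_n})\neq v'(\nu_{i_n})$.
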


\section{Ordered fields of well-based
series}\label{subsection-well-based-series}

\subsection{Well-based series}

Let $(\mathfrak{M}, \times, 1, \prec)$ be a totally ordered (and possibly
class-sized) abelian group. We say that $\mathfrak{S} \subseteq \mathfrak{M}$
is {\tmem{well-based}} if it contains no infinite ascending chain
(equivalently, this means that $\mathfrak{S}$ is well-ordered for the opposite
ordering). We denote by $\mathbb{R} [[\mathfrak{M}]]$\label{autolab1} the
class of functions $f : \mathfrak{M} \longrightarrow \mathbb{R}$ whose support
\[ \tmop{supp} f \assign \{ \mathfrak{m} \in \mathfrak{M} \suchthat f
   (\mathfrak{m}) \neq 0 \} \text{\label{autolab2}} \]
is a {\tmem{well-based}}. The elements of $\mathfrak{M}$ are called monomials
and the elements in $\mathbb{R}^{\neq} \mathfrak{M}$ are called
{\tmem{terms}}{\index{term}}. We also define
\[ \tmop{term} f \assign \{ f_{\mathfrak{m}} \mathfrak{m} \suchthat
   \mathfrak{m} \in \tmop{supp} f \}, \text{\label{autolab3}} \]
and elements $\tau \in \tmop{term} f$ are called terms in $f$.

We see elements $f$ of $\mathbb{S}$ as formal {\tmem{well-based series}} $f =
\sum_{\mathfrak{m}} f_{\mathfrak{m}} \mathfrak{m}$ where $f_{\mathfrak{m}}
\assign f (\mathfrak{m}) \in \mathbb{R}$ for all $\mathfrak{m} \in
\mathfrak{M}$. If $\tmop{supp} f \neq \varnothing$, then $\mathfrak{d}_f
\assign \max \tmop{supp} f \in \mathfrak{M}$\label{autolab4} is called the
{\tmem{dominant monomial}}{\index{dominant monomial}} of~$f$. For
$\mathfrak{m} \in \mathfrak{M}$, we define $f_{\succ \mathfrak{m}} \assign
\sum_{\mathfrak{n} \succ \mathfrak{m}} f_{\mathfrak{n}}
\mathfrak{n}$\label{autolab5} and $f_{\succ} \assign f_{\succ
1}$\label{autolab6}. For $f, g \in \mathbb{S}$, we sometimes write $f + g = f
\pplus g$\label{autolab7} if $\tmop{supp} g \prec f$. We say that a series $g
\in \mathbb{S}$ is a {\tmem{truncation}}{\index{truncation}} of $f$ and we
write $g \trianglelefteqslant f$\label{autolab8} if $\tmop{supp} (f - g) \succ
g$. The relation $\trianglelefteqslant$ is a well-founded partial order on
$\mathbb{S}$ with minimum $0$.

By {\cite{Hahn1907}}, the class $\mathbb{S}$ is field for the pointwise sum
\[ (f + g) \assign \sum_{\mathfrak{m}} (f_{\mathfrak{m}} + g_{\mathfrak{m}})
   \mathfrak{m}, \]
and the Cauchy product
\[ fg \assign \sum_{\mathfrak{m}} \left(
   \sum_{\mathfrak{u}\mathfrak{v}=\mathfrak{m}} f_{\mathfrak{u}}
   g_{\mathfrak{v}} \right) \mathfrak{m}, \]
where each sum $\sum_{\mathfrak{u}\mathfrak{v}=\mathfrak{m}} f_{\mathfrak{u}}
g_{\mathfrak{v}}$ is finite. The class $\mathbb{S}$ is actually an ordered
field, whose positive cone $\mathbb{S}^{>} = \{ f \in \mathbb{S} \suchthat f >
0 \}$ is defined by
\[ \text{$\mathbb{S}^{>} \assign \{ f \in \mathbb{S} \suchthat f \neq 0 \wedge
   f_{\mathfrak{d}_f} > 0 \}$} . \]
The ordered group $(\mathfrak{M}, \times, \prec)$ is naturally embedded into
$(\mathbb{S}^{>}, \times, <)$.

The relations $\prec$ and $\preccurlyeq$ on $\mathfrak{M}$ extend to
$\mathbb{S}$ by\label{autolab9} \label{autolab10}
\begin{eqnarray*}
  f \prec g & \Longleftrightarrow & \mathbb{R}^{>}  | f | < | g |\\
  f \preccurlyeq g & \Longleftrightarrow & \nobracket \exists r \in
  \mathbb{R}^{>} \nobracket, \hspace{1.2em} | f | \leqslant r | g | .
\end{eqnarray*}
We also write $f \asymp g$\label{autolab11} whenever $f \preccurlyeq g$ and $g
\preccurlyeq f$. If $f, g$ are non-zero, then $f \prec g$ ({\tmabbr{resp.}} $f
\preccurlyeq g$, {\tmabbr{resp.}} $f \asymp g$) if and only if $\mathfrak{d}_f
\prec \mathfrak{d}_g$ ({\tmabbr{resp.}} $\mathfrak{d}_f \preccurlyeq
\mathfrak{d}_g$, {\tmabbr{resp.}} $\mathfrak{d}_f =\mathfrak{d}_g$).

We finally define\label{autolab12} \label{autolab13} \label{autolab14}
\begin{eqnarray*}
  \mathbb{S}_{\succ} & \assign & \{ f \in \mathbb{S} \suchthat \tmop{supp} f
  \subseteq \mathfrak{M}^{\succ} \}\\
  \mathbb{S}^{\prec} & \assign & \{ f \in \mathbb{S} \suchthat \tmop{supp} f
  \subseteq \mathfrak{M}^{\prec} \} = \{ f \in \mathbb{S} \suchthat f \prec 1
  \}, \text{\quad and}\\
  \mathbb{S}^{>, \succ} & \assign & \{ f \in \mathbb{S} \suchthat f
  >\mathbb{R} \} = \{ f \in \mathbb{S} \suchthat f \geqslant 0 \wedge f \succ
  1 \} .
\end{eqnarray*}
Series in $\mathbb{S}_{\succ}$, $\mathbb{S}^{\prec}$ and $\mathbb{S}^{>,
\succ}$ are respectively called {\tmem{purely large}}, {\tmem{infinitesimal}},
and {\tmem{positive infinite}}.{\index{purely large
series}}{\index{infinitesimal series}}{\index{positive infinite series}}

\subsection{Well-based families}

If $(f_i)_{i \in I}$ is a family in $\mathbb{S}$, then we say that $(f_i)_{i
\in I}$ is {\tmem{well-based}}{\index{well-based family}} if
\begin{enumerateroman}
  \item $\bigcup_{i \in I} \tmop{supp} f_i$ is well-based, and
  
  \item $\{ i \in I \suchthat \mathfrak{m} \in \tmop{supp} f_i \}$ is finite
  for all $\mathfrak{m} \in \mathfrak{M}$.
\end{enumerateroman}
Then we may define the sum $\sum_{i \in I} f_i$ of $(f_i)_{i \in I}$ as the
series
\[ \sum_{i \in I} f_i \assign \sum_{\mathfrak{m}} \left( \sum_{i \in I}
   (f_i)_{\mathfrak{m}} \right) \mathfrak{m}. \]
If $\mathbb{U}=\mathbb{R} [[\mathfrak{N}]]$ is another field of well-based
series and $\Psi : \mathbb{S} \longrightarrow \mathbb{U}$ is
$\mathbb{R}$-linear, then we say that $\Psi$ is {\tmem{strongly linear}} if
for every well-based family $(f_i)_{i \in I}$ in $\mathbb{S}$, the family
$(\Psi (f_i))_{i \in I}$ in~$\mathbb{U}$ is well-based, with
\[ \Psi \left( \sum_{i \in I} f_i \right) = \sum_{i \in I} \Psi (f_i) . \]

\subsection{Logarithmic hyperseries}

The field $\mathbb{L}$\label{autolab15} of {\tmem{logarithmic
hyperseries}}{\index{logarithmic hyperseries}} plays an important role in the
theory of hyperseries. Let us briefly recall its definition and its most
prominent properties from {\cite{vdH:loghyp}}.

Let $\alpha$ be an ordinal. For each~$\gamma < \alpha$, we introduce the
formal hyperlogarithm {$\ell_{\gamma} \assign L_{\gamma} x$} and define
$\mathfrak{L}_{< \alpha}$\label{autolab16} to be the group of formal power
products {$\mathfrak{l}= \prod_{\gamma < \alpha}
\ell_{\gamma}^{\mathfrak{l}_{\gamma}}$} with $\mathfrak{l}_{\gamma} \in
\mathbb{R}$. This group comes with a monomial ordering~$\succ$ that is defined
by
\begin{eqnarray*}
  \mathfrak{l} \succ 1 & \Longleftrightarrow & \mathfrak{l}_{\min \{ \gamma <
  \alpha \suchthat \mathfrak{l}_{\gamma} \neq 0 \}} > 0.
\end{eqnarray*}
By what precedes, $\mathbb{L}_{< \alpha} \assign \mathbb{R} [[\mathfrak{L}_{<
\alpha}]]$\label{autolab17} is an ordered field of well-based series. If
$\alpha, \beta$ are ordinals with $\beta < \alpha$, then we define
$\mathfrak{L}_{[\beta, \alpha)}$ to be the subgroup of $\mathfrak{L}_{<
\alpha}$ of monomials $\mathfrak{l}$ with $\mathfrak{l}_{\gamma} = 0$ whenever
$\gamma < \beta$. As in {\cite{vdH:loghyp}}, we define
\begin{eqnarray*}
  \mathbb{L}_{[\beta, \alpha)} & \assign & \mathbb{R} [[\mathfrak{L}_{[\beta,
  \alpha)}]]\\
  \mathfrak{L} & \assign & \bigcup_{\alpha \in \mathbf{On}} \mathfrak{L}_{<
  \alpha}\\
  \mathbb{L} & \assign & \mathbb{R} [[\mathfrak{L}]] .
\end{eqnarray*}
We have natural inclusions $\mathfrak{L}_{[\beta, \alpha)} \subseteq
\mathfrak{L}_{< \alpha} \subset \mathfrak{L}$, hence natural inclusions
$\mathbb{L}_{[\beta, \alpha)} \subseteq \mathbb{L}_{< \alpha} \subset
\mathbb{L}$.

The field $\mathbb{L}_{< \alpha}$ is equipped with a derivation $\partial :
\mathbb{L}_{< \alpha} \longrightarrow \mathbb{L}_{< \alpha}$ which satisfies
the Leibniz rule and which is strongly linear: for each $\gamma < \alpha$, we
first define the logarithmic derivative of $\ell_{\gamma}$ by
$\ell_{\gamma}^{\dag} \assign \prod_{\iota \leqslant \gamma} \ell_{\iota}^{-
1} \in \mathfrak{L}_{< \alpha}$. The derivative of a logarithmic hypermonomial
$\mathfrak{l} \in \mathfrak{L}_{< \alpha}$ is next defined by
\begin{eqnarray*}
  \partial \mathfrak{l} & \assign & \left( \sum_{\gamma < \alpha}
  \mathfrak{l}_{\gamma} \ell_{\gamma}^{\dag} \right) \mathfrak{l}.
\end{eqnarray*}
Finally, this definition extends to $\mathbb{L}_{< \alpha}$ by strong
linearity. Note that $\partial \ell_{\gamma} = \frac{1}{\prod_{\iota < \gamma}
\ell_{\iota}}$ for all $\gamma < \alpha$. For $f \in \mathbb{L}_{< \alpha}$
and $k \in \mathbb{N}$, we will sometimes write $f^{(k)} \assign \partial^k
f$.

Assume that $\alpha = \omega^{\nu}$ for a certain ordinal $\nu$. Then the
field $\mathbb{L}_{< \alpha}$ is also equipped with a~composition $\circ :
\mathbb{L}_{< \alpha} \times \mathbb{L}_{< \alpha}^{>, \succ} \longrightarrow
\mathbb{L}_{< \alpha}$ that satisfies:
\begin{itemizedot}
  \item For $g \in \mathbb{L}_{< \alpha}^{>, \succ}$, the map $\mathbb{L}_{<
  \alpha} \longrightarrow \mathbb{L}_{< \alpha} ; f \longmapsto f \circ g$ is
  a strongly linear embedding {\cite[Lemma~6.6]{vdH:loghyp}}.
  
  \item For $f \in \mathbb{L}_{< \alpha}$ and $g, h \in \mathbb{L}_{<
  \alpha}^{>, \succ}$, we have $g \circ h \in \mathbb{L}_{< \alpha}^{>,
  \succ}$ and $f \circ (g \circ h) = (f \circ g) \circ h$
  {\cite[Proposition~7.14]{vdH:loghyp}}.
  
  \item For $g \in \mathbb{L}_{< \alpha}^{>, \succ}$ and successor ordinals
  $\mu < \nu$, we have $\ell_{\omega^{\mu}} \circ \ell_{\omega^{\mu_-}} =
  \ell_{\omega^{\mu}} - 1$ {\cite[Lemma~5.6]{vdH:loghyp}}.
\end{itemizedot}
The same properties hold for the composition $\circ : \mathbb{L} \times
\mathbb{L}^{>, \succ} \longrightarrow \mathbb{L}$, when $\alpha$ is replaced
by $\mathbf{On}$. For $\gamma < \alpha$, the map $\mathbb{L}_{< \alpha}
\longrightarrow \mathbb{L}_{< \alpha} ; f \longmapsto f \circ \ell_{\gamma}$
is injective, with range $\mathbb{L}_{[\gamma, \alpha)}$
{\cite[Lemma~5.11]{vdH:loghyp}}. For $g \in \mathbb{L}_{[\gamma, \alpha)}$, we
define $g^{\mathord{\uparrow} \gamma}$\label{autolab18} to be the unique
series in $\mathbb{L}_{< \alpha}$ with~$g^{\mathord{\uparrow} \gamma} \circ
\ell_{\gamma} = g$.

\section{Surreal numbers as a hyperserial
field}\label{subsection-surreal-numbers}

\subsection{Surreal numbers}

Following {\cite{Gon86}}, we define $\mathbf{No}$ as the class of sequences
\[ a : \alpha \longmapsto \{ - 1, 1 \} \]
of ``signs'' $- 1, + 1$ indexed by arbitrary ordinals $\alpha \in
\mathbf{On}$\label{autolab19}. We will write $\tmop{dom} a \in \mathbf{On}$
for the domain of such a sequence and $a [\beta] \in \{ - 1, 1 \}$ for its
value at $\beta \in \tmop{dom} a$. Given sign sequences $a$ and $b$, we
define\label{autolab20}
\begin{eqnarray*}
  a \sqsubseteq b & \Longleftrightarrow & \tmop{dom} a \subseteq \tmop{dom} b
  \wedge \left( \forall \beta \in \tmop{dom} a, \hspace{1.2em} a [\beta] = b
  [\beta] \right)
\end{eqnarray*}
Conway showed how to define an ordering, an addition, and a multiplication on
$\mathbf{No}$ that give $\mathbf{No}$ the structure of a real closed
field~{\cite{Con76}}. See {\cite[Section~2]{BvdH19}} for more details about
the interaction between $\sqsubseteq$ and the ordered field structure of
$\mathbf{No}$. By {\cite[Theorem~21]{Con76}}, there is a natural isomorphism
between~$\mathbf{No}$ and the ordered field of well-based series $\mathbb{R}
[[\mathbf{Mo}]]$, where $\mathbf{Mo}$ is a certain subgroup
of~$(\mathbf{No}^{>}, \times, <)$. We will identify those two fields and thus
regard $\mathbf{No}$ as a field of well-based series with monomials in
$\mathbf{Mo}$.

The partial order $(\mathbf{No}, \sqsubseteq)$ contains an isomorphic copy of
$(\mathbf{On}, \in)$ obtained by identifying each ordinal $\alpha$ with the
constant sequence $(1)_{\beta < \alpha}$ of length $\alpha$. We will write
$\tmmathbf{\nu} \leqslant \mathbf{On}$ to specify that $\tmmathbf{\nu}$ is
either an ordinal or the class of ordinals. The ordinal $\omega$, seen as
a~surreal number, is the simplest element, or $\sqsubseteq$-minimum, of the
class $\mathbf{No}^{>, \succ}$.

For $\alpha, \beta \in \mathbf{On}$, we write $\alpha \dotplus
\beta$\label{autolab21} and $\alpha \dottimes \beta$\label{autolab22} for the
non-commutative ordinal sum and product of $\alpha$ and $\beta$, as defined by
Cantor. The surreal sum and product $\alpha + \beta$ and $\alpha \beta$
coincide with the commutative Hessenberg sum and product of ordinals. In
general, we therefore have $\alpha + \beta \neq \alpha \dotplus \beta$ and
$\alpha \beta \neq \alpha \dottimes \beta$.

For $\gamma \in \mathbf{On}$, we write $\omega^{\gamma}$\label{autolab23} for
the ordinal exponentiation of base $\omega$ at $\gamma$. Gonshor also defined
an exponential function on $\mathbf{No}$ with range $\mathbf{No}^{>}$. One
should not confuse $\omega^{\gamma}$ with $\exp (\gamma \log \omega)$, which
yields a different number, in general. We define
\[ \omega^{\mathbf{On}} \assign \{ \omega^{\gamma} \suchthat \gamma \in
   \mathbf{On} \}, \]
Recall that every ordinal $\gamma$ has a unique Cantor normal
form{\index{Cantor normal form}}
\[ \gamma = \omega^{\eta_1} n_1 + \cdots + \omega^{\eta_r} n_r, \]
where $r \in \mathbb{N}$, $n_1, \ldots, n_r \in \mathbb{N}^{> 0}$ and $\eta_1,
\ldots, \eta_r \in \mathbf{On}$ with $\eta_1 > \cdots > \eta_r$. The ordinals
$\eta_i$ are called the {\tmem{exponents}}{\index{exponent of an ordinal}} of
$\gamma$ and the integers $n_i$ its {\tmem{coefficients}}{\index{coefficient
of an ordinal}}. We write $\rho \ll \sigma$\label{autolab24} ({\tmabbr{resp.}}
$\rho \lleq \sigma$\label{autolab25}) if each exponent $\eta_i$ of the Cantor
normal form of $\sigma$ satisfies $\rho \prec \omega^{\eta_i}$
({\tmabbr{resp.}} $\rho \preccurlyeq \omega^{\eta_i}$).

If $\gamma, \beta$ are ordinals, then we write $\gamma \prec
\beta$\label{autolab26} if $\gamma \mathbb{N}< \beta$, we write $\gamma
\preccurlyeq \beta$\label{autolab27} if there exists an~$n \in \mathbb{N}$
with $\gamma \leqslant \beta n$, and we write $\gamma \asymp
\beta$\label{autolab28} if both $\gamma \preccurlyeq \beta$ and $\gamma
\preccurlyeq \beta$ hold. The relation $\preccurlyeq$ is a quasi-order
on~$\textbf{$\mathbf{On}$}$. For $\eta, \beta, \gamma \in \mathbf{On}$ with
$\beta \ggeq \omega^{\eta}$ and $\gamma \preccurlyeq \omega^{\eta}$, we have
$\beta + \gamma = \beta \dotplus \gamma$. In particular, we have $\gamma + 1 =
\gamma \dotplus 1$ for all $\gamma \in \mathbf{On}$.

If $\mu \in \mathbf{On}$ is a successor, then we define $\mu_-$ to be the
unique ordinal with $\mu = \mu_- + 1$.\label{autolab29} We also define $\mu_-
\assign \mu$ if $\mu$ is a limit. Similarly, if $\alpha = \omega^{\mu}$, then
we write $\alpha_{/ \omega} \assign \omega^{\mu_-}$\label{autolab30}.

\subsection{Hyperserial structure on $\mathbf{No}$}

We already noted that Gonshor constructed an exponential and a logarithm on
$\mathbf{No}$ and~$\mathbf{No}^{>}$, respectively. We defined hyperexponential
and hyperlogarithmic functions of all strengths on $\mathbf{No}^{>, \succ}$
in~{\cite{vdH:hypno}}. In fact, we showed~{\cite[Theorem~1.1]{vdH:hypno}} how
to construct a~composition law $\circ : \mathbb{L} \times \mathbf{No}^{>,
\succ} \longrightarrow \mathbf{No}$\label{autolab31} with the following
properties:

\begin{descriptioncompact}
  \item[$\textbf{C1}$] \label{c1}For $f \in \mathbb{L}$, $g \in \mathbb{L}^{>,
  \succ}$ and $a \in \mathbf{No}^{>, \succ}$, we have $g \circ a \in
  \mathbf{No}^{>, \succ}$ and
  \[ f \circ (g \circ a) = (f \circ g) \circ a. \]
  \item[$\textbf{C2}$] \label{c2}For $a \in \mathbf{No}^{>, \succ}$, the
  function $\mathbb{L} \longrightarrow \mathbf{No} ; f \longmapsto f \circ a$
  is a strongly linear field morphism.
  
  \item[$\textbf{C3}$] \label{c3}For $f \in \mathbb{L}$, $a \in
  \mathbf{No}^{>, \succ}$ and $\delta \in \mathbf{No}$ with $\delta \prec a$,
  we have
  \[ f \circ (a + \delta) = \sum_{k \in \mathbb{N}} \frac{f^{(k)} \circ a}{k!}
     \delta^k . \]
  \item[$\textbf{C4}$] \label{c4}For $\gamma \in \mathbf{On}$ and $a, b \in
  \mathbf{No}^{>, \succ}$ with $a < b$, we have $\ell_{\gamma} \circ a <
  \ell_{\gamma} \circ b$.
  
  \item[$\textbf{C5}$] \label{c5}For $\gamma \in \mathbf{On}$ and $a \in
  \mathbf{No}^{>, \succ}$, there is $b \in \mathbf{No}^{>, \succ}$ with $a =
  \ell_{\gamma} \circ b$.
\end{descriptioncompact}

Note that the composition law on $\mathbb{L}$ also satisfies \hyperref[c1]{$\mathbf{C1}$} to
\hyperref[c4]{$\mathbf{C4}$} (but not \hyperref[c5]{$\mathbf{C5}$}), with each occurrence of $\mathbf{No}$ being
replaced by $\mathbb{L}$.

\subsection{Hyperlogarithms}

For $\gamma \in \mathbf{On}$, we write $L_{\gamma}$ for the function
$\mathbf{No}^{>, \succ} \longrightarrow \mathbf{No}^{>, \succ} ; a \longmapsto
\ell_{\gamma} \circ a$, called the {\tmem{hyperlogarithm of strength}}
$\gamma${\index{hyperlogarithm}}. By \hyperref[c4]{$\mathbf{C4}$} and \hyperref[c5]{$\mathbf{C5}$}, this is a
strictly increasing bijection. We sometimes write $L_{\gamma} a \assign
L_{\gamma} (a)$ for $a \in \mathbf{No}^{>, \succ}$. We write $E_{\gamma}$ for
the functional inverse of $L_{\gamma}$, called the {\tmem{hyperexponential of
strength}} $\gamma${\index{hyperexponential}}.

For $\gamma, \rho$ with $\rho \lleq \gamma$, the relation $\ell_{\gamma +
\rho} = \ell_{\rho} \circ \ell_{\gamma}$ in $\mathbb{L}$, combined with
\hyperref[c3]{$\mathbf{C3}$}, yields
\begin{equation}
  \nobracket \forall a \in \mathbf{No}^{>, \succ} \nobracket, \hspace{1.2em}
  L_{\gamma + \rho} a = L_{\gamma} L_{\rho} a \label{hyperlog-def},
\end{equation}
For $\eta \in \mathbf{On}$, the relation $\ell_{\omega^{\eta + 1}} \circ
\ell_{\omega^{\eta}} = \ell_{\omega^{\eta + 1}} - 1$ in $\mathbb{L}$, combined
with \hyperref[c3]{$\mathbf{C3}$}, yields
\begin{equation}
  \nobracket \forall a \in \mathbf{No}^{>, \succ} \nobracket, \hspace{1.2em}
  L_{\omega^{\eta + 1}} (L_{\omega^{\eta}} (a)) = L_{\omega^{\eta + 1}} (a) -
  1 \label{functional-equation},
\end{equation}
and we call this relation the {\tmem{functional equation}}{\index{functional
equation}} for $L_{\omega^{\eta + 1}}$.

Let $a \in \mathbf{No}^{>}$ and write $r_a \assign a_{\mathfrak{d}_a}$ for the
coefficient in $\mathfrak{d}_a$ in the Hahn series representation of $a$.
There is a unique infinitesimal number $\varepsilon_a$ with $a = r_a
\mathfrak{d}_a  (1 + \varepsilon_a)$. We write $\log_{\mathbb{R}}$ for the
natural logarithm on $\mathbb{R}^{>} \subset \mathbf{No}$. The function
defined by
\begin{equation}
  \log a \assign L_1 (\mathfrak{d}_a) + \log_{\mathbb{R}} r_a + \sum_{k \in
  \mathbb{N}} \frac{(-1)^k}{k + 1} \varepsilon_a^{k + 1} \label{eq-log}
\end{equation}
is an ordered group embedding
{$(\mathbf{No}^{>}, +,<) \longrightarrow
(\mathbf{No}, +,<)$} called the {\tmem{logarithm}}{\index{logarithm}} on $\mathbf{No}^{>}$. It extends the partial function $L_1$. It also coincides with the logarithm
on $\mathbf{No}^{>}$ that was defined by Gonshor.

\subsection{Atomicity}

Given $\tmmathbf{\mu} \leqslant \mathbf{On}$, we write
$\mathbf{Mo}_{\omega^{\tmmathbf{\mu}}}$ for the class of numbers $a \in
\mathbf{No}^{>, \succ}$ with $L_{\gamma} a \in \mathbf{Mo}^{\succ}$ for all
$\gamma < \omega^{\tmmathbf{\mu}}$. Those numbers are said to be $L_{<
\omega^{\tmmathbf{\mu}}}${\tmem{-atomic}}{\index{$L_{<
\omega^{\tmmathbf{\mu}}}$-atomic number}} and they play an important role in
this paper. Note that $\mathbf{Mo}_1 = \mathbf{Mo}^{\succ}$ and
\[ L_{\omega^{\eta}}  \mathbf{Mo}_{\smash{\omega^{\eta + 1}}} =
   \mathbf{Mo}_{\smash{\omega^{\eta + 1}}} \]
for all $\eta \in \mathbf{On}$, in view of (\ref{hyperlog-def}). There is a
unique $L_{< \mathbf{On}}$-atomic number {\cite[Proposition~6.20]{vdH:hypno}},
which is the simplest positive infinite number $\omega$.

Each hyperlogarithmic function $L_{\omega^{\eta}}$ with $\eta > 0$ is
essentially determined by its restriction to $\mathbf{Mo}_{\omega^{\eta}}$,
through a generalization of~\Cref{eq-log}. More precisely, for $a \in
\mathbf{No}^{>, \succ}$, there exist~$\gamma < \omega^{\eta}$ and
$\mathfrak{a} \in \mathbf{Mo}_{\omega^{\eta}}$ with $\delta \assign L_{\gamma}
(a) - L_{\gamma} (\mathfrak{a}) \prec L_{\gamma} (a)$. Moreover, the family
{$\left( \left( \left( \ell_{\omega^{\eta}}^{\mathord{\uparrow \gamma}}
\right)^{(k)} \circ L_{\gamma} (\mathfrak{a}) \right) \delta^k \right)_{k \in
\mathbb{N}^{>}}$} is well-based, and the hyperlogarithm $L_{\omega^{\eta}}
(a)$ is given by
\begin{equation}
  L_{\omega^{\eta}} (a) = L_{\omega^{\eta}} (\mathfrak{a}) + \sum_{k \in
  \mathbb{N}^{>}} \frac{(\ell_{\omega^{\eta}}^{\uparrow \gamma})^{(k)} \circ
  L_{\gamma} (\mathfrak{a})}{k!} \delta^k . \label{eq-hyperlog-Taylor}
\end{equation}
\subsection{Hyperexponentiation}\label{subsubsection-hyperexponentiation}

\begin{definition}
  {\tmem{{\cite[Definition~6.10]{BvdHK:hyp}}}} We say that $\varphi \in
  \mathbf{No}^{>, \succ}$ is {\tmem{$\textbf{1}${\tmstrong{-truncated}}}} if
  {$\tmop{supp} \varphi \succ 1$}, {\tmabbr{i.e.}} if $\varphi$ is positive
  and purely large. For $0 < \eta \in \mathbf{On}$, we say that $\varphi \in
  \mathbf{No}^{>, \succ}$ is
  {{\tmstrong{{\tmem{$\omega^{\eta}$\mbox{-}truncated}}}}}{\index{$\beta$-truncated
  series}} if
  \[ \nobracket \forall \mathfrak{m} \in \tmop{supp} \varphi_{\prec}
     \nobracket, \nobracket \forall \gamma < \omega^{\eta}
     \nobracket, \varphi < \ell_{\omega^{\eta}}^{\mathord{\uparrow}
     \gamma} \circ \mathfrak{m}^{- 1} . \]
\end{definition}

One sees that when $E_{\omega^{\eta}} (\varphi)$ is defined, the number $\varphi$ is
  $\omega^{\eta}$-truncated if and only if $\tmop{supp} \varphi \succ 1 /
  L_{\gamma} (E_{\omega^{\eta}} (\varphi))$, for all $\gamma < \omega^{\eta}$.

Given $\beta = \omega^{\eta}$ with $\eta \in \mathbf{On}$, we write
$\mathbf{No}_{\succ, \beta}$\label{autolab32} for the class of
$\beta$-truncated numbers. Note that $\mathbf{No}_{\succ, 1} =
\mathbf{No}_{\succ} \cap \mathbf{No}^{>, \succ}$. We will sometimes write
$E_{\beta} (\varphi) \backassign E_{\beta}^{\varphi}$ when $\varphi \in
\mathbf{No}_{\succ, \beta}$. For $a \in \mathbf{No}^{>, \succ}$, there is a
unique $\trianglelefteqslant$-maximal truncation $\sharp_{\beta} (a)$ of $a$
which is $\beta$-truncated. By {\cite[Proposition~7.17]{BvdHK:hyp}}, the
classes
\begin{equation}
  \left\{ b \in a + \mathbf{No}^{\prec} \suchthat b = a \vee \left( \exists
  \gamma < \beta, b < \ell_{\beta}^{\mathord{\uparrow} \gamma} \circ | a - b
  |^{- 1} \right) \right\} \label{eq-L-class}
\end{equation}
with $a \in \mathbf{No}^{>, \succ}$ form a partition of $\mathbf{No}^{>,
\succ}$ into convex subclasses. Moreover, the series $\sharp_{\beta}
(a)$\label{autolab33} is both the unique $\beta$-truncated element and the
$\trianglelefteqslant$-minimum of the convex class containing $a$. We have
\[ \mathbf{No}_{\succ, \beta} = L_{\beta}  \mathbf{Mo}_{\beta} \]
by {\cite[Proposition~7.6]{vdH:hypno}}. This allows us to define a map
$\mathfrak{d}_{\beta} : \mathbf{No}^{\succ, >} \longrightarrow
\mathbf{Mo}_{\beta}$ by {$\mathfrak{d}_{\beta} (b) \assign
E_{\beta}^{\smash{\sharp_{\beta} (L_{\beta} b)}}$}. In other words,
\[ E_{\beta}^{\smash{\sharp_{\beta} (a)}} =\mathfrak{d}_{\beta} (E_{\beta}
   (a)), \]
for all $a \in \mathbf{No}^{>, \succ}$ (see also
{\cite[Corollary~7.23]{BvdHK:hyp}}).

The formulas~\Cref{eq-log} and~\Cref{eq-hyperlog-Taylor} admit
hyperexponential analogues. For all $a \in \mathbf{No}^{>, \succ}$, there is a
$\gamma < \beta$ with $\varepsilon \assign a - \sharp_{\beta} (a) \prec
\frac{\ell_{\beta}'}{\ell_{\gamma}'} \circ E_{\beta}^{\sharp_{\beta} (a)}$.
For any such $\gamma$, there is a family $(t_{\gamma, k})_{k \in \mathbb{N}}
\in \mathbb{L}_{< \beta}^{\mathbb{N}}$ with $t_0 = \ell_{\gamma}$ such that
$((t_{\gamma, k} \circ E_{\beta}^{\sharp_{\beta} (a)}) \varepsilon^k)_{k \in
\mathbb{N}}$ is well-based and
\begin{equation}
  E_{\beta} a = E_{\gamma} \left( \sum_{k \in \mathbb{N}} \frac{t_{\gamma, k}
  \circ E_{\beta}^{\sharp_{\beta} (a)}}{k!} \varepsilon^k \right) .
  \label{eq-hyperexp-general}
\end{equation}
See {\cite[Section~7.1]{BvdHK:hyp}} for more details on $(t_{\gamma, k})_{k
\in \mathbb{N}}$. The number $L_{\gamma} E_{\beta}^{\sharp_{\beta} (a)}$ is a
monomial with $L_{\gamma} E_{\beta}^{\sharp_{\beta} (a)} \succ (t_{\gamma, 1}
\circ E_{\beta}^{\sharp_{\beta} (a)}) \varepsilon \succ (t_{\gamma, 2} \circ
E_{\beta}^{\sharp_{\beta} (a)}) \varepsilon^2 \succ \cdots$, so
\begin{equation}
  L_{\gamma} E_{\beta}^{\sharp_{\beta} (a)} \trianglelefteqslant L_{\gamma}
  E_{\beta} a \label{eq-trianglehere} .
\end{equation}
\section{Surreal
substructures}\label{subsection-surreal-substructures}\label{subsubsection-surreal-substructures}

In {\cite{BvdH19}}, we introduced the notion of {\tmem{surreal
substructure}}{\index{surreal substructure}}. A surreal substructure is a
subclass $\mathbf{S}$ of $\mathbf{No}$ such that $(\mathbf{No}, \leqslant,
\sqsubseteq)$ and $(\mathbf{S}, \leqslant, \sqsubseteq)$ are isomorphic. The
isomorphism {$\mathbf{No} \longrightarrow \mathbf{S}$} is unique and denoted
by~$\Xi_{\mathbf{S}}$. For the study of $\mathbf{No}$ as a hyperserial field,
many important subclasses of $\mathbf{No}$ turn out to be surreal
substructures. In particular, given {$\alpha = \omega^{\nu} \in \mathbf{On}$},
it is known that the following classes are surreal substructures:
\begin{itemizedot}
  \item The classes $\mathbf{No}^{>}$, $\mathbf{No}^{>, \succ}$ and
  $\mathbf{No}^{\prec}$ of positive, positive infinite and infinitesimal
  numbers.
  
  \item The classes $\mathbf{Mo}$ and $\mathbf{Mo}^{\succ}$ of monomials and
  infinite monomials.
  
  \item The classes $\mathbf{No}_{\succ}$ and $\mathbf{No}_{\succ}^{>}$ of
  purely infinite and positive purely infinite numbers.
  
  \item The class $\mathbf{Mo}_{\alpha}$ of $L_{< \alpha}$-atomic numbers.
  
  \item The class $\mathbf{No}_{\succ, \alpha}$ of $\alpha$-truncated numbers.
\end{itemizedot}
We will prove in Section~\ref{section-nested-series} that certain classes of
nested numbers also form surreal substructures.

\subsection{Cuts}

Given a subclass $\mathbf{X}$ of $\mathbf{No}$ and $a \in \mathbf{X}$, we
define
\[ \begin{array}{rclcccc}
     a_L^{\mathbf{X}} \; & \assign & \{ b \in \mathbf{X} \suchthat b < a
     \wedge b \sqsubseteq a \} & \qquad & a_L & \assign & a_L^{\mathbf{No}}\\
     a_R^{\mathbf{X}} \; & \assign & \{ b \in \mathbf{X} \suchthat b > a
     \wedge b \sqsubseteq a \} &  & a_R & \assign & a_R^{\mathbf{No}}\\
     a_{\sqsubset}^{\mathbf{X}} & \assign & a_L^{\mathbf{X}} \cup
     a_R^{\mathbf{X}} &  & a_{\sqsubset} & \assign &
     a_{\sqsubset}^{\mathbf{No}}
   \end{array} \]
If $\mathbf{X}$ is a subclass of $\mathbf{No}$ and $L, R$ are subsets of
$\mathbf{X}$ with $L < S$, then the class
\begin{eqnarray*}
  (L|R)_{\mathbf{X}} & \assign & \{ a \in \mathbf{X} \suchthat (\forall l \in
  L, l < a) \wedge (\forall r \in R, a < r) \}
\end{eqnarray*}
is called a {\tmem{cut}} in $\mathbf{X}$. If $(L|R)_{\mathbf{X}}$ contains a
unique simplest element, then we denote this element by~$\{ L|R
\}_{\mathbf{X}}$ and say that $(L, R)$ is a {\tmem{cut
representation}}{\index{cut representation}} (of $\{ L|R \}_{\mathbf{X}}$) in
$\mathbf{X}$. These notations naturally extend to the case when $\mathbf{L}$
and $\mathbf{R}$ are subclasses of $\mathbf{X}$ with $\mathbf{L}<\mathbf{R}$.

A surreal substructure $\mathbf{S}$ may be characterized as a subclass of
$\mathbf{No}$ such that for all cut representations $(L, R)$ in $\mathbf{S}$,
the cut $(L|R)_{\mathbf{S}}$ has a unique simplest element
{\cite[Proposition~4.7]{BvdH19}}.

Let $\mathbf{S}$ be a surreal substructure. Note that we have $a = \{
a_L^{\mathbf{S}} |a_R^{\mathbf{S}} \}$ for all $a \in \mathbf{S}$. Let $a \in
\mathbf{S}$ and let~{$(L, R)$} be a cut representation of $a$ in $\mathbf{S}$.
Then $(L, R)$ is {\tmem{cofinal with respect to}}{\index{cofinal with respect
to}} $(a_L^{\mathbf{S}}, a_R^{\mathbf{S}})$ in the sense that $L$ has no
strict upper bound in $a_L^{\mathbf{S}}$ and $R$ has no strict lower bound in
$a_R^{\mathbf{S}}$~{\cite[Proposition~4.11(b)]{BvdH19}}.

\subsection{Cut equations}

Let $\mathbf{X} \subseteq \mathbf{No}$ be a subclass, let $\mathbf{T}$ be a
surreal substructure and $F : \mathbf{X} \longrightarrow \mathbf{T}$ be a
function. Let $\lambda, \rho$ be functions defined for cut representations in
$\mathbf{X}$ such that $\lambda (L, R), \rho (L, R)$ are subsets of
$\mathbf{T}$ whenever $(L, R)$ is a cut representation in $\mathbf{X}$. We say
that $(\lambda, \rho)$ is a {\tmem{cut equation}}{\index{cut equation}} for
$F$ if for all $a \in \mathbf{X}$, we have
\[ \lambda (a_L^{\mathbf{X}}, a_R^{\mathbf{X}}) \; < \; \rho
   (a_L^{\mathbf{X}}, a_R^{\mathbf{X}}), \qquad F (a) \; = \; \{ \lambda
   (a_L^{\mathbf{X}}, a_R^{\mathbf{X}}) | \rho (a_L^{\mathbf{X}},
   a_R^{\mathbf{X}}) \}_{\mathbf{T}} . \]
Elements in $\lambda (a_L^{\mathbf{X}}, a_R^{\mathbf{X}})$ (resp. $\rho
(a_L^{\mathbf{X}}, a_R^{\mathbf{X}})$) are called {\tmem{left}}
({\tmabbr{resp.}} {\tmem{right}}) {\tmem{options}}{\index{left option, right
option}} of this cut equation at~$a$. We say that the cut equation is
{\tmem{uniform}}{\index{uniform equation}} if
\[ \lambda (L, R) \; < \; \rho (L, R), \qquad F (\{ L|R \}_{\mathbf{X}}) \; =
   \; \{ \lambda (L, R) | \rho (L, R) \}_{\mathbf{T}} \]
for all cut representations $(L, R)$ in $\mathbf{X}$. For instance, given $r
\in \mathbb{R}$, consider the translation $T_r : \mathbf{No} \longrightarrow
\mathbf{No} ; a \longmapsto a + r$ on $\mathbf{No}$. By
{\cite[Theorem~3.2]{Gon86}}, we have the following uniform cut equation for
$T_r$ on $\mathbf{No}$:
\begin{equation}
  \nobracket \forall a \in \mathbf{No} \nobracket, \quad a + r = \{ a_L + r, a
  + r_L |a + r_R, a_R + r \} . \label{eq-uniform-sum} \text{}
\end{equation}
Let $\nu \in \mathbf{On}$ with $\nu > 0$ and set $\alpha \assign
\omega^{\nu}$. We have the following uniform cut equations for $L_{\alpha}$ on
$\mathbf{Mo}_{\alpha}$ and $E_{\alpha}$ on $\mathbf{No}_{\succ, \alpha}$
{\cite[Section~8.1]{vdH:hypno}}:
\begin{eqnarray}
  \nobracket \forall \mathfrak{a} \in \mathbf{Mo}_{\alpha} \nobracket, \quad
  L_{\alpha} \mathfrak{a} & = & \{ L_{\alpha}
  \mathfrak{a}_L^{\mathbf{Mo}_{\alpha}} |L_{\alpha}
  \mathfrak{a}_R^{\mathbf{Mo}_{\alpha}}, L_{< \alpha} \mathfrak{a}
  \}_{\mathbf{No}_{\succ, \alpha}} \\
  & = & \{ \mathcal{L}_{\alpha} L_{\alpha}
  \mathfrak{a}_L^{\mathbf{Mo}_{\alpha}} |\mathcal{L}_{\alpha} L_{\alpha}
  \mathfrak{a}_R^{\mathbf{Mo}_{\alpha}}, L_{< \alpha} \mathfrak{a} \} . \\
  \nobracket \forall \varphi \in \mathbf{No}_{\succ, \alpha} \nobracket, \quad
  E_{\alpha}^{\varphi} & = & \left\{ E_{X_{\alpha}} \mathfrak{d}_{\alpha}
  (\varphi), E_{\alpha}^{\varphi_L^{\smash{\mathbf{No}_{\succ, \alpha}}}} |
  E_{\alpha}^{\varphi_R^{\smash{\mathbf{No}_{\succ, \alpha}}}}
  \right\}_{\mathbf{Mo}_{\alpha}} \\
  & = & \left\{ E_{< \alpha} \varphi, \mathcal{E}_{\alpha}
  E_{\alpha}^{\varphi^{\mathbf{No}_{\succ, \alpha}}_L} | \mathcal{E}_{\alpha}
  E_{\alpha}^{\varphi^{\mathbf{No}_{\succ, \alpha}}_R} \right\} . 
  \label{eq-rich-hyperexp}
\end{eqnarray}
where
\begin{eqnarray*}
  X_{\alpha} & = & \left\{\begin{array}{ll}
    \{ 0 \} & \text{\quad if $\nu$ is a limit}\\
    \{ \omega^{\mu} n \suchthat n \in \mathbb{N} \} & \text{\quad if $\nu =
    \mu + 1$ is a successor.}
  \end{array}\right.
\end{eqnarray*}

\subsection{Function groups}

A {\tmem{function group}}{\index{function group}} $\mathcal{G}$ on a surreal
substructure $\mathbf{S}$ is a set-sized group of strictly increasing
bijections $\mathbf{S} \longrightarrow \mathbf{S}$ under functional
composition. We see elements $f, g$ of $\mathcal{G}$ as actions
on~$\mathbf{S}$ and sometimes write $fg$ and $f a$ for $a \in \mathbf{S}$
rather than $f \circ g$ and~$f (a)$. We also write $f^{\tmop{inv}}$ for the
functional inverse of $f \in \mathcal{G}$.

Given such a function group $\mathcal{G}$, the collection of
classes\label{autolab34}
\[ \mathcal{G} [a] \assign \{ b \in \mathbf{S} \suchthat \exists f, g \in
   \mathcal{G}, f a \leqslant b \leqslant g a \} \]
with $a \in \mathbf{S}$ forms a partition of $\mathbf{S}$ into convex
subclasses. For subclasses $\mathbf{X} \subseteq \mathbf{S}$, we write
$\mathcal{G} [\mathbf{X}] \assign \bigcup_{a \in \mathbf{X}} \mathcal{G} [a]$.
An element $a \in \mathbf{S}$ is said to be
$\mathcal{G}${\tmem{-simple}}{\index{$\mathcal{G}$-simple element}} if it is
the simplest element inside $\mathcal{G} [a]$. We write
$\mathbf{Smp}_{\mathcal{G}}$\label{autolab35} for the class of
$\mathcal{G}$-simple elements. Given $a \in \mathbf{S}$, we also define
$\pi_{\mathcal{G}} (a)$\label{autolab36} to be the unique $\mathcal{G}$-simple
element of~$\mathcal{G} [a]$. \ The function $\pi_{\mathcal{G}}$ is a
non-decreasing projection of $(\mathbf{S}, \leqslant)$ onto
$(\mathbf{Smp}_{\mathcal{G}}, \leqslant)$. The main purpose of function groups
is to define surreal substructures:

\begin{proposition}
  \label{prop-group-substructure}{\tmem{{\cite[Theorem~6.7 and
  Proposition~6.8]{BvdH19}}}} The class $\mathbf{Smp}_{\mathcal{G}}$ is a
  surreal substructure. We have the uniform cut equation
  \begin{equation}
    \nobracket \forall z \in \mathbf{No} \nobracket, \quad
    \Xi_{\mathbf{Smp}_{\mathcal{G}}} z = \{ \mathcal{G}
    \Xi_{\mathbf{Smp}_{\mathcal{G}}} z_L | \mathcal{G}
    \Xi_{\mathbf{Smp}_{\mathcal{G}}} z_R \}_{\mathbf{S}} .
    \label{eq-uniform-G}
  \end{equation}
\end{proposition}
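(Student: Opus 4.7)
The plan is to apply the cut-characterization of surreal substructures from \cite[Proposition~4.7]{BvdH19}: it suffices to show that every cut representation $(L, R)$ in $\mathbf{Smp}_{\mathcal{G}}$ yields a cut $(L|R)_{\mathbf{Smp}_{\mathcal{G}}}$ with a unique simplest element, and then the uniform cut equation (\ref{eq-uniform-G}) drops out by reading off that element when $(L,R) = (\Xi_{\mathbf{Smp}_{\mathcal{G}}} z_L, \Xi_{\mathbf{Smp}_{\mathcal{G}}} z_R)$.

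First I would fix subsets $L, R \subset \mathbf{Smp}_{\mathcal{G}}$ with $L < R$ and consider the larger cut $(\mathcal{G} L | \mathcal{G} R)_{\mathbf{S}}$ in the ambient surreal substructure $\mathbf{S}$. The structural fact I exploit is that the convex classes $\mathcal{G}[a]$ partition $\mathbf{S}$, and each $\mathcal{G}$-simple element is the unique $\sqsubseteq$-minimum of its orbit. Distinct $\mathcal{G}$-simple elements therefore lie in distinct disjoint convex orbits, so $l < r$ with $l \in L, r \in R$ forces the entire orbit $\mathcal{G}[l]$ to lie below $\mathcal{G}[r]$, giving $\mathcal{G} L < \mathcal{G} R$. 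Since $\mathbf{S}$ is itself a surreal substructure, this cut admits a (unique) simplest element $a \in \mathbf{S}$.

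The crux is then to show $a \in \mathbf{Smp}_{\mathcal{G}}$ by a convexity argument. If $a' \assign \pi_{\mathcal{G}}(a)$ were strictly $\sqsubseteq$-simpler than $a$, then $a'$ would sit in the same convex orbit $\mathcal{G}[a]$ as $a$. Since $\mathcal{G}[a]$ is disjoint from $\mathcal{G}[l]$ for each $l \in L \cup R$ and these orbits are convex, the whole of $\mathcal{G}[a]$ lies strictly above $\mathcal{G} L$ and strictly below $\mathcal{G} R$; thus $a' \in (\mathcal{G} L|\mathcal{G} R)_{\mathbf{S}}$, contradicting the $\sqsubseteq$-minimality of $a$. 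Hence $a = \pi_{\mathcal{G}}(a)$ is $\mathcal{G}$-simple. I then verify $a \in (L|R)_{\mathbf{Smp}_{\mathcal{G}}}$ (immediate from $L \subseteq \mathcal{G} L < a < \mathcal{G} R \supseteq R$, using that $\mathrm{id} \in \mathcal{G}$), and minimality in the smaller cut: given any $b \in (L|R)_{\mathbf{Smp}_{\mathcal{G}}}$, each $l \in L$ is $\mathcal{G}$-simple and lies in an orbit strictly below $\mathcal{G}[b]$, hence $\mathcal{G} L < b$; symmetrically $b < \mathcal{G} R$, placing $b$ in $(\mathcal{G} L|\mathcal{G} R)_{\mathbf{S}}$ and forcing $a \sqsubseteq b$.

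I expect the principal obstacle to be the convexity-plus-minimality step that forces $a$ to be $\mathcal{G}$-simple, since it simultaneously uses the partition of $\mathbf{S}$ into convex orbits, the rigidity of the simplicity relation on $\mathbf{S}$, and the fact that $\mathcal{G}$-orbits do not ``interleave'' with the outer bounds $\mathcal{G} L, \mathcal{G} R$. The remaining checks---that $\Xi_{\mathbf{Smp}_{\mathcal{G}}}$ built by $\sqsubseteq$-recursion from (\ref{eq-uniform-G}) is an order- and simplicity-preserving bijection $\mathbf{No} \longrightarrow \mathbf{Smp}_{\mathcal{G}}$---then follow from the standard characterization of surreal substructures combined with the cofinality property of cut representations recalled after Proposition~4.7.
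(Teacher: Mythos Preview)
Your argument is correct. Note, however, that the paper does not actually prove this proposition: it is quoted from \cite[Theorem~6.7 and Proposition~6.8]{BvdH19} and used as a black box. What you have written is a faithful reconstruction of the argument behind that citation: exploit the convex partition of $\mathbf{S}$ into $\mathcal{G}$-orbits, pass to the larger cut $(\mathcal{G}L \mid \mathcal{G}R)_{\mathbf{S}}$, and use $\sqsubseteq$-minimality to force the resulting element to be $\mathcal{G}$-simple. One small point worth making explicit is why $\mathcal{G}[a]$ is disjoint from each $\mathcal{G}[l]$ with $l \in L \cup R$: if $a \in \mathcal{G}[l]$ for some $l \in L$, then $a \leqslant g l$ for some $g \in \mathcal{G}$, contradicting $\mathcal{G}L < a$; similarly for $R$. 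With that observation your convexity step is airtight. The uniformity of the cut equation also follows directly, since your proof actually establishes $\{L|R\}_{\mathbf{Smp}_{\mathcal{G}}} = \{\mathcal{G}L \mid \mathcal{G}R\}_{\mathbf{S}}$ for \emph{every} cut representation $(L,R)$ in $\mathbf{Smp}_{\mathcal{G}}$, not just the canonical one.
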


Note that for $a, b \in \mathbf{Smp}_{\mathcal{G}}$, we have $a < b$ if and
only if $\mathcal{G}a <\mathcal{G}b$. We have the following criterion to
identify the $\mathcal{G}$-simple elements inside $\mathbf{S}$.

\begin{proposition}
  \label{prop-simplicity-condition}{\tmem{{\cite[Lemma~6.5]{BvdH19}}}} An
  element $a$ of $\mathbf{S}$ is $\mathcal{G}$-simple if and only if there is
  a cut representation $(L, R)$ of $a$ in $\mathbf{S}$ with $\mathcal{G}L < a
  <\mathcal{G}R$. Equivalently, the number $a \in \mathbf{S}$ is
  $\mathcal{G}$-simple if and only if $\mathcal{G}a_L^{\mathbf{S}} < a
  <\mathcal{G}a_R^{\mathbf{S}}$.
\end{proposition}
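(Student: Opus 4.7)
The plan is to prove the proposition in three parts: first reduce the general cut-representation condition to the canonical one via cofinality, then prove the two implications between the canonical condition and $\mathcal{G}$-simplicity.

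First I would observe that $(a_L^{\mathbf{S}}, a_R^{\mathbf{S}})$ is itself a cut representation of $a$ in $\mathbf{S}$, so the condition on canonical cuts trivially entails the existence of \emph{some} cut representation satisfying the inequality. For the converse, suppose $(L, R)$ is a cut representation of $a$ in $\mathbf{S}$ with $\mathcal{G}L < a < \mathcal{G}R$. By the cofinality property recalled just before the subsection on cut equations, $L$ has no strict upper bound in $a_L^{\mathbf{S}}$, so every $b \in a_L^{\mathbf{S}}$ is bounded by some $b' \in L$; since each $g \in \mathcal{G}$ is strictly increasing, $gb \leqslant gb' < a$, giving $\mathcal{G}a_L^{\mathbf{S}} < a$. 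The symmetric argument yields $a < \mathcal{G}a_R^{\mathbf{S}}$, so the two cut conditions are equivalent.

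Next I would prove that if $a$ is $\mathcal{G}$-simple then $\mathcal{G}a_L^{\mathbf{S}} < a < \mathcal{G}a_R^{\mathbf{S}}$, by contrapositive. If there exist $b \in a_L^{\mathbf{S}}$ and $g \in \mathcal{G}$ with $gb \geqslant a$, then applying $g^{\tmop{inv}}$ gives $b \geqslant g^{\tmop{inv}} a$, while $b < a = \tmop{id}\, a$; taking $f = g^{\tmop{inv}}$ and the identity, the definition $\mathcal{G}[a] = \{c \in \mathbf{S} : \exists f, g \in \mathcal{G},\; fa \leqslant c \leqslant ga\}$ places $b$ in $\mathcal{G}[a]$. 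Since $b \in a_L^{\mathbf{S}}$ means $b \sqsubseteq a$ and $b \neq a$, $b$ is strictly simpler than $a$, contradicting the $\mathcal{G}$-simplicity of $a$. The case of $a_R^{\mathbf{S}}$ is symmetric.

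For the reverse direction, assume $\mathcal{G}a_L^{\mathbf{S}} < a < \mathcal{G}a_R^{\mathbf{S}}$ and let $c = \pi_{\mathcal{G}}(a)$, the unique $\mathcal{G}$-simple element of $\mathcal{G}[a]$. Since $c \sqsubseteq a$ (being simplest in a class containing $a$), if $c \neq a$ then $c \in a_L^{\mathbf{S}} \cup a_R^{\mathbf{S}}$; without loss of generality $c \in a_L^{\mathbf{S}}$, so $c < a$. Because $c \in \mathcal{G}[a]$, some $g \in \mathcal{G}$ satisfies $gc \geqslant a$, contradicting $\mathcal{G}a_L^{\mathbf{S}} < a$. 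Hence $c = a$ and $a$ is $\mathcal{G}$-simple.

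The main subtlety — not so much an obstacle as a point requiring care — is bookkeeping between the two directions of the group action: deducing $b \in \mathcal{G}[a]$ from a single inequality $gb \geqslant a$ relies on the fact that $\mathcal{G}$ is a group (so $g^{\tmop{inv}} \in \mathcal{G}$) and that $\tmop{id} \in \mathcal{G}$, both of which are built into the definition of a function group. The cofinality step is the only place where material external to this subsection is invoked, and it is stated explicitly in \cite[Proposition~4.11(b)]{BvdH19}.
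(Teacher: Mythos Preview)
Your proof is correct. The paper itself does not prove this proposition; it merely cites it as \cite[Lemma~6.5]{BvdH19}, so there is no in-paper argument to compare against. Your three-step approach (equivalence of the two cut conditions via cofinality, then both implications between $\mathcal{G}$-simplicity and the canonical cut condition) is the natural one and uses only material available in the surrounding subsections: the cofinality of cut representations, the convexity of $\mathcal{G}[a]$, the group structure of $\mathcal{G}$, and the fact that the $\sqsubseteq$-minimum of a non-empty convex subclass of a surreal substructure lies $\sqsubseteq$-below every element of that class. One minor point: in Part~3 you assert that $c \in \mathcal{G}[a]$ gives some $g$ with $gc \geqslant a$; this is correct because $\mathcal{G}[a] = \mathcal{G}[c]$ (the classes partition $\mathbf{S}$), so $a \in \mathcal{G}[c]$ yields such a $g$ directly. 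You might make that symmetry explicit, but the argument stands as written.
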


Given $X, Y$ be sets of strictly increasing bijections $\mathbf{S}
\longrightarrow \mathbf{S}$, we define\label{autolab37} \label{autolab38}
\begin{eqnarray*}
  X \leqangle Y & \xLeftrightarrow{\tmop{def}} & \nobracket \forall a \in
  \mathbf{S} \nobracket, \nobracket \forall f \in X \nobracket, \nobracket
  \exists g \in Y \nobracket, \hspace{1.2em} f a \leqslant g a\\
  X \legeangle Y & \xLeftrightarrow{\tmop{def}} & X \leqangle Y \infixand Y
  \leqangle X\\
  X \leqslant Y & \xLeftrightarrow{\tmop{def}} & \nobracket \forall a \in
  \mathbf{S} \nobracket, \nobracket \forall f \in X \nobracket, \nobracket
  \forall g \in Y \nobracket, \hspace{1.2em} f a \leqslant g a\\
  X < Y & \xLeftrightarrow{\tmop{def}} & \nobracket \forall a \in \mathbf{S}
  \nobracket, \nobracket \forall f \in X \nobracket, \nobracket \forall g \in
  Y \nobracket, \hspace{1.2em} f a < g a.
\end{eqnarray*}
If $X \leqangle Y$, then we say that $Y$ is {\tmem{pointwise cofinal}} with
respect to $X$. For $f, g \in \mathbf{S}$, we also write $f < Y$ or $X < g$
instead of $\{ f \} < Y$ and $X < \{ g \}$.

Given a function group $\mathcal{G}$ on $\mathbf{S}$, we define a partial
order $<$ on $\mathcal{G}$ by $f < g \Longleftrightarrow \{ f \} < \{ g \}$.
We will frequently rely on the elementary fact that this ordering is
compatible with the group structure in the sense that
\[ \nobracket \forall \nobracket f, g, h \nobracket \in \mathcal{G}
   \nobracket, \quad g > \tmop{id}_{\mathbf{S}} \Longleftrightarrow fgh > fh.
\]
Given a set $X$ of strictly increasing bijections $\mathbf{S} \longrightarrow
\mathbf{S}$, we define $\langle X \rangle$ to be the smallest function group
on $\mathbf{S}$ that is generated by $X$, {\tmabbr{i.e.}} $\langle X \rangle
\assign \{ f_1 \circ \cdots \circ f_n \suchthat n \in \mathbb{N}, f_1, \ldots,
f_n \in X \cup X^{\tmop{inv}} \}$\label{autolab39}.

\subsection{Remarkable function groups}

The examples of surreal substructures from the beginning of this section can
all be obtained as classes $\mathbf{Smp}_{\mathcal{G}}$ of
$\mathcal{G}$-simplest elements for suitable function groups $\mathcal{G}$
that act on $\mathbf{No}$, $\mathbf{No}^{>}$, or $\mathbf{No}^{>, \succ}$, as
we will describe now. Given $c \in \mathbb{R}$ and $r \in \mathbb{R}^{>}$, we
first~define\label{autolab40} \label{autolab41} \label{autolab42}
\[ \begin{array}{rcll}
     T_r & \assign & a \longmapsto a + c & \text{\qquad acting on
     $\mathbf{No}$ or $\mathbf{No}^{>, \succ}$}\\
     H_c & \assign & a \longmapsto ra & \text{\qquad acting on
     $\mathbf{No}^{>}$ or $\mathbf{No}^{>, \succ}$}\\
     P_c & \assign & a \longmapsto a^r & \text{\qquad acting on
     $\mathbf{No}^{>}$ or $\mathbf{No}^{>, \succ}$.}
   \end{array} \]
For $\alpha = \omega^{\nu} \in \mathbf{On}$, we then have the following
function groups\label{autolab43} \label{autolab44} \label{autolab45}
\label{autolab46} \label{autolab47}
\begin{eqnarray*}
  \mathcal{T} & \assign & \{ T_c \suchthat c \in \mathbb{R} \}\\
  \mathcal{H} & \assign & \{ H_r \suchthat r \in \mathbb{R}^{>} \}\\
  \mathcal{P} & \assign & \{ P_r \suchthat r \in \mathbb{R}^{>} \}\\
  \mathcal{E}_{\alpha} & \assign & \langle E_{\gamma} H_r L_{\gamma} :
  \matheuler < \alpha, r \in \mathbb{R}^{>} \rangle\\
  \mathcal{L}_{\alpha} & \assign & L_{\alpha} \mathcal{E}_{\alpha} E_{\alpha}
  .
\end{eqnarray*}
Now the action of $\mathcal{T}$ on $\mathbf{No}$ yields the surreal
substructure $\mathbf{No}_{\succ} \assign \mathbf{Smp}_{\mathcal{T}}$ as class
{of~$\mathcal{T}$\mbox{-}simplest} elements. All examples from the beginning
of this section can be obtained in a similar way:
\begin{itemizedot}
  \item The action of $\mathcal{T}$ on $\mathbf{No}$ (resp. $\mathbf{No}^{>,
  \succ}$) yields $\mathbf{No}_{\succ}$ (resp. $\mathbf{No}_{\succ}^{>}$).
  
  \item The action of $\mathcal{H}$ on $\mathbf{No}^{>}$ (resp.
  $\mathbf{No}^{>, \succ}$) yields $\mathbf{Mo}$ (resp.
  $\mathbf{Mo}^{\succ}$).
  
  \item The action of $\mathcal{P}$ on $\mathbf{No}^{>, \succ}$ yields
  $\mathbf{Mo} \lebar \mathbf{Mo} = E_1^{\mathbf{Mo}^{\succ}}$.
  
  \item The action of $\mathcal{E}_{\alpha}$ on $\mathbf{No}^{>, \succ}$
  yields $\mathbf{Mo}_{\alpha}$.
  
  \item The action of $\mathfrak{L}_{\alpha}$ on $\mathbf{No}^{>, \succ}$
  yields $\mathbf{No}_{\succ, \alpha}$.
\end{itemizedot}
We have
\begin{eqnarray*}
  \pi_{\mathbf{Smp}_{\mathcal{E}_{\alpha}}} & = & \mathfrak{d}_{\alpha}\\
  \pi_{\mathbf{Smp}_{\mathcal{L}_{\alpha}}} & = & \sharp_{\alpha} .
\end{eqnarray*}
Let $E_{< \alpha} = \{ E_{\gamma} \suchthat \gamma < \alpha \}$ and $L_{<
\alpha} = \{ L_{\gamma} \suchthat \gamma < \alpha \}$. We will need a few
inequalities from {\cite{vdH:hypno}}. The first one is immediate by definition
and the fact that $\mathcal{H}< E_{\alpha}$. The others are {\cite[Lemma~6.9,
Lemma~6.11, and {Proposition~6.17}]{vdH:hypno}}, in that order:
\begin{eqnarray}
  \mathcal{E}_{\alpha \omega} & < & E_{\alpha}  \label{ineq1}\\
  E_{< \alpha} & < & E_{\alpha} H_2 L_{\alpha} \hspace*{\fill} \text{} 
  \label{ineq2}\\
  \langle E_{\gamma} \suchthat \gamma < \alpha \rangle & \legeangle &
  \mathcal{E}_{\alpha} \text{\quad if $\nu$ is a limit}  \label{ineq3}\\
  \nobracket \forall \gamma < \rho < \alpha \nobracket, \nobracket \forall r,
  s > 1 \nobracket, \quad E_{\gamma} H_r L_{\gamma} & < & E_{\rho} H_s
  L_{\rho} .  \label{ineq4}
\end{eqnarray}
From (\ref{ineq4}), we also deduce that
\begin{equation}
  \{ E_{\gamma} H_r L_{\gamma} \suchthat \gamma < \alpha, r \in \mathbb{R} \}
  \legeangle \mathcal{E}_{\alpha} . \label{ineq5}
\end{equation}
\section{Well-nestedness}\label{section-well-nestedness}

In this section, we prove Theorem~\ref{th-well-nested}, i.e. that each number
is well-nested. In Section~\ref{subsection-standard} we start with the
definition and study of hyperserial expansions. We pursue with the study of
paths and well-nestedness in Section~\ref{subsection-paths}.

The general idea behind our proof of Theorem~\ref{th-well-nested} is as
follows. Assume for contradiction that there exists a number $a$ that is not
well-nested and choose a simplest (i.e. $\sqsubseteq$\mbox{-}minimal) such
number. By definition, $a$ contains a so-called ``bad path''. For the
ill-nested number $a$ from~(\ref{ill-nested-example}), that would be the
sequence
\[ \mathe^{\sqrt{\log \omega} + \mathe^{\sqrt{\log \log \omega} +
   \mathe^{\udots \ddots} + \log \log \log \omega} + \log \log \omega},
   \mathe^{\sqrt{\log \log \omega} + \mathe^{\udots \ddots} + \log \log \log
   \omega}, \mathe^{\udots \ddots}, \ldots \]
From this sequence, we next construct a ``simpler'' number like
\[ a' \assign \sqrt{\omega} + \mathe^{\sqrt{\log \omega} + \mathe^{\sqrt{\log
   \log \omega} + \mathe^{\udots \ddots} + \log \log \log \omega}} \]
that still contains a bad path
\[ \mathe^{\sqrt{\log \omega} + \mathe^{\sqrt{\log \log \omega} +
   \mathe^{\udots \ddots} + \log \log \log \omega}}, \mathe^{\sqrt{\log \log
   \omega} + \mathe^{\udots \ddots} + \log \log \log \omega}, \mathe^{\udots
   \ddots}, \ldots, \]
thereby contradicting the minimality assumption on $a$. In order to make this
idea work, we first need a series of ``deconstruction lemmas'' that allow us
to affirm that $a'$ is indeed simpler than $a$; these lemmas will be listed in
Section~\ref{subsection-decomposition}. We will also need a generalization
$\lesssim$ of the relation $\nonconverted{blacktrianglelefteqslant}$ that was
used by Berarducci and Mantova to prove the well-nestedness of $\mathbf{No}$
as a field of transseries; this will be the subject of
Section~\ref{subsection-nested-truncation-relations}. We prove
Theorem~\ref{th-well-nested} in Section~\ref{subsection-well-nestedness}.
Unfortunately, the relation $\lesssim$ does not have all the nice properties
of $\nonconverted{blacktrianglelefteqslant}$. For this reason,
Sections~\ref{subsection-nested-truncation-relations}
and~\ref{subsection-well-nestedness} are quite technical.

\subsection{Hyperserial expansions}\label{subsection-standard}

Recall that any number can be written as a well-based series. In order to
represent numbers as hyperseries, it therefore suffices to devise a means to
represent the infinitely large monomials~$\mathfrak{m}$
in~$\mathbf{Mo}^{\succ}$. We do this by taking a hyperlogarithm $L_{\alpha}
\mathfrak{m}$ of the monomial and then recursively applying the same procedure
for the monomials in this new series. This procedure stops when we encounter a
monomial in $L_{\mathbf{On}} \omega$.

Technically speaking, instead of directly applying a hyperlogarithm
$L_{\alpha}$ to the monomial, it turns out to be necessary to first decompose
$\mathfrak{m}$ as a product $\mathfrak{m}= \mathe^{\psi} \mathfrak{n}$ and
write $\mathfrak{n}$ as a hyperexponential (or more generally as the
hyperlogarithm of a hyperexponential). This naturally leads to the
introduction of {\tmem{hyperserial expansions}} of monomials $\mathfrak{m} \in
\mathbf{Mo}^{\neq 1}$, as~we will detail now.

\begin{definition}
  We say that a purely infinite number $\varphi \in \mathbf{No}_{\succ}$ is
  {\tmem{{\tmstrong{tail-atomic}}}}{\index{tail-atomic number}} if $\varphi =
  \psi \pplus \iota \mathfrak{a}$, for certain $\psi \in \mathbf{No}_{\succ}$,
  $\iota \in \{ - 1, 1 \}$, and $\mathfrak{a} \in \mathbf{Mo}_{\omega}$.
\end{definition}

\begin{definition}
  Let $\mathfrak{m} \in \mathbf{Mo}^{\neq 1}$. Assume that there are $\psi \in
  \mathbf{No}_{\succ}$, $\iota \in \{ - 1, 1 \}$, $\alpha \in \{ 0 \} \cup
  \omega^{\mathbf{On}}$, $\beta \in \mathbf{On}$ and $u \in \mathbf{No}$ such
  that
  \begin{equation}
    \mathfrak{m}= \mathe^{\psi}  (L_{\beta} E_{\alpha}^u)^{\iota},
    \label{eq-standard-expansion-gen}
  \end{equation}
  with $\tmop{supp} \psi \succ L_{\beta + 1} E_{\alpha}^u$. Then we say that
  \tmtextup{(\ref{eq-standard-expansion-gen})} is a
  {\tmstrong{{\tmem{hyperserial expansion of type I}}}} if
  \begin{itemize}
    \item $\beta \omega < \alpha$;
    
    \item $E_{\alpha}^u \in \mathbf{Mo}_{\alpha} \setminus L_{< \alpha} 
    \mathbf{Mo}_{\alpha \omega}$;
    
    \item $\alpha = 1 \Longrightarrow \left( \psi = 0 \text{ and $u$ is not
    tail-atomic} \right)$.
  \end{itemize}
  We say that \tmtextup{(\ref{eq-standard-expansion-gen})} is a
  {\tmstrong{{\tmem{hyperserial expansion of type II}}}} if $\alpha = 0$ and
  $u = \omega$, so that $E_{\alpha}^u = \omega$ and
  \begin{equation}
    \mathfrak{m}= \mathe^{\psi}  (L_{\beta} \omega)^{\iota} .
    \label{eq-st-expansion-1}
  \end{equation}
\end{definition}

Hyperserial expansions can be represented formally by tuples~${(\psi, \iota, \alpha, \beta, u)}$. By convention, we also consider
\[ 1 = \mathe^0  (L_0 E_0 0)^0, \]
to be a hyperserial expansion of the monomial $\mathfrak{m}= 1$; this
expansion is represented by the tuple~{$(0, 0, 0, 0, 0)$}.

\begin{example}
  \label{example-st}We will give a hyperserial expansion for the monomial
  \[ \mathfrak{m}= \exp \left( 2 E_{\omega} \omega - \sqrt{\omega} + L_{\omega
     + 1} \omega \right), \]
  and show how it can be expressed as a hyperseries. Note that
  \[ u \assign \log \mathfrak{m}= 2 E_{\omega} \omega - \sqrt{\omega} +
     L_{\omega + 1} \omega \]
  is tail-atomic since $L_{\omega} \omega$ is log-atomic. Now $L_{\omega}
  \omega = L_{\omega} \omega$ is a hyperserial expansion of type II and we
  have $L_{\omega + 1} \omega \prec E_{\omega} \omega, \sqrt{\omega}$. Hence
  $\mathfrak{m}= \mathe^{2 E_{\omega} \omega - \sqrt{\omega}} 
  ({\color[HTML]{000000}L_{\omega} \omega})$ is a hyperserial expansion.
  
  Let $\psi \assign {\color[HTML]{000000}2 E_{\omega} \omega} -
  \sqrt{\omega}$, so $\mathfrak{m}= \mathe^{\psi}  (L_{\omega} \omega)$. We
  may further expand each monomial in $\tmop{supp} \psi$. We clearly have
  $E_{\omega} \omega \in \mathbf{Mo}_{\omega^2}$. We claim that $E_{\omega}
  \omega \in \mathbf{Mo}_{\omega^2} \setminus L_{< \omega^2} 
  \mathbf{Mo}_{\omega^3}$. Indeed, if we could write $E_{\omega} \omega = L_n
  L_{\omega m} \mathfrak{a}$ for some $\mathfrak{a} \in
  \mathbf{Mo}_{\omega^3}$ and $n, m \in \mathbb{N}^{>}$, then $\omega =
  L_{\omega} (L_n L_{\omega m} \mathfrak{a}) = L_{\omega (m + 1)}
  \mathfrak{a}- n$ and $L_{\omega (m + 1)} \mathfrak{a}$ would both be
  monomials, which cannot be. Note that $E_{\omega} \omega = E_{\omega^2}
  (L_{\omega^2} E_{\omega} \omega) = E_{\omega^2}^{L_{\omega^2} \omega + 1}$,
  so {$E_{\omega} \omega = E_{\omega^2}^{L_{\omega^2} \omega + 1}$} is a
  hyperserial expansion of type I. We also have $\sqrt{\omega} = \exp \left(
  \frac{1}{2} \log \omega \right)$ where $\frac{1}{2} \log \omega$ is
  tail~expanded. Thus $\sqrt{\omega} = E_1^{\frac{1}{2} \log \omega}$ is a
  hyperserial expansion. Note finally that $\log \omega = L_1 \omega$ is a
  hyperserial expansion. We thus have the following ``recursive'' expansion of
  $\mathfrak{m}$:
  \begin{equation}
    \mathfrak{m}= \mathe^{2 E_{\omega^{\small{2}}}^{L_{\omega^{\small{2}}}
    \omega + 1} - E_1^{\frac{1}{2} L_1 \omega}}  (L_{\omega} \omega) .
    \label{ex-expanded-monomial}
  \end{equation}
\end{example}

\begin{lemma}
  \label{lem-st-expansion-existence}Any $\mathfrak{m} \in \mathbf{Mo}$ has a
  hyperserial expansion.
\end{lemma}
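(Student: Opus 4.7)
The plan is to build the expansion from the outside in by analysing $\log \mathfrak{m}$ and then recursively the inner monomial appearing in its tail. First I dispose of the easy reductions: $\mathfrak{m} = 1$ is handled by convention, and if $\mathfrak{m} \prec 1$ I pass to $\mathfrak{m}^{-1} \in \mathbf{Mo}^{\succ}$, since any expansion $\mathfrak{m}^{-1} = \mathe^{\psi}(L_{\beta} E_{\alpha}^u)^{\iota}$ yields a valid expansion $\mathfrak{m} = \mathe^{-\psi}(L_{\beta} E_{\alpha}^u)^{-\iota}$ (all the type~I and type~II conditions are preserved under $(\psi, \iota) \mapsto (-\psi, -\iota)$). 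So I may assume $\mathfrak{m} \in \mathbf{Mo}^{\succ}$ and set $\phi \assign \log \mathfrak{m} \in \mathbf{No}_{\succ} \setminus \{ 0 \}$.

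I then split on whether $\phi$ is tail-atomic. If it is not, the type~I expansion with $\alpha = 1$, $\beta = 0$, $u = \phi$, $\psi = 0$, $\iota = 1$ works: $\mathfrak{m} = E_1^{\phi}$, with $\beta \omega = 0 < 1 = \alpha$, $u$ not tail-atomic by hypothesis, $\psi = 0$, and $E_{\alpha}^u = \mathfrak{m} \in \mathbf{Mo}_1 \setminus \mathbf{Mo}_{\omega}$ (because $\mathfrak{m} \in \mathbf{Mo}_{\omega}$ would force $\phi = \log \mathfrak{m} \in \mathbf{Mo}_{\omega}$, and any element of $\mathbf{Mo}_{\omega}$ is trivially tail-atomic). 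Otherwise I write $\phi = \psi \pplus \iota \mathfrak{a}$ with $\mathfrak{a} \in \mathbf{Mo}_{\omega}$ and $\tmop{supp} \psi \succ \mathfrak{a}$, giving $\mathfrak{m} = \mathe^{\psi} \mathfrak{a}^{\iota}$, and it remains to represent the log-atomic monomial $\mathfrak{a}$ either as $L_{\beta} \omega$ (for a type~II expansion) or as $L_{\beta} E_{\alpha}^u$ with $E_{\alpha}^u \in \mathbf{Mo}_{\alpha} \setminus L_{< \alpha} \mathbf{Mo}_{\alpha \omega}$ and $\beta \omega < \alpha$ (for type~I).

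For this last step I examine the class
\[
A \assign \{ \alpha \in \omega^{\mathbf{On}} \suchthat \exists \beta < \alpha,\ \beta \omega < \alpha,\ \exists \mathfrak{a}' \in \mathbf{Mo}_{\alpha},\ \mathfrak{a} = L_{\beta} \mathfrak{a}' \},
\]
which contains $\omega$ by taking $\beta = 0$ and $\mathfrak{a}' = \mathfrak{a}$. If $A$ has a maximum $\alpha_0$, I pick a witnessing pair $(\beta, \mathfrak{a}')$; by maximality of $\alpha_0$ we have $\mathfrak{a}' \in \mathbf{Mo}_{\alpha_0} \setminus L_{< \alpha_0} \mathbf{Mo}_{\alpha_0 \omega}$, and setting $u \assign \sharp_{\alpha_0} (L_{\alpha_0} \mathfrak{a}')$ via the truncation and atomicity apparatus of Section~\ref{subsubsection-hyperexponentiation} rewrites $\mathfrak{a}' = E_{\alpha_0}^u$ and gives the required type~I expansion. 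If instead $A$ is unbounded in $\omega^{\mathbf{On}}$, the witnessing factorisations force the monomials $\mathfrak{a}_{\alpha}$ to be $L_{< \gamma}$-atomic for arbitrarily large $\gamma$; combined with the uniqueness of $\omega$ as the only $L_{< \mathbf{On}}$-atomic number, this collapses to $\mathfrak{a} = L_{\beta} \omega$ for some fixed $\beta$, giving a type~II expansion $\mathfrak{m} = \mathe^{\psi}(L_{\beta} \omega)^{\iota}$. The support requirement $\tmop{supp} \psi \succ \log(L_{\beta} E_{\alpha}^u)$ follows in every case from $\tmop{supp} \psi \succ \mathfrak{a}$ together with the elementary bound $\log(L_{\beta} \mathfrak{a}') \preccurlyeq \mathfrak{a}$.

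The hard part will be the unbounded subcase: rigorously showing that unboundedness of $A$ forces $\mathfrak{a}$ to be a single iterated hyperlogarithm of $\omega$. This will require stabilisation of the witnessing ordinals $\beta_{\alpha}$ (which I expect to follow from the compositional identities $L_{\gamma + \rho} = L_{\rho} L_{\gamma}$ and the fact that $\mathfrak{a}$ is a fixed monomial), together with the collapse $\bigcap_{\alpha \in \omega^{\mathbf{On}}} \mathbf{Mo}_{\alpha} = \{ \omega \}$. A secondary and easier technicality used in the non-tail-atomic branch is the equivalence $\mathfrak{m} \in \mathbf{Mo}_{\omega} \Longleftrightarrow \phi \in \mathbf{Mo}_{\omega}$, which is a routine consequence of the identities $L_n L_1 = L_{n+1}$ and the definition of $\mathbf{Mo}_{\omega}$.
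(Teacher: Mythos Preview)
Your outer reduction---splitting on whether $\log\mathfrak{m}$ is tail-atomic and reducing to the representation of a log-atomic $\mathfrak{a}$---matches the paper exactly. The divergence is in how you treat the log-atomic case: the paper proceeds by $\sqsubseteq$-induction inside $\mathbf{Mo}_{\omega}$, whereas you try to analyse the class $A$ directly.

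The direct approach has a real gap in the ``maximum'' case. You assert that if $\alpha_0 = \max A$ and $(\beta,\mathfrak{a}')$ witnesses $\alpha_0\in A$, then automatically $\mathfrak{a}'\notin L_{<\alpha_0}\mathbf{Mo}_{\alpha_0\omega}$. But suppose $\mathfrak{a}' = L_{\gamma}\mathfrak{b}$ with $\gamma<\alpha_0$ and $\mathfrak{b}\in\mathbf{Mo}_{\alpha_0\omega}$: to contradict maximality you must produce a witness for $\alpha_0\omega\in A$, i.e.\ write $\mathfrak{a}=L_{\beta^*}\mathfrak{b}^*$ with $\mathfrak{b}^*\in\mathbf{Mo}_{\alpha_0\omega}$ and $\beta^*<\alpha_0$. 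The obvious candidate requires collapsing $L_{\beta}\circ L_{\gamma}$ into a single $L_{\delta}$, and this is \emph{not} automatic---for instance $L_{\omega}\circ L_1 = L_{\omega}-1$. What makes the collapse work is that $\gamma$ is forced to have the special form $(\alpha_0)_{/\omega}\,n$, so that $\beta\lleq\gamma$; but establishing that form is precisely the content of \Cref{cor-alpha-alphaomega}, whose proof in the paper \emph{invokes the present lemma}. The shape constraint can be proved directly from the functional equation (the paper does this inline, inside its inductive step), but ``by maximality of $\alpha_0$'' is not a substitute for that argument. A related issue: your dichotomy ``$A$ has a maximum / $A$ is unbounded'' is not obviously exhaustive, since the witnessing $\beta$'s for different $\alpha\in A$ need not cohere, and nothing you say rules out $A$ being bounded with no greatest element.

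The paper's argument avoids both problems. Given log-atomic $\mathfrak{m}\neq\omega$, it takes the maximal $\lambda$ with $\mathfrak{m}\in\mathbf{Mo}_{\lambda}$. If $\mathfrak{m}\notin L_{<\lambda}\mathbf{Mo}_{\lambda\omega}$ one is done with $\alpha=\lambda$, $\beta=0$. Otherwise one picks $\gamma<\lambda$ with $\mathfrak{a}\assign E_{\gamma}\mathfrak{m}\in\mathbf{Mo}_{\lambda\omega}$, shows directly from the functional equation that $\lambda=\omega^{\eta+1}$ and $\gamma=\omega^{\eta}n$, observes $\mathfrak{a}=\mathfrak{d}_{\lambda\omega}(\mathfrak{m})\sqsubset\mathfrak{m}$, and applies the induction hypothesis to $\mathfrak{a}$. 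The resulting expansion $\mathfrak{a}=L_{\beta}E_{\alpha}^u$ (or $L_{\beta}\omega$) then has $\beta\ggeq\omega^{\eta}$, so $\gamma\lleq\beta$ and $L_{\gamma}\circ L_{\beta}=L_{\beta+\gamma}$ legitimately. One inductive peel thus replaces your whole analysis of $A$, including the unbounded subcase you flagged as hard.
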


\begin{proof}
  We first prove the result for $\mathfrak{m} \in \mathbf{Mo}_{\omega}$, by
  induction with respect to the simplicity relation $\sqsubseteq$. The
  $\sqsubseteq$-minimal element of $\mathbf{Mo}_{\omega}$ is $\omega$, which
  satisfies~(\ref{eq-st-expansion-1}) for $\psi = \beta = 0$ and~{$\iota =
  1$}. Consider $\mathfrak{m} \in \mathbf{Mo}_{\omega} \setminus \{ \omega \}$
  such that the result holds on
  $\mathfrak{m}_{\sqsubset}^{\mathbf{Mo}_{\omega}}$. By
  {\cite[Proposition~6.20]{vdH:hypno}}, the monomial $\mathfrak{m}$ is not
  $L_{< \mathbf{On}}$-atomic. So there is a maximal $\lambda \in
  \omega^{\mathbf{On}}$ with $\mathfrak{m} \in \mathbf{Mo}_{\lambda}$, and we
  have $\lambda \geqslant \omega$ by our hypothesis.
  
  If there is no ordinal $\gamma < \lambda$ such that
  $E_{\gamma}^{\mathfrak{m}} \in \mathbf{Mo}_{\lambda \omega}$, then we have
  $\mathfrak{m} \in \mathbf{Mo}_{\lambda} \setminus L_{< \lambda} 
  \mathbf{Mo}_{\lambda \omega}$. So setting $\alpha = \lambda$, $\beta = 0$
  and $u = L_{\lambda} \mathfrak{m}$, we are done. Otherwise, let $\gamma <
  \lambda$ be such that $\mathfrak{a} \assign E_{\gamma}^{\mathfrak{m}} \in
  \mathbf{Mo}_{\lambda \omega}$. We cannot have $\gamma = 0$ by definition of
  $\lambda$. So there is a~unique ordinal $\eta$ and a~unique natural number
  $n \in \mathbb{N}^{>}$ such that $\gamma = \gamma' + \omega^{\eta} n$
  and~{$\gamma' \gg \omega^{\eta}$}. Note that~{$\lambda \geqslant
  \omega^{\eta + 1}$}. We must have $\lambda = \omega^{\eta + 1}$: otherwise,
  $L_{\omega^{\eta + 1}} \mathfrak{m}= L_{\gamma' + \omega^{\eta + 1}}
  (\mathfrak{a}) + n$ where $L_{\omega^{\eta + 1}} \mathfrak{m}$ and
  $L_{\gamma' + \omega^{\eta + 1}} \mathfrak{a}$ are monomials. We deduce that
  $\gamma' = 0$ and $\gamma = \omega^{\eta} n$. Note that {$L_{\lambda}
  \mathfrak{a} \asymp L_{\lambda} \mathfrak{m}$}, $\lambda < \lambda \omega$,
  and $\mathfrak{a} \in \mathbf{Mo}_{\lambda \omega}$, so
  $\mathfrak{a}=\mathfrak{d}_{\lambda \omega} (\mathfrak{m})$. We deduce that
  $\mathfrak{a} \sqsubset \mathfrak{m}$. The induction hypothesis yields a
  hyperserial expansion $\mathfrak{a}= \mathe^{\psi}  (L_{\beta}
  E_{\alpha}^u)^{\iota}$. Since $\mathfrak{a}$ is log-atomic, we must have
  $\psi = 0$ and $\iota = 1$. If $\mathfrak{a}= L_{\beta} \omega$, then $\beta
  \ggeq \lambda_{/ \omega} = \omega^{\eta}$, since $\mathfrak{a} \in
  \mathbf{Mo}_{\lambda \omega}$. Thus $\mathfrak{m}= L_{\gamma} \mathfrak{a}=
  L_{\beta + \gamma} \omega$ is a hyperexponential expansion of type II. If
  $\mathfrak{a}= L_{\beta} E_{\alpha}^u$, then likewise $\beta \ggeq
  \omega^{\eta}$ and thus $\mathfrak{m}= L_{\beta + \gamma} E_{\alpha}^u$ is a
  hyperexponential expansion of type I. This completes the inductive proof.
  
  Now let $\mathfrak{m} \in \mathbf{Mo}^{\neq} \setminus \mathbf{Mo}_{\omega}$
  and set $\varphi \assign \log \mathfrak{m}$. If $\varphi$ is tail-atomic,
  then there are $\psi \in \mathbf{No}_{\succ}$, $\iota \in \{ - 1, 1 \}$ and
  $\mathfrak{a} \in \mathbf{Mo}_{\omega}$ with $\varphi = \psi \pplus \iota
  \mathfrak{a}$. Applying the previous arguments to $\mathfrak{a}$, we obtain
  elements $\alpha \geqslant \omega, \beta, u$ with $\mathfrak{a}= L_{\beta}
  E_{\alpha}^u$ and $\beta \omega < \alpha$, or an ordinal $\beta$ with
  $\mathfrak{a}= L_{\beta} \omega$. Then $\mathfrak{m}= \mathe^{\psi} 
  (L_{\beta} E_{\alpha}^u)^{\iota}$ or $\mathfrak{m}= \mathe^{\psi} 
  (L_{\beta} \omega)^{\iota}$ is a hyperserial expansion. If $\varphi$ is not
  tail-atomic, then we have $\mathfrak{m}= E_1^{\varphi}$ is a hyperserial
  expansion of type I.
\end{proof}

\begin{lemma}
  \label{lem-dominant-atomic-standard-expansion}Consider a hyperserial
  expansion $\mathfrak{a}= L_{\beta} E_{\alpha}^u$. Let $\mu > 0$ and define
  $\mu_- \assign \mu - 1$ if $\mu$ is a successor ordinal and $\mu_- \assign
  \mu$ if $\mu$ is a limit ordinal. Let
  \begin{eqnarray*}
    \beta & \backassign & \beta' + \beta'' \text{\quad where}\\
    \beta' & \assign & \beta_{\succcurlyeq \omega^{\mu_{\small{-}}}} \ggeq
    \omega^{\mu_{\small{-}}} \text{\quad and}\\
    \beta'' & \assign & \beta_{\prec \omega^{\mu_{\small{-}}}} <
    \omega^{\mu_{\small{-}}} .
  \end{eqnarray*}
  \begin{enumeratealpha}
    \item \label{lem-dominant-atomic-standard-expansion-a}Then $\mathfrak{a}$
    is $L_{< \omega^{\mu}}$-atomic if and only if $\beta'' = 0$ and either
    $\alpha \geqslant \omega^{\mu}$ or $\alpha = 0$.
    
    \item \label{lem-dominant-atomic-standard-expansion-b}If $\alpha \geqslant
    \omega^{\mu}$, then $\mathfrak{d}_{\omega^{\mu}} (\mathfrak{a}) =
    L_{\beta'} E_{\alpha}^u$.
  \end{enumeratealpha}
\end{lemma}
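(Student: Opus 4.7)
The plan is to exploit the decomposition $\beta = \beta' + \beta''$ to rewrite $L_\beta$ via the composition rule~(\ref{hyperlog-def}). I would first observe that every Cantor exponent of $\beta'$ is at least $\mu_-$ while $\beta'' < \omega^{\mu_-}$, so $\beta'' \preccurlyeq \omega^{\mu_-} \preccurlyeq \omega^{\eta_i}$ for each such exponent~$\eta_i$, giving $\beta'' \lleq \beta'$. Then (\ref{hyperlog-def}) yields $L_\beta = L_{\beta''} L_{\beta'}$ and hence $\mathfrak{a} = L_{\beta''}(L_{\beta'} E_\alpha^u)$. A parallel verification shows that every $\gamma < \omega^\mu$ satisfies $\gamma \preccurlyeq \omega^{\mu_-}$ (handled separately in the successor and limit cases for $\mu$), so $\gamma \lleq \beta'$. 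These two reductions underlie everything that follows.

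For part~\ref{lem-dominant-atomic-standard-expansion-a}, direction $(\Leftarrow)$ is then straightforward: assuming $\beta'' = 0$ and either $\alpha = 0$ or $\alpha \geqslant \omega^\mu$, for $\gamma < \omega^\mu$ one has $\gamma \lleq \beta = \beta'$ and (\ref{hyperlog-def}) gives $L_\gamma \mathfrak{a} = L_{\beta + \gamma} E_\alpha^u$; this is a monomial in type~II (a hyperlogarithm of~$\omega$) and also in type~I, since additive indecomposability of $\alpha = \omega^\sigma$, with $\beta < \alpha$ from $\beta \omega < \alpha$ and $\gamma < \omega^\mu \leqslant \alpha$, yields $\beta + \gamma < \alpha$ and $E_\alpha^u \in \mathbf{Mo}_\alpha$. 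For $(\Rightarrow)$, I would first dispatch the constraint on $\alpha$: if $0 < \alpha < \omega^\mu$, taking $\gamma := \alpha$ and invoking the type-I constraint $E_\alpha^u \notin L_{<\alpha} \mathbf{Mo}_{\alpha \omega}$ prevents $L_\alpha \mathfrak{a}$ from being a monomial. With this in hand, the case $\beta'' \neq 0$ is handled by letting $\eta_r < \mu_-$ be the smallest Cantor exponent of $\beta$, writing $\beta = \beta''' + \omega^{\eta_r} n_r$ with $\beta''' \ggeq \omega^{\eta_r + 1}$, taking $\gamma := \omega^{\eta_r + 1} < \omega^\mu$, and using (\ref{hyperlog-def}) together with $n_r$ iterations of the functional equation~(\ref{functional-equation}) to reach
\[
  L_\gamma \mathfrak{a} \; = \; L_{\beta''' + \omega^{\eta_r + 1}} E_\alpha^u - n_r.
\]
Since $\omega^{\eta_r + 1} < \omega^\mu \leqslant \alpha$ (or $\alpha = 0$) by the previous step, the first summand is a monomial, so subtracting the positive integer $n_r$ gives a non-monomial, contradicting $L_\gamma \mathfrak{a} \in \mathbf{Mo}^{\succ}$.

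For part~\ref{lem-dominant-atomic-standard-expansion-b}, I would set $\mathfrak{n} := L_{\beta'} E_\alpha^u$ and apply part~\ref{lem-dominant-atomic-standard-expansion-a} to the expansion $L_{\beta'} E_\alpha^u$ (whose $\beta''$-part vanishes tautologically) to obtain $\mathfrak{n} \in \mathbf{Mo}_{\omega^\mu}$. Since $\mathfrak{a} = L_{\beta''}(\mathfrak{n})$ with $\beta'' < \omega^{\mu_-} < \omega^\mu$, the task reduces to showing that $\mathfrak{a}$ lies in the $\mathcal{E}_{\omega^\mu}$-orbit of $\mathfrak{n}$. I would do this by iterating the inequality~(\ref{ineq2}) to sandwich $L_{\beta''}(\mathfrak{n})$ between elements of $\mathcal{E}_{\omega^\mu}$ applied to $\mathfrak{n}$; uniqueness of the $\mathcal{E}_{\omega^\mu}$-simple representative in an orbit then gives $\mathfrak{d}_{\omega^\mu}(\mathfrak{a}) = \pi_{\mathbf{Smp}_{\mathcal{E}_{\omega^\mu}}}(\mathfrak{a}) = \mathfrak{n}$. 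The hardest step will be the case $0 < \alpha < \omega^\mu$ in $(\Rightarrow)$ of part~\ref{lem-dominant-atomic-standard-expansion-a}, since $u$ may itself be a monomial and the non-monomiality of $L_\alpha \mathfrak{a}$ must be extracted from the hyperserial constraint rather than from $u \notin \mathbf{Mo}$; the orbit-sandwich argument in part~\ref{lem-dominant-atomic-standard-expansion-b} is likewise delicate and relies on the pointwise cofinality estimates in (\ref{ineq2})--(\ref{ineq5}).
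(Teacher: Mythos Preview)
Your argument for the $(\Rightarrow)$ direction of part~\ref{lem-dominant-atomic-standard-expansion-a} has a genuine gap, and in fact the order you chose cannot work. You claim that when $0 < \alpha < \omega^\mu$, the constraint $E_\alpha^u \notin L_{<\alpha}\mathbf{Mo}_{\alpha\omega}$ forces $L_\alpha\mathfrak{a}$ not to be a monomial. But the very hypothesis you are contradicting is that $\mathfrak{a} \in \mathbf{Mo}_{\omega^\mu}$, and since $\alpha < \omega^\mu$ this \emph{gives} $L_\alpha\mathfrak{a} \in \mathbf{Mo}^{\succ}$. So no contradiction can arise from looking at $L_\alpha\mathfrak{a}$ alone. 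More structurally: from $\mathfrak{a} \in \mathbf{Mo}_{\omega^\mu} \subseteq \mathbf{Mo}_{\alpha\omega}$ and $E_\alpha^u = E_\beta\mathfrak{a}$ with $\beta < \alpha$, one would like to conclude $E_\alpha^u \in L_{<\alpha}\mathbf{Mo}_{\alpha\omega}$, but $E_\beta$ applied to an $L_{<\alpha\omega}$-atomic element need not be $L_{<\alpha\omega}$-atomic, so this route requires additional work you have not supplied.

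The paper avoids this by reversing your order: it proves $\beta'' = 0$ \emph{first}, and only then handles $\alpha$. Your stated reason for needing $\alpha$ first---to certify that the summand $L_{\beta''' + \omega^{\eta_r+1}}E_\alpha^u$ is a monomial---is unnecessary. In type~II this is $L_\gamma\omega$, always a monomial. In type~I, the expansion already gives $\beta\omega < \alpha$; since $\alpha \in \omega^{\mathbf{On}}$ is additively indecomposable and both $\beta''' < \beta < \alpha$ and $\omega^{\eta_r+1} \leqslant \beta\omega < \alpha$, one gets $\beta''' + \omega^{\eta_r+1} < \alpha$ and hence monomiality from $E_\alpha^u \in \mathbf{Mo}_\alpha$. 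Once $\beta'' = 0$, the $\alpha$-constraint is immediate: if $0 < \alpha < \omega^\mu$ then $\alpha \leqslant \omega^{\mu_-}$ forces $\beta' = 0$ (else $\beta' \geqslant \omega^{\mu_-} \geqslant \alpha > \beta'$), so $\mathfrak{a} = E_\alpha^u \in \mathbf{Mo}_{\omega^\mu} \subseteq \mathbf{Mo}_{\alpha\omega}$, contradicting $E_\alpha^u \notin L_{<\alpha}\mathbf{Mo}_{\alpha\omega}$.

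For part~\ref{lem-dominant-atomic-standard-expansion-b} your strategy is sound, but the sandwich via~(\ref{ineq2}) is the wrong tool: that inequality bounds $E_{<\omega^\mu}$ by $E_{\omega^\mu}H_2L_{\omega^\mu}$, which lies in $\mathcal{E}_{\omega^{\mu+1}}$, not $\mathcal{E}_{\omega^\mu}$. The paper instead applies the functional equation at level $\omega^{\eta+1}$ (for $\eta$ the least exponent of $\beta''$) to obtain $L_{\omega^{\eta+1}}\mathfrak{a} \asymp L_{\omega^{\eta+1}}(L_{\beta'}E_\alpha^u)$, which places $\mathfrak{a}$ and $L_{\beta'}E_\alpha^u$ in the same $E_{\omega^{\eta+1}}\mathcal{H}L_{\omega^{\eta+1}}$-class, hence the same $\mathcal{E}_{\omega^\mu}$-class.
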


\begin{proof}
  We first prove $a$). Assume that $\mathfrak{a}$ is $L_{<
  \omega^{\mu}}$-atomic. Assume for contradiction that $\beta'' \neq 0$ and
  let $\omega^{\eta} m$ denote the least non-zero term in the Cantor normal
  form of $\beta''$. Since $\beta'' < \omega^{\mu_{\small{-}}}$, we have
  $\omega^{\eta + 1} < \omega^{\mu}$ so $L_{\omega^{\eta + 1}} \mathfrak{a}$
  is a monomial. But $L_{\omega^{\eta + 1}} \mathfrak{a}= L_{\beta''_{\succ
  \omega^{\eta}}} E_{\alpha}^u - m$ where $L_{\beta''_{\succ \omega^{\eta}}}
  E_{\alpha}^u$ is a monomial: a contradiction. So $\beta'' = 0$. If $\alpha =
  0$ then we are done. Otherwise $E_{\alpha}^u \nin \mathbf{Mo}_{\alpha
  \omega}$, so we must have $\alpha \omega > \omega^{\mu}$, whence $\alpha
  \geqslant \omega^{\mu}$. Conversely, assume that $\alpha \geqslant
  \omega^{\mu}$ or $\alpha = 0$, and that $\beta'' = 0$. If $\alpha \neq 0$,
  then then for all $\gamma < \omega^{\mu}$, we have $L_{\gamma} \mathfrak{a}=
  L_{\beta' + \gamma} E_{\alpha}^u$ where $\beta' + \gamma < \alpha$, so
  $L_{\gamma} \mathfrak{a}$ is a monomial, whence $\mathfrak{a} \in
  \mathbf{Mo}_{\omega^{\mu}}$. If $\alpha = 0$, then for all $\gamma <
  \omega^{\mu}$, we have $L_{\gamma} \mathfrak{a}= L_{\beta' + \gamma} \omega
  \in \mathbf{Mo}$, whence $\mathfrak{a} \in \mathbf{Mo}_{\omega^{\mu}}$. This
  proves $a$).
  
  Now assume that $\alpha \geqslant \omega^{\mu}$. So $L_{\beta'}
  E_{\alpha}^u$ is $L_{< \omega^{\mu}}$-atomic by $a$). If $\beta'' = 0$ then
  we conclude that $\mathfrak{a}= L_{\beta'} E_{\alpha}^u
  =\mathfrak{d}_{\omega^{\mu}} (\mathfrak{a})$. If $\beta'' \neq 0$, then let
  $\omega^{\eta} m$ denote the least non-zero term in its Cantor normal form.
  We have $\omega^{\eta + 1} < \omega^{\mu}$ and $L_{\omega^{\eta + 1}}
  \mathfrak{a}= L_{\omega^{\eta + 1}} L_{\beta'} E_{\alpha}^u - m \asymp
  L_{\omega^{\eta + 1}} L_{\beta'} E_{\alpha}^u$, so $L_{\beta'} E_{\alpha}^u
  =\mathfrak{d}_{\omega^{\mu}} (\mathfrak{a})$.
\end{proof}

\begin{corollary}
  \label{cor-alpha-alphaomega}Let $\mu \in \mathbf{On}^{>}$, $\alpha \assign
  \omega^{\mu}$, $\gamma < \alpha$, and $\mathfrak{b} \in \mathbf{Mo}_{\alpha
  \omega}$. If $L_{\gamma} \mathfrak{b} \in \mathbf{Mo}_{\alpha} \setminus
  \mathbf{Mo}_{\alpha \omega}$, then $\mu$ is a successor ordinal and $\gamma
  = \alpha_{/ \omega} n$ for some $n \in \mathbb{N}^{>}$.
\end{corollary}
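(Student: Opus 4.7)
The plan is to take a hyperserial expansion of $\mathfrak{b}$, compose with $L_{\gamma}$ to obtain a hyperserial expansion of $\mathfrak{a} \assign L_{\gamma}\mathfrak{b}$, and then read off constraints on $\gamma$ by comparing what Lemma~\ref{lem-dominant-atomic-standard-expansion}(a) says at the two strengths $\mu$ and $\mu+1$. By Lemma~\ref{lem-st-expansion-existence}, fix a hyperserial expansion of the log-atomic monomial $\mathfrak{b}$; log-atomicity forces $\psi=0$ and $\iota=1$, so the expansion takes the form $\mathfrak{b} = L_{\beta} E_{\alpha'}^{u}$ with $\alpha' \in \{0\} \cup \omega^{\mathbf{On}}$. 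Applying Lemma~\ref{lem-dominant-atomic-standard-expansion}(a) at strength $\mu+1$, from $\mathfrak{b} \in \mathbf{Mo}_{\alpha \omega} = \mathbf{Mo}_{\omega^{\mu+1}}$ we obtain $\beta_{\prec \omega^{\mu}} = 0$ (every Cantor-normal-form exponent of $\beta$ is $\geq \mu$) and either $\alpha' = 0$ or $\alpha' \geq \omega^{\mu+1}$.

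Since $\gamma < \omega^{\mu}$ and every exponent of $\beta$ is $\geq \mu$, we have $\gamma \lleq \beta$, so the identity $\ell_{\beta+\gamma} = \ell_{\gamma} \circ \ell_{\beta}$ yields $\mathfrak{a} = L_{\gamma} L_{\beta} E_{\alpha'}^{u} = L_{\beta+\gamma} E_{\alpha'}^{u}$. The leading Cantor-normal-form term of $\beta+\gamma$ coincides with that of $\beta$ (the exponents coming from $\gamma$ are all strictly smaller), so $(\beta+\gamma)\omega = \beta\omega < \alpha'$ in the type~I case while the type~II conditions $\alpha' = 0$, $u = \omega$ are preserved; hence $L_{\beta+\gamma} E_{\alpha'}^{u}$ is a legitimate hyperserial expansion of $\mathfrak{a}$. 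Applying Lemma~\ref{lem-dominant-atomic-standard-expansion}(a) to it at strength $\mu$, from $\mathfrak{a} \in \mathbf{Mo}_{\alpha}$ we obtain $(\beta+\gamma)_{\prec \omega^{\mu_-}} = 0$; and applying its contrapositive at strength $\mu+1$, combined with the constraint on $\alpha'$ established above, we obtain $(\beta+\gamma)_{\prec \omega^{\mu}} \neq 0$ from $\mathfrak{a} \notin \mathbf{Mo}_{\alpha \omega}$.

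If $\mu$ were a limit, then $\mu_- = \mu$ and the two conditions just derived would directly contradict, so $\mu$ must be a successor with $\mu_- = \mu - 1$. Since $\beta$ contributes no Cantor-normal-form term of exponent $< \mu$ to $\beta+\gamma$, every such term comes from $\gamma$; the conditions then force every exponent of $\gamma$ to lie in $[\mu-1,\mu)$, hence to equal $\mu-1$ (using $\gamma < \omega^{\mu}$). Since Cantor-normal-form exponents are strictly decreasing, $\gamma = \omega^{\mu-1} n = \alpha_{/\omega} n$ for some $n \in \mathbb{N}^{>}$, as required. The main point requiring care is the direction of the composition formula $L_{\gamma} \circ L_{\beta} = L_{\beta+\gamma}$ and the verification that the composed expression remains a legitimate hyperserial expansion so that Lemma~\ref{lem-dominant-atomic-standard-expansion} applies to $\mathfrak{a}$.
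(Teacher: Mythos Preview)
Your proof is correct and follows essentially the same route as the paper's: take a hyperserial expansion of $\mathfrak{b}$, use log-atomicity to reduce it to the form $L_{\beta}E_{\alpha'}^{u}$, apply Lemma~\ref{lem-dominant-atomic-standard-expansion}(a) at strength $\mu+1$ to get $\beta \ggeq \omega^{\mu}$, compose to get the expansion $L_{\beta+\gamma}E_{\alpha'}^{u}$ of $\mathfrak{a}$, and then apply Lemma~\ref{lem-dominant-atomic-standard-expansion}(a) at strength $\mu$ (and its contrapositive at $\mu+1$) to pin down $\gamma$. The paper derives $\gamma \neq 0$ directly at the outset from $\mathfrak{b} \in \mathbf{Mo}_{\alpha\omega}$ but $L_{\gamma}\mathfrak{b} \notin \mathbf{Mo}_{\alpha\omega}$; you extract it instead from the contrapositive step, which is equivalent.

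One small slip: your claim that $(\beta+\gamma)\omega = \beta\omega$ is false in the surreal (Hessenberg) arithmetic used here---for instance $\beta=\omega$, $\gamma=1$ gives $(\omega+1)\omega = \omega^{2}+\omega \neq \omega^{2}$. The conclusion $(\beta+\gamma)\omega < \alpha'$ you need is nevertheless correct: since $\alpha' \in \omega^{\mathbf{On}}$ is additively indecomposable, and $\beta\omega < \alpha'$ together with $\gamma\omega < \omega^{\mu+1} \leqslant \alpha'$, we get $(\beta+\gamma)\omega = \beta\omega + \gamma\omega < \alpha'$. With this fix, your verification that $L_{\beta+\gamma}E_{\alpha'}^{u}$ remains a legitimate hyperserial expansion goes through, and the rest of the argument is sound.
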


\begin{proof}
  Since $L_{\gamma} \mathfrak{b} \in \mathbf{Mo}_{\alpha} \setminus
  \mathbf{Mo}_{\alpha \omega}$, we must have $\gamma \neq 0$. By
  Lemma~\ref{lem-st-expansion-existence}, we have a hyperserial expansion
  $\mathfrak{b}= \mathe^{\psi}  (L_{\beta} E_{\rho}^u)^{\iota}$. Since
  $\mathfrak{b}$ is log-atomic, we have $\log \mathfrak{b}= \psi \pplus \iota
  L_{\beta + 1} E_{\alpha}^u \in \mathbf{Mo}$, whence $\psi = 0$ and $\iota =
  1$. So $\mathfrak{b}= L_{\beta} E_{\rho}^u$. We have $\mathfrak{b} \in
  \mathbf{Mo}_{\alpha \omega}$ so by
  \Cref{lem-dominant-atomic-standard-expansion}(\ref{lem-dominant-atomic-standard-expansion-a}),
  we have {$\beta \ggeq \alpha$}. It follows that $L_{\gamma} \mathfrak{b}=
  L_{\beta + \gamma} E_{\rho}^u$ is a hyperserial expansion. But then
  {$L_{\beta + \gamma} E_{\rho}^u \in \mathbf{Mo}_{\alpha}$} and
  \Cref{lem-dominant-atomic-standard-expansion}({\tmem{\ref{lem-dominant-atomic-standard-expansion-a}}})
  imply that $\gamma \ggeq \omega^{\mu_-}$. The condition that $\gamma <
  \alpha$ now gives $\mu_- < \mu$, whence $\mu$ is a successor and $\gamma =
  \omega^{\mu_-} n$ for a certain $n \in \mathbb{N}^{>}$, as claimed.
\end{proof}

\begin{lemma}
  \label{lem-st-expansion-unicity}Any $\mathfrak{m} \in \mathbf{Mo}$ has a
  unique hyperserial expansion (that we will call \tmtextup{the} hyperserial
  expansion, henceforth).
\end{lemma}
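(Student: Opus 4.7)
The plan is to establish the uniqueness of the hyperserial expansion in two stages: first, uniqueness of the outer tail factor $(\psi,\iota,\mathfrak{a})$ where $\mathfrak{a}\in\{L_{\beta}\omega,\ L_{\beta}E_{\alpha}^u\}$ is the log-atomic-like factor, and second, uniqueness of the inner parametrization $(\alpha,\beta,u)$ of $\mathfrak{a}$.

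For the first stage, I take logarithms to get $\log\mathfrak{m}=\psi\pplus\iota\log\mathfrak{a}$. In type~II or type~I with $\alpha\geq\omega$, the constraint $\beta\omega<\alpha$ forces $\beta+n<\alpha$ for every $n\in\mathbb{N}$, so $\log\mathfrak{a}=L_{\beta+1}E_{\alpha}^u$ (or $L_{\beta+1}\omega$) lies in $\mathbf{Mo}_{\omega}$; call such expansions \emph{regular}. The condition $\tmop{supp}\psi\succ\log\mathfrak{a}$ then exhibits $\log\mathfrak{a}$ as the unique $\prec$-minimum of $\tmop{supp}(\log\mathfrak{m})$. The degenerate case $\alpha=1$ (forcing $\psi=0$, $\iota=1$, $\mathfrak{a}=\mathfrak{m}=e^u$ with $u$ not tail-atomic) is ruled out as coexisting with any regular expansion, since a regular expansion of $\mathfrak{m}=e^u$ would present $u=\log\mathfrak{m}$ as tail-atomic. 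Hence either all expansions are regular---in which case $\mathfrak{a}$ is determined by the $\log$-preimage of the $\prec$-minimum of $\tmop{supp}(\log\mathfrak{m})$ (using injectivity of $\log$ on $\mathbf{Mo}^{>}$), and $\iota,\psi$ follow by reading off coefficients---or all are degenerate with $u=\log\mathfrak{m}$.

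For the second stage, I distinguish type~II from type~I using Proposition~6.20 of~\cite{vdH:hypno}: $\mathfrak{a}$ admits a type~II expansion iff $\mathfrak{a}$ is $L_{<\mathbf{On}}$-atomic. A coexisting type~I expansion would make $E_{\alpha}^u$ also $L_{<\mathbf{On}}$-atomic, hence in $\mathbf{Mo}_{\alpha\omega}$, yielding $E_{\alpha}^u=L_0(E_{\alpha}^u)\in L_{<\alpha}\mathbf{Mo}_{\alpha\omega}$ (since $0<\alpha$), contradicting the type~I condition. Uniqueness within type~II follows from strict monotonicity of $\beta\mapsto L_{\beta}\omega$. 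For two type~I expansions sharing the same $\alpha$, \Cref{lem-dominant-atomic-standard-expansion}(b) with $\omega^{\mu}=\alpha$ combined with $\beta_i\omega<\alpha$ (which makes $\beta_i'=0$) yields $\mathfrak{d}_{\alpha}(\mathfrak{a})=E_{\alpha}^{u_i}$; injectivity of $E_{\alpha}$ then gives $u_1=u_2$, after which $\beta_1=\beta_2$ follows from injectivity of $\beta\mapsto L_{\beta}E_{\alpha}^{u_1}$.

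The main obstacle is two type~I expansions with $\alpha_1<\alpha_2$, so $\alpha_1\omega\leq\alpha_2$ and $E_{\alpha_2}^{u_2}\in\mathbf{Mo}_{\alpha_2}\subseteq\mathbf{Mo}_{\alpha_1\omega}$. The target is to derive $E_{\alpha_1}^{u_1}\in L_{<\alpha_1}\mathbf{Mo}_{\alpha_1\omega}$, contradicting the type~I condition on the first expansion. Applying $E_{\beta_1}$ to $L_{\beta_1}E_{\alpha_1}^{u_1}=L_{\beta_2}E_{\alpha_2}^{u_2}$ yields either $E_{\alpha_1}^{u_1}=L_{\beta_2-\beta_1}E_{\alpha_2}^{u_2}$ (when $\beta_1\leq\beta_2$) or $E_{\alpha_1}^{u_1}=E_{\alpha_2+(\beta_1-\beta_2)}(u_2)$ (when $\beta_1>\beta_2$). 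In the first sub-case I split $\beta_2-\beta_1=\gamma+\zeta$ in Cantor normal form with $\zeta<\alpha_1$ and every Cantor exponent of $\gamma$ at least $\mu_1$ (where $\alpha_1=\omega^{\mu_1}$); the closure of $\alpha_2$ under ordinal addition of smaller ordinals yields $L_{\gamma}E_{\alpha_2}^{u_2}\in\mathbf{Mo}_{\alpha_1\omega}$, so $E_{\alpha_1}^{u_1}=L_{\zeta}(L_{\gamma}E_{\alpha_2}^{u_2})\in L_{<\alpha_1}\mathbf{Mo}_{\alpha_1\omega}$. The second sub-case is easier, since $E_{\xi}(u_2)\in\mathbf{Mo}_{\alpha_1\omega}$ for any $\xi\geq\alpha_1\omega$, whence $E_{\alpha_1}^{u_1}\in\mathbf{Mo}_{\alpha_1\omega}\subseteq L_{<\alpha_1}\mathbf{Mo}_{\alpha_1\omega}$. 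The delicate technical ingredient throughout is the closure of powers of $\omega$ under ordinal addition of strictly smaller ordinals, which keeps the algebraic manipulations within the correct atomicity classes.
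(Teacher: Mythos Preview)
Your overall two–stage architecture (recover $(\psi,\iota,\mathfrak{a})$ from $\log\mathfrak m$, then recover $(\alpha,\beta,u)$ from $\mathfrak a$) is sound and matches the paper's, and your treatment of the $\alpha=1$ case as well as the ``same $\alpha$'' case via $\mathfrak d_{\alpha}(\mathfrak a)=E_{\alpha}^{u_i}$ is correct. There are, however, two genuine gaps.

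\textbf{Type~I versus type~II.} Your criterion ``$\mathfrak a$ admits a type~II expansion iff $\mathfrak a$ is $L_{<\mathbf{On}}$-atomic'' is false: for instance $\mathfrak a=L_1\omega$ is of type~II, yet $L_{\omega}(L_1\omega)=L_{\omega}\omega-1$ is not a monomial. The implication you actually use (type~II $\Rightarrow$ $L_{<\mathbf{On}}$-atomic) fails, so the chain ``hence $E_{\alpha}^u$ is $L_{<\mathbf{On}}$-atomic'' never gets off the ground. The paper handles this by folding the type~II case into the type~I argument: it treats $\omega$ as an element of $\mathbf{Mo}_{\alpha}$ for $\alpha:=\max(\alpha'\omega,\beta\omega^2)$ and then runs the same contradiction as for two type~I expansions with distinct $\alpha$.

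\textbf{The cancellation $E_{\beta_1}L_{\beta_2}=L_{\beta_2-\beta_1}$.} This identity does not hold for general $\beta_1\leq\beta_2$, because hyperlogarithms compose according to the Cantor normal form (smaller exponents composed on the outside), not via left ordinal subtraction. Concretely, with $\beta_1=1$ and $\beta_2=\omega$ (which is compatible with $\beta_1\omega<\alpha_1=\omega^2$, $\beta_2\omega<\alpha_2=\omega^3$), one has $E_1L_{\omega}(x)=\exp(L_{\omega}x)$, which is not $L_{\gamma}(x)$ for any ordinal $\gamma$; the functional equation gives $L_{\omega}(L_1 x)=L_{\omega}x-1$, not $L_{\omega}x$. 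The only cancellations that are legitimate are those removing a \emph{common suffix} of $\beta$ and $\beta'$ in the Cantor normal form. This is precisely what the paper does: it applies hyperexponentials only until either $\beta'=0$ or the least Cantor exponents of $\beta$ and $\beta'$ differ, and then invokes \Cref{lem-dominant-atomic-standard-expansion}(a) to read off contradictory atomicity levels $\mathbf{Mo}_{\omega^{\eta+1}}\setminus\mathbf{Mo}_{\omega^{\eta+2}}$ on the two sides. Your split $\beta_2-\beta_1=\gamma+\zeta$ and the subsequent claim $L_{\gamma}E_{\alpha_2}^{u_2}\in\mathbf{Mo}_{\alpha_1\omega}$ cannot be reached because the equality $E_{\alpha_1}^{u_1}=L_{\beta_2-\beta_1}E_{\alpha_2}^{u_2}$ was never established. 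The second sub-case $\beta_1>\beta_2$ has the same defect, and in addition the conclusion $E_{\xi}(u_2)\in\mathbf{Mo}_{\alpha_1\omega}$ would require $u_2$ to be $\xi$-truncated, which is not given.
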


\begin{proof}
  Consider a monomial $\mathfrak{m} \neq 1$ with
  \[ \mathfrak{m}= \mathe^{\psi}  (L_{\beta} \mathfrak{a})^{\iota}, \]
  where $\psi \in \mathbf{No}_{\succ}$, $\iota \in \{ - 1, 1 \}$, $\beta,
  \alpha \in \omega^{\mathbf{On}}$, $\mathfrak{a} \in \mathbf{Mo}_{\alpha}$,
  $\beta \omega < \alpha$, and $\tmop{supp} \psi \succ L_{\beta + 1}
  \mathfrak{a}$. Assume for contradiction that we can write $\mathfrak{m}=
  \mathe^{\psi'}  (L_{\beta'} E_{\alpha'} u')^{\iota'}$ as a hyperserial
  expansion of type I with~$\alpha' < \alpha$. Note in particular that $\alpha
  > 1$, so $L_{\beta + 1} \mathfrak{a}$ is log-atomic. We have
  \[ \log \mathfrak{m}= \psi \pplus \iota L_{\beta + 1} \mathfrak{a}= \psi'
     \pplus \iota' L_{\beta' + 1} E_{\alpha'}^{u'} . \]
  If $\alpha' = 1$, then $\beta' = 0$, $\psi' = 0$, $\iota' = 1$, and $u'$ is
  not tail-atomic. But $\psi \pplus \iota L_{\beta + 1} \mathfrak{a}= u'$,
  where $L_{\beta + 1} \mathfrak{a} \in \mathbf{Mo}_{\omega}$, so $u'$ is
  tail-atomic: a contradiction. Hence $\alpha' > 1$. Note that $\iota L_{\beta
  + 1} \mathfrak{a}$ and $\iota' L_{\beta' + 1} E_{\alpha'}^{u'}$ are both the
  least term of $\log \mathfrak{m}$. It follows that $\psi = \psi'$, $\iota =
  \iota'$, and
  \begin{equation}
    L_{\beta} \mathfrak{a}= L_{\beta'} E_{\alpha'}^{u'} .
    \label{eq-a-beta-prime-beta}
  \end{equation}
  Since $\beta' \omega < \alpha'$, we have
  \[ E_{\alpha'}^{u'} =\mathfrak{d}_{\alpha'} (L_{\beta'} E_{\alpha'}^{u'})
     =\mathfrak{d}_{\alpha'} (L_{\beta} \mathfrak{a}) . \]
  Now $E_{\alpha'}^{u'} \nin \mathbf{Mo}_{\alpha' \omega}$, so
  $\mathfrak{d}_{\alpha'} (L_{\beta} \mathfrak{a}) \neq \mathfrak{a}$ and thus
  $\beta \omega \geqslant \alpha'$. In particular $\beta > \beta'$. Taking
  hyperexponentials on both sides of \Cref{eq-a-beta-prime-beta}, we may
  assume without loss of generality that~{$\beta' = 0$} or that the least
  exponents $\eta$ and $\eta'$ in the Cantor normal forms of $\beta$
  {\tmabbr{resp.}} $\beta'$ differ. If $\beta' = 0$, then we decompose $b =
  \gamma \dotplus \omega^{\eta} n$ where $n \in \mathbb{N}^{>}$ and $\gamma
  \gg \omega^{\eta}$. Since $L_{\beta} \mathfrak{a}= E_{\alpha'}^{u'} \in
  \mathbf{Mo}_{\alpha'} \setminus \mathbf{Mo}_{\alpha' \omega}$, applying
  \Cref{lem-dominant-atomic-standard-expansion}({\tmem{\ref{lem-dominant-atomic-standard-expansion-a}}})
  twice (for $\omega^{\mu} = \alpha'$ and $\omega^{\mu} = \alpha' \omega$)
  gives $\omega^{\eta + 1} \geqslant \alpha'$ and {$\omega^{\eta + 1}
  \ngeqslant \alpha' \omega$}, whence $\alpha' = \omega^{\eta + 1}$. But then
  $E_{\alpha'}^{u'} = L_{\omega^{\eta} n} \mathfrak{b}$, where $\mathfrak{b}
  \assign L_{\gamma} \mathfrak{a} \in \mathbf{Mo}_{\alpha'} \omega$ by
  \Cref{lem-dominant-atomic-standard-expansion}({\tmem{\ref{lem-dominant-atomic-standard-expansion-a}}}).
  So $E_{\alpha'}^{u'} \in L_{< \alpha'}  \mathbf{Mo}_{\alpha' \omega}$: a
  contradiction. Assume now that $\beta' \neq 0$.
  \Cref{lem-dominant-atomic-standard-expansion}({\tmem{\ref{lem-dominant-atomic-standard-expansion-a}}})
  yields both $L_{\beta} \mathfrak{a} \in {\mathbf{Mo}_{\omega^{\eta + 1}}
  \setminus \mathbf{Mo}_{\omega^{\eta + 2}}}$ and $L_{\beta'} E_{\alpha'}^{u'}
  \in \mathbf{Mo}_{\omega^{\eta' + 1}} \setminus \mathbf{Mo}_{\omega^{\eta' +
  2}}$, which contradicts \Cref{eq-a-beta-prime-beta}.
  
  Taking $\mathfrak{a}= \omega \in \mathbf{No}$ and $\alpha \assign \max
  (\alpha' \omega, \beta \omega^2)$, this proves that no two hyperserial
  expansions of distinct types I and II can be equal. Taking $\mathfrak{a}=
  E_{\alpha}^u$ with $\alpha > \alpha'$, this proves that no two hyperserial
  expansions $\mathe^{\psi}  (L_{\beta} E_{\alpha}^u)^{\iota}, \mathe^{\psi'} 
  (L_{\beta'} E_{\alpha'}^{u'})^{\iota'}$ of type I with $\alpha \neq \alpha'$
  can be equal.
  
  The two remaining cases are hyperserial expansions of type II and
  hyperserial expansions $\mathe^{\psi}  (L_{\beta} E_{\alpha}^u)^{\iota}$ and
  $\mathe^{\psi'}  (L_{\beta'} E_{\alpha'}^{u'})^{\iota'}$ of type I with
  $\alpha = \alpha'$. Consider a monomial $\mathfrak{m} \in
  \mathbf{Mo}^{\neq}$ with the hyperserial expansions $\mathfrak{m}=
  \mathe^{\psi}  (L_{\gamma} \omega)^{\iota} = \mathe^{\psi'}  (L_{\gamma'}
  \omega)^{\iota'}$ of type II. As above we have $\psi = \psi'$, $\iota =
  \iota'$, and $L_{\gamma} \omega = L_{\gamma'} \omega$. We deduce that
  $\gamma = \gamma'$, so the expansions coincide.
  
  Finally, consider a monomial $\mathfrak{m} \neq 1$ with two hyperserial
  expansions of type I

  \begin{equation}
    \mathfrak{m}= \mathe^{\psi}  (L_{\beta} E_{\alpha}^u)^{\iota} =
    \mathe^{\psi'}  (L_{\beta'} E_{\alpha}^{u'})^{\iota'} .
    \label{eq-conf-uni}
  \end{equation}
  If $\alpha = 1$, then we have $\psi = \psi' = 0$ and $\beta = \beta' = 0$
  and $\iota = \iota' = 1$, whence $u = u'$, so we are done.
  
  Assume now that $\alpha > 1$. Taking logarithms in (\ref{eq-conf-uni}), we
  see that $\psi = \psi'$, $\iota = \iota'$, and
  \begin{equation}
    L_{\beta} E_{\alpha}^u = L_{\beta'} E_{\alpha}^{u'} .
    \label{eq-beta-prime-beta}
  \end{equation}
  We may assume without loss of generality that~$\beta \geqslant \beta'$.
  Assume for contradiction that $\beta > \beta'$. Taking hyperexponentials on
  both sides of \Cref{eq-beta-prime-beta}, we may assume without loss of
  generality that~{$\beta' = 0$} or that the least exponents $\eta$ and
  $\eta'$ in the Cantor normal forms of~$\beta$~{\tmabbr{resp.}}~$\beta'$
  differ. On the one hand,
  \Cref{lem-dominant-atomic-standard-expansion}({\tmem{\ref{lem-dominant-atomic-standard-expansion-a}}})
  yields $L_{\beta} E_{\alpha}^u \in \mathbf{Mo}_{\omega^{\eta + 1}} \setminus
  \mathbf{Mo}_{\omega^{\eta + 2}}$. Note in particular that $L_{\beta}
  E_{\alpha}^u \nin \mathbf{Mo}_{\alpha}$, since $\beta \omega < \alpha$. On
  the other hand, if $\beta \neq 0$, then
  \Cref{lem-dominant-atomic-standard-expansion}({\tmem{\ref{lem-dominant-atomic-standard-expansion-a}}})
  yields $L_{\beta'} E_{\alpha}^{u'} \in \mathbf{Mo}_{\omega^{\eta'}}
  \setminus \mathbf{Mo}_{\omega^{\eta' + 1}}$; if $\beta' = 0$, then
  $L_{\beta'} E_{\alpha}^{u'} \in \mathbf{Mo}_{\alpha}$. Thus
  \Cref{eq-beta-prime-beta} is absurd: a contradiction. We conclude that
  $\beta = \beta'$. Finally $E_{\alpha}^u = E_{\alpha}^{u'}$ yields $u = u'$,
  so the expansions are identical.
\end{proof}

\begin{lemma}
  \label{lem-st-expansion-no-sign}If $\mathfrak{m}= \mathe^{\psi}  (L_{\beta}
  E_{\alpha}^u)^{\iota}$ is a hyperserial expansion of type I, then we have
  \[ \tmop{supp} \psi \cap \tmop{supp} u = \varnothing . \]
\end{lemma}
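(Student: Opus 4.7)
The plan is to split on whether $\alpha = 1$ or $\alpha > 1$. If $\alpha = 1$, the definition of a type I hyperserial expansion forces $\psi = 0$, so $\operatorname{supp} \psi = \varnothing$ and the conclusion is immediate.

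So assume $\alpha > 1$, i.e.\ $\alpha = \omega^{\nu}$ for some $\nu \geqslant 1$. The convention attached to the notation $E_{\alpha}^{u}$ requires $u \in \mathbf{No}_{\succ, \alpha}$; for such $\alpha$ this class is contained in $\mathbf{No}^{>, \succ}$, so $u$ is positive infinite. Consequently $u \asymp \mathfrak{d}_{u}$, and every monomial in $\operatorname{supp} u$ is $\preccurlyeq \mathfrak{d}_{u}$. On the other side, the condition $\beta \omega < \alpha$ gives $\beta + 1 < \alpha$ (trivially if $\beta = 0$, and via $\beta + 1 \leqslant \beta \omega < \alpha$ otherwise), so $L_{\beta+1} E_{\alpha}^{u}$ is a well-defined infinite monomial in $\mathbf{Mo}^{\succ}$.

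The key comparison to establish is $u \prec L_{\beta+1} E_{\alpha}^{u}$. Since $E_{\alpha}^{u} \in \mathbf{Mo}_{\alpha}$, both $L_{\beta+1} E_{\alpha}^{u}$ and $L_{\alpha} E_{\alpha}^{u} = u$ are positive infinite; the point is that the hyperlogarithms satisfy the monotonicity principle $L_{\delta} a \prec L_{\gamma} a$ whenever $\gamma < \delta$ and $L_\gamma a, L_\delta a$ lie in $\mathbf{No}^{>, \succ}$. This principle follows from the basic fact $\log x \prec x$ on $\mathbf{No}^{>, \succ}$ together with the composition identity $L_{\gamma + \rho} = L_{\rho} \circ L_{\gamma}$ from \Cref{hyperlog-def}, applied after an appropriate Cantor-normal-form decomposition of $\alpha$ relative to $\beta+1$ (the atomicity of $E_{\alpha}^{u}$ ensures each intermediate $L$-iterate lands in $\mathbf{No}^{>, \succ}$, so the inequality $\log x \prec x$ can be iterated). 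Taking $\gamma = \beta + 1$ and $\delta = \alpha$ and $a = E_{\alpha}^{u}$ yields $u \prec L_{\beta+1} E_{\alpha}^{u}$.

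Combining, $\mathfrak{d}_{u} \asymp u \prec L_{\beta+1} E_{\alpha}^{u}$, so every element of $\operatorname{supp} u$ is $\prec L_{\beta+1} E_{\alpha}^{u}$. Since the definition of hyperserial expansion of type I directly gives $\operatorname{supp} \psi \succ L_{\beta+1} E_{\alpha}^{u}$, the two supports are separated by this monomial and are therefore disjoint. The only non-routine step is the monotonicity $L_{\delta} a \prec L_{\gamma} a$; this is the main obstacle, though it is standard in the hyperserial calculus and is already implicit in the machinery of~\cite{vdH:hypno} recalled in \Cref{subsection-surreal-numbers}.
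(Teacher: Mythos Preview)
Your proof is correct and follows essentially the same idea as the paper's: both hinge on the separation $\operatorname{supp} u \prec L_{\beta+1} E_{\alpha}^{u} \prec \operatorname{supp} \psi$. The paper argues by contradiction and leaves the step ``$u \geqslant r\mathfrak{n}$ so $L_{\beta+1} E_{\alpha}^{u} \succ \mathfrak{n}$'' implicit, whereas you spell out the underlying monotonicity $L_{\alpha} a \prec L_{\beta+1} a$ explicitly; the content is the same.
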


\begin{proof}
  Assume for contradiction that $\mathfrak{n} \in \tmop{supp} \psi \cap
  \tmop{supp} u$. In particular $\mathfrak{n} \succ L_{\beta + 1}
  E_{\alpha}^u$. Since $u > 0$, there is $r \in \mathbb{R}^{>}$ with $u
  \geqslant r\mathfrak{n}$, so $L_{\beta + 1} E_{\alpha}^u \succ
  \mathfrak{n}$: a contradiction.
\end{proof}

\subsection{Paths and subpaths}\label{subsection-paths}

Let $\lambda$ be an ordinal with $0 < \lambda \leqslant \omega$ and note that
$i < 1 \dotplus \lambda \Longleftrightarrow (i \leqslant \lambda < \omega \vee
i < \omega = \lambda)$ for all~$i \in \mathbb{N}$. Consider a
sequence\label{autolab48} \label{autolab49} \label{autolab50}
\[ P = (P (i))_{i < \lambda} = (\tau_{P, i})_{i < \lambda} = (r_{P, i}
   \mathfrak{m}_{P, i})_{i < \lambda} \text{\quad in\quad$(\mathbb{R}^{\neq} 
   \mathbf{Mo})^{\lambda}$} . \]
We say that $P$ is a {\tmem{path}}{\index{path}} if there exist sequences
$(u_{P, i})_{i < 1 \dotplus \lambda}$, $(\psi_{P, i})_{i < 1 \dotplus
\lambda}$, $(\iota_{P, i})_{i < \lambda}$, $(\alpha_{P, i})_{i < \lambda}$,
and $(\beta_{P, i})_{i < 1 \dotplus \lambda}$ with
\begin{itemizedot}
  \item $u_{P, 0} = \tau_{P, 0}$ and $\psi_{P, 0} = 0$;
  
  \item $\tau_{P, i} \in \tmop{term} \psi_{P, i}$ or $\tau_{P, i} \in
  \tmop{term} u_{P, i}$ for all $i < \lambda$;
  
  \item $\tau_{P, i} \in \mathbb{R}^{\neq} \cup \{ \omega \} \Longrightarrow
  \lambda = i + 1$ for all $i < \lambda$;
  
  \item For $i < \lambda$, the hyperserial expansion of $\mathfrak{m}_{P, i}$
  is
  \begin{eqnarray*}
    \mathfrak{m}_{P, i} & = & \mathe^{\psi_{P, i + 1}}  (L_{\beta_{P, i}}
    E_{\alpha_{P, i}}^{u_{P, i + 1}})^{\iota_{P, i}} .
  \end{eqnarray*}
\end{itemizedot}
We call $\lambda$ the {\tmem{length}} of $P$ and we write $| P | \assign
\lambda$\label{autolab51}. We say that $P$ is {\tmem{infinite}} if $| P | =
\omega$ and {\tmem{finite}} otherwise. We set $a_{P, 0} \assign a$. For $0 < i
< | P |$, we define
\begin{eqnarray*}
  (s_{P, i}, a_{P, i}) & \assign & \left\{\begin{array}{ll}
    (- 1, \psi_{P, i}) & \text{{\hspace*{\fill}}if $\mathfrak{m}_{P, i} \in
    \tmop{supp} \psi_{P, i}$}\\
    (1, u_{P, i}) & \text{{\hspace*{\fill}}if $\mathfrak{m}_{P, i} \in
    \tmop{supp} u_{P, i}$} .
  \end{array}\right.
\end{eqnarray*}
By Lemma~\ref{lem-st-expansion-no-sign}, those cases are mutually exclusive so
$(s_{P, i}, a_{P, i})$ is well-defined. For~{$a \in \mathbf{No}$}, we say that
$P$ is a path in $a$ if $P (0) \in \tmop{term} a$.

For $k \leqslant | P |$, we let $P_{\nearrow k}$ denote the path of length $|
P | - k$ in $a_{P, k}$ with
\[ \forall i < | P | - k, \tau_{P_{\nearrow k}, i} \assign \tau_{P, k + i} .
\]
\begin{example}
  Let us find all the paths in the monomial $\mathfrak{m}$ of
  Example~\ref{example-st}. We have a representation
  (\ref{ex-expanded-monomial}) of $\mathfrak{m}$ as a hyperseries
  \[ \mathfrak{m}= \mathe^{2 E_{\omega^2}^{L_{\omega^{\small{2}}} \omega + 1}
     - E_1^{\frac{1}{2} L_1 \omega}}  (L_{\omega} \omega) \]
  which by Lemma~\ref{lem-st-expansion-unicity} is unique. There are nine
  paths in $\mathfrak{m}$, namely
  \begin{itemizedot}
    \item 1 path $(\mathfrak{m})$ of length $1$;
    
    \item 3 paths $\left( \mathfrak{m}, 2
    E_{\omega^2}^{L_{\omega^{\small{2}}} \omega + 1} \right)$, $\left(
    \mathfrak{m}, - E_1^{\frac{1}{2} L_1 \omega} \right)$, and $(\mathfrak{m},
    \omega)$ of length $2$;
    
    \item 3 paths $\left( \mathfrak{m}, 2
    E_{\omega^2}^{L_{\omega^{\small{2}}} \omega + 1}, L_{\omega^2} \omega
    \right)$, $\left( \mathfrak{m}, 2 E_{\omega^2}^{L_{\omega^{\small{2}}}
    \omega + 1}, 1 \right)$, $\left( \! \mathfrak{m}, - E_1^{\frac{1}{2}
    L_1 \omega}, \frac{1}{2} L_1 \omega \! \right)$ of length~$3$;
    
    \item 2 paths $\left( \mathfrak{m}, 2
    E_{\omega^2}^{L_{\omega^{\small{2}}} \omega + 1}, L_{\omega^2} \omega,
    \omega \right)$ and $\left( \mathfrak{m}, - E_1^{\frac{1}{2} L_1 \omega},
    \frac{1}{2} L_1 \omega, \omega \right)$ of length $4$.
  \end{itemizedot}
  Note that the paths which cannot be extended into strictly longer paths are
  those whose last value is a real number or $\omega$.
\end{example}

Infinite paths occur in so-called nested numbers that will be studied in more
detail in~Section~\ref{section-nested-series}.

\begin{definition}
  \label{def-good-path}Let $a \in \mathbf{No}$ and let $P$ be a path in a. We
  say that an index $i < | P |$ is {\tmem{{\tmstrong{bad}}}}{\index{bad
  index}} for~$(P, a)$ if one of the following conditions is satisfied
  \begin{enumeratenumeric}
    \item \label{def-good-path-1}$\mathfrak{m}_{P, i}$ is not the
    $\preccurlyeq$-minimum of $\tmop{supp} u_{P, i}$;
    
    \item \label{def-good-path-2}$\mathfrak{m}_{P, i} = \min \tmop{supp} u_{P,
    i}$ and $\beta_{P, i} \neq 0$;
    
    \item \label{def-good-path-3}$\mathfrak{m}_{P, i} = \min \tmop{supp} u_{P,
    i}$ and $\beta_{P, i} = 0$ and $r_{P, i} \nin \{ - 1, 1 \}$;
    
    \item \label{def-good-path-4}$\mathfrak{m}_{P, i} = \min \tmop{supp} u_{P,
    i}$ and $\beta_{P, i} = 0$ and $r_{P, i} \in \{ - 1, 1 \}$ and
    $\mathfrak{m}_{P, i} \in \tmop{supp} \psi_{P, i}$.
  \end{enumeratenumeric}
  The index $i$ is {\tmem{{\tmstrong{good}}}}{\index{good index}} for $(P, a)$
  if it is not bad for $(P, a)$.
  
  If $P$ is infinite, then we say that it is
  {\tmstrong{{\tmem{good}}}}{\index{good path, bad path}} if~$(P, \tau_{P,
  0})$ is good for all but a finite number of indices. In the opposite case,
  we say that $P$ is a {\tmem{{\tmstrong{bad}}}} path. An element $a \in
  \mathbf{No}$ is said to be
  {\tmem{{\tmstrong{well-nested}}}}{\index{well-nested number}} every path in
  $a$ is good.
\end{definition}

\begin{remark}
  The above definition extends the former definitions of paths
  in~{\cite{vdH:phd,Schm01,BM18}}. More precisely, a path $P$ with with
  $\alpha_{P, i} = 1$ (whence $\psi_{P, i} = 0$) for all $i < | P |$,
  corresponds to a path for these former definitions. The validity of the
  axiom {\tmstrong{T4}} for $\mathbf{No}$ means that those paths are good.
  With Theorem~\ref{th-well-nested}, we will extend this result to all paths.
\end{remark}

\begin{lemma}
  \label{lem-no-path-in-hell}For $\mathfrak{m} \in (L_{\mathbf{On}}
  \omega)^{\pm 1}$ and for any path $P$ in $\mathfrak{m}$, we have $| P |
  \leqslant 2$. For $a \in \mathbb{L} \circ \omega$ and for any path $P$ in
  $a$, we have $| P | \leqslant 3$.
\end{lemma}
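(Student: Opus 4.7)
The plan is to observe that monomials in $L_{\mathbf{On}} \omega$ and, more generally, products of hyperlog powers of $\omega$ have very simple hyperserial expansions, from which the bound on path length follows by unfolding the definition of a path. The only nontrivial point is a case analysis on whether $\log \mathfrak{m}$ is tail-atomic.

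For the first claim, I would exploit the fact that $\mathfrak{m} = (L_\gamma \omega)^\iota$, with $\iota \in \{-1,1\}$ and $\gamma \in \mathbf{On}$, has by Lemma~\ref{lem-st-expansion-unicity} the type II hyperserial expansion $\mathfrak{m} = \mathe^0 (L_\gamma \omega)^\iota$, so that $\psi_{P,1} = 0$ and $u_{P,1} = \omega$. I would conclude that $\tmop{term} \psi_{P,1} \cup \tmop{term} u_{P,1} = \{\omega\}$, forcing $\tau_{P,1} = \omega$ in any extension of $P$ past index $0$, which by the path axiom $\tau_{P,i} \in \mathbb{R}^{\neq} \cup \{\omega\} \Longrightarrow \lambda = i+1$ terminates $P$ at length at most $2$ (and at length $1$ in the degenerate case $\mathfrak{m} = \omega$).

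For the second claim, I would first observe that a monomial occurring in $a \in \mathbb{L} \circ \omega$ has the form $\mathfrak{m} = \prod_{\gamma \in S} (L_\gamma \omega)^{b_\gamma}$ with $S \subseteq \mathbf{On}$ well-ordered and $b_\gamma \in \mathbb{R}^{\neq}$, and then compute its hyperserial expansion by cases. If $S = \varnothing$ then $\tau_{P,0} \in \mathbb{R}^{\neq}$ and $|P| = 1$. If $S$ has a maximum $\gamma^*$ with $b_{\gamma^*} \in \{-1, 1\}$, then $\log \mathfrak{m} = \sum_\gamma b_\gamma L_{\gamma+1} \omega$ is tail-atomic and the expansion is type II with $\psi_{P,1} = \sum_{\gamma < \gamma^*} b_\gamma L_{\gamma+1} \omega$ and $u_{P,1} = \omega$. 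In all remaining cases $\log \mathfrak{m}$ is not tail-atomic and the expansion is type I with $\alpha = 1$, $\psi_{P,1} = 0$, and $u_{P,1} = \log \mathfrak{m}$. In every case, every element of $\tmop{term} \psi_{P,1} \cup \tmop{term} u_{P,1}$ is either $\omega$ or of the form $s L_\delta \omega$ with $s \in \mathbb{R}^{\neq}$ and $\delta \geqslant 1$.

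Assuming $|P| \geqslant 2$, the conclusion follows: if $\tau_{P,1} = \omega$ then $|P| = 2$; otherwise $\mathfrak{m}_{P,1} = L_\delta \omega$ with $\delta \geqslant 1$, whose hyperserial expansion is $\mathe^0 L_\delta \omega$ exactly as in the first claim, so $\psi_{P,2} = 0$ and $u_{P,2} = \omega$, which forces $\tau_{P,2} = \omega$ and $|P| = 3$ whenever $|P| \geqslant 3$. The main technical obstacle will be the case distinction for the type of the hyperserial expansion of $\mathfrak{m}$, which hinges on whether the smallest exponent $b_{\gamma^*}$ (when it exists) equals $\pm 1$; the rest is a mechanical trace through the definitions of hyperserial expansion and of a path.
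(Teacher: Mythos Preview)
Your proposal is correct and follows essentially the same approach as the paper: both identify the type~II hyperserial expansion of $(L_\gamma\omega)^{\pm 1}$ to get $u_{P,1}=\omega$ for the first claim, then split the general monomial $\mathfrak{l}\circ\omega$ on whether $\log\mathfrak{m}$ is tail-atomic and reduce to the first claim. Your case analysis (in terms of whether $S$ has a maximum and whether $b_{\gamma^*}\in\{-1,1\}$) is a more explicit version of the paper's one-line observation that ``$P_{\nearrow 1}$ is a path in some monomial in $L_{\mathbf{On}}\omega$''. One small slip: in the non-tail-atomic type~I case you should have $u_{P,1}=\iota_{P,0}\log\mathfrak{m}$ (with $\iota_{P,0}$ the sign of $\log\mathfrak{m}$) since $u_{P,1}$ must lie in $\mathbf{No}_{\succ,1}\subseteq\mathbf{No}^{>,\succ}$, but this does not affect the shape of the terms and hence the argument goes through unchanged.
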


\begin{proof}
  Let $\mathfrak{l} \in \mathfrak{L} \setminus \{ - 1 \}$ and let $P$ be
  a~path in $\mathfrak{l} \circ \omega$. If there is an ordinal $\gamma$ with
  $\mathfrak{l}= \ell_{\gamma}$, then the hyperserial expansion of
  $\mathfrak{l} \circ \omega$ is $L_{\gamma} \omega$, so $| P | = 1$ if
  $\gamma = 0$ and $| P | = 2$ otherwise. If there is an ordinal $\gamma$ with
  $\mathfrak{l}= \ell_{\gamma}^{- 1}$, then the hyperserial expansion of
  $\mathfrak{l} \circ \omega$ is $(L_{\gamma} \omega)^{- 1}$ and $| P | = 2$.
  
  Assume now that $\mathfrak{l} \nin \ell_{\mathbf{On}}^{\pm 1}$. If $\log
  \mathfrak{l} \circ \omega$ is not tail-atomic, then hyperserial expansion of
  $\mathfrak{l} \circ \omega$ is $\mathfrak{l} \circ \omega = \mathe^{\log
  \mathfrak{l} \circ \omega}$. If $\log \mathfrak{l} \circ \omega$ is
  tail-atomic, then the hyperserial expansion of $\mathfrak{l} \circ \omega$
  is $\mathfrak{l} \circ \omega = \mathe^{\psi \circ \omega}  (\mathfrak{a}
  \circ \omega)^{\iota}$ for a certain log-atomic $\mathfrak{a} \in
  \mathbb{L}$. In both cases, $P_{\nearrow 1}$ is a path in some monomial in
  $L_{\mathbf{On}} \omega$, whence $| P_{\nearrow 1} | \leqslant 2$ and $| P |
  \leqslant 3$, by the previous argument.
\end{proof}

\begin{definition}
  Let $P, Q$ be paths. We say that $Q$ is a {\tmem{{\tmstrong{subpath
  of}}}}{\index{subpath}} $P$, or equivalently that $P$
  {\tmem{{\tmstrong{extends}}}} $Q$, if there exists a $k < | P |$ with $Q =
  P_{\nearrow k}$. For $a \in \mathbf{No}$, we say that $Q$ is a
  {\tmem{{\tmstrong{subpath in}}}} $a$ if there is a path $P$ in $a$ such that
  $Q$ is a subpath of $P$. We say that $P$ {\tmstrong{{\tmem{shares a subpath
  with}}}} $a$ if there is a subpath of $P$ which is a subpath in $a$.
\end{definition}

Let $P$ be a finite path and let $Q$ be a path with $Q (0) \in \tmop{supp}
u_{P, | P |} \cup \tmop{supp} \psi_{P, | P |}$. Then we define $P \ast
Q$\label{autolab52} to be the path $(P (0), \ldots, P (| P |), Q (0), \ldots)$
of length $| P | + | Q |$.

\begin{lemma}
  \label{lem-hyperlog-subpath}Let $\lambda \in \omega^{\mathbf{On}}$ and
  $\mathfrak{m} \in \mathbf{Mo}_{\lambda}$. Let $P$ be a path in
  $\mathfrak{m}$ with $| P | > 1$. Then $P_{\nearrow 1}$ is a subpath in
  $L_{\lambda} \mathfrak{m}$.
\end{lemma}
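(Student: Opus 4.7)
I would prove the lemma by analyzing the unique hyperserial expansion of $\mathfrak{m}$ and constructing a path in $L_\lambda \mathfrak{m}$ extending $P_{\nearrow 1}$. By Lemmas~\ref{lem-st-expansion-existence} and~\ref{lem-st-expansion-unicity}, $\mathfrak{m}$ has a unique hyperserial expansion $\mathfrak{m} = \mathe^{\psi_{P,1}} (L_{\beta_{P,0}} E_{\alpha_{P,0}}^{u_{P,1}})^{\iota_{P,0}}$ (of type~I if $\alpha_{P,0} > 0$, and of type~II with $E_{\alpha_{P,0}}^{u_{P,1}} = \omega$ otherwise). Writing $\lambda = \omega^\mu$, the hypothesis $\mathfrak{m} \in \mathbf{Mo}_\lambda$ combined with Lemma~\ref{lem-dominant-atomic-standard-expansion}(\ref{lem-dominant-atomic-standard-expansion-a}) forces the Cantor exponents of $\beta_{P,0}$ to all be $\geqslant \mu_-$, and gives $\alpha_{P,0} \geqslant \lambda$ in the type~I case. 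When $\lambda \geqslant \omega$, log-atomicity further forces $\psi_{P,1} = 0$ and $\iota_{P,0} = 1$, so $\tau_{P,1} \in \tmop{term} u_{P,1}$.

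The argument then proceeds by cases. In the type~I case with $\alpha_{P,0} = \lambda$, the constraints $\beta_{P,0} \omega < \omega^\mu$ and the Cantor exponents of $\beta_{P,0}$ being $\geqslant \mu_-$ are jointly incompatible unless $\beta_{P,0} = 0$; hence $\mathfrak{m} = E_\lambda^{u_{P,1}}$, $L_\lambda \mathfrak{m} = u_{P,1}$, and $P_{\nearrow 1}$ is itself a path in $u_{P,1} = L_\lambda \mathfrak{m}$. If $\lambda = 1$ and $\tau_{P,1} \in \tmop{term} \psi_{P,1}$, the separation condition $\tmop{supp} \psi_{P,1} \succ L_{\beta_{P,0}+1} E_{\alpha_{P,0}}^{u_{P,1}}$ built into the expansion gives that $\tau_{P,1}$ is a term of $\log \mathfrak{m} = L_1 \mathfrak{m}$, so $P_{\nearrow 1}$ is already a path in $L_1 \mathfrak{m}$. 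In the remaining cases---type~II, or type~I with $\alpha_{P,0} \geqslant \lambda\omega$---one computes the hyperserial expansion of $L_\lambda \mathfrak{m}$ by iterated application of~(\ref{hyperlog-def}), possibly combined with the functional equation~(\ref{functional-equation}). The resulting expansion carries the same $\alpha_{P,0}$ as that of $\mathfrak{m}$ (or $\alpha = 0$ in type~II) and an inner part $u'$ that equals $u_{P,1}$ up to a finite constant shift. Since $u_{P,1}$ is purely infinite, $\tau_{P,1}$ remains a term of $u'$, so $Q = (L_\lambda \mathfrak{m}, \tau_{P,1}, \tau_{P,2}, \ldots)$ is a path in $L_\lambda \mathfrak{m}$ with $Q_{\nearrow 1} = P_{\nearrow 1}$.

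The principal technical difficulty is the final case: the naive simplification $L_\lambda L_{\beta_{P,0}} = L_{\lambda \dotplus \beta_{P,0}}$ may produce a $\beta'$ for which $\beta' \omega = \alpha_{P,0}$, violating the strict inequality in the definition of a hyperserial expansion; it can also fail to directly apply when $\beta_{P,0}$ has Cantor exponents exceeding $\mu$. In both subcases one invokes the functional equation $L_{\omega^{\eta+1}}(L_{\omega^\eta} a) = L_{\omega^{\eta+1}} a - 1$ from~(\ref{functional-equation}) to absorb the overshoot into a reduction $u_{P,1} \mapsto u_{P,1} - k$ for some $k \in \mathbb{N}^>$, together with a correspondingly smaller $\beta'$. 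Because $u_{P,1}$ is purely infinite, such a reduction only inserts a $-k$ constant and leaves all other terms of $u_{P,1}$ intact, so $\tau_{P,1}$ is preserved as a term of the reduced inner part, and the construction of $Q$ succeeds.
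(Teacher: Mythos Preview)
Your approach is essentially the paper's: write out the hyperserial expansion of $\mathfrak{m}$, compute $L_{\lambda}\mathfrak{m}$ explicitly, and exhibit a path there extending $P_{\nearrow 1}$. The paper's case split is by $\lambda = 1$ versus $\lambda > 1$; for $\lambda > 1$ it decomposes $\beta = \beta' + \lambda_{/\omega}n$ (with $\beta' \gg \lambda_{/\omega}$), obtains $L_{\lambda}\mathfrak{m} = L_{\beta'+\lambda}E_{\alpha}^u - n$, and asserts that $L_{\beta'+\lambda}E_{\alpha}^u$ is again a hyperserial expansion with the same inner part $u$, so that $P_{\nearrow 1}$ continues there. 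Your explicit treatment of $\alpha_{P,0} = \lambda$ (forcing $\beta_{P,0} = 0$, whence $L_{\lambda}\mathfrak{m} = u_{P,1}$) is a welcome clarification of a case the paper handles only implicitly. Your ``second difficulty'' (Cantor exponents of $\beta_{P,0}$ exceeding $\mu$) is not actually a difficulty: all such exponents are absorbed into $\beta'$, and the identity $L_{\lambda}L_{\beta} = L_{\beta'+\lambda} - n$ follows directly from~(\ref{hyperlog-def}) and~(\ref{functional-equation}) without further adjustment. Also, writing $Q = (L_{\lambda}\mathfrak{m}, \tau_{P,1}, \ldots)$ is imprecise, since $L_{\lambda}\mathfrak{m}$ need not be a single term; you intend $Q(0)$ to be the monomial $L_{\beta'+\lambda}E_{\alpha}^u$.

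Your first difficulty is genuine---indeed the paper's own assertion that $L_{\beta'+\lambda}E_{\alpha}^u$ is a hyperserial expansion breaks precisely when $\beta' = 0$ and $\alpha = \lambda\omega$---but your proposed repair has a gap. The claim that $u_{P,1}$ is ``purely infinite'' is unjustified for $\alpha_{P,0} > 1$: $\alpha$-truncatedness does not exclude $1 \in \tmop{supp} u_{P,1}$. Concretely, $u = \omega + \tfrac{1}{2}$ is $\omega^2$-truncated and $\mathfrak{m} = E_{\omega^2}^{\omega+1/2} \in \mathbf{Mo}_{\omega}$ is a valid type-I expansion; taking $\lambda = \omega$, the functional equation gives $L_{\omega}\mathfrak{m} = E_{\omega^2}^{\omega-1/2}$, and the real term $\tfrac{1}{2}$ of $u$ becomes $-\tfrac{1}{2}$ in $u - 1$. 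So when $\tau_{P,1} \in \mathbb{R}^{\neq}$, your reduction $u \mapsto u - k$ does not preserve $\tau_{P,1}$ as a term of the new inner part, and the construction of $Q$ fails.
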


\begin{proof}
  By Lemma~\ref{lem-no-path-in-hell}, we have $\mathfrak{m} \nin
  (L_{\mathbf{On}} \omega)^{\pm 1}$. If $\mathfrak{m}$ has a hyperserial
  expansion of the form $\mathfrak{m}= \mathe^{\psi}  (L_{\gamma}
  \omega)^{\iota}$, then $P_{\nearrow 1}$ must be a path in $\psi$. So $\psi$
  is non-zero and thus $\lambda = 1$. It follows that $P_{\nearrow 1}$ is a
  path in $\log \mathfrak{m}= \psi \pplus \iota (L_{\gamma} \omega)^{\iota}$.
  Otherwise, let $\mathfrak{m}= \mathe^{\psi}  (L_{\beta}
  E_{\alpha}^u)^{\iota}$ be the hyperserial expansion of $\mathfrak{m}$. If
  $P_{\nearrow 1}$ is a path in $\psi$, then it is a path in $\log
  \mathfrak{m}$ as above. Otherwise, it is a path in $u$. Assume that $\lambda
  = 1$. If $\alpha = 1$, then we have $\psi = 0$ and $\log \mathfrak{m}= \iota
  u$ so $P_{\nearrow 1}$ is a path in $\log \mathfrak{m}$. If $\alpha > 1$,
  then $\log \mathfrak{m}= \psi \pplus \iota L_{\beta + 1} E_{\alpha}^u$ where
  $L_{\beta + 1} E_{\alpha}^u$ is a hyperserial expansion, so $P_{\nearrow 1}$
  is a path in $\log \mathfrak{m}$. Assume now that $\lambda > 1$, so $\psi =
  0$, $\iota = 1$, and $\alpha \geqslant \omega$. We must have $\beta \ggeq
  \lambda_{/ \omega}$ so there are $\beta' \in \mathbf{On}$ and $n \in
  \mathbb{N}$ with $\beta' \gg \lambda_{/ \omega}$ and $\beta = \beta' +
  \lambda_{/ \omega} n$. We have $L_{\lambda} \mathfrak{m}= L_{\beta' +
  \lambda} E_{\alpha}^u - n$ where $L_{\beta' + \lambda} E_{\alpha}^u$ is a
  hyperserial expansion, so $P_{\nearrow 1}$ is a path in~$L_{\lambda}
  \mathfrak{m}$.
\end{proof}

\begin{lemma}
  \label{lem-hyperexp-subpath}Let $a \in \mathbf{No}^{>, \succ}$, $\alpha \in
  \omega^{\mathbf{On}}$ and $k \in \mathbb{N}^{>}$. If $P$ is a path in
  $\sharp_{\alpha} (a)$ with $| P | > 2$, then $P_{\nearrow 1}$ is a subpath
  in~$\mathfrak{d}_{E_{\alpha k} a}$.
\end{lemma}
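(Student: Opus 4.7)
The plan is to proceed by induction on $k$, using the hyperexponential Taylor expansion~(\ref{eq-hyperexp-general}) to identify the dominant monomial of $E_{\alpha k}(a)$ explicitly. The heart of the argument lies in the base case $k = 1$; the inductive step reduces $E_{\alpha(k+1)}(a) = E_\alpha(E_{\alpha k}(a))$ to the base case combined with the inductive hypothesis applied to $E_{\alpha k}(a)$.

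For the base case, the key claim is that $\mathfrak{d}_{E_\alpha(b)} = E_\alpha^{\sharp_\alpha(b)}$ for every $b \in \mathbf{No}^{>, \succ}$. Indeed, choosing $\gamma < \alpha$ as in~(\ref{eq-hyperexp-general}) and setting $\varepsilon \assign b - \sharp_\alpha(b)$, one has $E_\alpha(b) = E_\gamma\!\left( L_\gamma(E_\alpha^{\sharp_\alpha(b)}) + \delta \right)$, where $\delta$ consists of terms strictly dominated by $L_\gamma(E_\alpha^{\sharp_\alpha(b)})$; since $E_\gamma$ inverts $L_\gamma$ on the monomial $E_\alpha^{\sharp_\alpha(b)} \in \mathbf{Mo}_\alpha$, the dominant monomial of $E_\alpha(b)$ equals $E_\alpha^{\sharp_\alpha(b)}$. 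Applied to $b = a$, this yields $\mathfrak{d}_{E_\alpha(a)} = E_\alpha^{\sharp_\alpha(a)}$, which is $L_{<\alpha}$-atomic, so by Lemma~\ref{lem-st-expansion-unicity} its hyperserial expansion is of type I with $\psi = 0$, $\iota = 1$, and $u$-component equal to $\sharp_\alpha(a)$ (possibly with a nonzero $\beta$ should $E_\alpha^{\sharp_\alpha(a)}$ happen to lie in $L_{<\alpha} \mathbf{Mo}_{\alpha \omega}$). Prepending $\mathfrak{d}_{E_\alpha(a)}$ to a path $P$ in $\sharp_\alpha(a)$ with $|P| > 2$ then produces a path $Q$ in $\mathfrak{d}_{E_\alpha(a)}$ of length $|P| + 1$ with $P_{\nearrow 1} = Q_{\nearrow 2}$, exhibiting $P_{\nearrow 1}$ as a subpath in $\mathfrak{d}_{E_\alpha(a)}$.

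For the inductive step, applying the base case formula to $b = E_{\alpha k}(a)$ gives $\mathfrak{d}_{E_{\alpha(k+1)}(a)} = E_\alpha^{\sharp_\alpha(E_{\alpha k}(a))}$, so the $u$-component of its hyperserial expansion equals $\sharp_\alpha(E_{\alpha k}(a))$. Since $\sharp_\alpha$ is a $\trianglelefteqslant$-truncation it preserves the dominant monomial, so the leading term of $\sharp_\alpha(E_{\alpha k}(a))$ has monomial $\mathfrak{d}_{E_{\alpha k}(a)}$. A path in $\mathfrak{d}_{E_{\alpha(k+1)}(a)}$ can therefore take two steps to reach the monomial $\mathfrak{d}_{E_{\alpha k}(a)}$, and by the inductive hypothesis $P_{\nearrow 1}$ appears as a subpath in $\mathfrak{d}_{E_{\alpha k}(a)}$; splicing yields the required subpath in $\mathfrak{d}_{E_{\alpha(k+1)}(a)}$.

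The main obstacle is the handling of the case where $E_\alpha^{\sharp_\alpha(b)}$ lies in $L_{<\alpha} \mathbf{Mo}_{\alpha \omega}$, so its hyperserial expansion has a nontrivial $\beta$ and the $u$-component arises only after one intermediate $L_\beta$ layer. Here one relies on Lemma~\ref{lem-hyperlog-subpath} to propagate paths through $L_\beta$ and on Lemma~\ref{lem-st-expansion-unicity} to keep the hyperserial expansion unambiguous. The hypothesis $|P| > 2$ is exactly what is needed for $P_{\nearrow 1}$ to have the length required for the subpath bookkeeping to carry through all of these compositions.
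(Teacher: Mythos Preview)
Your argument has a genuine gap: the ``key claim'' that $\mathfrak{d}_{E_\alpha(b)} = E_\alpha^{\sharp_\alpha(b)}$ for every $b \in \mathbf{No}^{>,\succ}$ is false whenever $\alpha > 1$. What does hold is $\mathfrak{d}_\alpha(E_\alpha(b)) = E_\alpha^{\sharp_\alpha(b)}$, where $\mathfrak{d}_\alpha$ is the projection onto $\mathbf{Mo}_\alpha$; but $\mathfrak{d}_\alpha$ is not the dominant-monomial map $\mathfrak{d}$. To see that your claim cannot hold, note that $E_\alpha$ is a bijection of $\mathbf{No}^{>,\succ}$ onto itself, so your claim would force $\mathfrak{d}_c \in \mathbf{Mo}_\alpha$ for every $c \in \mathbf{No}^{>,\succ}$, which is absurd (for instance $\sqrt{\omega} \notin \mathbf{Mo}_\omega$, since $\log\sqrt{\omega} = \tfrac{1}{2}\log\omega$ is not a monomial). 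Concretely, in the formula $E_\alpha(b) = E_\gamma\bigl(L_\gamma E_\alpha^{\sharp_\alpha(b)} \pplus \delta\bigr)$ one indeed has $\delta \prec 1$, so a \emph{single} application of $E_1$ preserves the dominant monomial; but when $\gamma \geqslant 2$ is genuinely required, the iterated exponentials in $E_\gamma$ amplify $\delta$ back to purely large size, and the dominant monomial shifts away from $E_\alpha^{\sharp_\alpha(b)}$.

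This is exactly why the paper inducts on the ordinal $\alpha k$ rather than on the integer $k$. For $\alpha \geqslant \omega$ one writes $E_\alpha a = E_{\beta n}\bigl(L_{\beta n} E_\alpha^v \pplus \delta\bigr)$ with $v = \sharp_\alpha(a)$ and $\beta = \omega^\eta < \alpha$, then carefully traces $P_{\nearrow 1}$ into $\sharp_\beta(L_{\beta n}E_\alpha a)$ via the hyperserial expansion of $E_\alpha^v$ (whose $u$-component need not be $v$ itself --- another point you elide), and only then invokes the induction hypothesis at the strictly smaller level $\beta n$ to reach $\mathfrak{d}_{E_{\beta n}(L_{\beta n}E_\alpha a)} = \mathfrak{d}_{E_\alpha a}$. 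An induction on $k$ alone supplies no hypothesis for this inner step.
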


\begin{proof}
  We prove this by induction on $\alpha k$, for any number $a \in
  \mathbf{No}^{>, \succ}$. We consider {$a \in \mathbf{No}^{>, \succ}$}, and a
  fixed path $P$ in $\sharp_{\alpha} (a)$ with $| P | > 1$.
  
  Assume that~$\alpha = k = 1$. We have $\sharp_1 (a) = a_{\succ}$ and
  $\mathfrak{d}_{\exp a} = \mathe^{a_{\succ}}$. Assume that $a_{\succ} = \psi
  \pplus \iota \mathfrak{a}$ for certain $\psi \in \mathbf{No}_{\succ}$,
  $\iota \in \{ - 1, 1 \}$, and $\mathfrak{a} \in \mathbf{Mo}_{\omega}$. Let
  $\mathfrak{a}= L_{\gamma} E_{\lambda}^u$ be the hyperserial expansion of
  $\mathfrak{a}$. If $\lambda = \omega$, then $\gamma = 0$ and the hyperserial
  expansion of $\mathe^{\mathfrak{a}}$ is $\mathe^{\mathfrak{a}} =
  E_{\omega}^{u + 1}$. Therefore~$P_{\nearrow 1}$ is a~subpath in
  $\mathfrak{d}_{\exp a} = \mathe^{\psi}  (E_{\omega}^{u + 1})^{\iota}$. If
  $\lambda > \omega$, then the hyperserial expansion of
  $\mathe^{\mathfrak{a}}$ is {$\mathe^{\mathfrak{a}} = L_{\gamma + 1}
  E_{\lambda}^u$}. Therefore $P_{\nearrow 1}$ is a subpath in
  $\mathfrak{d}_{\exp a} = \mathe^{\psi}  (L_{\gamma + 1}
  E_{\lambda}^u)^{\iota}$. Finally, if $\mathe^{a_{\succ}}$ is not
  tail-atomic, then~$P_{\nearrow 1}$ is a subpath in $\mathfrak{d}_{\exp a} =
  \left( E_1^{\smash{\epsilon a_{\succ}}} \right)^{\epsilon}$, where $\epsilon
  \in \{ - 1, 1 \}$ is the sign of $a_{\succ}$.
  
  Now assume that $\alpha = 1$, $k > 1$, and that the result holds strictly
  below $k$. We have $E_k a = E_{k - 1} (\exp a)$ where $P_{\nearrow 1}$ is a
  subpath in $\mathfrak{d}_{\exp a}$ by the previous argument. We have
  $r\mathfrak{d}_{\exp a} \trianglelefteqslant \sharp_1 (\exp a)$ for a
  certain $r \in \mathbb{R}^{\neq}$, so $Q \assign (r\mathfrak{d}_{\exp a})
  \ast P_{\nearrow 1}$ is a path in $\sharp_1 (\exp a)$. The induction
  hypothesis on $k - 1$ implies that $Q_{\nearrow 1} = P_{\nearrow 1}$ is a
  subpath in $\mathfrak{d}_{E_k a}$.
  
  Assume now that $\alpha \geqslant \omega$ and that the result holds strictly
  below $\alpha$. Write $v \assign \sharp_{\alpha} (a)$.
  By~(\ref{eq-trianglehere}), there exist $\eta \in \mathbf{On}$, $n <
  \omega$, and $\delta \in \mathbf{No}$ with $\beta \assign \omega^{\eta} <
  \alpha$ and
  \[ E_{\alpha} a = E_{\beta n} (L_{\beta n} E_{\alpha}^v \pplus \delta) . \]
  Assume for contradiction that there is a $\gamma \in \mathbf{On}$ with
  $E_{\alpha}^v = L_{\gamma} \omega$. We must have $\gamma \ggeq \alpha_{/
  \omega}$, so there are a number $n \in \mathbb{N}$ and an ordinal $\gamma'
  \ggeq \alpha$ with $\gamma = \gamma' + \alpha_{/ \omega} n$. We have $v =
  {L_{\gamma' + \alpha} \omega - n}$. By Lemma~\ref{lem-no-path-in-hell}, this
  contradicts the fact that $| P | > 2$. So by
  Lemma~\ref{lem-st-expansion-existence}, there exist $\beta \in
  \omega^{\mathbf{On}}$ and $\gamma \in \mathbf{On}$ with {$\beta \geqslant
  \alpha$}, $\gamma \omega < \beta$, $E_{\alpha}^v = L_{\gamma} E_{\beta}^u$,
  and $E_{\beta}^u \in \mathbf{Mo}_{\beta} \setminus L_{< \beta} 
  \mathbf{Mo}_{\beta \omega}$. Since $E_{\alpha}^v \in \mathbf{Mo}_{\alpha}$,
  we must have $\gamma \ggeq \alpha_{/ \omega}$ so there are a number $n \in
  \mathbb{N}$ and an ordinal $\gamma' \ggeq \alpha$ with $\gamma = \gamma' +
  \alpha_{/ \omega} n$ (note that $n = 0$ whenever $\alpha_{/ \omega} =
  \alpha$). Thus $v + n = L_{\alpha} L_{\gamma' + \alpha_{/ \omega} n}
  E_{\beta}^u + n = L_{\gamma' + \alpha} E_{\beta}^u$ is a monomial with
  hyperserial expansion $v + n = L_{\gamma' + \alpha} E_{\beta}^u$. There is
  no path in $n$ of length $> 1$, so $P$ must be a path in $L_{\gamma' +
  \alpha} E_{\beta}^u$. We deduce that $P_{\nearrow 1}$ is a path in $u$.
  Consequently, $Q = (L_{\gamma} E_{\beta}^u) \ast P_{\nearrow 1}$ is a path
  in $E_{\alpha}^v$ with $| Q | = | P | > 1$. Applying $n$~times
  Lemma~\ref{lem-hyperlog-subpath}, we deduce that $Q_{\nearrow 1} =
  P_{\nearrow 1}$ is a subpath in $L_{\beta n} E_{\alpha}^v$, hence in
  $\sharp_{\beta} (L_{\beta n} E_{\alpha} a)$. Consider a path~$R$ in
  $\sharp_{\beta} (L_{\beta n} E_{\alpha} a)$ with $P_{\nearrow 1} =
  R_{\nearrow i}$ for a certain $i > 0$. Applying the induction hypothesis
  for~$L_{\beta n} E_{\alpha} a$ and~$\beta n$ in the roles of $a$ and $\alpha
  k$, the path $R_{\nearrow 1}$ is a subpath in~$\mathfrak{d}_{E_{\beta n}
  (L_{\beta n} E_{\alpha} a)} =\mathfrak{d}_{E_{\alpha} a}$. Therefore
  $P_{\nearrow 1}$ is a subpath in~$\mathfrak{d}_{E_{\alpha} a}$. We deduce as
  in the case $\alpha = 1$ that $P_{\nearrow 1}$ is a subpath
  in~$\mathfrak{d}_{E_{\alpha k} a}$.
\end{proof}

\begin{lemma}
  \label{lem-subpath-formula}Let $\psi \in \mathbf{No}_{\succ}$, and
  $\mathfrak{m} \in \mathbf{Mo}^{\neq}$ with $\tmop{supp} \psi \succ \log
  \mathfrak{m}$. Let $P$ be a path in~$\mathfrak{m}$ with $| P | > 1$. Then
  $P_{\nearrow 1}$ is a subpath in $\mathe^{\psi} \mathfrak{m}$.
\end{lemma}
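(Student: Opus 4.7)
Plan: The strategy is to show that $\mathfrak{n} \assign \mathe^{\psi} \mathfrak{m}$ admits essentially the ``same'' hyperserial expansion as $\mathfrak{m}$, up to replacing the purely infinite exponent $\psi'$ appearing in that expansion by $\psi + \psi'$. Once this is established, the path $P' \assign (\mathfrak{n}) \ast P_{\nearrow 1}$ will be a valid path in $\mathfrak{n}$ with $P'_{\nearrow 1} = P_{\nearrow 1}$, witnessing the subpath relation.

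First, I would invoke Lemma~\ref{lem-st-expansion-existence} to write the hyperserial expansion of $\mathfrak{m}$ in the form $\mathfrak{m} = \mathe^{\psi'} (L_{\beta} E_{\alpha}^{u})^{\iota}$ (type~I) or $\mathfrak{m} = \mathe^{\psi'} (L_{\beta} \omega)^{\iota}$ (type~II). In either case $\log \mathfrak{m}$ equals $\psi'$ plus a $\prec$-smaller term: namely $\iota L_{\beta + 1} E_{\alpha}^{u}$ when $\alpha > 1$, just $\iota u$ when $\alpha = 1$, and $\iota L_{\beta + 1} \omega$ in type~II. The hypothesis $\tmop{supp} \psi \succ \log \mathfrak{m}$ therefore forces $\tmop{supp} \psi \succ \tmop{supp} \psi'$ and $\tmop{supp} \psi \succ L_{\beta + 1}(\cdots)$ simultaneously. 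In particular $\psi + \psi' = \psi \pplus \psi'$ is purely infinite, its support is the disjoint union $\tmop{supp} \psi \sqcup \tmop{supp} \psi'$, and it still dominates $L_{\beta + 1}(\cdots)$.

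Second, I would verify that $\mathfrak{n} = \mathe^{\psi + \psi'} (L_{\beta} E_{\alpha}^{u})^{\iota}$ (or its type~II analogue) is itself a valid hyperserial expansion, hence by uniqueness (Lemma~\ref{lem-st-expansion-unicity}) \textit{the} hyperserial expansion of $\mathfrak{n}$. For $\alpha > 1$ in type~I and for type~II, this is immediate from the support domination just established; the remaining data $(\beta, \alpha, \iota, u)$ is untouched. The delicate case is $\alpha = 1$, where $\psi' = 0$, $\iota = 1$, and $u$ is not tail-atomic by assumption, so $\mathfrak{n} = \mathe^{\psi + u}$ and I must check that $\psi + u$ is not tail-atomic either. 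But $\tmop{supp} \psi \succ u$ yields $\psi + u = \psi \pplus u$, whence the $\prec$-minimum term of $\psi + u$ coincides with that of $u$ (same monomial, same coefficient); hence $\psi + u$ is tail-atomic iff $u$ is, and the latter is excluded.

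Finally, the path $P' \assign (\mathfrak{n}) \ast P_{\nearrow 1}$ is well defined in $\mathfrak{n}$: the hyperserial expansion of $\mathfrak{n}$ from the previous step gives $\psi_{P', 1} = \psi + \psi_{P, 1}$, $u_{P', 1} = u_{P, 1}$, and matching $(\beta_{P', 0}, \alpha_{P', 0}, \iota_{P', 0})$. Since $\psi$ and $\psi_{P, 1}$ have disjoint supports, $\tmop{term} \psi_{P, 1} \subseteq \tmop{term} \psi_{P', 1}$, so the condition $\tau_{P, 1} \in \tmop{term} \psi_{P, 1} \cup \tmop{term} u_{P, 1}$ upgrades to $\tau_{P', 1} = \tau_{P, 1} \in \tmop{term} \psi_{P', 1} \cup \tmop{term} u_{P', 1}$. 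All path axioms for indices $i \geq 2$ are inherited directly from $P$, so $P'$ is genuinely a path and $P_{\nearrow 1} = P'_{\nearrow 1}$ is a subpath in $\mathfrak{n}$. The main technical obstacle is the tail-atomicity check in the $\alpha = 1$ corner case; everything else is bookkeeping against the uniqueness of hyperserial expansions.
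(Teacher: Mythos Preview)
Your proof is correct and follows the same approach as the paper: write the hyperserial expansion of $\mathfrak{m}$, observe that prepending $\psi$ to the exponent yields the hyperserial expansion of $\mathe^{\psi}\mathfrak{m}$, and conclude that $(\mathfrak{n})\ast P_{\nearrow 1}$ is a path in $\mathfrak{n}$. The paper compresses this into three lines and does not single out the $\alpha=1$ case; your explicit verification there---that $\psi\pplus u$ inherits non-tail-atomicity from $u$ because $\tmop{supp}\psi\succ u$ leaves the minimal term of $u$ unchanged---is a detail the paper's phrasing glosses over, so your version is in fact slightly more careful.
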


\begin{proof}
  Let $\mathfrak{m}= \mathe^{\varphi}  (L_{\beta} E_{\alpha}^u)^{\iota}$ be a
  hyperserial expansion. The condition $\tmop{supp} \psi \succ \log
  \mathfrak{m}$ implies $\varphi + \psi = \varphi \pplus \psi$, whence
  $\mathe^{\psi} \mathfrak{m}= \mathe^{\psi \pplus \varphi}  (L_{\beta}
  E_{\alpha}^u)^{\iota}$ is also a hyperserial expansion. In particular
  $P_{\nearrow 1}$ is a subpath in $\mathe^{\psi} \mathfrak{m}$.
\end{proof}

\begin{corollary}
  \label{cor-subpath-hypexplog}Let $\alpha = \omega^{\nu} \in \mathbf{On}$,
  $\beta \in \mathbf{On}$ with $\beta < \alpha$, and $\varphi \in
  \mathbf{No}_{\succ, \alpha}$. If $P$ is an infinite path, then $P$ shares a
  subpath with $\varphi$ if and only if it shares a subpath with $L_{\beta}
  E_{\alpha}^{\varphi}$.
\end{corollary}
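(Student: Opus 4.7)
We prove both implications by routing through the monomial $E_\alpha^\varphi \in \mathbf{Mo}_\alpha$. Set $\mathfrak{m} := L_\beta E_\alpha^\varphi$; since $E_\alpha^\varphi$ is $L_{<\alpha}$-atomic and $\omega^\nu$ is closed under Hessenberg addition of ordinals less than itself, $\mathfrak{m}$ is a monomial in $\mathbf{Mo}_\alpha$ and admits a unique hyperserial expansion by Lemma~\ref{lem-st-expansion-unicity}.

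\textbf{Key step.} Let $E_\alpha^\varphi = L_{\beta_0} E_{\alpha_0}^{u_0}$ be the hyperserial expansion; by Lemma~\ref{lem-dominant-atomic-standard-expansion}(a) we have $\alpha_0 \geqslant \alpha$ (the type~II case $E_\alpha^\varphi = L_{\beta_0}\omega$ is handled by the same method). Composing with $L_\beta$ on the left, invoking~\eqref{hyperlog-def} to combine $L_\beta L_{\beta_0}$, and then applying the functional equation~\eqref{functional-equation} iteratively to absorb the portion of the resulting ordinal that exceeds $\alpha_{0/\omega}$, we identify the hyperserial expansion of $\mathfrak{m}$ as $\mathfrak{m} = L_\delta E_{\alpha_0}^{u_0 - k}$ for some $\delta < \alpha_{0/\omega}$ and some $k \in \mathbb{N}$. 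Consequently, the expansions of $E_\alpha^\varphi$ and $\mathfrak{m}$ differ only in their $L_?$-prefix and in an integer shift of the $u$-component. Since $u_0$ is purely large, $u_0$ and $u_0 - k$ share the same support on infinite monomials. An infinite path in either monomial must, after its mandatory starting element, navigate into the common non-constant part (by the definition of path, a path starting at a real constant has length~$1$). Therefore, for infinite $P$, sharing a subpath with $E_\alpha^\varphi$ is equivalent to sharing a subpath with $\mathfrak{m}$.

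\textbf{Deduction.} The forward direction follows by combining the Key Step with Lemma~\ref{lem-hyperexp-subpath}: given an infinite path $R$ in $\varphi$ realizing a common tail with $P$, apply the lemma with $k=1$ (valid since $|R|>2$), noting $\sharp_\alpha\varphi = \varphi$ and $\mathfrak{d}_{E_\alpha\varphi} = E_\alpha^\varphi$ (as $E_\alpha\varphi = E_\alpha^\varphi$ is a monomial), to deduce that $R_{\nearrow 1}$ is a subpath in $E_\alpha^\varphi$, hence, by the Key Step, in $\mathfrak{m}$. The backward direction is symmetric: given an infinite path $R$ in $\mathfrak{m}$, the Key Step produces an infinite path in $E_\alpha^\varphi$ with the same eventual tail, and Lemma~\ref{lem-hyperlog-subpath} (with $\lambda = \alpha$, $|R|>1$) then shows the $\nearrow 1$-shift of this path is a subpath in $L_\alpha E_\alpha^\varphi = \varphi$.

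\textbf{Main obstacle.} The entire substantive content sits in the Key Step. Describing the hyperserial expansion of $\mathfrak{m}$ from that of $E_\alpha^\varphi$ requires careful bookkeeping: the raw combination $L_\beta L_{\beta_0}$ need not satisfy the constraint $\delta\omega < \alpha_0$ demanded for a valid hyperserial expansion, so one must shed factors of $\alpha_{0/\omega}$ via iterated functional-equation applications~\eqref{functional-equation}, each shedding producing an integer correction to the $u$-component. Case distinctions on whether $\nu$ is a successor or limit, and on whether $\alpha_0 = \alpha$ or $\alpha_0 > \alpha$, must be tracked to identify $\delta$ and $k$ correctly, and Corollary~\ref{cor-alpha-alphaomega} is useful to verify that $u_0 - k$ still satisfies the residual membership condition $E_{\alpha_0}^{u_0 - k}\in\mathbf{Mo}_{\alpha_0}\setminus L_{<\alpha_0}\mathbf{Mo}_{\alpha_0\omega}$ required of a hyperserial expansion of type~I.
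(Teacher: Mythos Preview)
Your approach is correct, but it takes a genuinely different route from the paper's proof. The paper avoids your Key Step entirely: it writes $\beta = \omega^{\eta_1} m_1 + \cdots + \omega^{\eta_k} m_k$ in Cantor normal form, sets $\mathfrak{a}_i := L_{\omega^{\eta_1} m_1 + \cdots + \omega^{\eta_{i-1}} m_{i-1}} E_\alpha^\varphi$ (so $\mathfrak{a}_1 = E_\alpha^\varphi$), and then simply iterates Lemmas~\ref{lem-hyperlog-subpath} and~\ref{lem-hyperexp-subpath} along the chain $\mathfrak{a}_1,\ldots,\mathfrak{a}_{k+1} = L_\beta E_\alpha^\varphi$. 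Each $\mathfrak{a}_i$ is easily seen to be $L_{<\omega^{\eta_i}}$-atomic (since $E_\alpha^\varphi \in \mathbf{Mo}_\alpha$ and the applied hyperlogarithm has index with smallest CNF exponent $\eta_{i-1} > \eta_i$), so Lemma~\ref{lem-hyperlog-subpath} applies in the forward direction; conversely each $\mathfrak{a}_{i+1}$ is an infinite monomial, hence equals its own $\sharp_{\omega^{\eta_i}}$, so Lemma~\ref{lem-hyperexp-subpath} applies in the backward direction. No hyperserial expansion of $\mathfrak{m}$ is ever computed.

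Your route instead computes the hyperserial expansion of $\mathfrak{m}$ explicitly as $L_\delta E_{\alpha_0}^{u_0 - k}$. This works, and your ``Main obstacle'' paragraph correctly identifies the required bookkeeping (combining $\beta_0 + \beta$ via~\eqref{hyperlog-def}, shedding $(\alpha_0)_{/\omega}$-factors via~\eqref{functional-equation}, and re-verifying the membership condition). The cost is precisely this bookkeeping, which the paper's induction sidesteps. One minor inaccuracy: your claim that ``$u_0$ is purely large'' is not generally true for $\alpha_0 > 1$ (elements of $\mathbf{No}_{\succ,\alpha_0}$ may have constant or infinitesimal terms), but this does not affect your argument---what you actually need, and what holds, is that $u_0$ and $u_0 - k$ have the same non-constant terms, so an infinite path (which cannot terminate at a real term) passes through identical data after the first step.
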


\begin{proof}
  Write \ $\beta = \omega^{\eta_1} m_1 + \cdots + \omega^{\eta_k} m_k$ in
  Cantor normal form, with $\eta_1 > \cdots > \eta_k$ and {$m_1, \ldots, m_k
  \in \mathbb{N}^{>}$} and let
  \[ \mathfrak{a}_i \assign L_{\omega^{\eta_1} m_1 + \cdots + \omega^{\eta_{i
     - 1}} m_{i - 1}} E_{\alpha}^{\varphi} \]
  for all $i = 1, \ldots, k$.
  
  Assume that $P$ shares a subpath with $\varphi$. In other words, there is a
  path $R$ in $\varphi$ which has a common subpath with $P$. The path $R$ must
  be infinite, so by \Cref{lem-hyperexp-subpath}, it shares a subpath with
  $E_{\alpha}^{\varphi} =\mathfrak{a}_1$. Let us prove by induction on $i = 1,
  \ldots, k$ that $R$ shares a subpath with $E_{\alpha}^{\varphi}
  =\mathfrak{a}_i$. Assuming that this holds for $i < k$, we note that
  $\mathfrak{a}_i$ is $L_{< \omega^{\eta_{i - 1}} \omega}$-atomic, hence $L_{<
  \omega^{\eta_i}}$\mbox{-}atomic. So $P$ shares a subpath with
  $\mathfrak{a}_{i + 1}$ by \Cref{lem-hyperlog-subpath} and the induction
  hypothesis. We conclude by induction that $P$ shares a subpath with
  $\mathfrak{a}_k = L_{\beta} E_{\alpha}^{\varphi}$.
  
  Suppose conversely that $P$ shares a subpath with $L_{\beta}
  E_{\alpha}^{\varphi} =\mathfrak{a}_k$. By induction on {$i = k - 1, \ldots,
  1$}, it follows from \Cref{lem-hyperexp-subpath} that $P$ shares a subpath
  with $\mathfrak{a}_i$. Applying \Cref{lem-hyperlog-subpath} to
  $\mathfrak{a}_1 = E_{\alpha}^{\varphi}$, we conclude that $P$ shares a
  subpath with $\varphi$.
\end{proof}

\subsection{Deconstruction lemmas}\label{subsection-decomposition}

In this subsection, we list several results on the interaction between the
simplicity relation $\sqsubseteq$ and various operations in $(\mathbf{No}, +,
\times, (L_{\alpha})_{\alpha \in \mathbf{On}})$.

\begin{lemma}
  \label{neg-dec-lem}{\tmem{{\cite[Theorem~3.3]{Gon86}}}} For $a, b \in
  \mathbf{No}$, we have
  \[ a \sqsubseteq b \Longleftrightarrow \mathord{-} a \sqsubseteq \mathord{-}
     b. \]
\end{lemma}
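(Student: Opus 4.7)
The plan is to reduce the equivalence to a concrete statement about sign sequences. Define $\sigma : \mathbf{No} \longrightarrow \mathbf{No}$ to be the map that, given a surreal number viewed as a sign sequence $a : \tmop{dom} a \longrightarrow \{-1, 1\}$, returns the sequence $\sigma(a)$ with the same domain and $\sigma(a)[\beta] = -a[\beta]$ for every $\beta \in \tmop{dom} a$. This is an involution, and the relation $\sqsubseteq$, being just the prefix relation on sign sequences, is manifestly preserved by $\sigma$: indeed $a \sqsubseteq b$ iff $\tmop{dom} a \subseteq \tmop{dom} b$ and $a[\beta] = b[\beta]$ for all $\beta \in \tmop{dom} a$, which holds iff the same relation holds after flipping every sign. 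Thus the lemma reduces to proving the identification $\sigma(a) = -a$ for all $a \in \mathbf{No}$.

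To prove $\sigma(a) = -a$, I would argue by $\sqsubseteq$-induction on $a$. The base case $a = 0$ is trivial since the empty sign sequence is fixed by $\sigma$ and $-0 = 0$. For the inductive step, use Conway's recursive formula $-a = \{-a_R \mid -a_L\}$ together with the inductive hypotheses $-a_L = \sigma(a_L)$ and $-a_R = \sigma(a_R)$ (applied pointwise to the elements of these sets, which are $\sqsubset$-simpler than $a$). The task becomes: show that $\sigma(a)$ is the unique simplest surreal number strictly between $\sigma(a_R)$ and $\sigma(a_L)$. Since $a$ is by hypothesis the simplest number strictly between $a_L$ and $a_R$, and $\sigma$ preserves the $\sqsubseteq$-order, it suffices to check that $\sigma$ reverses the order $<$ on $\mathbf{No}$, i.e.\ that $a < b \Longleftrightarrow \sigma(b) < \sigma(a)$.

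The order-reversal property of $\sigma$ is the main technical obstacle, but it is essentially built into Gonshor's definition of $<$ via lexicographic comparison of sign sequences (where a missing sign at position $\beta$ is treated as lying strictly between $-1$ and $+1$). A direct case analysis on the first position $\beta$ at which two sequences differ (including the possibility that one sequence terminates there) shows that flipping every sign simultaneously flips the outcome of the lexicographic comparison in each case. Granted this, the inductive step closes: $\sigma(a)$ is strictly between $\sigma(a_R)$ and $\sigma(a_L)$ (order reversal applied to $a_L < a < a_R$), and it is simplest such (since any strictly simpler witness would pull back via $\sigma^{-1} = \sigma$ to a strictly simpler witness between $a_L$ and $a_R$, contradicting minimality of $a$). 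Combining the induction with the initial reduction yields the biconditional $a \sqsubseteq b \Longleftrightarrow -a \sqsubseteq -b$.
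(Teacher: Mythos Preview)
Your argument is correct. Note, however, that the paper does not actually prove this lemma: it simply cites it as Theorem~3.3 in Gonshor's book, where the content is precisely that $-a$ is the sign sequence obtained from $a$ by reversing every sign (your map $\sigma$). Your write-up therefore reconstructs Gonshor's proof rather than offering an alternative to something in the paper. The reduction ``$\sqsubseteq$ is the prefix relation, hence preserved by any pointwise relabelling of signs'' is exactly the right observation, and the inductive identification $\sigma(a)=-a$ via order reversal of the lexicographic comparison is the standard route; Gonshor's own argument proceeds the same way.
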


\begin{lemma}
  \label{sign-dec-lem}{\tmem{{\cite[Theorem~5.12(a)]{Gon86}}}} For
  $\mathfrak{m} \in \mathbf{Mo}$ and $r \in \mathbb{R}^{\neq}$, we have
  \[ \tmop{sign} (r) \mathfrak{m} \sqsubseteq r\mathfrak{m}. \]
\end{lemma}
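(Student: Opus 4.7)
The plan is to reduce to the case $r > 0$ using Lemma~\ref{neg-dec-lem}, and then handle that case by invoking the characterisation of $\mathbf{Mo}$ as the class of $\mathcal{H}$-simple elements of $\mathbf{No}^{>}$ that was recalled among the remarkable function groups.

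First I would dispose of the sign reduction. Suppose the result has been established for every monomial and every $r \in \mathbb{R}^{>}$. If $r < 0$, set $s \assign -r \in \mathbb{R}^{>}$, so that the positive case yields $\mathfrak{m} \sqsubseteq s\mathfrak{m}$. By Lemma~\ref{neg-dec-lem} this is equivalent to $-\mathfrak{m} \sqsubseteq -s\mathfrak{m} = r\mathfrak{m}$, and since $\tmop{sign}(r) = -1$ we obtain $\tmop{sign}(r)\mathfrak{m} \sqsubseteq r\mathfrak{m}$, as required. This reduces the problem to proving $\mathfrak{m} \sqsubseteq r\mathfrak{m}$ for all $\mathfrak{m} \in \mathbf{Mo}$ and $r \in \mathbb{R}^{>}$.

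For the positive case, I would use the identification $\mathbf{Mo} = \mathbf{Smp}_{\mathcal{H}}$, where $\mathcal{H} = \{H_r \suchthat r \in \mathbb{R}^{>}\}$ acts on $\mathbf{No}^{>}$. By definition of $\mathcal{H}$-simplicity, $\mathfrak{m}$ is the $\sqsubseteq$-minimum of its convex $\mathcal{H}$-class
\[ \mathcal{H}[\mathfrak{m}] = \{ b \in \mathbf{No}^{>} \suchthat \exists s, t \in \mathbb{R}^{>},\ s\mathfrak{m} \leqslant b \leqslant t\mathfrak{m} \}. \]
For every $r \in \mathbb{R}^{>}$, taking $s = t = r$ shows that $r\mathfrak{m}$ itself belongs to $\mathcal{H}[\mathfrak{m}]$. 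Since any convex subclass of $\mathbf{No}$ has a unique $\sqsubseteq$-minimum which is an initial segment of every element of the class (this is the property underlying the definition of a surreal substructure via $\mathbf{Smp}_{\mathcal{G}}$, cf.\ Proposition~\ref{prop-group-substructure}), we conclude $\mathfrak{m} \sqsubseteq r\mathfrak{m}$.

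There is essentially no obstacle here: once one accepts the already-cited structural fact that $\mathbf{Mo}$ arises as $\mathbf{Smp}_{\mathcal{H}}$ and the general property of $\sqsubseteq$-minima of convex classes, the proof collapses to the two lines above. If one instead wished to argue from scratch without the function-group machinery, the natural route would be to start from the recursive Conway formula for $r\mathfrak{m}$ and verify by transfinite induction on the length of $r$ (as a sign sequence) that the cut representation produced contains options both strictly below and strictly above $\mathfrak{m}$, forcing $\mathfrak{m}$ to appear as a truncation; this is the argument in \cite[Theorem~5.12]{Gon86}, but it is considerably more laborious than the surreal-substructure route.
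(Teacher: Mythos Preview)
The paper does not supply its own proof of this lemma: it is simply quoted from Gonshor's book as {\cite[Theorem~5.12(a)]{Gon86}}, so there is no in-paper argument to compare against. Your proof is correct and self-contained within the paper's framework: the reduction to $r>0$ via Lemma~\ref{neg-dec-lem} is immediate, and the positive case follows cleanly from the identification $\mathbf{Mo}=\mathbf{Smp}_{\mathcal{H}}$ recalled in the section on remarkable function groups, together with the standard fact (underlying Proposition~\ref{prop-group-substructure}) that the $\sqsubseteq$-minimum of a non-empty convex subclass of $\mathbf{No}$ is an initial segment of every element of that class. Your closing remark is also apt: Gonshor's original argument proceeds by a direct sign-sequence analysis of the product $r\mathfrak{m}$, whereas your route exploits the surreal-substructure machinery that the present paper has already set up, making the lemma an almost immediate consequence.
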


\begin{lemma}
  \label{imm-dec-lem}{\tmem{{\cite[Proposition~4.20]{BvdH19}}}} Let $\varphi
  \in \mathbf{No}$. For $\delta, \varepsilon$ with $\delta, \varepsilon \prec
  \tmop{supp} \varphi$, we have
  \begin{eqnarray*}
    \varphi \pplus \delta \sqsubseteq \varphi \pplus \varepsilon &
    \Longleftrightarrow & \delta \sqsubseteq \varepsilon .
  \end{eqnarray*}
\end{lemma}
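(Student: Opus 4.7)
The plan is to set up a uniform cut equation for the translation map $F_{\varphi} : \eta \longmapsto \varphi \pplus \eta$ restricted to the convex subclass $\mathbf{X}_{\varphi} \assign \{\eta \in \mathbf{No} \suchthat \tmop{supp} \eta \prec \tmop{supp} \varphi\}$, and to deduce both directions of the equivalence from the fact that $F_{\varphi}$ then embeds $(\mathbf{X}_{\varphi},\leqslant,\sqsubseteq)$ as a surreal substructure of $\mathbf{No}$.

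First I would verify that $\mathbf{X}_{\varphi}$ is itself a surreal substructure. For any cut representation $(L,R)$ in $\mathbf{X}_{\varphi}$, the simplest $a \assign \{L|R\}_{\mathbf{No}}$ still satisfies $\tmop{supp} a \prec \tmop{supp}\varphi$: otherwise, using the well-foundedness of $\trianglelefteqslant$, one could truncate $a$ above the offending monomials to obtain a strictly simpler element of the same cut, contradicting minimality. By the cut criterion~\cite[Proposition~4.7]{BvdH19}, $\mathbf{X}_{\varphi}$ is a surreal substructure. Next I would derive the uniform cut equation
\[ \varphi \pplus \eta \;=\; \{ \varphi \pplus \eta_{L}^{\mathbf{X}_{\varphi}} \mid \varphi \pplus \eta_{R}^{\mathbf{X}_{\varphi}} \}_{\mathbf{No}} \]
for $F_{\varphi}$ on $\mathbf{X}_{\varphi}$. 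Starting from the uniform sum equation~\eqref{eq-uniform-sum}, one shows that the options $\varphi_{L} + \eta$ and $\varphi_{R} + \eta$ are cofinality-redundant: because $\varphi - \varphi_{L}$ and $\varphi_{R} - \varphi$ involve monomials which are not strictly below $\tmop{supp}\varphi$, they dominate every tail in $\mathbf{X}_{\varphi}$, and hence these options lie cofinally among (and are therefore absorbed by) the tail options $\varphi \pplus \eta_{L/R}^{\mathbf{X}_{\varphi}}$.

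Granted this cut equation, both directions follow formally. For $(\Longleftarrow)$, an $\sqsubseteq$-induction on $\varepsilon$ shows that if $\delta \sqsubseteq \varepsilon$ then $\delta \in \varepsilon_{\sqsubset}^{\mathbf{X}_{\varphi}}$, so $\varphi \pplus \delta$ appears as a left or right option in the cut equation for $\varphi \pplus \varepsilon$ and is therefore $\sqsubseteq$-simpler. For $(\Longrightarrow)$, the cut equation identifies $F_{\varphi}(\mathbf{X}_{\varphi})$ itself as a surreal substructure of $\mathbf{No}$ onto which $F_{\varphi}$ is an $(\leqslant,\sqsubseteq)$-isomorphism, so the preservation of $\sqsubseteq$ in the forward direction can be inverted, yielding $\delta \sqsubseteq \varepsilon$ from $\varphi \pplus \delta \sqsubseteq \varphi \pplus \varepsilon$.

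The main obstacle is the cofinality argument discarding the $\varphi_{L/R} + \eta$ options. Concretely, one must show that for every $\varphi' \in \varphi_{L}^{\mathbf{No}}$ there exists $\eta' \in \mathbf{X}_{\varphi}$ with $\varphi' + \eta \leqslant \varphi \pplus \eta'$, and symmetrically on the right. The crucial input is that $\varphi - \varphi'$ has dominant monomial in $\tmop{supp}\varphi$ or strictly above, so it dominates everything in $\mathbf{X}_{\varphi}$; this allows one to absorb any shift of the form $\eta - (\varphi - \varphi')$ into a suitable tail option. Once this is done, the remaining verifications are mechanical consequences of the surreal-substructure machinery of Section~\ref{subsubsection-surreal-substructures}.
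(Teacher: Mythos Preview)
The paper does not give its own proof here; the lemma is simply quoted from~\cite[Proposition~4.20]{BvdH19}, so there is no in-paper argument to compare against.

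Your overall strategy---realising $F_{\varphi}$ as the canonical $(\leqslant,\sqsubseteq)$-isomorphism between the surreal substructures $\mathbf{X}_{\varphi}$ and $\varphi+\mathbf{X}_{\varphi}$---is the right one and is essentially how~\cite{BvdH19} proceeds. But the cut equation you write,
\[ \varphi \pplus \eta \;=\; \{\varphi \pplus \eta_L^{\mathbf{X}_{\varphi}} \mid \varphi \pplus \eta_R^{\mathbf{X}_{\varphi}}\}_{\mathbf{No}}, \]
is false. For $\eta=0$ both option sets are empty and the right-hand side is~$0$, not~$\varphi$. Less degenerately, take $\varphi=1$ (so $\mathbf{X}_{\varphi}$ is the class of infinitesimals) and $\eta=-\omega^{-1}$: then $\eta_L^{\mathbf{No}}=\{-1\}$ with $-1\notin\mathbf{X}_{\varphi}$, hence $\eta_L^{\mathbf{X}_{\varphi}}=\varnothing$ while $\eta_R^{\mathbf{X}_{\varphi}}=\{0\}$, and your formula gives $1-\omega^{-1}=\{\varnothing\mid 1\}_{\mathbf{No}}=0$. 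The cofinality step is precisely where this breaks: the options $\varphi_{L/R}+\eta$ are \emph{not} redundant---they are what pin the answer inside the convex class $\varphi+\mathbf{X}_{\varphi}$, and there is nothing on the tail side to absorb them when $\eta_{L/R}^{\mathbf{X}_{\varphi}}$ is empty. Your $(\Longleftarrow)$ step has a second slip: appearing as an option in a cut representation $\{L\mid R\}$ does not by itself force $\sqsubseteq$-simplicity; that holds only for the canonical representation.

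The repair is to take the cut in the target substructure rather than in $\mathbf{No}$. One first shows independently that $\varphi$ is the $\sqsubseteq$-minimum of $\varphi+\mathbf{X}_{\varphi}$ (this is the case $\delta=0$, and your domination claim about $\mathfrak{d}_{\varphi-\varphi'}$ is literally equivalent to it, so it cannot be used as input), and then proves $\varphi+\eta=\{\varphi+\eta_L^{\mathbf{X}_{\varphi}}\mid\varphi+\eta_R^{\mathbf{X}_{\varphi}}\}_{\varphi+\mathbf{X}_{\varphi}}$ by $\sqsubseteq$-induction on~$\eta$, handling along the way the discrepancy between $\eta_{L/R}^{\mathbf{No}}$ and $\eta_{L/R}^{\mathbf{X}_{\varphi}}$ that the example above exhibits.
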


\begin{lemma}
  \label{inv-dec-lem}{\tmem{{\cite[Corollary~4.21]{BM18}}}} For $\mathfrak{m},
  \mathfrak{n} \in \mathbf{Mo}$, we have
  \begin{eqnarray*}
    \mathfrak{m} \sqsubseteq \mathfrak{n} & \Longleftrightarrow &
    \mathfrak{m}^{- 1} \sqsubseteq \mathfrak{n}^{- 1} .
  \end{eqnarray*}
\end{lemma}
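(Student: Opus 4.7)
Plan: The strategy is to reduce the lemma to Lemma~\ref{neg-dec-lem} (the corresponding statement for negation) by identifying $(\mathbf{Mo},\cdot,\sqsubseteq)$ with $(\mathbf{No},+,\sqsubseteq)$ via Conway's $\omega$-map. Since $\mathbf{Mo}=\mathbf{Smp}_{\mathcal{H}}$ inside $\mathbf{No}^{>}$, Proposition~\ref{prop-group-substructure} provides the canonical isomorphism $\Xi_{\mathbf{Mo}}:\mathbf{No}\to\mathbf{Mo}$ together with the uniform cut equation
\[ \Xi_{\mathbf{Mo}}(z) \;=\; \{\mathcal{H}\,\Xi_{\mathbf{Mo}}(z_L) \mid \mathcal{H}\,\Xi_{\mathbf{Mo}}(z_R)\}_{\mathbf{No}^{>}}. \]
Unfolding $\mathcal{H}=\{H_r:r\in\mathbb{R}^{>}\}$, this reads $r\,\Xi_{\mathbf{Mo}}(z_L) < \Xi_{\mathbf{Mo}}(z) < s\,\Xi_{\mathbf{Mo}}(z_R)$ for all positive reals $r,s$, which (read inside $\mathbf{No}^{>}$) is precisely Conway's recursive definition of $\omega^z$. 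A short induction on $\sqsubseteq$ then identifies $\Xi_{\mathbf{Mo}}$ with the $\omega$-map.

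With this identification, multiplicative inversion on $\mathbf{Mo}$ pulls back to additive negation on $\mathbf{No}$: the identity $\omega^a\cdot\omega^{-a}=\omega^0=1$ gives $(\omega^a)^{-1}=\omega^{-a}$. Writing $\mathfrak{m}=\omega^a$ and $\mathfrak{n}=\omega^b$, the fact that $\Xi_{\mathbf{Mo}}$ is simultaneously an order and simplicity isomorphism yields
\[ \mathfrak{m}\sqsubseteq\mathfrak{n}\;\Longleftrightarrow\; a\sqsubseteq b \qquad\text{and}\qquad \mathfrak{m}^{-1}\sqsubseteq\mathfrak{n}^{-1}\;\Longleftrightarrow\; -a\sqsubseteq -b. \]
The lemma then follows directly from Lemma~\ref{neg-dec-lem}.

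The main technical step is the identification of $\Xi_{\mathbf{Mo}}$ with Conway's $\omega$-map, since $\mathbf{Mo}$ here is presented abstractly as the monomial group of the Hahn-series representation of $\mathbf{No}$ rather than via Conway's original construction; justifying the cut equation for $\omega^z$ requires the classical computation from~\cite{Con76}. An alternative that bypasses the $\omega$-map is to prove, by induction on the well-founded relation $\sqsubseteq$ restricted to $\mathbf{Mo}$, the uniform cut equation $\mathfrak{m}^{-1}=\{(\mathfrak{m}^{\mathbf{Mo}}_R)^{-1}\mid(\mathfrak{m}^{\mathbf{Mo}}_L)^{-1}\}_{\mathbf{Mo}}$, using that inversion reverses $<$ on $\mathbf{No}^{>}$ and intertwines $\mathcal{H}$ with itself via $H_r\mapsto H_{1/r}$; the equivalence stated in the lemma then follows from this cut equation by a second short induction.
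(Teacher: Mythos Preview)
The paper does not supply its own proof of this lemma: it is stated with the citation \cite[Corollary~4.21]{BM18} and no further argument. Your approach is correct and is essentially the one used in~\cite{BM18}: there the $\omega$-map is shown to be a $(\leqslant,\sqsubseteq)$-isomorphism from $\mathbf{No}$ onto $\mathbf{Mo}$ that carries addition to multiplication (hence negation to inversion), and the claim follows at once from Lemma~\ref{neg-dec-lem}. Your alternative---proving directly by $\sqsubseteq$-induction the uniform cut equation $\mathfrak{m}^{-1}=\{(\mathfrak{m}^{\mathbf{Mo}}_R)^{-1}\mid(\mathfrak{m}^{\mathbf{Mo}}_L)^{-1}\}_{\mathbf{Mo}}$ using that $H_r^{-1}\circ(\cdot)^{-1}=(\cdot)^{-1}\circ H_r$---is also sound and slightly more self-contained, since it avoids invoking the multiplicativity of the $\omega$-map.
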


\begin{lemma}
  \label{exp-dec-lem}{\tmem{{\cite[Proposition~4.23]{BM18}}}} Given $\varphi,
  a, b$ in $\mathbf{No}_{\succ}$ with $a, b \prec \tmop{supp} \varphi$, we
  have
  \begin{eqnarray*}
    \mathe^a \sqsubseteq \mathe^b & \Longrightarrow & \mathe^{\varphi \pplus
    a} \sqsubseteq \mathe^{\varphi \pplus b} .
  \end{eqnarray*}
\end{lemma}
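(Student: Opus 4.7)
The plan is to combine Gonshor's inductive cut equation for $\exp$ with the additive decomposition of the exponent into a ``large'' purely infinite part $\varphi$ and a ``small'' purely infinite tail $a$ or $b$. Since $a, b \prec \tmop{supp} \varphi$, both sums $\varphi \pplus a$ and $\varphi \pplus b$ are genuine direct sums in the sense of Hahn series, so by additivity of $\exp$ on direct sums we have $\mathe^{\varphi \pplus a} = \mathe^{\varphi} \mathe^a$ and $\mathe^{\varphi \pplus b} = \mathe^{\varphi} \mathe^b$. All three factors $\mathe^{\varphi}$, $\mathe^a$, $\mathe^b$ lie in $\mathbf{Mo}$ since their arguments are purely infinite, so the problem reduces to an assertion about the simplicity order on monomials that differ only by a common multiplicative factor $\mathe^{\varphi}$.

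Next I would argue by induction on the simplicity $\sqsubseteq$-rank of $b$. The base case $b = 0$ forces $\mathe^b = 1$, so $\mathe^a \sqsubseteq 1$ forces $a = 0$ and the conclusion is trivial. For the inductive step, I would write out Gonshor's cut equation for $\exp(\varphi \pplus b)$: up to positive rational shifts coming from truncated Taylor polynomials of $\exp$ at infinitesimal differences, its options are of the form $\mathe^{c}$ for $c \sqsubset \varphi \pplus b$. Using Lemma \ref{imm-dec-lem}, every such $c$ of the form $\varphi \pplus b'$ with $b' \sqsubset b$ still satisfies $b' \prec \tmop{supp} \varphi$, so the inductive hypothesis applies to every pair $(a', b')$ arising from compatible truncations. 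The hypothesis $\mathe^a \sqsubseteq \mathe^b$ then says exactly that the left and right cut options coming from the $a$-side are, respectively, cofinal below and above $\mathe^a$ in the cut options of $\mathe^b$, and one reads off $\mathe^{\varphi \pplus a} \sqsubseteq \mathe^{\varphi \pplus b}$ from the analogous statement about the cut options of $\mathe^{\varphi \pplus b}$.

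The main obstacle is that $\mathe^a \sqsubseteq \mathe^b$ does \emph{not} in general imply $a \sqsubseteq b$, so one cannot simply lift the relation from exponents to exponents via Lemma \ref{imm-dec-lem}; the simplicity information lives on the monomial side, not the exponent side. What saves the argument is the explicit form of Gonshor's cut equation restricted to purely infinite arguments, together with the support-domination $a, b \prec \tmop{supp} \varphi$: this domination ensures that the ``perturbation'' terms of the cut equation for $\exp(\varphi \pplus b)$ are controlled entirely by $b$, so that multiplication by the fixed monomial $\mathe^{\varphi}$ preserves the strict inequalities between cut options and transports the simplicity relation from $\mathe^a \sqsubseteq \mathe^b$ to $\mathe^{\varphi \pplus a} \sqsubseteq \mathe^{\varphi \pplus b}$. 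The routine but careful verification of this preservation under multiplication by $\mathe^{\varphi}$ is the technical heart of the proof.
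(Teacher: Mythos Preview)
The paper does not supply its own proof of this lemma: it is stated as a citation of \cite[Proposition~4.23]{BM18} and used as a black box among the ``deconstruction lemmas'' of Section~\ref{subsection-decomposition}. So there is no in-paper argument to compare your proposal against.

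That said, your outline is broadly aligned with the Berarducci--Mantova argument you would find at the cited location. They too exploit that for purely infinite $x$ one has $\mathe^x \in \mathbf{Mo}$ and that Gonshor's cut equation for $\exp$ on $\mathbf{No}_{\succ}$ simplifies so that the options of $\mathe^{\varphi \pplus b}$ are, up to positive real scalars, of the form $\mathe^{\varphi \pplus b'}$ with $b' \sqsubset b$ (this is where Lemma~\ref{imm-dec-lem} enters, exactly as you indicate). Your reduction ``multiply both monomials by $\mathe^{\varphi}$ and hope simplicity is preserved'' is \emph{not} a valid shortcut on its own---multiplication by a fixed monomial does not preserve $\sqsubseteq$ in general---and you correctly flag this by saying the simplicity information lives on the monomial side and must be read off from the cut equation rather than transported naively. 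The inductive scheme you describe (on the $\sqsubseteq$-rank of $b$, or equivalently of $\mathe^b$) is the right skeleton; the ``routine but careful verification'' you allude to is genuinely the content of the proof and is carried out in \cite{BM18}.
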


\begin{lemma}
  \label{exp-mon-lem}{\tmem{{\cite[Proposition~4.24]{BM18}}}} Given
  $\mathfrak{m}, \mathfrak{n} \in \mathbf{Mo}^{\succ}$ with $\log \mathfrak{m}
  \prec \mathfrak{n}$, we have
  \begin{eqnarray*}
    \mathfrak{m} \sqsubseteq \mathfrak{n} & \Longrightarrow &
    \mathe^{\mathfrak{m}} \sqsubseteq \mathe^{\mathfrak{n}} .
  \end{eqnarray*}
\end{lemma}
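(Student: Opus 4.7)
The plan is to argue by transfinite induction on the $\sqsubseteq$-complexity of $\mathfrak{n}$ within the surreal substructure $\mathbf{Mo}^{\succ}$. The base case $\mathfrak{m} = \mathfrak{n}$ is immediate, and by transitivity of $\sqsubseteq$ combined with the induction hypothesis, the general step reduces to the situation where $\mathfrak{m}$ is an immediate $\sqsubseteq$-predecessor of $\mathfrak{n}$ in $\mathbf{Mo}^{\succ}$, namely $\mathfrak{m} \in \mathfrak{n}_L^{\mathbf{Mo}^{\succ}} \cup \mathfrak{n}_R^{\mathbf{Mo}^{\succ}}$. Any intermediate predecessor $\mathfrak{p}$ with $\mathfrak{m} \sqsubseteq \mathfrak{p} \sqsubseteq \mathfrak{n}$ lies between $\mathfrak{m}$ and $\mathfrak{n}$ in magnitude, so the auxiliary bound $\log \mathfrak{m} \prec \mathfrak{p}$ is preserved along the chain, allowing the inductive step to be iterated cleanly.

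For the one-step case, I would derive a cut equation for the map $\mathfrak{n} \mapsto \mathe^{\mathfrak{n}}$ viewed as an endomap of $\mathbf{Mo}^{\succ}$. Since $\mathbf{Mo}^{\succ} = \mathbf{Smp}_{\mathcal{H}}$ on $\mathbf{No}^{>, \succ}$ and $\exp$ restricts to a bijection $\mathbf{No}_{\succ}^{>} \to \mathbf{Mo}^{\succ}$, the uniform cut equation for $\exp$ on $\mathbf{No}_{\succ}$ combined with Proposition~\ref{prop-group-substructure} applied to $\mathcal{H}$ should yield a cut representation of $\mathe^{\mathfrak{n}}$ in $\mathbf{Mo}^{\succ}$ whose sides are built from $\mathcal{H}$-orbits of $\mathe^{\mathfrak{n}_L^{\mathbf{Mo}^{\succ}}}$ and $\mathe^{\mathfrak{n}_R^{\mathbf{Mo}^{\succ}}}$. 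Reading off this cut places $\mathe^{\mathfrak{m}}$ directly among the $\sqsubseteq$-predecessors of $\mathe^{\mathfrak{n}}$ in $\mathbf{Mo}^{\succ}$, delivering the conclusion via the characterization of simplicity in a surreal substructure.

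The technical heart of the argument, and the reason for imposing $\log \mathfrak{m} \prec \mathfrak{n}$, is the verification of the cofinality condition in Proposition~\ref{prop-simplicity-condition} for the derived cut representation: one must show that $\mathcal{H} \mathe^{\mathfrak{m}} < \mathe^{\mathfrak{n}}$ whenever $\mathfrak{m}$ is on the left (and dually on the right). The bound $\log \mathfrak{m} \prec \mathfrak{n}$ translates, via strict monotonicity of $\exp$, into $\mathfrak{m} \prec \mathe^{\mathfrak{n}}$, whence $\mathcal{H} \mathe^{\mathfrak{m}} < \mathe^{\mathfrak{n}}$. Without this hypothesis, the cut equation for $\exp$ on $\mathbf{No}_{\succ}$ would contribute spurious real-multiple options such as $n \mathe^{\mathfrak{m}}$ that could fail to stay on the correct side of $\mathe^{\mathfrak{n}}$, breaking the simplicity argument. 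I expect this compatibility check between the $\mathcal{H}$-action and the $\exp$-cut to be the main technical obstacle; once it is in place, Proposition~\ref{prop-simplicity-condition} closes the induction.
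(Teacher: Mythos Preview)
The paper does not supply a proof of this lemma; it is quoted from \cite[Proposition~4.24]{BM18} and merely listed among the deconstruction lemmas of Section~\ref{subsection-decomposition}. So there is no in-paper argument to compare against, and what follows assesses your sketch on its own terms.

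Your outline has the right ingredients, but two steps are not sound as written. First, the reduction to immediate predecessors rests on the claim that any $\mathfrak p$ with $\mathfrak m\sqsubseteq\mathfrak p\sqsubseteq\mathfrak n$ lies between $\mathfrak m$ and $\mathfrak n$ in the ordering. This is false: already in $\mathbf{No}$ one has $0\sqsubset 1\sqsubset \tfrac12$ with $1\notin[0,\tfrac12]$, and since $\mathbf{Mo}^{\succ}$ is a surreal substructure the same pattern occurs there via $\Xi_{\mathbf{Mo}^{\succ}}$. In the configuration $\mathfrak m>\mathfrak n$ with an intermediate $\mathfrak p<\mathfrak n$ you would need $\log\mathfrak m\prec\mathfrak p$ to invoke the induction hypothesis on the pair $(\mathfrak m,\mathfrak p)$, and this does not follow from $\log\mathfrak m\prec\mathfrak n$. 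Second, your account of where the hypothesis enters is off. For distinct $\mathfrak m,\mathfrak n\in\mathbf{Mo}^{\succ}$ one always has $|\mathfrak m-\mathfrak n|\asymp\max(\mathfrak m,\mathfrak n)\succ 1$, so $\mathe^{\mathfrak m}$ and $\mathe^{\mathfrak n}$ lie in different $\mathcal H$-classes regardless; the step ``$\mathfrak m\prec\mathe^{\mathfrak n}\Rightarrow\mathcal H\mathe^{\mathfrak m}<\mathe^{\mathfrak n}$'' is moreover a non sequitur (take $\mathfrak m=\mathfrak n$). In Berarducci--Mantova's argument the hypothesis $\log\mathfrak m\prec\mathfrak n$ is needed instead to control the \emph{power-type} options coming from Gonshor's cut for $\mathe^{\mathfrak m}$: verifying $\mathe^{\mathfrak m}\sqsubseteq\mathe^{\mathfrak n}$ amounts to checking that $\mathe^{\mathfrak n}$ lies in that cut, whose left side contains terms $\asymp\mathfrak m^{k}$ for all $k\in\mathbb N$; when $\mathfrak m>\mathfrak n$ the requirement $\mathfrak m^{k}<\mathe^{\mathfrak n}$ for all $k$ is exactly $\log\mathfrak m\prec\mathfrak n$.
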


\begin{lemma}
  \label{prop-full-exp-dec}Let $\varphi \in \mathbf{No}_{\succ}$ and $r \in
  \mathbb{R}^{\neq}$, let $\mathfrak{m}, \mathfrak{n} \in \mathbf{Mo}^{\succ}
  \cap \mathbf{No}^{\prec \tmop{supp} \varphi}$ with $\mathfrak{m} \in
  \mathcal{E}_{\omega} [\mathfrak{n}]$, and let $\delta \in
  \mathbf{No}_{\succ}$ with $\delta \prec \tmop{supp} \mathfrak{n}$. Then
  \[ \mathfrak{m} \sqsubseteq \mathfrak{n} \Longrightarrow \mathe^{\varphi
     \pplus \tmop{sign} (r) \mathfrak{m}} \sqsubseteq \mathe^{\varphi \pplus
     r\mathfrak{n} \pplus \delta} . \]
\end{lemma}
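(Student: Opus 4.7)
The plan is to peel off the base $\varphi$ and the tail $\delta$ via Lemma~\ref{exp-dec-lem}, reduce to a monomial-level statement via Lemma~\ref{exp-mon-lem} using the $\mathcal{E}_\omega$-equivalence of $\mathfrak{m}$ and $\mathfrak{n}$, and finally establish a scaling fact for the exponential that upgrades $\tmop{sign}(r)\mathfrak{n}$ to $r\mathfrak{n}$ in the exponent.

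First I apply Lemma~\ref{exp-dec-lem} with base $\varphi$: both $\tmop{sign}(r)\mathfrak{m}$ and $r\mathfrak{n}\pplus\delta$ are purely infinite and lie $\prec \tmop{supp}\varphi$ (the latter using $\delta\prec\tmop{supp}\mathfrak{n}\preccurlyeq\mathfrak{n}\prec\tmop{supp}\varphi$), so the goal reduces to $\mathe^{\tmop{sign}(r)\mathfrak{m}}\sqsubseteq\mathe^{r\mathfrak{n}\pplus\delta}$. A second application of Lemma~\ref{exp-dec-lem}, now with base $r\mathfrak{n}$, yields $\mathe^{r\mathfrak{n}}\sqsubseteq\mathe^{r\mathfrak{n}\pplus\delta}$: the precondition $\mathe^{0}=1\sqsubseteq\mathe^{\delta}$ holds because $\mathe^{\delta}>0$ and any positive surreal $x>0$ satisfies $1\sqsubseteq x$. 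By transitivity of $\sqsubseteq$, it suffices to prove $\mathe^{\tmop{sign}(r)\mathfrak{m}}\sqsubseteq\mathe^{r\mathfrak{n}}$.

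For the exchange $\mathfrak{m}\to\mathfrak{n}$, I use the hypothesis $\mathfrak{m}\in\mathcal{E}_\omega[\mathfrak{n}]$ together with the inequality $\mathcal{E}_\omega<E_1$ from (\ref{ineq1}) applied with $\alpha=1$. Since $P_s=E_1H_sL_1\in\mathcal{E}_\omega$ for every $s>0$, one has $\mathfrak{m}^s=P_s\mathfrak{m}\in\mathcal{E}_\omega[\mathfrak{n}]$ and hence $\mathfrak{m}^s<\exp\mathfrak{n}$ for every $s>0$, i.e.\ $s\log\mathfrak{m}<\mathfrak{n}$, whence $\log\mathfrak{m}\prec\mathfrak{n}$. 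Lemma~\ref{exp-mon-lem} then delivers $\mathe^{\mathfrak{m}}\sqsubseteq\mathe^{\mathfrak{n}}$, and Lemma~\ref{inv-dec-lem} applied to the monomials $\mathe^{\mathfrak{m}},\mathe^{\mathfrak{n}}$ supplies $\mathe^{-\mathfrak{m}}\sqsubseteq\mathe^{-\mathfrak{n}}$, so in both signs $\mathe^{\tmop{sign}(r)\mathfrak{m}}\sqsubseteq\mathe^{\tmop{sign}(r)\mathfrak{n}}$.

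The remaining step $\mathe^{\tmop{sign}(r)\mathfrak{n}}\sqsubseteq\mathe^{r\mathfrak{n}}$ is the main obstacle. At the level of exponents Lemma~\ref{sign-dec-lem} gives $\tmop{sign}(r)\mathfrak{n}\sqsubseteq r\mathfrak{n}$, but $\exp$ is not in general $\sqsubseteq$-monotone, so this does not transfer through $\mathe$ via the preceding lemmas alone. Using the identity $\mathe^{-a}=(\mathe^a)^{-1}$ together with Lemma~\ref{inv-dec-lem}, I would first reduce to the case $r>0$, where the claim becomes $\mathe^{\mathfrak{n}}\sqsubseteq\mathe^{s\mathfrak{n}}$ for every $s>0$. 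I would settle this by working inside the surreal substructure $\mathbf{No}_{\succ,1}=\mathbf{No}_\succ\cap\mathbf{No}^{>,\succ}$: from $\mathfrak{n}\sqsubseteq s\mathfrak{n}$ in $\mathbf{No}_{\succ,1}$ one invokes the cut equation (\ref{eq-rich-hyperexp}) for $E_1$ together with Proposition~\ref{prop-group-substructure} applied to the action of $\mathcal{E}_1=\mathcal{H}$ on $\mathbf{No}^{>,\succ}$ with simplest-class $\mathbf{Mo}^{\succ}$; tracking cofinality of cut options locates $\mathe^{\mathfrak{n}}$ as a $\sqsubseteq$-predecessor of $\mathe^{s\mathfrak{n}}$ in $\mathbf{Mo}^{\succ}$, since $\mathcal{H}$ identifies the relevant $\mathcal{E}_1$-orbit. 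Carrying out this cofinality verification, and reconciling it with the specific form of the $E_1$-cut on $1$-truncated exponents, is the technically delicate portion I expect to consume most of the proof.
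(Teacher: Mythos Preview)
Your overall structure matches the paper's proof almost exactly: peel off $\varphi$ and $\delta$ via Lemma~\ref{exp-dec-lem}, extract $\log\mathfrak{m}\prec\mathfrak{n}$ from $\mathfrak{m}\in\mathcal{E}_\omega[\mathfrak{n}]$, apply Lemma~\ref{exp-mon-lem} for $\mathe^{\mathfrak{m}}\sqsubseteq\mathe^{\mathfrak{n}}$, and use Lemma~\ref{inv-dec-lem} for signs. The only substantive divergence is at the step you flag as the ``main obstacle'', namely $\mathe^{\mathfrak{n}}\sqsubseteq\mathe^{s\mathfrak{n}}$ for $s>0$.

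The paper disposes of this in one line using the identity $\mathe^{\mathbf{Mo}^{\succ}}=\mathbf{Smp}_{\mathcal{P}}$ (recorded in the list of remarkable function groups): since $\mathfrak{n}\in\mathbf{Mo}^{\succ}$, the number $\mathe^{\mathfrak{n}}$ is $\mathcal{P}$-simple, and $\mathe^{s\mathfrak{n}}=(\mathe^{\mathfrak{n}})^{s}=P_{s}(\mathe^{\mathfrak{n}})\in\mathcal{P}[\mathe^{\mathfrak{n}}]$, hence $\mathe^{\mathfrak{n}}\sqsubseteq\mathe^{s\mathfrak{n}}$ immediately. Your proposed detour through the cut equation~(\ref{eq-rich-hyperexp}) for $E_1$ and the group $\mathcal{E}_1=\mathcal{H}$ is not only more laborious but aims at the wrong group: both $\mathe^{\mathfrak{n}}$ and $\mathe^{s\mathfrak{n}}$ are already $\mathcal{H}$-simple (they lie in $\mathbf{Mo}^{\succ}$) and sit in \emph{different} $\mathcal{H}$-classes, so $\mathcal{H}$-simplicity gives no relation between them. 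The group that collapses them into a single convex class is $\mathcal{P}$, and once you use it the step is trivial.
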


\begin{proof}
  The condition $\mathfrak{m} \in \mathcal{E}_{\omega} [\mathfrak{n}]$ yields
  $\log \mathfrak{m} \prec \mathfrak{n}$. We have $\mathe^{\mathfrak{m}}
  \sqsubseteq \mathe^{\mathfrak{n}}$ by Lemma~\ref{exp-mon-lem}. The identity
  $\mathe^{\mathbf{Mo}^{\succ}} = \mathbf{Smp}_{\mathcal{P}}$ implies that
  $\mathe^{\mathfrak{m}} \sqsubseteq \mathe^{| r | \mathfrak{n}}$, whence
  $\mathe^{\tmop{sign} (r) \mathfrak{m}} \sqsubseteq \mathe^{r\mathfrak{n}}$
  by Lemma~\ref{inv-dec-lem}. Lemma~\ref{exp-dec-lem} yields $\mathe^{\varphi \pplus
  \tmop{sign} (r) \mathfrak{m}} \sqsubseteq \mathe^{\varphi \pplus
  r\mathfrak{n}}$. Since $\mathe^0 = 1 \sqsubseteq
  \mathe^{\delta} \in \mathbf{No}^{>}$, we may apply Lemma~\ref{exp-dec-lem}
  to $\varphi \pplus r\mathfrak{n}$ and $\varphi \pplus r\mathfrak{n} \pplus
  \delta$ to obtain $\mathe^{\varphi \pplus r\mathfrak{n}} \sqsubseteq
  \mathe^{\varphi \pplus r\mathfrak{n} \pplus \delta}$. We conclude using the
  transitivity of $\sqsubseteq$.
\end{proof}

\begin{lemma}
  \label{hypexp-mon-lem}Let $\alpha \in \omega^{\mathbf{On}}$ with $\alpha >
  1$. For $\varphi, \psi \in \mathbf{No}_{\succ, \alpha}$ with $L_{\alpha}
  E_{< \alpha} \varphi < \psi$, we have
  \[ \varphi \sqsubseteq \psi \Longrightarrow E_{\alpha}^{\varphi} \sqsubseteq
     E_{\alpha}^{\psi} . \]
\end{lemma}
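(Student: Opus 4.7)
The plan is to proceed by $\sqsubseteq$-induction on $\psi$ within the surreal substructure $\mathbf{No}_{\succ, \alpha}$. The base case, where $\psi$ is the $\sqsubseteq$-minimum $L_\alpha \omega$ of $\mathbf{No}_{\succ, \alpha}$, is trivial as it forces $\varphi = \psi$. For the inductive step I may assume $\varphi \sqsubsetneq \psi$; since $\mathbf{No}_{\succ, \alpha}$ is a surreal substructure, $\varphi$ lies in $\psi_L^{\mathbf{No}_{\succ, \alpha}} \cup \psi_R^{\mathbf{No}_{\succ, \alpha}}$, and by a symmetric argument (using \Cref{neg-dec-lem} on the $\mathcal{L}_\alpha$-orbit) it suffices to treat the case $\varphi < \psi$.

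In this case I would invoke the uniform cut equation \Cref{eq-rich-hyperexp},
\[ E_\alpha^\psi = \left\{ E_{<\alpha}\psi,\ \mathcal{E}_\alpha E_\alpha^{\psi_L^{\mathbf{No}_{\succ,\alpha}}} \mid \mathcal{E}_\alpha E_\alpha^{\psi_R^{\mathbf{No}_{\succ,\alpha}}} \right\}, \]
which, taking the identity in $\mathcal{E}_\alpha$ and $\psi' = \varphi$, immediately exhibits $E_\alpha^\varphi$ as a left option of $E_\alpha^\psi$. The main work is to promote this to the stronger statement that $E_\alpha^\varphi$ actually sits on the $\sqsubseteq$-chain down from $E_\alpha^\psi$ in $\mathbf{No}$, rather than being merely a cofinal lower bound. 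I would do this by iterating the inductive hypothesis along a $\sqsubseteq$-chain from $\varphi$ to $\psi$ in $\mathbf{No}_{\succ, \alpha}$: at each successor step, the cut equation identifies the $E_\alpha$-image of the immediate predecessor as the immediate $\mathbf{Mo}_\alpha$-predecessor of the $E_\alpha$-image of its successor, while limit steps are handled by the cofinality property of left options in cut representations of surreal substructures.

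Note that the hypothesis $L_\alpha E_{<\alpha} \varphi < \psi$ is automatic when $\varphi < \psi$: from $E_\gamma \varphi < E_\alpha \varphi$ for $\gamma < \alpha$ together with the strict monotonicity of $L_\alpha$ (property \hyperref[c4]{$\mathbf{C4}$}), one gets $L_\alpha E_{<\alpha} \varphi < \varphi < \psi$. The hypothesis is genuinely needed only in the symmetric case $\varphi > \psi$, where it guarantees $E_\gamma \varphi < E_\alpha^\psi$ for all $\gamma < \alpha$, preventing the parasitic left options $E_{<\alpha} \psi$ from dominating $E_\alpha^\varphi$. The hard part will be verifying that these extra left options $E_{<\alpha} \psi$ and the non-trivial $\mathcal{E}_\alpha$-shifts appearing in the cut equation do not obstruct the identification of $E_\alpha^\varphi$ as an actual $\sqsubset$-predecessor in $\mathbf{No}$; this should follow from the pointwise cofinality relation \Cref{ineq5} together with rewriting the cut representation in the alternative $\mathbf{Mo}_\alpha$-form of \Cref{eq-rich-hyperexp}, yielding a canonical cut representation $E_\alpha^\psi = \{E_\alpha^{\psi_L^{\mathbf{No}_{\succ,\alpha}}} \mid E_\alpha^{\psi_R^{\mathbf{No}_{\succ,\alpha}}}\}_{\mathbf{Mo}_\alpha}$, from which $\sqsubseteq$-compatibility of $E_\alpha$ follows by the uniqueness of the surreal-substructure isomorphism between $\mathbf{No}_{\succ, \alpha}$ and $\mathbf{Mo}_\alpha$.
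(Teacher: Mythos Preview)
Your approach reverses the roles of $\varphi$ and $\psi$ relative to the paper's proof, and this reversal creates the very difficulty you then spend most of the proposal trying to work around. The paper writes the cut equation~(\ref{eq-rich-hyperexp}) for $E_\alpha^{\varphi}$, not for $E_\alpha^{\psi}$:
\[ E_\alpha^{\varphi} = \left\{ E_{<\alpha}\varphi,\ \mathcal{E}_\alpha E_\alpha^{\varphi_L^{\mathbf{No}_{\succ,\alpha}}} \;\middle|\; \mathcal{E}_\alpha E_\alpha^{\varphi_R^{\mathbf{No}_{\succ,\alpha}}} \right\}. \]
Since $\varphi \sqsubseteq \psi$ in the surreal substructure $\mathbf{No}_{\succ,\alpha}$, one has $\varphi_L^{\mathbf{No}_{\succ,\alpha}} \subseteq \psi_L^{\mathbf{No}_{\succ,\alpha}}$ and $\varphi_R^{\mathbf{No}_{\succ,\alpha}} \subseteq \psi_R^{\mathbf{No}_{\succ,\alpha}}$, so $E_\alpha^{\psi}$ automatically satisfies the $\mathcal{E}_\alpha$-bounds. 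The hypothesis $L_\alpha E_{<\alpha}\varphi < \psi$ then gives $E_{<\alpha}\varphi < E_\alpha^{\psi}$. Hence $E_\alpha^{\psi}$ lies in the cut defining $E_\alpha^{\varphi}$, and since $E_\alpha^{\varphi}$ is by construction the \emph{simplest} element of that cut, $E_\alpha^{\varphi} \sqsubseteq E_\alpha^{\psi}$ follows at once---no induction, no chain iteration, no case split on the sign of $\psi - \varphi$.

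Your route---writing the cut for $E_\alpha^{\psi}$ and trying to recognise $E_\alpha^{\varphi}$ as a $\sqsubset$-predecessor---runs into exactly the obstacle you name: an option in a cut representation need not lie on the $\sqsubseteq$-chain below the defined element. Your proposed repair via ``uniqueness of the surreal-substructure isomorphism between $\mathbf{No}_{\succ,\alpha}$ and $\mathbf{Mo}_\alpha$'' does not go through, because $E_\alpha \colon \mathbf{No}_{\succ,\alpha} \to \mathbf{Mo}_\alpha$ is \emph{not} that isomorphism: the $\mathbf{Mo}_\alpha$-version of the cut equation carries the extra left option $E_{X_\alpha}\mathfrak{d}_\alpha(\varphi)$, and this extra option is precisely why the hypothesis $L_\alpha E_{<\alpha}\varphi < \psi$ is needed at all. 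Were $E_\alpha$ the canonical isomorphism, the lemma would hold unconditionally and the hypothesis would be superfluous.
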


\begin{proof}
  By~(\ref{eq-rich-hyperexp}), we have
  \[ E_{\alpha} \varphi = \left\{ E_{< \alpha} \varphi, \mathcal{E}_{\alpha}
     E_{\alpha}^{\varphi^{\mathbf{No}_{\succ, \alpha}}_L} |
     \mathcal{E}_{\alpha} E_{\alpha}^{\varphi^{\mathbf{No}_{\succ, \alpha}}_R}
     \right\} . \]
  Since $\varphi \sqsubseteq \psi$, we have $\varphi_L^{\mathbf{No}_{\succ,
  \alpha}} \subseteq \psi_L^{\mathbf{No}_{\succ, \alpha}}$ and
  $\varphi_R^{\mathbf{No}_{\succ, \alpha}} \subseteq
  \psi_R^{\mathbf{No}_{\succ, \alpha}}$, whence
  \[ \mathcal{E}_{\alpha} E_{\alpha}^{\varphi^{\mathbf{No}_{\succ, \alpha}}_L}
     < E_{\alpha} \psi <\mathcal{E}_{\alpha}
     E_{\alpha}^{\varphi^{\mathbf{No}_{\succ, \alpha}}_R} . \]
  Furthermore, we have $L_{\alpha} E_{< \alpha} \varphi < \psi$, so~$E_{<
  \alpha} \varphi < E_{\alpha}^{\psi}$. We conclude that $E_{\alpha}^{\varphi}
  \sqsubseteq E_{\alpha}^{\psi}$.
\end{proof}

\subsection{Nested truncation}\label{subsection-nested-truncation-relations}

In {\cite[Section~8]{BM18}}, the authors prove the well-nestedness axiom
$\textbf{T4}$ for $\mathbf{No}$ by relying on a well-founded partial order
$\nonconverted{blacktrianglelefteqslant}_{\tmop{BM}}$\label{autolab53} that is
defined by induction. This relation has the additional property that
\[ \nobracket \forall \nobracket a, b \nobracket \in \mathbf{No}^{\neq}
   \nobracket, \quad a \nonconverted{blacktrianglelefteqslant}_{\tmop{BM}} b
   \Longrightarrow a \sqsubseteq b. \]
In this subsection, we define a similar relation $\lesssim$ on $\mathbf{No}$
that will be instrumental in deriving results on the structure of
$(\mathbf{No}, (L_{\gamma})_{\gamma \in \omega^{\mathbf{On}}})$. However, this
relation does {\tmem{not}} {satisfy} $a \lesssim b \Longrightarrow a
\sqsubseteq b$ for all $a, b \in \mathbf{No}$.

Given $a, b \in \mathbf{No}$, we define
\begin{eqnarray*}
  a \lesssim b & \xLeftrightarrow{\tmop{def}} & \nobracket \exists n \in
  \mathbb{N} \nobracket, \hspace{1.2em} a \lesssim_n b,
\end{eqnarray*}
where $(\lesssim_n)_{n \in \mathbb{N}}$ is a sequence of relations that are
defined by induction on $n$, as follows. For $n = 0$, we set $a \lesssim_0 b$,
if $a \trianglelefteqslant b$ or if there exist decompositions
\begin{eqnarray*}
  a & = & \varphi \pplus \tmop{sign} (r) \mathfrak{m}\\
  b & = & \varphi \pplus r\mathfrak{m} \pplus \delta,
\end{eqnarray*}
with $r \in \mathbb{R}^{\neq}$ and $\mathfrak{m} \in \mathbf{Mo}$. Assuming
that $\lesssim_n$ has been defined, we set $a \lesssim_{n + 1} b$ if we are in
one of the two following configurations:

\begin{descriptioncompact}
  \item[Configuration I] \label{conf1}We may decompose $a$ and $b$ as
  \begin{eqnarray}
    a & = & \varphi \pplus \tmop{sign} (r) \mathe^{\psi} 
    (E_{\alpha}^u)^{\iota}  \label{x-dec-st}\\
    b & = & \varphi \pplus r \mathe^{\psi}  (L_{\beta} E_{\alpha}^v)^{\iota}
    \pplus \delta,  \label{y-dec-st}
  \end{eqnarray}
  where $r \in \mathbb{R}^{\neq}$, $\psi \in \mathbf{No}_{\succ}$, $\alpha \in
  \omega^{\mathbf{On}}$, $\beta \omega < \alpha$, $\iota \in \{ - 1, 1 \}$,
  $u, v \in \mathbf{No}_{\succ, \alpha}$,
  \[ \tmop{supp} \psi \succ \log E_{\alpha}^u, \hspace{1em} L_{\beta + 1}
     E_{\alpha}^v, \]
  and $u \lesssim_n v$. If $\alpha = 1$, then we also require that $\psi = 0$.
  
  \item[Configuration II] \label{conf2}We may decompose $a$ and $b$ as
  \begin{eqnarray}
    a & = & \varphi \pplus \tmop{sign} (r) \mathe^{\psi}  \label{z-dec-st}\\
    b & = & \varphi \pplus r \mathe^{\psi'} \mathfrak{a}^{\iota} \pplus
    \delta,  \label{t-dec-st}
  \end{eqnarray}
  where $r \in \mathbb{R}^{\neq}$, $\psi, \psi' \in \mathbf{No}_{\succ}$,
  $\iota \in \{ - 1, 1 \}$, $\mathfrak{a} \in \mathbf{Mo}_{\omega}, \delta \in
  \mathbf{No}$, $\tmop{supp} \psi' \succ \log \mathfrak{a}$, and $\psi
  \lesssim_n \psi'$.
\end{descriptioncompact}

\begin{warning}
  Taking $\alpha = 1$ in the first configuration, we see that $\lesssim$
  extends Berarducci and Mantova's nested truncation ordering. However, the
  relation $\lesssim$ is neither transitive nor anti-symmetric. Furthermore,
  as we already noted above, we do {\tmem{not}} have $\forall a, b \in
  \mathbf{No}, a \lesssim b \Longrightarrow a \sqsubseteq b$.
\end{warning}

\begin{lemma}
  \label{lem-truncated-truncation}Let $\alpha \in \omega^{\mathbf{On}}$. Let
  $a, b \in \mathbf{No}^{>, \succ}$ be numbers of the form
  \begin{eqnarray*}
    a & = & \varphi \pplus r\mathfrak{m}\\
    b & = & \varphi \pplus s\mathfrak{n} \pplus \delta
  \end{eqnarray*}
  where $\varphi, \delta \in \mathbf{No}$, $r, s \in \mathbb{R}^{\neq}$ with
  $\tmop{sign} (r) = \tmop{sign} (s)$, and $\mathfrak{m}, \mathfrak{n} \in
  \mathbf{Mo}^{\prec}$. If $\mathfrak{m}^{- 1} < E_{\rho} \mathfrak{n}^{- 1}$
  for sufficiently large $\rho < \alpha$, then
  \[ b \in \mathbf{No}_{\succ, \alpha} \Longrightarrow a \in
     \mathbf{No}_{\succ, \alpha} . \]
\end{lemma}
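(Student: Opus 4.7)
I use the alternative characterization noted in the paper just after the definition of $\alpha$-truncation: $\varphi \in \mathbf{No}_{\succ, \alpha}$ if and only if $\tmop{supp} \varphi \succ 1/L_{\gamma} E_{\alpha}\varphi$ for every $\gamma < \alpha$. First I observe that $a - b = r\mathfrak{m} - s\mathfrak{n} - \delta$ lies in $\mathbf{No}^{\prec}$, hence $a$ and $b$ have the same dominant monomial; via the hyperexponential Taylor expansion~(\ref{eq-hyperexp-general}), one verifies that $L_{\gamma} E_{\alpha} a \asymp L_{\gamma} E_{\alpha} b$ for every $\gamma < \alpha$. The support of $a$ consists of $\tmop{supp} \varphi$ (shared with $b$) together with $\mathfrak{m}$; for the former, the truncation bound for $b$ transfers to $a$ via this asymptotic equivalence.

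The heart of the proof is to show $\mathfrak{m}^{-1} \prec L_{\gamma} E_{\alpha} b$ for every $\gamma < \alpha$. Since $\gamma \mapsto L_{\gamma} E_{\alpha} b$ is non-increasing, it suffices to verify this bound cofinally in $\gamma < \alpha$. Let $\rho_0 < \alpha$ witness the hypothesis $\mathfrak{m}^{-1} < E_{\rho_0} \mathfrak{n}^{-1}$; since $\alpha = \omega^{\nu}$ is additively indecomposable and $\rho_0 < \alpha$, one can choose $\gamma' < \alpha$ with $\gamma' \geq \gamma$ and $\rho_0 \lleq \gamma'$ (concretely, $\gamma' = \omega^{\eta} \cdot k$ with $\eta = \max(\eta_0, \eta_1)$, where $\omega^{\eta_0}$ and $\omega^{\eta_1}$ are the leading Cantor monomials of $\rho_0$ and $\gamma$, and $k$ large enough). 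The composition rule $L_{\gamma' + \rho_0} = L_{\rho_0} \circ L_{\gamma'}$ then holds, and applying the $\alpha$-truncation of $b$ at the monomial $\mathfrak{n}$ and level $\gamma' + \rho_0 < \alpha$ gives $\mathfrak{n}^{-1} \prec L_{\rho_0} L_{\gamma'} E_{\alpha} b$. Applying $E_{\rho_0}$ (which preserves $\prec$ and satisfies $E_{\rho_0} L_{\rho_0} = \tmop{id}$) yields $E_{\rho_0} \mathfrak{n}^{-1} \prec L_{\gamma'} E_{\alpha} b$, so the hypothesis gives $\mathfrak{m}^{-1} < E_{\rho_0} \mathfrak{n}^{-1} \prec L_{\gamma'} E_{\alpha} b$, hence $\mathfrak{m}^{-1} \prec L_{\gamma'} E_{\alpha} b$. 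By monotonicity, $\mathfrak{m}^{-1} \prec L_{\gamma} E_{\alpha} b$ follows.

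The main technical obstacle is the ordinal-arithmetic bookkeeping to select $\gamma'$ so as to simultaneously satisfy $\rho_0 \lleq \gamma'$, $\gamma' \geq \gamma$, and $\gamma' < \alpha$ (thereby unlocking the identity $L_{\gamma' + \rho_0} = L_{\rho_0} \circ L_{\gamma'}$ from the composition rule in $\mathbb{L}$), together with the verification that the relevant inequalities are preserved by $E_{\rho_0}$ in the $\prec$-sense. A secondary subtlety is establishing, through the Taylor formula~(\ref{eq-hyperexp-general}), the asymptotic equivalence $L_{\gamma} E_{\alpha} a \asymp L_{\gamma} E_{\alpha} b$ used to handle the monomials coming from $\varphi$.
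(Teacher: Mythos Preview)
Your central claim that $L_{\gamma} E_{\alpha}(a) \asymp L_{\gamma} E_{\alpha}(b)$ for every $\gamma < \alpha$ is false, and this breaks the entire argument. The Taylor formula~(\ref{eq-hyperexp-general}) relates $L_{\gamma} E_{\alpha}(a)$ to $L_{\gamma} E_{\alpha}^{\sharp_{\alpha}(a)}$, not to $L_{\gamma} E_{\alpha}(b)$. There is no reason for $\sharp_{\alpha}(a)$ and $\sharp_{\alpha}(b) = b$ to coincide; in fact, once the lemma's conclusion holds and $a \neq b$, we have $\sharp_{\alpha}(a) = a \neq b$, so $E_{\alpha}(a) = E_{\alpha}^{a}$ and $E_{\alpha}(b) = E_{\alpha}^{b}$ are two \emph{distinct} elements of $\mathbf{Mo}_{\alpha}$. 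For any $\gamma < \alpha$ the numbers $L_{\gamma} E_{\alpha}^{a}$ and $L_{\gamma} E_{\alpha}^{b}$ are then distinct monomials, hence certainly not $\asymp$. Since both your treatment of $\tmop{supp} \varphi$ and your treatment of $\mathfrak{m}$ end by transferring the bound $\mathfrak{p}^{-1} \prec L_{\gamma} E_{\alpha}(b)$ to $\mathfrak{p}^{-1} \prec L_{\gamma} E_{\alpha}(a)$ through this equivalence, neither part survives.

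More fundamentally, using the characterization $\tmop{supp} a \succ 1/L_{\gamma} E_{\alpha}(a)$ makes the target depend on $E_{\alpha}(a)$, which you cannot control without already knowing something about $\sharp_{\alpha}(a)$; this is the circularity you are running into. The paper avoids this by arguing by contradiction directly from the defining inequality $a < \ell_{\alpha}^{\uparrow \gamma}(\mathfrak{q}^{-1})$ for $\mathfrak{q} \in (\tmop{supp} a)_{\prec}$: it first reduces to $\delta$ being a single term, then compares the inequalities $b > \ell_{\alpha}^{\uparrow \gamma'}(\mathfrak{p}^{-1})$ (from $b \in \mathbf{No}_{\succ, \alpha}$) and $a < \ell_{\alpha}^{\uparrow \gamma}(\mathfrak{p}^{-1})$ (from $a \notin \mathbf{No}_{\succ, \alpha}$) at nearby levels $\gamma, \gamma'$, and extracts a contradiction by bounding the difference $b - a$. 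The hypothesis $\mathfrak{m}^{-1} < E_{\rho} \mathfrak{n}^{-1}$ is used, as in your sketch, to bridge between $\mathfrak{m}$ and $\mathfrak{n}$, but inside this contradiction framework rather than via the $E_{\alpha}$-characterization.
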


\begin{proof}
  Let $\nu \in \mathbf{On}$ and $\alpha \assign \omega^{\nu}$. Assume for
  contradiction that $b \in \mathbf{No}_{\succ, \alpha}$ and~$a \nin
  \mathbf{No}_{\succ, \alpha}$. Assume first that $a \vartriangleleft b$, so
  $b = a \pplus \delta$. Then $\tmop{supp} b \succ \frac{1}{L_{< \alpha}
  E_{\alpha} b}$. Let $k \in \mathbb{N}^{>}$ be such that {$a \pplus
  k\mathfrak{d}_{\delta} \geqslant b$}. Since $\tmop{supp} (a \pplus
  k\mathfrak{d}_{\delta}) \subseteq \tmop{supp} b$, we deduce that
  $\tmop{supp} (a + k\mathfrak{d}_{\delta}) \succ \frac{1}{L_{< \alpha}
  E_{\alpha} (a + k\mathfrak{d}_{\delta})}$, whence~${a +
  k\mathfrak{d}_{\delta}} \in \mathbf{No}_{\succ, \alpha}$. Modulo replacing
  $b$ by $a + k\mathfrak{d}_{\delta}$, it follow that we may assume without
  loss of generality that $\delta = k\mathfrak{p}$ for some $k \in
  \mathbb{N}^{>}$ and some monomial $\mathfrak{p}$.
  
  On the one hand, $a$ is not $\alpha$-truncated, so there are $\mathfrak{q}
  \in (\tmop{supp} \varphi)_{\prec}$ and $\gamma$ with $0 < \gamma < \alpha$
  and~$a < L_{\alpha}^{\mathord{\uparrow} \gamma} (\mathfrak{q}^{- 1})$. We
  may choose $\gamma = \omega^{\eta} n$ for certain $\eta < \nu$ and $n \in
  \mathbb{N}^{>}$, so~$a < L_{\alpha}^{\mathord{\uparrow} \omega^{\eta} n}
  (\mathfrak{p}^{- 1})$. On the other hand, $a + k\mathfrak{p}$ is
  $\alpha$-truncated, so we have
  \[ a + k\mathfrak{p}> L_{\alpha}^{\mathord{\uparrow} \omega^{\eta}  (n
     +\mathbb{N}^{>})} (\mathfrak{p}^{- 1}) > L_{\alpha}^{\mathord{\uparrow}
     \omega^{\eta} n} (\mathfrak{p}^{- 1}) > a. \]
  We deduce that~$k\mathfrak{p}> L_{\alpha}^{\mathord{\uparrow} \omega^{\eta} 
  (n +\mathbb{N}^{>})} (\mathfrak{p}^{- 1}) - L_{\alpha}^{\mathord{\uparrow}
  \omega^{\eta} n} (\mathfrak{p}^{- 1})$. If $\nu$ is a successor, then
  choosing ${\eta = \nu_-}$, we obtain {$k\mathfrak{p}>
  L_{\alpha}^{\mathord{\uparrow} \omega^{\eta} n} (\mathfrak{p}^{- 1})
  +\mathbb{N}^{>} - L_{\alpha}^{\mathord{\uparrow} \omega^{\eta} n}
  (\mathfrak{p}^{- 1})$}, so $k\mathfrak{p} \succ 1$: a contradiction.
  Otherwise, \ $k\mathfrak{p}> \ell_{[\omega^{\eta + 1}, \alpha)}^{- 1} \circ
  \mathfrak{p}^{- 1}$ by {\cite[(2.4)]{BvdHK:hyp}}, where $\ell_{[\omega^{\eta
  + 1}, \alpha)} \assign \prod_{\omega^{\eta + 1} \leqslant \gamma < \alpha}
  \ell_{\gamma}$. Thus $k^{- 1} \mathfrak{p}^{- 1} < {\ell_{[\omega^{\eta +
  1}, \alpha)} \circ \mathfrak{p}^{- 1}}$, whence $k^{- 1} \ell_0 <
  \ell_{[\omega^{\eta + 1}, \alpha)}$: a~contradiction.
  
  We now treat the general case. By a similar argument as above, we may assume
  without loss of generality that $b = \varphi \pplus s\mathfrak{n}$. Assume
  that $b \leqslant a$. Since $a$ is not $\alpha$-truncated, there exists a
  $\gamma < \alpha$ with $\mathfrak{m} \prec (L_{\gamma} E_{\alpha} a)^{- 1}
  \leqslant (L_{\gamma} E_{\alpha} b)^{- 1}$, whence $\mathfrak{m}^{- 1} \succ
  L_{\gamma} E_{\alpha} b$. But $b$ is $\alpha$\mbox{-}truncated, so
  $\mathfrak{n}^{- 1} \prec L_{< \alpha} E_{\alpha} b$. In particular
  $\mathfrak{n}^{- 1} \prec L_{\gamma} E_{\alpha} b$, so our hypothesis
  $\mathfrak{m}^{- 1} < L_{\rho} \mathfrak{n}^{- 1}$ implies
  that~$\mathfrak{m}^{- 1} \prec L_{\rho} L_{\gamma} E_{\alpha} b \preccurlyeq
  L_{\gamma} E_{\alpha} b$: a contradiction.
  
  Assume now that~$b > a$. As in the first part of the proof, there are $\eta
  < \nu$ and $n < n' < \omega$ with $\varphi \pplus s\mathfrak{n}>
  L_{\alpha}^{\mathord{\uparrow} \omega^{\eta} n'} (\mathfrak{n}^{- 1})$ and
  $L_{\alpha}^{\mathord{\uparrow} \omega^{\eta} n} (\mathfrak{m}^{- 1}) >
  \varphi \pplus r\mathfrak{m}$. Recall that $\mathfrak{m}^{- 1} < E_{\rho}
  \mathfrak{n}^{- 1}$ for sufficiently large $\rho < \alpha$. Take $\eta <
  \nu$ and $n' < \omega$ such that
  \begin{eqnarray}
    L_{\alpha}^{\mathord{\uparrow} \omega^{\eta} n'} (\mathfrak{n}^{- 1}) & >
    & L_{\alpha}^{\mathord{\uparrow} \omega^{\eta}  (n + 1)} (\mathfrak{m}^{-
    1}) \nonumber\\
    L_{\omega^{\eta}} \mathfrak{m}^{- 1} & \prec & \mathfrak{n}^{- 1} \quad
    \text{if $\nu$ is a limit.}  \label{ineq-trunc-limit}
  \end{eqnarray}
  Then $b - a > L_{\alpha}^{\mathord{\uparrow} \omega^{\eta}  (n + 1)}
  (\mathfrak{m}^{- 1}) - L_{\alpha}^{\mathord{\uparrow} \omega^{\eta} n}
  (\mathfrak{m}^{- 1})$. If $\nu$ is a successor, then choosing $\eta = \nu_-$
  yields {$b - a > 1$}, which contradicts the fact that $\mathfrak{m}$ and
  $\mathfrak{n}$ are infinitesimal. So $\nu$ is a limit. Writing $\mathfrak{q}
  \assign \max (\mathfrak{m}, \mathfrak{n})$, we have $b - a \asymp
  \mathfrak{q}$. As in the first part of the proof, we obtain $\mathfrak{q}
  \succcurlyeq {\ell_{[\omega^{\eta + 1}, \alpha)}^{- 1} \circ \mathfrak{m}^{-
  1}}$, so $\mathfrak{q}^{- 1} \preccurlyeq \ell_{[\omega^{\eta + 1}, \alpha)}
  \circ \mathfrak{m}^{- 1} \prec \mathfrak{m}^{- 1}$. In view of
  (\ref{ineq-trunc-limit}), we also obtain {$\mathfrak{q}^{- 1} \prec
  \mathfrak{n}^{- 1}$}, so $\mathfrak{q}^{- 1} \prec \max (\mathfrak{m},
  \mathfrak{n})^{- 1}$: a~contradiction.
\end{proof}

\begin{lemma}
  \label{lem-transitive-ineq}Let $\alpha, \alpha' \in \omega^{\mathbf{On}}$
  with $\alpha' \geqslant \alpha$. For $u, v \in \mathbf{No}^{>, \succ}$, we
  have
  \begin{eqnarray*}
    L_{\alpha} u <\mathcal{E}_{\alpha} v & \Longrightarrow & L_{\alpha'}
    E_{\alpha} u <\mathcal{E}_{\alpha'} E_{\alpha} v.
  \end{eqnarray*}
\end{lemma}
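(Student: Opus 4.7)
My plan is to prove the result by transfinite induction on $\alpha' \in \omega^{\mathbf{On}}$ with $\alpha' \geqslant \alpha$. Since both $\alpha$ and $\alpha'$ lie in $\omega^{\mathbf{On}}$, the strict inequality $\alpha' > \alpha$ automatically forces $\alpha' \geqslant \alpha \omega$, so the induction has a base case $\alpha' = \alpha$ together with a step that handles successor and limit $\omega$-powers strictly above $\alpha$.

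In the base case $\alpha' = \alpha$, the conclusion reduces to $u < \mathcal{E}_\alpha E_\alpha v$. Applying the strictly increasing bijection $E_\alpha$ to the hypothesis $L_\alpha u < \mathcal{E}_\alpha v$ yields $u < E_\alpha f v$ for every $f \in \mathcal{E}_\alpha$, so it suffices, given $g \in \mathcal{E}_\alpha$, to exhibit $f \in \mathcal{E}_\alpha$ with $E_\alpha f v \leqslant g E_\alpha v$, equivalently $f v \leqslant L_\alpha g E_\alpha v$. Since the conjugate $L_\alpha g E_\alpha$ lies in the function group $\mathcal{L}_\alpha = L_\alpha \mathcal{E}_\alpha E_\alpha$, this is precisely the pointwise cofinality $\mathcal{L}_\alpha \leqangle \mathcal{E}_\alpha$ on $\mathbf{No}^{>, \succ}$, which I would extract from the function group inequalities (\ref{ineq1})--(\ref{ineq5}) and the generator structure of $\mathcal{E}_\alpha$.

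In the inductive step, pick $\alpha'' \in \omega^{\mathbf{On}}$ with $\alpha \leqslant \alpha'' < \alpha'$, so the induction hypothesis gives $L_{\alpha''} E_\alpha u < \mathcal{E}_{\alpha''} E_\alpha v$. If $\alpha' = \alpha'' \omega$ is a successor $\omega$-power, the functional equation (\ref{functional-equation}) gives $L_{\alpha'}(E_{\alpha''} w) = L_{\alpha'} w + 1$; applying $L_{\alpha'}$ to the induction hypothesis therefore yields $L_{\alpha'} E_\alpha u < L_{\alpha'}(f E_\alpha v) + 1$ for all $f \in \mathcal{E}_{\alpha''}$, and the inclusion $\mathcal{E}_{\alpha''} \subseteq \mathcal{E}_{\alpha'}$ together with inequality (\ref{ineq4}) absorbs both the additive constant and the $L_{\alpha'}$ to promote the inequality to level $\alpha'$. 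For a limit $\alpha' = \omega^{\mu'}$, the cofinality (\ref{ineq3}) $\langle E_\gamma : \gamma < \alpha' \rangle \legeangle \mathcal{E}_{\alpha'}$ bounds an arbitrary $g \in \mathcal{E}_{\alpha'}$ below some $E_{\alpha''}$ with $\alpha'' < \alpha'$, and monotonicity of $L_{\alpha'}$ combined with the induction hypothesis at this $\alpha''$ yields the conclusion.

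The principal obstacle is the base case: the cofinality $\mathcal{L}_\alpha \leqangle \mathcal{E}_\alpha$ must be established by analyzing compositions of the generators $E_\gamma H_r L_\gamma$ of $\mathcal{E}_\alpha$ under conjugation by $L_\alpha$ and $E_\alpha$, which is unavoidably technical but falls out of the function group machinery from Section \ref{subsubsection-surreal-substructures}. Once this is in place, the inductive step is a mechanical application of the functional equation and the pointwise cofinality inequalities already at our disposal.
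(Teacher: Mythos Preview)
Your base case is essentially the paper's case $\alpha' = \alpha$: the paper shows directly that for every $h \in \mathcal{E}_{\alpha}$ one has $L_{\alpha} h E_{\alpha} > H_{1/2}$ (because $h > E_{\alpha} H_{1/2} L_{\alpha}$ by (\ref{ineq4},\ref{ineq5})), which is exactly the cofinality statement you isolate. So that part is fine.

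The divergence is in how you treat $\alpha' > \alpha$. The paper does \emph{not} use induction at all: it disposes of the case $\alpha' > \alpha$ in one shot by showing $E_{\alpha'} h > E_{\alpha}$ for every $h \in \mathcal{E}_{\alpha'}$. Indeed, from $h > E_{\alpha'} H_{1/2} L_{\alpha'}$ one gets $E_{\alpha'} h > E_{\alpha' 2} H_{1/2} L_{\alpha'}$, and (\ref{ineq2}) gives $E_{\alpha' 2} H_{1/2} > E_{\alpha'} H_2 > E_{\alpha} E_{\alpha'}$, whence $E_{\alpha'} h > E_{\alpha}$. Composing on the right with $E_{\alpha}$ yields $E_{\alpha'} h E_{\alpha} > E_{\alpha} E_{\alpha}$, and combining with $E_{\alpha} u < E_{\alpha} E_{\alpha} v$ (the case $g = \mathrm{id}$ of the hypothesis) finishes the proof.

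Your inductive step, by contrast, has a real gap. In the successor case you arrive at $L_{\alpha'} E_{\alpha} u < L_{\alpha'}(f E_{\alpha} v) + 1$ for all $f \in \mathcal{E}_{\alpha''}$ and then assert that ``(\ref{ineq4}) absorbs both the additive constant and the $L_{\alpha'}$''. But what you need is, for each $g \in \mathcal{E}_{\alpha'}$, an $f \in \mathcal{E}_{\alpha''}$ with $L_{\alpha'}(f w) + 1 \leqslant g w$, i.e.\ $f w \leqslant L_{\alpha''} E_{\alpha'} g\, w$. This forces you to show $L_{\alpha''} E_{\alpha'} g > \mathrm{id}$, equivalently $E_{\alpha'} g > E_{\alpha''}$, which is precisely the inequality the paper proves directly (with $\alpha$ in place of $\alpha''$). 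So the induction is not doing any work: the successor step already requires the full strength of the direct argument. Your limit case is also stated backwards: bounding $g$ \emph{above} by some $E_{\alpha''}$ is unhelpful; you need a \emph{lower} bound $g > E_{\gamma} H_s L_{\gamma}$ with $\gamma < \alpha'$, then choose $\alpha'' \in \omega^{\mathbf{On}}$ with $\gamma < \alpha'' < \alpha'$ and invoke the induction hypothesis there together with $L_{\alpha'} < L_{\alpha''}$.

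In short: drop the induction. Once you have the base case (which you correctly identify), the case $\alpha' > \alpha$ follows from the single inequality $E_{\alpha'} h > E_{\alpha}$, obtained from (\ref{ineq2}) and (\ref{ineq4},\ref{ineq5}) in three lines.
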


\begin{proof}
  Assume that $L_{\alpha} u <\mathcal{E}_{\alpha} v$. Let $h \in
  \mathcal{E}_{\alpha'}$ and let $h^{\tmop{inv}}$ be its functional inverse in
  $\mathcal{E}_{\alpha'}$. We have $h^{\tmop{inv}} < E_{\alpha'} H_2
  L_{\alpha'}$ by (\ref{ineq4}, \ref{ineq5}), whence $h > E_{\alpha'} H_{1 /
  2} L_{\alpha'}$. Furthermore, $u < E_{\alpha} \mathcal{E}_{\alpha} v$, so
  \begin{equation}
    E_{\alpha} u < E_{\alpha} E_{\alpha} \mathcal{E}_{\alpha} v.
    \label{ineq-trans}
  \end{equation}
  We want to prove that~$E_{\alpha} u < (E_{\alpha'} h E_{\alpha}) v$. By
  (\ref{ineq-trans}), it is enough to prove that there is a~$g \in
  \mathcal{E}_{\alpha}$ such that the inequality $E_{\alpha} E_{\alpha} g
  \leqslant E_{\alpha'} h E_{\alpha}$ holds on $\mathbf{No}^{>, \succ}$.
  
  Assume that $\alpha = \alpha'$. Setting $g \assign H_{1 / 2} \in
  \mathcal{E}_{\alpha}$, we have $L_{\alpha} h E_{\alpha} > g$, whence
  $E_{\alpha} g \leqslant h E_{\alpha}$, and $E_{\alpha} E_{\alpha} g
  \leqslant E_{\alpha} h E_{\alpha}$.
  
  Assume that $\alpha' > \alpha$. We have $E_{\alpha'} H_{1 / 2} > H_2$ so
  $E_{\alpha' 2} H_{1 / 2} > E_{\alpha'} H_2 > E_{\alpha} E_{\alpha'}$
  by~(\ref{ineq2}). Thus~${E_{\alpha'} h > E_{\alpha' 2} H_{1 / 2} L_{\alpha'}
  > E_{\alpha}}$. Consequently, $E_{\alpha'} h E_{\alpha} > E_{\alpha}
  E_{\alpha}$, as claimed.
\end{proof}

Given $a, b \in \mathbf{No}$, we write $[a \leftrightarrow b]$ for the interval
$[\min (a, b), \max (a, b)]$.

\begin{proposition}
  \label{prop-convexity-subpath}For $a, b, c \in \mathbf{No}$ with $a \lesssim
  c$ and $b \in [a \leftrightarrow c]$, any infinite path in $a$ shares
  a~subpath with $b$.
\end{proposition}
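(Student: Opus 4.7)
The plan is to prove the proposition by induction on the least $n \in \mathbb{N}$ with $a \lesssim_n c$, for all $b \in [a \leftrightarrow c]$ and all infinite paths $P$ in $a$ simultaneously.

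For the base case $n=0$ we split into the two possibilities of the definition of $\lesssim_0$. If $a \trianglelefteqslant c$, then $c = a \pplus \delta$ with $\tmop{supp}\delta$ small, so any $b \in [a \leftrightarrow c]$ satisfies $b = a \pplus \varepsilon$ with $\tmop{supp}\varepsilon \prec \tmop{supp} a$; hence $\tmop{term} a \subseteq \tmop{term} b$ and $P$ is itself a path in $b$. Otherwise $a = \varphi \pplus \tmop{sign}(r)\mathfrak{m}$ and $c = \varphi \pplus r\mathfrak{m} \pplus \delta$; a sign-and-magnitude argument (using that both $a - \varphi$ and $c - \varphi$ have sign $\tmop{sign}(r)$ and magnitude $\asymp \mathfrak{m}$) shows that $b - \varphi$ has dominant monomial $\mathfrak{m}$ with a nonzero coefficient $s$ of sign $\tmop{sign}(r)$, and moreover $b = \varphi \pplus s\mathfrak{m} \pplus \chi$ with $\tmop{supp}\chi \prec \mathfrak{m}$. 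Then either $P(0) \in \tmop{term}\varphi \subseteq \tmop{term}b$ and $P$ is a path in $b$, or $P(0) = \tmop{sign}(r)\mathfrak{m}$ and the sequence $(s\mathfrak{m}, P(1), P(2), \ldots)$ is a path in $b$ (the continuation depending only on the monomial $\mathfrak{m}$ via its unique hyperserial expansion, by Lemma~\ref{lem-st-expansion-unicity}), so $P$ shares the subpath $P_{\nearrow 1}$ with $b$.

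For the inductive step in Configuration~I with $\mathfrak{m} = \mathe^{\psi}(E_{\alpha}^u)^{\iota}$ and $\mathfrak{n} = \mathe^{\psi}(L_{\beta} E_{\alpha}^v)^{\iota}$, both $a - \varphi$ and $c - \varphi$ share the common exponential factor $\mathe^{\psi}$. Applying the base-case analysis to the outer structure yields $b = \varphi \pplus s\mathfrak{m}^* \pplus \cdots$ where $s \neq 0$ and the dominant monomial $\mathfrak{m}^*$ of $b - \varphi$ lies, in the monomial ordering, between $\mathfrak{m}$ and $\mathfrak{n}$. Since both endpoints factor through $\mathe^{\psi}$, so does $\mathfrak{m}^*$: writing $\mathfrak{m}^* = \mathe^{\psi}\mathfrak{q}^*$, the conditions $\tmop{supp}\psi \succ \log E_{\alpha}^u, L_{\beta+1} E_{\alpha}^v$ together with the uniqueness from Lemma~\ref{lem-st-expansion-unicity} and the atomicity analysis of Lemma~\ref{lem-dominant-atomic-standard-expansion} force the hyperserial expansion of $\mathfrak{m}^*$ into the form $\mathe^{\psi^*}(L_{\beta^*} E_{\alpha}^w)^{\iota}$ with $\psi \trianglelefteqslant \psi^*$, $\beta^* \lleq \beta$ and $w \in [u \leftrightarrow v]$. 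The case $P(0) \in \tmop{term}\varphi$ is handled as in the base case. Otherwise $P(0) = \tmop{sign}(r)\mathfrak{m}$ and $P_{\nearrow 1}$ is an infinite path in either $\psi$ or $u$. In the first sub-case, $\tmop{term}\psi \subseteq \tmop{term}\psi^*$ gives that $P_{\nearrow 1}$ is a subpath in $\psi^*$, hence a subpath (via the expansion of $\mathfrak{m}^*$) in $b$. In the second sub-case we invoke the induction hypothesis on the pair $u \lesssim_n v$ with the intermediate point $w$, obtaining that $P_{\nearrow 1}$ shares a subpath with $w$; Corollary~\ref{cor-subpath-hypexplog} then propagates this to a shared subpath with $L_{\beta^*} E_{\alpha}^w$, and Lemma~\ref{lem-subpath-formula} further propagates it across the factor $\mathe^{\psi^*}$ to a subpath in $\mathfrak{m}^*$, hence in $b$. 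Configuration~II is treated analogously, the role of $u \lesssim_n v$ being played by $\psi \lesssim_n \psi'$.

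The main technical obstacle is the structural lemma that identifies $\mathfrak{d}_{b-\varphi}$ for $b \in [a \leftrightarrow c]$ in Configuration~I: one must show that its hyperserial expansion has exactly the same outer shape $\mathe^{\psi^*}(L_{\beta^*} E_{\alpha}^w)^{\iota}$ with the same $\alpha$ and $\iota$, with $\psi \trianglelefteqslant \psi^*$ and, crucially, with the inner argument $w$ actually landing in the interval $[u \leftrightarrow v]$ required by the induction hypothesis. Verifying this requires combining the atomicity machinery of Lemma~\ref{lem-dominant-atomic-standard-expansion} and Corollary~\ref{cor-alpha-alphaomega} with the cofinality inequalities of Lemma~\ref{lem-transitive-ineq} to rule out parasitic expansions; once this is in place, the transfer lemmas for paths through hyperlogs, hyperexps and exponential factors do the rest of the work.
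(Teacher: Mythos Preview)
Your base case is fine and matches the paper. The genuine gap is in Configuration~I of the inductive step: the ``structural lemma'' you want, asserting that the hyperserial expansion of $\mathfrak{m}^* := \mathfrak{d}_{b-\varphi}$ has the form $\mathe^{\psi^*}(L_{\beta^*}E_{\alpha}^{w})^{\iota}$ with the \emph{same} $\alpha$ and $\iota$ and with $w \in [u \leftrightarrow v]$, is simply false. An arbitrary monomial $\mathfrak{n} := (\mathfrak{m}^*\mathe^{-\psi})^{\iota}$ lying in the interval $[E_{\alpha}^u \leftrightarrow L_{\beta}E_{\alpha}^v]$ need not be $L_{<\alpha}$-atomic, so its expansion can involve a completely different strength. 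Concretely, take $\alpha = \omega$, $\psi = 0$, $\iota = 1$, $\beta = 0$, $u = L_{\omega}\omega$ (so $E_{\omega}^u = \omega$), and any $v$ with $E_{\omega}^v > \omega^2$. Then $\mathfrak{n} = \omega^2$ lies in the interval, but its hyperserial expansion is $E_1^{2L_1\omega}$, not $L_{\beta^*}E_{\omega}^{w}$ for any $w$. No amount of atomicity machinery from Lemma~\ref{lem-dominant-atomic-standard-expansion} or Lemma~\ref{lem-transitive-ineq} will manufacture the claimed form, because it does not hold.

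The paper sidesteps this entirely. It never tries to control the hyperserial expansion of $\mathfrak{n}$; instead it passes to $z := \sharp_{\alpha}(L_{\alpha}\mathfrak{n})$. Since $\mathfrak{n} \in [E_{\alpha}^u \leftrightarrow L_{\beta}E_{\alpha}^v]$ and $L_{\beta}E_{\alpha}^v \in \mathcal{E}_{\alpha}[E_{\alpha}^v]$ by~(\ref{ineq2}), one gets $L_{\alpha}\mathfrak{n} \in [u \leftrightarrow \mathcal{L}_{\alpha}[v]]$; monotonicity of the projection $\sharp_{\alpha} = \pi_{\mathbf{Smp}_{\mathcal{L}_{\alpha}}}$ then gives $z \in [u \leftrightarrow v]$ directly, with no structural analysis of $\mathfrak{n}$ required. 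The induction hypothesis applied to $u \lesssim_{n-1} v$ and the intermediate point $z$ shows that $P$ shares a subpath with $z$, and Lemma~\ref{lem-hyperexp-subpath} (not Corollary~\ref{cor-subpath-hypexplog}) transfers this to $\mathfrak{n}$, since $\mathfrak{d}_{E_{\alpha}(L_{\alpha}\mathfrak{n})} = \mathfrak{n}$. The same trick, taking a logarithm and landing in $[\psi \leftrightarrow \psi' \pplus \iota\log\mathfrak{a}]$, handles Configuration~II. Your outline can be repaired by replacing the unavailable structural lemma with this projection argument.
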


\begin{proof}
  We prove this by induction on $n$ with $a \lesssim_n c$. Let $P$ be an
  infinite path in $a$. Assume that $a \lesssim_0 c$. If $a
  \trianglelefteqslant c$, then we have $a \trianglelefteqslant b$ so $P$ is a
  path in $b$. Otherwise, there are $\varphi, \delta \in \mathbf{No}$, $r \in
  \mathbb{R}^{\neq}$ and $\mathfrak{m} \in \mathbf{Mo}$ with $a = \varphi
  \pplus \tmop{sign} (r) \mathfrak{m}$ and $c = \varphi \pplus r\mathfrak{m}
  \pplus \delta$. Then $b = \varphi \pplus s\mathfrak{n} \pplus t$ for certain
  $t \in \mathbf{No}$, $s \in \mathbb{R}^{\neq}$ and $\mathfrak{n} \in
  \mathbf{Mo}$ with $s\mathfrak{n} \in [\tmop{sign} (r) \mathfrak{m}
  \leftrightarrow r\mathfrak{m}]$. We must have $\mathfrak{n}=\mathfrak{m}$.
  If $P$ is a path in $\varphi$, then it is a path in $b$. Otherwise, it is a
  path in $\tmop{sign} (r) \mathfrak{m}$, so $P_{\nearrow 1}$ is a subpath in
  $s\mathfrak{m}$, hence in $b$.
  
  We now assume that $a \lesssim_n c$ where $n > 0$ and that the result holds
  for all $a', b', c' \in \mathbf{No}$ and $k < n$ with $a' \lesssim_k c'$ and
  $b' \in [a' \leftrightarrow c']$. Assume first that $(a, c)$ is in
  \ref{conf1}, and write
  \[ \begin{array}{rcl}
       a & = & \varphi \pplus \tmop{sign} (r) \mathe^{\psi} 
       (E_{\alpha}^u)^{\iota}\\
       c & = & \varphi \pplus r \mathe^{\psi}  (L_{\beta}
       E_{\alpha}^v)^{\iota} \pplus \delta
     \end{array} \mathpunct{\text{\quad with\quad}} u \lesssim_{n - 1} v. \]
  Then we can write $b = \varphi \pplus s\mathfrak{m} \pplus t$ like in the
  case when $n = 0$. If $P$ is a path in~$\varphi$, then it is a path in $b$.
  So we may assume that $P$ is a path in~$\tmop{sign} (r) \mathe^{\psi} 
  (E_{\alpha}^u)^{\iota}$. Note that we have~$\mathfrak{m} \in [\mathe^{\psi} 
  (E_{\alpha}^u)^{\iota} \leftrightarrow \mathe^{\psi}  (L_{\beta}
  E_{\alpha}^v)^{\iota}]$. Setting $\mathfrak{n} \assign (\mathfrak{m}
  \mathe^{- \psi})^{\iota} \in [E_{\alpha}^u \leftrightarrow L_{\beta}
  E_{\alpha}^v]$, we observe that $\tmop{supp} \log \mathfrak{n} \prec
  \tmop{supp} \psi$, whence $\mathe^{\psi} \mathfrak{n}^{\iota}$ is the
  hyperserial expansion of $\mathfrak{m}$. If $P_{\nearrow 1}$ is a path in
  $\psi$, then it is a path in $\mathfrak{m}$.
  
  Suppose that $P_{\nearrow 1}$ is not a path in $\psi$. Assume first that
  {$\alpha = 1$}, so $\psi = 0$, $\beta = 0$, and $P$ is a~path in
  $(E_1^u)^{\iota}$. Then Lemma~\ref{lem-hyperlog-subpath} implies that
  $P_{\nearrow 1}$ is a~subpath in $\iota u$, so $P_{\nearrow 2}$ is a subpath
  in~$u$. Otherwise, consider the hyperserial expansion $E_{\alpha}^u =
  L_{\beta'} E_{\alpha'}^w$, $E_{\alpha'}^w \in \mathbf{Mo}_{\alpha'}
  \setminus L_{< \alpha'}  \mathbf{Mo}_{\alpha' \omega}$ of $E_{\alpha}^u$.
  Since $P_{\nearrow 1}$ is not a path in $\psi$, it must be a path in $w$.
  The number $L_{\beta'} E_{\alpha'}^w$ is $L_{< \alpha}$\mbox{-}atomic, so we
  must have $\alpha' \geqslant \alpha$ and {$\beta' \ggeq \alpha_{/ \omega}$}.
  There are $n \in \mathbb{N}$ and $\beta'' \gg \alpha_{/ \omega}$ such that
  {$\beta' = \beta'' + \alpha_{/ \omega} n$}. Therefore $u = L_{\beta'' +
  \alpha} E_{\alpha'}^w - n$. It follows by \Cref{cor-subpath-hypexplog} that
  $P_{\nearrow 1}$ shares a subpath with~$u$, whence so does $P$.
  
  Let $z \assign \sharp_{\alpha} (L_{\alpha} \mathfrak{n})$. Recall that
  $\mathfrak{n} \in [E_{\alpha}^u \leftrightarrow L_{\beta} E_{\alpha}^v]$, so
  $L_{\alpha} \mathfrak{n} \in [u \leftrightarrow L_{\alpha} L_{\beta}
  E_{\alpha}^v]$. Now (\ref{ineq2}) implies that $L_{\beta} E_{\alpha}^v \in
  \mathcal{E}_{\alpha} [E_{\alpha}^v]$, so $L_{\alpha} L_{\beta} E_{\alpha}^v
  \in L_{\alpha} \mathcal{E}_{\alpha} [E_{\alpha}^v] =\mathcal{L}_{\alpha}
  [v]$. The function $\sharp_{\alpha} =
  \pi_{\mathbf{Smp}_{\mathcal{L}_{\alpha}}}$ is non-decreasing, so \[z =
  \sharp_{\alpha} (L_{\alpha} \mathfrak{n}) \in [u \leftrightarrow
  \sharp_{\alpha} (L_{\alpha} L_{\beta} E_{\alpha}^v)] = [u \leftrightarrow
  v].\] But $u \lesssim_{n - 1} v$, so the induction hypothesis yields that
  $P_{\nearrow 2}$, and thus $P$, shares a subpath with $z$. We deduce with
  Lemma~\ref{lem-hyperexp-subpath} that $P$ shares a subpath
  with~$\mathfrak{n}$, hence with $b$.
  
  Assume now that $(a, c)$ is in \ref{conf2}, and write
  \[ \begin{array}{rcl}
       a & = & \varphi \pplus \tmop{sign} (r) \mathe^{\psi}\\
       c & = & \varphi \pplus r \mathe^{\psi'} \mathfrak{a}^{\iota} \pplus
       \delta
     \end{array} \mathpunct{\text{\quad with\quad}} \psi \lesssim_{n - 1}
     \psi' . \]
  Note that we also have $\psi \lesssim_{n - 1} \psi' \pplus \iota \log
  \mathfrak{a}$. We may again assume that $P_{\nearrow 1}$ is a path in
  $\psi$. Write $b = \varphi \pplus s' \mathfrak{q} \pplus t'$, where $s' \in
  \mathbb{R}^{\neq}$, $t' \in \mathbf{No}$, and~$\mathfrak{q} \in
  [\mathe^{\psi} \leftrightarrow \mathe^{\psi'} \mathfrak{a}^{\iota}] \cap
  \mathbf{Mo}$. Then $\log \mathfrak{q} \in [\psi \leftrightarrow \psi' \pplus
  \iota \log \mathfrak{a}]$ where $\psi \lesssim_{n - 1} \psi' \pplus \iota
  \log \mathfrak{a}$. We deduce by induction that $P$ shares a subpath
  with~$\log \mathfrak{q}$. By Lemma~\ref{lem-hyperexp-subpath}, it follows
  that $P$ shares a~subpath with $\mathfrak{q}$, hence with $b$. This
  concludes the proof.
\end{proof}

\begin{lemma}
  \label{lem-subpath-expansion}Let $\lambda, \alpha \in \omega^{\mathbf{On}}$
  and $\beta \in \mathbf{On}$ with $\beta \omega < \alpha$. Let $a \in
  \mathbf{No}_{\succ, \lambda}$ be of the form
  \[ a = \varphi \pplus r \mathe^{\psi}  (L_{\beta} E_{\alpha}^b)^{\iota}
     \pplus \delta, \]
  with $\varphi \in \mathbf{No}$, $r \in \mathbb{R}^{\neq}$, $\psi \in
  \mathbf{No}_{\succ}$, $b \in \mathbf{No}_{\succ, \alpha}$, $\iota \in \{ -
  1, 1 \}$, $\delta \in \mathbf{No}$ and $\log L_{\beta} E_{\alpha}^b \prec
  \tmop{supp} \psi$. Consider an infinite path $P$ in $c \in
  \mathbf{No}_{\succ, \alpha}$ with $c \lesssim b$.
  \begin{enumerateroman}
    \item \label{lem-subpath-expansion-1}If $\log E_{\alpha}^c \nprec
    \tmop{supp} \psi$, then $P$ shares a subpath with $\psi$.
    
    \item \label{lem-subpath-expansion-2}If $\log E_{\alpha}^c \prec
    \tmop{supp} \psi$ and $\mathe^{\psi}  (E_{\alpha}^c)^{\iota} \nprec
    \tmop{supp} \varphi$, then $P$ shares a subpath with $\varphi$.
    
    \item \label{lem-subpath-expansion-3}If $\log E_{\alpha}^c \prec
    \tmop{supp} \psi$ and $\mathe^{\psi}  (E_{\alpha}^c)^{\iota} \prec
    \tmop{supp} \varphi$ and $a' \assign \varphi \pplus \tmop{sign} (r)
    \mathe^{\psi}  (E_{\alpha}^c)^{\iota} \nin \mathbf{No}_{\succ, \lambda}$,
    then $P$ shares a subpath with $\varphi$.
  \end{enumerateroman}
\end{lemma}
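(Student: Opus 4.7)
The plan is to apply Proposition \ref{prop-convexity-subpath}: since $c \lesssim b$ and $P$ is infinite in $c$, that proposition guarantees $P$ shares a subpath with every $d \in [c \leftrightarrow b]$. For each of the three cases I aim to exhibit a suitable $d$ in this interval whose structural expansion, combined with the path-transfer lemmas \ref{lem-hyperlog-subpath}, \ref{lem-hyperexp-subpath}, \ref{lem-subpath-formula}, and Corollary \ref{cor-subpath-hypexplog}, forces the shared subpath to live inside $\psi$ in case (\ref{lem-subpath-expansion-1}) and inside $\varphi$ in cases (\ref{lem-subpath-expansion-2}) and (\ref{lem-subpath-expansion-3}).

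For case (\ref{lem-subpath-expansion-1}), the failure of $\log E_{\alpha}^c \prec \tmop{supp}\psi$ yields a monomial $\mathfrak{n} \in \tmop{supp}\psi$ with $\mathfrak{n} \preccurlyeq \log E_{\alpha}^c$, while the standing condition $\tmop{supp}\psi \succ \log L_{\beta} E_{\alpha}^b$ forces $\mathfrak{n} \succ \log L_{\beta} E_{\alpha}^b$. These bounds sandwich $\mathfrak{n}$ between $\log L_{\beta}E_{\alpha}^b$ and $\log E_{\alpha}^c$, allowing me to pick an $\alpha$-truncated $d$ between $c$ and $b$ with $\log E_{\alpha}^d \asymp \mathfrak{n}$. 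The hyperserial expansion of $E_{\alpha}^d$ then exhibits a sub-monomial archimedean to $\mathfrak{n}$, and the subpath of $P$ shared with $d$ (delivered by Proposition \ref{prop-convexity-subpath}) is transported through the $E_{\alpha}$ layer and then through $\mathe^{\mathfrak{n}}$ by Lemma \ref{lem-hyperexp-subpath} and Corollary \ref{cor-subpath-hypexplog}, landing as a subpath of a path in $\psi$ rooted at the term carrying $\mathfrak{n}$.

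Cases (\ref{lem-subpath-expansion-2}) and (\ref{lem-subpath-expansion-3}) share the assumption $\log E_{\alpha}^c \prec \tmop{supp}\psi$, which guarantees that $\mathfrak{m}_c \assign \mathe^{\psi}(E_{\alpha}^c)^{\iota}$ admits a valid hyperserial expansion. In case (\ref{lem-subpath-expansion-2}), the assumption $\mathfrak{m}_c \nprec \tmop{supp}\varphi$ produces a term $s\mathfrak{q}$ of $\varphi$ with $\mathfrak{q} \preccurlyeq \mathfrak{m}_c$; since $\mathe^{\psi}(L_{\beta} E_{\alpha}^b)^{\iota}$ lies well inside $\tmop{supp}\varphi$, a similar sandwich produces $d \in [c \leftrightarrow b]$ for which $\mathe^{\psi}(E_{\alpha}^d)^{\iota}$ matches $\mathfrak{q}$ in dominant behaviour. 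Stripping the $\mathe^{\psi}$ factor via Lemma \ref{lem-subpath-formula} and the $E_{\alpha}$ layer via Lemma \ref{lem-hyperexp-subpath} then pushes the corresponding tail of $P$ into a path of $\varphi$ starting at $s\mathfrak{q}$.

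The principal technical obstacle is case (\ref{lem-subpath-expansion-3}): both $\prec$-conditions hold, yet $a' = \varphi \pplus \tmop{sign}(r)\mathfrak{m}_c$ fails to be $\lambda$-truncated despite $a \in \mathbf{No}_{\succ, \lambda}$. I plan to exploit this asymmetry through Lemma \ref{lem-truncated-truncation} applied to $a'$ and $a$: the contrapositive forces the inequality $\mathfrak{m}_c^{-1} < E_{\rho}\bigl(\mathe^{\psi}(L_{\beta} E_{\alpha}^b)^{\iota}\bigr)^{-1}$ to break for cofinally many $\rho < \lambda$. Unfolding this failure through the expansion formulas \Cref{eq-hyperlog-Taylor} and \Cref{eq-hyperexp-general} and using that $\delta$ and the $L_{\beta}$-layer cannot absorb the discrepancy, I localise the obstruction inside some term $s\mathfrak{q}$ of $\varphi$ whose hyperserial expansion shares a common tail with that of $\mathfrak{m}_c$. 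Applying Proposition \ref{prop-convexity-subpath} to obtain a tail of $P$ inside a suitable $d \in [c \leftrightarrow b]$ and then chasing through Lemmas \ref{lem-hyperexp-subpath} and \ref{lem-subpath-formula} delivers the required subpath of $P$ inside $\varphi$.
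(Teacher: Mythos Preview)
Your plan correctly identifies Proposition~\ref{prop-convexity-subpath} and the path-transfer lemmas as the tools, but the level at which you apply convexity is wrong, and this creates a genuine gap in all three parts. You try to find $d\in[c\leftrightarrow b]$, apply Proposition~\ref{prop-convexity-subpath} there, and then transport through $E_\alpha$ and $\mathe^\psi$. The paper instead uses that $c\lesssim b$ yields $E_\alpha^c\lesssim L_\beta E_\alpha^b$ (Configuration~I with trivial $\varphi,r,\psi,\iota,\delta$) and applies convexity at \emph{that} level, where the target object already lies in the interval.

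In part~(\ref{lem-subpath-expansion-1}), the monomial $\mathe^{\mathfrak{n}}$ you aim at lies in $[E_\alpha^c\leftrightarrow L_\beta E_\alpha^b]$, not in $[E_\alpha^c\leftrightarrow E_\alpha^b]$; because of the $L_\beta$ shift there need be no $d\in[c\leftrightarrow b]$ with $E_\alpha^d$ near $\mathe^{\mathfrak{n}}$. Worse, even granting such a $d$, the step ``$\log E_\alpha^d\asymp\mathfrak{n}$, hence a subpath in $E_\alpha^d$ gives a subpath in $\mathfrak{n}$'' is simply false: archimedean equivalence does not transfer subpaths. The paper instead shows $\mathe^{\mathfrak{m}}\in[E_\alpha^c\leftrightarrow L_\beta E_\alpha^b]$ directly, notes (via Lemma~\ref{lem-hyperexp-subpath}) that $P$ shares a subpath with $E_\alpha^c$, applies Proposition~\ref{prop-convexity-subpath} to get a shared subpath with $\mathe^{\mathfrak{m}}$, and finishes with Lemma~\ref{lem-hyperlog-subpath}. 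Part~(\ref{lem-subpath-expansion-2}) follows the same pattern: for $\mathfrak{m}\in\tmop{supp}\varphi$ with $\mathfrak{m}\preccurlyeq\mathe^\psi(E_\alpha^c)^\iota$ one sets $\mathfrak{n}=(\mathe^{-\psi}\mathfrak{m})^\iota\in[E_\alpha^c\leftrightarrow L_\beta E_\alpha^b]$, obtains a shared subpath with $\mathfrak{n}$, and concludes by analysing the hyperserial expansion of $\mathfrak{m}=\mathe^\psi\mathfrak{n}^\iota$.

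Your sketch for part~(\ref{lem-subpath-expansion-3}) is furthest from a proof. The contrapositive of Lemma~\ref{lem-truncated-truncation} and the Taylor formulas do not ``localise the obstruction inside some term of $\varphi$''; that is not where the witness lives. Writing $\mathfrak{p}=\mathe^\psi(E_\alpha^c)^\iota$ and $\mathfrak{q}=\mathe^\psi(L_\beta E_\alpha^b)^\iota$, the paper reduces to $\delta=0$, shows $\sharp_\lambda(a')=\varphi$, and then uses the failure of $\lambda$-truncation of $a'$ versus the $\lambda$-truncation of $a$ to produce a $\gamma<\lambda$ with $L_\gamma E_\lambda^\varphi\in[\mathfrak{p}^{-1}\leftrightarrow\mathfrak{q}^{-1}]$. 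Since $c\lesssim b$ gives $\mathfrak{p}^{-1}\lesssim\mathfrak{q}^{-1}$ (Configuration~I again), Proposition~\ref{prop-convexity-subpath} makes $P$ share a subpath with $L_\gamma E_\lambda^\varphi$, and Corollary~\ref{cor-subpath-hypexplog} transfers this to $\varphi$. The crucial object is a hyperlogarithm of $E_\lambda^\varphi$ itself, not a term of $\varphi$.
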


\begin{proof}
  \ref{lem-subpath-expansion-1}. If $\log E_{\alpha}^c \nprec \tmop{supp}
  \psi$, then we have $\psi \neq 0$, so $\alpha > 1$. Let $\mathfrak{m} \in
  \tmop{supp} \psi$ with $\log E_{\alpha}^c \succcurlyeq \mathfrak{m}$. Since
  $\log E_{\alpha}^c$ and $\mathfrak{m}$ are monomials, we have $\mathfrak{m}
  \leqslant \log E_{\alpha}^c$, whence $\mathe^{\mathfrak{m}} \leqslant
  E_{\alpha}^c$. Our assumption that $\mathfrak{m} \in \tmop{supp} \psi \succ
  \log L_{\beta} E_{\alpha}^b$ also implies $\mathe^{\mathfrak{m}} \leqslant
  L_{\beta} E_{\alpha}^b$. Hence $\mathe^{\mathfrak{m}} \in [E_{\alpha}^c
  \leftrightarrow L_{\beta} E_{\alpha}^b]$. Now~$P$ shares a subpath with
  $E_{\alpha}^c$, by Lemma~\ref{lem-hyperexp-subpath}. Since $E_{\alpha}^c
  \lesssim L_{\beta} E_{\alpha}^b$, Proposition~\ref{prop-convexity-subpath}
  next implies that $P$ shares a subpath with $\mathe^{\mathfrak{m}}$. Using
  Lemma~\ref{lem-hyperlog-subpath}, we conclude that $P$ shares a subpath with
  $\mathfrak{m}$, and hence with $\psi$.
  
  \ref{lem-subpath-expansion-2}. Let $\mathfrak{m} \in \tmop{supp} \varphi$
  with $\mathfrak{m} \preccurlyeq \mathe^{\psi}  (E_{\alpha}^c)^{\iota}$. It
  is enough to prove that $P$ shares a subpath with~$\mathfrak{m}$. Since
  $\mathfrak{m}$, $\mathe^{\psi}  (L_{\beta} E_{\alpha}^b)^{\iota}$, and
  $\mathe^{\psi}  (E_{\alpha}^c)^{\iota}$ are monomials, we have
  $\mathe^{\psi}  (L_{\beta} E_{\alpha}^b)^{\iota} \leqslant \mathfrak{m}
  \leqslant \mathe^{\psi}  (E_{\alpha}^c)^{\iota}$. Let $\mathfrak{n} \assign
  (\mathe^{- \psi} \mathfrak{m})^{\iota}$, so that $\mathfrak{n} \in
  [L_{\beta} E_{\alpha}^b \leftrightarrow E_{\alpha}^c]$. In particular, we
  have $\tmop{supp} \psi \succ \log \mathfrak{n} \succ 1$. Moreover
  $E_{\alpha}^c \lesssim L_{\beta} E_{\alpha}^b$, so using
  Lemma~\ref{lem-hyperexp-subpath} and
  Proposition~\ref{prop-convexity-subpath}, we deduce in the same way as above
  that~$P$ shares a subpath with $\mathfrak{n}$. If $\mathfrak{n} \nin
  \mathbf{Mo}_{\omega}$, then $\mathfrak{m}= \mathe^{\psi \pplus \iota \log
  \mathfrak{n}}$ is the hyperserial expansion of $\mathfrak{m}$, so $P$ shares
  a subpath with $\mathfrak{m}$. If $\mathfrak{n} \in \mathbf{Mo}_{\omega}$,
  then the hyperserial expansion of $\mathfrak{n}$ must be of the form
  $\mathfrak{n}= E_{\beta'} E_{\alpha'}^u$, since otherwise $\log
  \mathfrak{n}$ would have at least two elements in its support. We deduce
  that $P$ shares a subpath with $u$ and that the hyperserial expansion of
  $\mathfrak{m}$ is $\mathe^{\psi}  (E_{\beta'} E_{\alpha'}^u)^{\iota}$.
  Therefore $P$ shares a subpath with~$\mathfrak{m}$.
  
  \ref{lem-subpath-expansion-3}. We assume that $a'$ is not
  $\lambda$-truncated whereas $\log E_{\alpha}^c \prec \tmop{supp} \psi$ and
  $\mathe^{\psi}  (E_{\alpha}^c)^{\iota} \prec \tmop{supp} \varphi$. If
  $\lambda = 1$, then we must have $\mathe^{\psi}  (E_{\alpha}^c)^{\iota}
  \preccurlyeq 1$, which means that $\psi < 0$ or that $\psi = 0$ and {$\iota
  = - 1$}. But then $\mathe^{\psi}  (L_{\beta} E_{\alpha}^b)^{\iota}
  \preccurlyeq 1$: a contradiction.
  
  Assume that $\lambda > 1$. By Lemma~\ref{lem-truncated-truncation}, we may
  assume without loss of generality that $\delta = 0$. The assumption on $a'$
  and the fact that $a \in \mathbf{No}^{>, \succ}$ imply that $\varphi$ is
  non-zero. Write
  \begin{eqnarray*}
    \mathfrak{p} & \assign & \mathe^{\psi}  (E_{\alpha}^c)^{\iota} \text{\quad
    and}\\
    \mathfrak{q} & \assign & \mathe^{\psi}  (L_{\beta} E_{\alpha}^b)^{\iota} .
  \end{eqnarray*}
  So $a = \varphi \pplus r\mathfrak{q}$ and $a' = \varphi \pplus \tmop{sign}
  (r) \mathfrak{p}$. Note that $\mathfrak{p}$ must be infinitesimal since $a'$
  is not $\lambda$\mbox{-}truncated. Thus $\mathfrak{q}$ is also
  infinitesimal. By Lemma~\ref{lem-truncated-truncation}, we deduce that $E_{<
  \lambda} \mathfrak{q}^{- 1} \prec \mathfrak{p}^{- 1}$. We have
  $\sharp_{\lambda} (a') \vartriangleleft a'$, so $\sharp_{\lambda} (a') =
  \varphi$, since $a$ and $\varphi \vartriangleleft a$ are both
  $\lambda$-truncated. Since $a'$ is not $\lambda$\mbox{-}truncated, there is
  an ordinal $\gamma < \lambda$ with $\mathfrak{p} \prec (L_{\gamma}
  E_{\lambda}^{\varphi})^{- 1}$. If $\varphi \geqslant a$, then $\mathfrak{q}
  \succ (L_{< \lambda} E_{\lambda}^a)^{- 1}$, because $a$ is
  $\lambda$-truncated. Thus $\mathfrak{q} \succ (L_{< \lambda}
  E_{\lambda}^{\varphi})^{- 1}$. If $\varphi < a$, then $\varphi + (L_{<
  \lambda} E_{\lambda}^{\varphi})^{- 1} \in \mathcal{L}_{\lambda} [\varphi]
  <\mathcal{L}_{\lambda} [a] \ni a = \varphi \pplus r\mathfrak{q}$, because
  $\varphi$ and $a$ are $\lambda$-truncated. Now $r > 0$, since $\varphi < a$.
  We again deduce that~$\mathfrak{q} \succ (L_{< \lambda}
  E_{\lambda}^{\varphi})^{- 1}$.
  
  In both cases, we have $L_{\gamma} E_{\lambda}^{\varphi} \in
  [\mathfrak{p}^{- 1} \leftrightarrow \mathfrak{q}^{- 1}]$ where
  $\mathfrak{p}^{- 1} \lesssim \mathfrak{q}^{- 1}$, so $P$ shares a subpath
  with~$L_{\gamma} E_{\lambda}^{\varphi}$, by
  Proposition~\ref{prop-convexity-subpath}. It follows by
  \Cref{cor-subpath-hypexplog} that $P$ shares a subpath with~$\varphi$.
\end{proof}

\subsection{Well-nestedness}\label{subsection-well-nestedness}

We now prove that every number is well-nested. Throughout this subsection, $P$
will be an infinite path inside a number $a \in \mathbf{No}$. At the beginning
of Section~\ref{subsection-paths} we have shown how to attach sequences
$(r_{P, i})_{i < \omega}$, $(\mathfrak{m}_{P, i})_{i < \omega}$,
{\tmabbr{etc.}} to this path. In order to alleviate notations, we will
abbreviate $r_i \assign r_{P, i}$, $\mathfrak{m}_i \assign \mathfrak{m}_{P,
i}$, $u_i \assign u_{P, i}$, $\psi_i \assign \psi_{P, i}$, $\iota_i \assign
\iota_{P, i}$, $\alpha_i \assign \alpha_{P, i}$, and $\beta_i \assign
\beta_{P, i}$ for all~$i \in \mathbb{N}$.

We start with a technical lemma that will be used to show that the existence
of a bad path $P$ in $a$ implies the existence of a bad path in a strictly
simpler number than $a$.

\begin{lemma}
  \label{lem-subpath-descent}Let $a \in \mathbf{No}$, let $P$ be an infinite
  path in $a$ and let $i \in \mathbb{N}$ such that every index~$k \leqslant i$
  is good for $(P, a)$. For $k \leqslant i$, let $\varphi_k \assign
  (u_k)_{\succ \mathfrak{m}_k}$, $\varepsilon_k \assign r_k$, and $\rho_k
  \assign (u_k)_{\prec \mathfrak{m}_k}$, so that $\nobracket \varepsilon_0,
  \ldots, \varepsilon_{i - 1} \nobracket \in \{ - 1, 1 \}$ and
  \begin{eqnarray*}
    u_k & = & \varphi_k \pplus \varepsilon_k \mathe^{\psi_{k + 1}} 
    (E_{\alpha_k}^{u_{k + 1}})^{\iota_k} \hspace*{\fill} (k < i)\\
    u_i & = & \varphi_i \pplus r_i \mathe^{\psi_{i + 1}}  (L_{\beta_i}
    E_{\alpha_i}^{u_{i + 1}})^{\iota_i} \pplus \rho_i .
  \end{eqnarray*}
  Let $\chi \in \{ 0, 1 \}$ and let $c_i \in \mathbf{No}_{\succ, \alpha_{i -
  1}}$ be a number with $c_i \lesssim u_i$ and
  \begin{equation}
    c_i = \varphi_i \pplus \chi \tmop{sign} (r_i) \mathe^{\psi_{i + 1}}
    \mathfrak{p}^{\iota_i}, \label{eq-cj}
  \end{equation}
  for a certain $\mathfrak{p} \in \mathbf{Mo}^{\succcurlyeq}$ with $\log
  \mathfrak{p} \prec \tmop{supp} \psi_{i + 1}$, $\mathfrak{p} \sqsubseteq
  E_{\alpha_i}^{u_{i + 1}}$ and $\mathfrak{p} \in \mathcal{E}_{\omega}
  [E_{\alpha_i}^{u_{i + 1}}]$ whenever $\psi_{i + 1} = 0$. For~$k = i - 1,
  \ldots, 0$, we define
  \begin{equation}
    c_k \assign \varphi_k + \varepsilon_k \mathe^{\psi_{k + 1}} 
    (E_{\alpha_k}^{c_{k + 1}})^{\iota_k} \label{eq-config-I}
  \end{equation}
  Assume that $P$ shares a subpath with $c_i$. If $P$ shares no subpath with
  any of the numbers $\varphi_0, \psi_1, \ldots, \varphi_{i - 1}, \psi_i$,
  then we have~$c_0 \sqsubseteq a$, and $P$ shares a subpath with $c_0$.
\end{lemma}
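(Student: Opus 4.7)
The plan is to argue by downward induction on $k \in \{0, 1, \ldots, i\}$, establishing simultaneously $(S_k)$ $c_k \sqsubseteq u_k$ (with $u_0$ identified with $a$ when $k = 0$) and $(T_k)$ that $P$ shares a subpath with $c_k$. The hypothesis furnishes $(T_i)$; we verify $(S_i)$ directly from the explicit form \Cref{eq-cj}, and the inductive step then propagates both statements one layer upward through the recursion \Cref{eq-config-I}, in which $c_k$ is assembled from $c_{k+1}$ by reapplying $E_{\alpha_k}$ and recombining with $\varphi_k$ and $\psi_{k+1}$.

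For the base case $k = i$, we split on $\chi$. If $\chi = 0$, then $c_i = \varphi_i \trianglelefteqslant u_i$ and $c_i \sqsubseteq u_i$ is immediate. If $\chi = 1$, we begin from $\mathfrak{p} \sqsubseteq E_{\alpha_i}^{u_{i+1}}$ and peel off structure one operation at a time: \Cref{inv-dec-lem} handles the exponent $\iota_i$; the observation that $E_{\alpha_i}^{u_{i+1}}$ is an $\trianglelefteqslant$-truncation of $L_{\beta_i}E_{\alpha_i}^{u_{i+1}}$ deals with the prefactor $L_{\beta_i}$; \Cref{prop-full-exp-dec} multiplies by $\mathe^{\psi_{i+1}}$; and \Cref{sign-dec-lem,imm-dec-lem} absorb the passage from $r_i$ to $\tmop{sign}(r_i)$ and the trailing tail $\rho_i$. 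The delicate sub-case is $\psi_{i+1} = 0$, where \Cref{prop-full-exp-dec} requires an $\mathcal{E}_{\omega}$-equivalence between the two monomials; this is precisely the extra hypothesis $\mathfrak{p} \in \mathcal{E}_{\omega}[E_{\alpha_i}^{u_{i+1}}]$ built into the lemma.

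For the inductive step $(S_{k+1}),(T_{k+1}) \Longrightarrow (S_k),(T_k)$, the simplicity half propagates $c_{k+1} \sqsubseteq u_{k+1}$ through one layer of hyperexponentiation. When $\alpha_k > 1$, this is \Cref{hypexp-mon-lem}: its side condition $L_{\alpha_k}E_{<\alpha_k}c_{k+1} < u_{k+1}$ follows from the fact that $c_{k+1}$ and $u_{k+1}$ share the same $\alpha_k$-truncated head, so the left-hand side lies far below $u_{k+1}$. When $\alpha_k = 1$, the conventions on type I hyperserial expansions force $\psi_{k+1} = 0$ and $\iota_k = 1$, and \Cref{exp-mon-lem,exp-dec-lem} supply the required comparison. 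In all cases one then combines \Cref{exp-dec-lem}, \Cref{inv-dec-lem}, \Cref{sign-dec-lem,imm-dec-lem} to conclude $c_k \sqsubseteq u_k$. The subpath half uses $(T_{k+1})$: since $P$ shares a subpath with $c_{k+1} \in \mathbf{No}_{\succ, \alpha_k}$, \Cref{cor-subpath-hypexplog} yields a shared subpath with $E_{\alpha_k}^{c_{k+1}}$, whence by \Cref{lem-subpath-formula} a shared subpath with $\mathe^{\psi_{k+1}}(E_{\alpha_k}^{c_{k+1}})^{\iota_k}$, a term of $c_k$; this establishes $(T_k)$.

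The main obstacle is verifying the side conditions of \Cref{hypexp-mon-lem} and \Cref{prop-full-exp-dec} at each inductive step --- in particular the inequality $L_{\alpha_k}E_{<\alpha_k}c_{k+1} < u_{k+1}$ and the $\mathcal{E}_{\omega}$-equivalence when $\psi = 0$. These depend crucially on the goodness of every index $k \leqslant i$, which rigidifies each $u_k$ into the form $\varphi_k \pplus \varepsilon_k \mathfrak{m}_k$ with $\varepsilon_k \in \{-1,+1\}$ and no trailing tail for $k < i$; and on the hypothesis that $P$ shares no subpath with any $\varphi_j$ or $\psi_{j+1}$, which ensures that the subpath threaded through $c_{k+1}$ truly descends into the hyperexponential layer rather than being captured by the truncated head $\varphi_k$ or the exponent $\psi_{k+1}$.
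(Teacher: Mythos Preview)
Your overall strategy of downward induction on $k$ matches the paper's, but your induction hypothesis is too weak to close. Tracking only $(S_k)\ c_k \sqsubseteq u_k$ and $(T_k)$ that $P$ shares a subpath with $c_k$ does not suffice; the paper's induction carries five additional statements, among them $c_k \lesssim u_k$, $c_{k+1} \in \mathbf{No}_{\succ,\alpha_k}$, $\log E_{\alpha_k}^{c_{k+1}} \prec \tmop{supp}\psi_{k+1}$, and $\mathe^{\psi_{k+1}}(E_{\alpha_k}^{c_{k+1}})^{\iota_k} \prec \tmop{supp}\varphi_k$. The last two are precisely the side conditions you need for \Cref{exp-dec-lem}, \Cref{imm-dec-lem}, and \Cref{lem-subpath-formula}, and they are obtained via \Cref{lem-subpath-expansion}, whose hypothesis is $c_{k+1} \lesssim u_{k+1}$, not $c_{k+1} \sqsubseteq u_{k+1}$. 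Your final paragraph gestures at these side conditions but does not identify the mechanism; the ``no subpath with $\varphi_j,\psi_{j+1}$'' hypothesis is exactly what \Cref{lem-subpath-expansion} converts into the $\prec\tmop{supp}$ inequalities, but only once $\lesssim$ is available. Without also tracking $c_{k+1} \in \mathbf{No}_{\succ,\alpha_k}$ you cannot even be sure that $E_{\alpha_k}^{c_{k+1}}$ lies in $\mathbf{Mo}_{\alpha_k}$.

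Two further points. Your justification of the \Cref{hypexp-mon-lem} side condition (``$c_{k+1}$ and $u_{k+1}$ share the same $\alpha_k$-truncated head'') is unjustified as stated; the paper instead proves $L_{\alpha_k}c_{k+1} < \mathcal{E}_{\alpha_k}u_{k+1}$ by a case split on whether $\varphi_{k+1}$ or $\psi_{k+2}$ vanishes, invoking \Cref{lem-transitive-ineq} in the degenerate case. And the case $\alpha_k = 1$ is more delicate than you indicate: the conventions do force $\psi_{k+1} = 0$ but not $\iota_k = 1$, and to apply \Cref{exp-mon-lem} or \Cref{prop-full-exp-dec} you need an $\mathcal{E}_\omega$-equivalence between the relevant monomials at each layer, which the paper obtains via a \emph{secondary} descending induction on $d = i,\ldots,k-1$.
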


\begin{proof}
  Using backward induction on $k$, let us prove for $k = i - 1, \ldots, 0$
  that
  \begin{eqnarray}
    L_{\alpha_k} c_{k + 1} & < & \mathcal{E}_{\alpha_k} u_{k + 1}
    \label{induc-3}\\
    \log E_{\alpha_k}^{c_{k + 1}} & \prec & \tmop{supp} \psi_{k + 1} 
    \label{induc-4}\\
    \mathe^{\psi_{k + 1}}  (E_{\alpha_k}^{c_{k + 1}})^{\iota_k} & \prec &
    \tmop{supp} \varphi_k  \label{induc-5}\\
    c_k & \lesssim & u_k \label{induc-7}\\
    \hspace*{\fill} \mathord{\text{$P$ shares a subpath}} & \text{with} & c_{k + 1}
    \label{induc-6} \\
    c_{k + 1} & \in & \mathbf{No}_{\succ, \alpha_k}  \label{induc-2}\\
    c_{k + 1} & \sqsubseteq & u_{k + 1}  \label{induc-1}
  \end{eqnarray}
  and that~(\ref{induc-6}) and~(\ref{induc-1}) also
  hold for $k = - 1$. Given $j\leq i-1$, we use the index notation (\ref{induc-3})\tmrsub{$i$} to refer to the equation (\ref{induc-3}) for $k=i$, and so on for the other conditions.
  
  We first treat the case when $k = i - 1$. Note that $c_i \neq 0$ since it
  contains a subpath, so $\varphi_i \in \mathbf{No}^{>, \succ}$ or $\chi = 1$.
  From our assumption that $c_i = \varphi_i \pplus \chi \tmop{sign} (r_i)
  \mathe^{\psi_{i + 1}} \mathfrak{p}^{\iota_i}$ and the fact that
  $\mathfrak{p} \in \mathcal{E}_{\omega} [E_{\alpha_i}^{u_{i + 1}}]$ if
  $\psi_{i + 1} = 0$, we deduce that $c_i \in \mathcal{E}_{\omega} [u_i]$.
  Hence {$L_{\alpha_{i - 1}} c_i <\mathcal{E}_{\alpha_{i - 1}} u_i$}
  and~(\ref{induc-3})\tmrsub{$i - 1$}. Note that (\ref{induc-6})\tmrsub{$i -
  1$} and (\ref{induc-2})\tmrsub{$i - 1$} follow immediately from the other
  assumptions on~$c_i$. If $\chi = 0$ then $c_i = \varphi_i
  \trianglelefteqslant u_i$. If $\chi = 1$, then $\mathfrak{p} \sqsubseteq
  L_{\beta_i} E_{\alpha_i}^{u_{i + 1}}$, since $L_{\beta_i} E_{\alpha_i}^{u_{i
  + 1}} \in \mathcal{E}_{\alpha_i} [E_{\alpha_i}^{u_{i + 1}}]$ and
  $\mathfrak{p} \sqsubseteq E_{\alpha_i}^{u_{i + 1}} \sqsubseteq
  \mathcal{E}_{\alpha_i} [E_{\alpha_i}^{u_{i + 1}}]$. Hence
  $\mathfrak{p}^{\iota_i} \sqsubseteq (L_{\beta_i} E_{\alpha_i}^{u_{i +
  1}})^{\iota_i}$ by \Cref{inv-dec-lem} and $\tmop{sign} (r_i) \mathe^{\psi_{i
  + 1}} \mathfrak{p}^{\iota_i} \sqsubseteq r_i \mathe^{\psi_{i + 1}} 
  (L_{\beta_i} E_{\alpha_i}^{u_{i + 1}})^{\iota_i}$ by
  Lemmas~\ref{sign-dec-lem} and~\ref{exp-dec-lem}. Finally, $c_i \sqsubseteq
  u_i$ by Lemma~\ref{imm-dec-lem}, so (\ref{induc-1})\tmrsub{$i - 1$} holds in
  general. Recall that $P$ is a subpath in~$c_i$, but that it shares no
  subpath with $\psi_i$ or~$\varphi_{i - 1}$. In view of
  (\ref{induc-2})\tmrsub{$i - 1$}, we deduce (\ref{induc-4})\tmrsub{$i - 1$}
  from Lemma~\ref{lem-subpath-expansion}(\ref{lem-subpath-expansion-1}) and
  (\ref{induc-5})\tmrsub{$i - 1$} from
  Lemma~\ref{lem-subpath-expansion}(\ref{lem-subpath-expansion-2}). Combining
  (\ref{induc-4})\tmrsub{$i - 1$}, (\ref{induc-5})\tmrsub{$i - 1$} and
  (\ref{induc-2})\tmrsub{$i - 1$} with the relation $c_i \lesssim u_i$, we
  finally obtain (\ref{induc-7})\tmrsub{$i - 1$}.
  
  Let $k \in \{ 0, \ldots, i - 1 \}$ and assume that
  (\ref{induc-3}--\ref{induc-1})\tmrsub{$\ell$} hold for all $\ell \geqslant
  k$. We shall prove~(\ref{induc-3}--\ref{induc-1})\tmrsub{$k - 1$} if $k >
  0$, as well as (\ref{induc-6})\tmrsub{$- 1$} and~(\ref{induc-1})\tmrsub{$-
  1$}. Recall that
  \[ c_k = \varphi_k + \varepsilon_k \mathe^{\psi_{k + 1}} 
     (E_{\alpha_k}^{c_{k + 1}})^{\iota_k} . \]
  \begin{descriptioncompact}
    \item[(\ref{induc-3})\tmrsub{$k - 1$}] Recall that $k > 0$. If $\varphi_k
    \neq 0$ or $\psi_{k + 1} \neq 0$, then $c_k \in \mathcal{P} [u_k]$
    and~(\ref{induc-4}--\ref{induc-5})\tmrsub{$k$}
    imply~(\ref{induc-3})\tmrsub{$k - 1$}. Assume now that $\varphi_k =
    \psi_{k + 1} = 0$. It follows since $k > 0$ that {$\iota_k = 1$}, so $c_{k
    - 1} = E_{\alpha_{k - 1}}^{c_k}$ and $u_{k - 1} = E_{\alpha_{k - 1}} u_k$.
    Since $E_{\alpha_{k - 1}}^{u_k}$ is a hyperserial expansion, we must have
    $u_k \nin \mathbf{Mo}_{\alpha_{k - 1} \omega}$, so $\alpha_{k - 1}
    \geqslant \alpha_k$. The result now follows from
    (\ref{induc-3})\tmrsub{$k$} and Lemma~\ref{lem-transitive-ineq}.
    
    \item[(\ref{induc-6})\tmrsub{$k - 1$}] We know by
    (\ref{induc-6})\tmrsub{$k$} that $P$ shares a subpath with $c_{k + 1}$.
    Since $c_{k + 1} \in \mathbf{No}_{\succ, \alpha_k}$, we deduce with
    \Cref{cor-subpath-hypexplog} that $P$ also shares a subpath with
    $E_{\alpha_k}^{c_{k + 1}}$, hence with $(E_{\alpha_k}^{c_{k +
    1}})^{\iota_k}$. In view of (\ref{induc-4})\tmrsub{$k$} and
    \Cref{lem-subpath-formula}, we see that $P$ shares a subpath with
    $\mathe^{\psi_{k + 1}}  (E_{\alpha_k}^{c_{k + 1}})^{\iota_k}$ . Hence
    (\ref{induc-5})\tmrsub{$k$} gives that $P$ shares a subpath with $c_k$.
    
    \item[(\ref{induc-4})\tmrsub{$k - 1$}]  By~(\ref{induc-7})\tmrsub{$k$}, we
    have $c_k \lesssim u_k$. Now $P$ shares a subpath with $c_k$ by
    (\ref{induc-6})\tmrsub{$k$}, but it shares no subpath with~$\psi_k$.
    Lemma~\ref{lem-subpath-expansion}(\ref{lem-subpath-expansion-1}) therefore
    yields the desired result $\log E_{\alpha_{k - 1}}^{c_k} \prec \tmop{supp}
    \psi_k$.
    
    \item[(\ref{induc-5})\tmrsub{$k - 1$}] As above, $P$ shares a subpath with
    $c_k$, but no subpath with~$\varphi_{k - 1}$. We also have $c_k \lesssim
    u_k$ and $\log E_{\alpha_{k - 1}}^{c_k} \prec \tmop{supp} \psi_k$, so
    (\ref{induc-5})\tmrsub{$k - 1$} follows from
    Lemma~\ref{lem-subpath-expansion}(\ref{lem-subpath-expansion-2}).
    
    \item[(\ref{induc-7})\tmrsub{$k - 1$}] We obtain (\ref{induc-7})\tmrsub{$k
    - 1$} by combining (\ref{induc-3}--\ref{induc-7})\tmrsub{$k$} and
    (\ref{induc-2})\tmrsub{$k$}.
    
    \item[(\ref{induc-2})\tmrsub{$k - 1$}] The path $P$ shares a subpath with
    $c_k$, but no subpath with $\varphi_k$. By what precedes, we also have
    $\log E_{\alpha_k}^{c_{k + 1}} \prec \tmop{supp} \psi_k$ and
    $\mathe^{\psi_k}  (E_{\alpha_k}^{c_{k + 1}})^{\iota_k} \prec \tmop{supp}
    \varphi_k$. Note finally that $u_k \in \mathbf{No}_{\succ, \alpha_{k -
    1}}$. Hence {$c_k \in \mathbf{No}_{\succ, \alpha_{k - 1}}$}, by applying
    Lemma~\ref{lem-subpath-expansion}(\ref{lem-subpath-expansion-3}) with
    $\alpha_k$, $\alpha_{k - 1}$, $u_k$, $u_{k + 1}$, and $c_{k + 1}$ in the
    roles of $\alpha$, $\lambda$, $a$, $b$, and $c$.
    
    \item[(\ref{induc-1})\tmrsub{$k - 1$}] It suffices to prove that
    $E_{\alpha_k}^{c_{k + 1}} \sqsubseteq E_{\alpha_k}^{u_{k + 1}}$, since
    \begin{align*}
      &  & E_{\alpha_k}^{c_{k + 1}} \sqsubseteq E_{\alpha_k}^{u_{k + 1}} && \\
      & \Longrightarrow & (E_{\alpha_k}^{c_{k + 1}})^{\iota_k} \sqsubseteq
      (E_{\alpha_k}^{u_{k + 1}})^{\iota_k} && \text{(Lemma~\ref{inv-dec-lem})}\\
      & \Longrightarrow & \mathe^{\psi_{k + 1}}  (E_{\alpha_k}^{c_{k +
      1}})^{\iota_k} \sqsubseteq \mathe^{\psi_{k + 1}}  (E_{\alpha_k}^{u_{k +
      1}})^{\iota_k} && \text{(Lemma~\ref{exp-dec-lem})}\\
      & \Longrightarrow & \varepsilon_k \mathe^{\psi_{k + 1}} 
      (E_{\alpha_k}^{c_{k + 1}})^{\iota_k} \sqsubseteq \varepsilon_k
      \mathe^{\psi_{k + 1}}  (E_{\alpha_k}^{u_{k + 1}})^{\iota_k}\\
      & \Longrightarrow & \varphi_k \pplus \varepsilon_k \mathe^{\psi_{k +
      1}}  (E_{\alpha_k}^{c_{k + 1}})^{\iota_k} \sqsubseteq \varphi_k \pplus
      \varepsilon_k \mathe^{\psi_{k + 1}}  (E_{\alpha_k}^{u_{k +
      1}})^{\iota_k} && \text{(Lemma~\ref{imm-dec-lem})}\\
      & \Longrightarrow & c_k \sqsubseteq u_k. &&
    \end{align*}
    Assume that $\alpha_k > 1$ and recall that
    \begin{eqnarray*}
      c_k & = & \varphi_k \pplus \varepsilon_k \mathe^{\psi_{k + 1}}
      (E_{\alpha_k}^{c_{k + 1}})^{\iota_k}\\
      c_{k + 1} & = & \varphi_{k + 1} \pplus \varepsilon_{k + 1}
      \mathe^{\psi_{k + 2}} (E_{\alpha_{k + 1}}^{c_{k + 2}})^{\iota_{k + 1}} .
    \end{eqnarray*}
    By Lemma~\ref{hypexp-mon-lem}, it suffices to prove that $c_{k + 1}
    \sqsubseteq u_{k + 1}$ and that $E_{\gamma} c_{k + 1} < E_{\alpha_k}^{u_{k
    + 1}}$ for all $\gamma < \alpha_k$. The first relation holds by
    (\ref{induc-1})\tmrsub{$k$}. By (\ref{induc-3})\tmrsub{$k$}, we have
    $L_{\alpha_k} c_{k + 1} <\mathcal{E}_{\alpha_k} u_{k + 1}$. Therefore
    $c_{k + 1} < E_{\alpha_k}  \frac{1}{2} u_{k + 1} < L_{< \alpha_k}
    E_{\alpha_k} u_{k + 1}$ by Lemma~\ref{hypexp-mon-lem}. This yields the
    result.
    
    Assume now that $\alpha_k = 1$. For $d = 0, \ldots, i$, let
    \begin{eqnarray*}
      \mathfrak{c}_d & \assign & \mathfrak{d}_{c_d - \varphi_d}\\
      \mathfrak{u}_d & \assign & \mathfrak{d}_{u_d - \varphi_d} .
    \end{eqnarray*}
    We will prove, by a second descending induction on $d = i, \ldots, k - 1$,
    that the monomials $\mathfrak{c}_d$ and $\mathfrak{u}_d$ satisfy the
    premises of Lemma~\ref{prop-full-exp-dec}, {\tmabbr{i.e.}}
    $\mathfrak{c}_d, \mathfrak{u}_d \succ 1$, $\mathfrak{c}_d \in
    \mathcal{E}_{\omega} [\mathfrak{u}_d]$, and $\mathfrak{c}_d \sqsubseteq
    \mathfrak{u}_d$. It will then follow by Lemma~\ref{prop-full-exp-dec} that
    $\mathe^{c_k} \sqsubseteq \mathe^{u_k}$, thus concluding the proof.
    
    If $d = i$, then $\tmop{supp} c_i, \tmop{supp} u_i \succ 1$, because
    $\alpha_{i - 1} = 1$. In particular {$\mathfrak{c}_i, \mathfrak{u}_i \succ
    1$}. Moreover, $\mathfrak{c}_i \sqsubseteq \mathfrak{u}_i$ follows from
    our assumption that $\mathfrak{p} \sqsubseteq E_{\alpha_i}^{u_{i + 1}}$,
    the fact that $E_{\alpha_i}^{u_{i + 1}} \sqsubseteq \mathcal{E}_{\alpha_i}
    [E_{\alpha_i}^{u_{i + 1}}] \ni L_{\beta_i} E_{\alpha_i}^{u_{i + 1}}$, and
    Lemmas~\ref{exp-dec-lem} and~\ref{inv-dec-lem}. If $\psi_{i + 1} \neq 0$,
    then we have $\mathfrak{c}_i \in \mathcal{E}_{\omega} [\mathfrak{u}_i]$
    because $\tmop{supp} \psi_{i + 1} \succ \log \mathfrak{p}, \log
    E_{\alpha_i}^{u_{i + 1}}$. Otherwise, we have $\mathfrak{c}_i
    =\mathfrak{p} \in \mathcal{E}_{\omega} [E_{\alpha_i}^{u_{i + 1}}]
    =\mathcal{E}_{\omega} [\mathfrak{u}_i]$.
    
    Now assume that $d < i$, that the result holds for $d + 1$, and that
    $\alpha_d = 1$. Again {$\alpha_d = 1$} implies that $\mathfrak{c}_{d + 1},
    \mathfrak{u}_{d + 1} \succ 1$. The relation $c_{d + 1} \sqsubseteq u_{d +
    1}$ and Lemmas~\ref{neg-dec-lem},~\ref{sign-dec-lem},
    and~\ref{imm-dec-lem} imply that $\mathfrak{c}_{d + 1} \sqsubseteq
    \mathfrak{u}_{d + 1}$. If $\psi_{d + 2} \neq 0$, then $\mathfrak{c}_{d +
    1} \in \mathcal{E}_{\omega} [\mathfrak{u}_{d + 1}]$
    by~(\ref{induc-4})\tmrsub{$d + 1$}. Otherwise, we have $\iota_{d + 1} =
    1$, because $c_d \in \mathbf{No}_{\succ, 1}$. Since $\alpha_d = 1$, the
    number $u_{d + 1} = \varphi_{d + 1} \pplus \varepsilon_{d + 1}
    E_{\alpha_{d + 1}}^{u_{d + 2}}$ is not tail-atomic, so we must
    have~$\alpha_{d + 1} = 1$. This entails that $\mathfrak{c}_{d + 1} =
    \mathe^{c_{d + 2}}$ and $\mathfrak{u}_{d + 1} = \mathe^{u_{d + 2}}$. By
    the induction hypothesis at {$d + 1$}, we have $\mathfrak{c}_{d + 2} \in
    \mathcal{E}_{\omega} [\mathfrak{u}_{d + 2}]$. We deduce that~$c_{d + 2}
    \in \mathcal{E}_{\omega} [u_{d + 2}]$, so
    \[ \mathfrak{c}_{d + 1} \in \exp \mathcal{E}_{\omega} [u_{d + 2}]
       =\mathcal{E}_{\omega} [\mathe^{u_{d + 2}}] =\mathcal{E}_{\omega}
       [\mathfrak{u}_{d + 1}] . \]
    It follows by induction that (\ref{induc-1})\tmrsub{$k - 1$} is valid.
  \end{descriptioncompact}
  
  This concludes our inductive proof. The lemma follows from
  (\ref{induc-1})\tmrsub{$- 1$} and (\ref{induc-6})\tmrsub{$- 1$}.
\end{proof}

We are now in a position to prove our first main theorem.

\begin{proof*}{Proof of \Cref{th-well-nested}}
  Assume for contradiction that the theorem is false. Let $a$ be a
  $\sqsubseteq$-minimal ill-nested number and let $P$ be a bad path in $a$.
  Let $i \in \mathbb{N}$ be the smallest bad index in $(P, a)$. As in
  Lemma~\ref{lem-subpath-descent}, we define $\varphi_k \assign (u_k)_{\prec
  \mathfrak{m}_k}$, $\rho_k \assign (u_k)_{\succ \mathfrak{m}_k}$, and
  $\varepsilon_k \assign r_k$ for all $k \leqslant i$. We may assume that $i >
  0$, otherwise the number $c_0 \assign \varphi_0 \pplus \tmop{sign} (r_0)
  \mathe^{\psi_1}  (E_{\alpha_0}^{u_1})^{\iota_0}$ is ill-nested and
  {satisfies}~{$c_0 \sqsubset a$}: a contradiction.
  
  Assume for contradiction that there is a $j < i$ such that $\varphi_j$ or
  $\psi_{j + 1}$ is ill-nested. Set $\chi \assign 0$ if $\varphi_j$ is
  ill-nested and $\chi \assign 1$ otherwise. If $\chi = 1$, then $P$ cannot
  share a~subpath with~$\varphi_j$, so $\tmop{supp} \varphi_j \succ
  \mathe^{\psi_{j + 1}}$ by Lemma~\ref{lem-subpath-expansion}, and $\varphi_j
  \pplus \varepsilon_j \mathe^{\psi_{j + 1}}$ is ill-nested. In general, it
  follows that $c_j \assign \varphi_j \pplus \chi \varepsilon_j
  \mathe^{\psi_{j + 1}}$ is ill-nested. Let $Q$ be a bad path in~$c_j$ and set
  $P' \assign (P (0), \ldots, P (j - 1)) \ast Q$. Then we may apply
  Lemma~\ref{lem-subpath-descent} to $j$, $c_j$, and $P'$ in the roles of $i$,
  $c_i$, and $P$. Since $c_j \neq u_j$, this yields an ill-nested number $c_0
  \sqsubset a$: a contradiction.
  
  Therefore the numbers $\varphi_0, \psi_1, \ldots, \varphi_{i - 1}, \psi_i$
  are well-nested. Since $i$ is bad for $(P, a)$, one of the four cases listed
  in Definition~\ref{def-good-path} must occur. We set
  \[ d_i \assign \left\{\begin{array}{lll}
       \varphi_i \pplus \tmop{sign} (r_i) \mathe^{\psi_{i + 1}} & \quad &
       \text{if Definition~\ref{def-good-path}(\ref{def-good-path-4})
       occurs}\\
       \varphi_i \pplus \tmop{sign} (r_i) \mathe^{\psi_{i + 1}} 
       (E_{\alpha_i}^{u_{i + 1}})^{\iota_i} &  & \text{otherwise} .
     \end{array}\right. \]
  By construction, we have $d_i \lesssim u_i$. Furthermore $P$ shares a
  subpath with $d_i$, so there exists a bad path $Q$ in~$d_i$. We have $d_i
  \in \mathbf{No}_{\succ, \alpha_{j - 1}}$ by
  Lemma~\ref{lem-truncated-truncation}. If
  Definition~\ref{def-good-path}(\ref{def-good-path-4}) occurs, then we must
  have $\psi_{i + 1} \neq 0$ so $d_i$ is written as in (\ref{eq-cj}) with
  $d_i$ in the role of $c_i$ and $\mathfrak{p}= \chi = 1$. Otherwise, $d_i$ is
  as in (\ref{eq-cj}) for $\mathfrak{p}= E_{\alpha_i}^{u_{i + 1}}$. Setting
  $P' \assign (P (0), \ldots, P (i - 1)) \ast Q$, it follows that we may apply
  Lemma~\ref{lem-subpath-descent} to $d_i$ and~$P'$ in the roles of $c_i$ and
  $P$. We conclude that there exists an ill-nested number $d_0 \sqsubset a$: a
  contradiction.
\end{proof*}

\section{Surreal substructures of nested numbers}\label{section-nested-series}

In the previous section, we have examined the nature of infinite paths in
surreal numbers and shown that they are ultimately ``well-behaved''. In this
section, we work in the opposite direction and show how to construct surreal
numbers that contain infinite paths of a specified kind. We follow the same
method as in {\cite[Section~8]{BvdH19}}.

Let us briefly outline the main ideas. Our aim is to construct ``nested
numbers'' that correspond to nested expressions like
\begin{eqnarray}
  a & = & \sqrt{\omega} + \mathe^{\sqrt{\log \omega} - \mathe^{\sqrt{\log \log
  \omega} + \mathe^{\sqrt{\log \log \log \omega} - \mathe^{\udots}}}} 
  \label{a-nested}
\end{eqnarray}
Nested expressions of this kind will be presented through so-called
{\tmem{coding sequences}} $\Sigma$. Once we have fixed such a coding sequence
$\Sigma$, numbers $a$ of the form~(\ref{a-nested}) need to satisfy a sequence
of natural inequalities: for any $r \in \mathbb{R}$ with $r > 1$, we require
that
\[ \begin{array}{rcccl}
     \frac{1}{r}   \sqrt{\omega} & < & a & < & r \sqrt{\omega}\\
     \sqrt{\omega} + \mathe^{\frac{1}{r}  \sqrt{\log \omega}} & < & a & < &
     \sqrt{\omega} + \mathe^{r \sqrt{\log \omega}}\\
     \sqrt{\omega} + \mathe^{\sqrt{\log \omega} - \mathe^{r \sqrt{\log \log
     \omega}}} & < & a & < & \sqrt{\omega} + \mathe^{\sqrt{\log \omega} -
     \mathe^{\frac{1}{r}   \sqrt{\log \log \omega}}}\\
     &  & \vdots &  & 
   \end{array} \]
Numbers that satisfy these constraints are said to be {\tmem{admissible}}.
Under suitable conditions, the class {\tmstrong{Ad}} of admissible numbers
forms a convex surreal substructure. This will be detailed in
Sections~\ref{subsection-coding-sequences}
and~\ref{subsection-admissible-sequences}, where we will also introduce
suitable coordinates
\[ \begin{array}{ccccc}
     a_{; 0} & = & \sqrt{\omega} + \mathe^{\sqrt{\log \omega} -
     \mathe^{\sqrt{\log \log \omega} + \mathe^{\sqrt{\log \log \log \omega} -
     \mathe^{\udots}}}} & = & a\\
     a_{; 1} & = & \sqrt{\log \omega} - \mathe^{\sqrt{\log \log \omega} +
     \mathe^{\sqrt{\log \log \log \omega} - \mathe^{\udots}}} & = & \log
     \left( a_{; 0} - \sqrt{\omega} \right)\\
     a_{; 2} & = & \sqrt{\log \log \omega} + \mathe^{\sqrt{\log \log \log
     \omega} - \mathe^{\udots}} & = & \log \left( \sqrt{\log \omega} - a_{; 1}
     \right)\\
     &  & \vdots &  & 
   \end{array} \]
for working with numbers in $\mathbf{Ad}$.

The notation~(\ref{a-nested}) also suggests that each of the numbers $a_{; 0}
- \sqrt{\omega}$, $\sqrt{\log \omega} - a_{; 1}$, $\ldots$ should be a
monomial. An admissible number $a \in \mathbf{Ad}$ is said to be
{\tmem{nested}} if this is indeed the case. The main result of this section is
\Cref{th-nested-numbers}, {\tmabbr{i.e.}} that the class $\mathbf{Ne}$ of
nested numbers forms a surreal substructure. In other words, the
notation~(\ref{a-nested}) is ambiguous, but can be disambiguated using a
single surreal parameter.

\subsection{Coding sequences}\label{subsection-coding-sequences}

\begin{definition}
  \label{def-coding-sequence}Let $\Sigma \assign (\varphi_i, \varepsilon_i,
  \psi_i, \iota_i, \alpha_i)_{i \in \mathbb{N}} \in (\mathbf{No} \times \{ -
  1, 1 \} \times \mathbf{No} \times \{ - 1, 1 \} \times
  \omega^{\mathbf{On}})^{\mathbb{N}}$. We say that $\Sigma$ is a
  {\tmem{{\tmstrong{coding sequence}}}}{\index{coding sequence}} if for all $i
  \in \mathbb{N}$, we have
  \begin{enumeratealpha}
    \item $\psi_i \in \mathbf{No}_{\succ}$;
    
    \item $\varphi_{i + 1} \in \mathbf{No}_{\succ, \alpha_i} \cup \{ 0 \}$;
    
    \item $(\alpha_i = 1) \Longrightarrow (\psi_i = 0 \wedge (\psi_{i + 1} = 0
    \Longrightarrow \alpha_{i + 1} = 1))$;
    
    \item $(\varphi_{i + 1} = \psi_{i + 1} = 0) \Longrightarrow (\alpha_i
    \geqslant \alpha_{i + 1} \wedge \varepsilon_{i + 1} = \iota_{i + 1} = 1)$;
    
    \item \label{def-coding-sequence-e}$\exists j > i, (\varphi_j \neq 0 \vee
    \psi_j \neq 0)$.
  \end{enumeratealpha}
\end{definition}

{\noindent}Taking $\alpha_i = 1$ for all $i \in \mathbb{N}$, we obtain a
reformulation of the notion of coding sequences
in~{\cite[Section~8.1]{BvdH19}}. If $\Sigma = (\varphi_i, \varepsilon_i,
\psi_i, \iota_i, \alpha_i)_{i \in \mathbb{N}}$ is a coding sequence and $k \in
\mathbb{N}$, then we write
\[ \Sigma_{\nearrow k} \assign (\varphi_{k + i}, \varepsilon_{k + i}, \psi_{k
   + i}, \iota_{k + i}, \alpha_{k + i})_{i \in \mathbb{N}}, \]
which is also a coding sequence.

\begin{lemma}
  \label{lem-path-sequence}Let $P$ be an infinite path in a number $a \in
  \mathbf{No}$ without any bad index for $a$. Let $\varphi_0 \assign a_{\succ
  \mathfrak{m}_{P, 0}}$ and $\varphi_i \assign (a_{P, i})_{\succ
  \mathfrak{m}_{P, i}}$\, for all $i \in \mathbb{N}^{>}${\hspace{-0.8em}}.
  Then $\Sigma_P \assign (\varphi_i, r_{P, i}, \psi_{P, i + 1}, \iota_{P, i},
  \alpha_{P, i})_{i \in \mathbb{N}}$ is a coding sequence.
\end{lemma}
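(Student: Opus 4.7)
The plan is to verify clauses (a)--(e) of Definition~\ref{def-coding-sequence} in order for the sequence $\Sigma_P$. A preliminary simplification from Definition~\ref{def-good-path} is that the no-bad-index hypothesis forces, for every $i < |P| = \omega$, the conditions $\mathfrak{m}_{P,i} = \min \tmop{supp} u_{P,i}$, $\beta_{P,i} = 0$, $r_{P,i} \in \{-1,1\}$, and $\mathfrak{m}_{P,i} \notin \tmop{supp}\psi_{P,i}$. In particular $\mathfrak{m}_{P,i} \in \tmop{supp} u_{P,i}$ so $a_{P,i} = u_{P,i}$ for $i > 0$, and each expansion of $\mathfrak{m}_{P,i}$ is of type~I --- a type~II expansion would set $u_{P,i+1} = \omega$, truncating $P$ to finite length in contradiction with infiniteness. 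In particular $\alpha_{P,i} \in \omega^{\mathbf{On}}$ throughout.

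Clauses (a) and (b) then fall out quickly. Clause (a) is immediate since $\psi_{P,i+1} \in \mathbf{No}_{\succ}$ by the very definition of a hyperserial expansion. For (b), $u_{P,i+1}$ is $\alpha_{P,i}$-truncated as part of the type~I expansion of $\mathfrak{m}_{P,i}$, and $\varphi_{i+1} = (u_{P,i+1})_{\succ \mathfrak{m}_{P,i+1}}$ is either $0$ or a series whose entire support lies $\succcurlyeq \mathfrak{m}_{P,i+1} \succ 1$; its $\prec 1$-part being empty renders the $\alpha_{P,i}$-truncation condition vacuous, while positivity is preserved since the leading term of $u_{P,i+1}$ is positive.

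For clause (c), $\alpha_{P,i} = 1$ forces $\psi_{P,i+1} = 0$ directly from the type~I rule for $\alpha = 1$. If additionally $\psi_{P,i+2} = 0$, the expansion of $\mathfrak{m}_{P,i+1}$ collapses (also using $\beta_{P,i+1} = 0$) to $(E_{\alpha_{P,i+1}}^{u_{P,i+2}})^{\iota_{P,i+1}}$. Infiniteness of $\mathfrak{m}_{P,i+1}$ forces $\iota_{P,i+1} = 1$; were $\alpha_{P,i+1} > 1$, then $\mathfrak{m}_{P,i+1} \in \mathbf{Mo}_{\alpha_{P,i+1}} \subseteq \mathbf{Mo}_{\omega}$ would be log-atomic, so the decomposition $u_{P,i+1} = \varphi_{i+1} \pplus r_{P,i+1}\mathfrak{m}_{P,i+1}$ with $r_{P,i+1} \in \{-1,1\}$ would make $u_{P,i+1}$ tail-atomic, contradicting the type~I constraint that when $\alpha_{P,i} = 1$ the series $u_{P,i+1}$ is not tail-atomic. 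Clause (d) follows a parallel script: $\varphi_{i+1} = \psi_{P,i+2} = 0$ gives $u_{P,i+1} = r_{P,i+1}(E_{\alpha_{P,i+1}}^{u_{P,i+2}})^{\iota_{P,i+1}}$, and positivity forces $r_{P,i+1} = \iota_{P,i+1} = 1$. Were $\alpha_{P,i+1} > \alpha_{P,i}$, then $\alpha_{P,i+1} \geqslant \alpha_{P,i}\omega$ (both lying in $\omega^{\mathbf{On}}$), so $u_{P,i+1} \in \mathbf{Mo}_{\alpha_{P,i}\omega}$; since $E_{\alpha_{P,i}}$ preserves $\mathbf{Mo}_{\alpha_{P,i}\omega}$, this yields $E_{\alpha_{P,i}}^{u_{P,i+1}} \in \mathbf{Mo}_{\alpha_{P,i}\omega} \subseteq L_{<\alpha_{P,i}}\mathbf{Mo}_{\alpha_{P,i}\omega}$, violating the type~I hyperserial expansion condition on $\mathfrak{m}_{P,i}$.

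The main obstacle is clause (e), the non-triviality of the tail. Suppose for contradiction that $\varphi_j = 0$ and $\psi_{P,j+1} = 0$ for every $j$ past some threshold. Repeating the argument of (d), we obtain $u_{P,j+1} = E_{\alpha_{P,j}}^{u_{P,j+2}}$ (a single monomial, so $u_{P,j+1} = \mathfrak{m}_{P,j+1}$) and $\alpha_{P,j+1} \leqslant \alpha_{P,j}$, so the ordinals $\alpha_{P,j}$ stabilise at some $\alpha \in \omega^{\mathbf{On}}$ beyond some index $k$. From $k$ onwards we have $u_{P,j+1} = E_\alpha u_{P,j+2}$ with $u_{P,j+n} = \mathfrak{m}_{P,j+n} \in \mathbf{Mo}_\alpha$ for all $n \geqslant 1$. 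For any $\gamma < \alpha\omega$, writing $\gamma = \alpha n + \gamma'$ with $n \in \mathbb{N}$ and $\gamma' < \alpha$, iterating the identity $L_\alpha u_{P,j+m} = u_{P,j+m+1}$ yields $L_\gamma u_{P,j+1} = L_{\gamma'} u_{P,j+n+1} \in \mathbf{Mo}$, so $u_{P,j+1} \in \mathbf{Mo}_{\alpha\omega}$. But then $\mathfrak{m}_{P,j} = E_\alpha^{u_{P,j+1}} \in \mathbf{Mo}_{\alpha\omega} = L_0\mathbf{Mo}_{\alpha\omega} \subseteq L_{<\alpha}\mathbf{Mo}_{\alpha\omega}$, contradicting the type~I requirement $E_{\alpha_{P,j}}^{u_{P,j+1}} \notin L_{<\alpha_{P,j}}\mathbf{Mo}_{\alpha_{P,j}\omega}$. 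This contradiction establishes (e) and completes the proof.
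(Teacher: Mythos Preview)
Your overall strategy matches the paper's: you verify clauses (a)--(e) of Definition~\ref{def-coding-sequence} directly, and your treatments of (a), (c), (d), and (e) are essentially the paper's arguments. There is, however, a genuine gap in your proof of clause~(b).

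You assert that the support of $\varphi_{i+1}$ lies entirely above $1$ because ``$\mathfrak{m}_{P,i+1} \succ 1$'', and conclude that the $\alpha_{P,i}$-truncation condition on $\varphi_{i+1}$ is vacuous. But the inequality $\mathfrak{m}_{P,i+1} \succ 1$ is not justified and is in fact false in general. When $\alpha_{P,i} > 1$, the series $u_{P,i+1} \in \mathbf{No}_{\succ,\alpha_{P,i}}$ may well have infinitesimal monomials in its support: for instance $L_\omega\omega + 1/L_1\omega + 1/\omega$ is $\omega$-truncated, and its minimal monomial is $1/\omega \prec 1$. In such a case $\varphi_{i+1}$ itself can have infinitesimal support, and one must actually check the $\alpha_{P,i}$-truncation inequality for those monomials. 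A direct check does not go through cleanly either: writing $\varphi_{i+1} = u_{P,i+1} - r_{P,i+1}\mathfrak{m}_{P,i+1}$, the case $r_{P,i+1} = -1$ gives $\varphi_{i+1} > u_{P,i+1}$, so the bound $u_{P,i+1} < \ell_{\alpha_{P,i}}^{\uparrow\gamma}\circ\mathfrak{m}^{-1}$ does not immediately transfer. The paper closes this gap by invoking Lemma~\ref{lem-truncated-truncation}, which is precisely designed to show that such truncations of $\alpha$-truncated series remain $\alpha$-truncated. You should replace your claim by an appeal to that lemma.
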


\begin{proof}
  Let $i \in \mathbb{N}$. We have $r_{P, i} \in \{ - 1, 1 \}$ because $i$ is a
  good index for $(P, a)$. We have {$\psi_{P, i + 1} \in \mathbf{No}_{\succ}$}
  and $a_{P, i + 1} \in \mathbf{No}_{\succ, \alpha_i}$ by the definition of
  hyperserial expansions. If $i > 0$ and $\varphi_i \neq 0$, then we have
  $\varphi_i \in \mathbf{No}^{>, \succ}$ because $a_{P, i} \in \mathbf{No}^{>,
  \succ}$ by the definition of paths. Lemma~\ref{lem-truncated-truncation}
  also yields $\varphi_i \in \mathbf{No}_{\succ, \alpha_i}$. This proves the
  conditions $a$) and $b$) for coding sequences. Assume that $\alpha_i = 1$.
  Then by the definition of hyperserial expansions, we have $\psi_{P, i + 1} =
  0$ and $u_{P, i + 1} = a_{P, i + 1}$ is not tail-atomic. Assume that
  $\psi_{P, i + 2} = 0$. Then $\tmop{supp} u_{P, i + 1} \succ 1$ so
  {$\iota_{P, i + 2} = 1$}. We have $u_{P, i + 1} = \varphi_{i + 1} \pplus
  r_{P, i + 1} \mathfrak{a}$ where $\mathfrak{a} \assign E_{\alpha_{P, i +
  1}}^{u_{P, i + 2}}$ and $u_{P, i + 1}$ is not tail-atomic. This implies that
  $\mathfrak{a}$ is not log-atomic, so $\alpha_{P, i + 1} = 1$. Thus $c$) is
  valid.
  
  Assume that $\varphi_{i + 1} = \psi_{P, i + 2} = 0$. Recall that $a_{P, i +
  1} = r_{P, i + 1}  (E_{\alpha_{P, i + 1}}^{u_{P, i + 2}})^{\iota_{P, i + 1}}
  = u_{P, i + 1} \in \mathbf{No}^{>, \succ}$, so $r_{P, i + 1} = \iota_{P, i +
  1} = 1$. Since $E_{\alpha_{P, i}}^{u_{P, i + 1}} \nin
  \mathbf{Mo}_{\alpha_{P, i} \omega}$, we have $u_{P, i + 1} \nin
  \mathbf{Mo}_{\alpha_{P, i} \omega}$, whence $\alpha_{P, i + 1} \leqslant
  \alpha_{P, i}$. This proves $d$).
  
  Assume now for contradiction that there is an $i_0 \in \mathbb{N}$ with
  $\varphi_{P, j} = \psi_{P, j + 1} = 0$ for all~{$j > i_0$}. By $d$), we have
  $r_{P, j} = \iota_{P, j} = 1$ for all $j > i_0$, and the sequence
  $(\alpha_{P, j})_{j > i_0}$ is non-increasing, hence eventually constant.
  Let $i_1 > i_0$ with $\alpha_{P, i_1} = \alpha_{P, j}$ for all $j > i_1$.
  For $k \in \mathbb{N}$, we have $a_{P, i_1} = E_{\alpha_{P, i_1} k} a_{P,
  i_1 + k}$ so $a_{P, i_1} \in \bigcap_{k \in \mathbb{N}} E_{\alpha_{P, i_1}
  k}  \mathbf{Mo}_{\alpha_{P, i_1}} = \mathbf{Mo}_{\alpha_{P, i_1} \omega}$.
  Therefore $E_{\alpha_{P, i_1}}^{a_{P, i_1 + 1}}$ is $L_{< \alpha_{P, i_1 +
  1} \omega}$\mbox{-}atomic: a~contradiction. We deduce that $e$) holds as
  well.
\end{proof}

We next fix some notations. For all $i, j \in \mathbb{N}$ with $i \leqslant
j$, we define partial functions $\Phi_i$,~$\Phi_{i ;}$ and $\Phi_{j ; i}$ on
$\mathbf{No}$ by
\[ \begin{array}{rcl}
     \Phi_i (a) & \assign & \varphi_i + \varepsilon_i \mathe^{\psi_{i - 1}} 
     (E_{\alpha_{i - 1}} a)^{\iota_{i - 1}} \text{},\\
     \Phi_{j ; i} (a) & \assign & (\Phi_i \circ \cdots \circ \Phi_{j - 1})
     (a),\\
     \Phi_{i ;} & \assign & \Phi_{i ; 0} .
   \end{array} \]
The domains of these functions are assumed to be largest for which these
expressions make sense. We also write
\[ \begin{array}{ccccc}
     \sigma_{i ;} & = & \sigma_{; i} & \assign & \prod_{k < i} \varepsilon_k
     \iota_k\\
     \sigma_{j ; i} & = & \sigma_{i ; j} & \assign & \prod_{i \leqslant k < j}
     \varepsilon_k \iota_k
   \end{array} \]
We note that on their respective domains, the functions $\Phi_i$, $\Phi_{i
;}$, and $\Phi_{j ; i}$ are strictly increasing if $\varepsilon_i \iota_i =
1$, $\sigma_{i ;} = 1$, and $\sigma_{j ; i} = 1$, respectively, and strictly
decreasing in the contrary cases. We will write $\Phi_{; i}$ and $\Phi_{i ;
j}$ for the partial inverses of $\Phi_{i ;}$ and $\Phi_{j ; i}$. We will also
use the abbreviations
\[ \begin{array}{ccc}
     a_{i ;} & \assign & \Phi_{i ;} (a)\\
     a_{; i} & \assign & \Phi_{; i} (a)
   \end{array} \nocomma \qquad \begin{array}{ccc}
     a_{j ; i} & \assign & \Phi_{j ; i} (a)\\
     a_{i ; j} & \assign & \Phi_{i ; j} (a)
   \end{array} \]
For all $i \in \mathbb{N}$, we set
\[ \begin{array}{rclccc}
     L_i^{[1]} & \assign & (\varphi_i - \sigma_{; i} \mathbb{R}^{>}
     \tmop{supp} \varphi_i)_{i ;}\\
     L_i^{[2]} & \assign & (\varphi_i + \varepsilon_i \mathe^{\psi_i -
     \varepsilon_i \sigma_{; i} \mathbb{R}^{>} \tmop{supp} \psi_i})_{i ;}\\
     L_i^{[3]} & \assign & \left\{\begin{array}{ll}
       \varnothing & \left. \text{if } \varphi_{i + 1} = 0 \right.\\
       & \text{or } \sigma_{; i + 1} \varepsilon_{i + 1} = - 1 \quad\\
       (\mathcal{L}_{\alpha_i} \varphi_{i + 1})_{i + 1 ;} & \text{otherwise}
     \end{array}\right.\\
     L_i & \assign & L_i^{[1]} \cup L_i^{[2]} \cup L_i^{[3]}.
   \end{array} \]
as well as 

   \[ \begin{array}{rclccc}
      R_i^{[1]} & \assign & (\varphi_i +
     \sigma_{; i} \mathbb{R}^{>} \tmop{supp} \varphi_i)_{i ;}\\
     R_i^{[2]} & \assign & (\varphi_i + \varepsilon_i \mathe^{\psi_i +
     \varepsilon_i \sigma_{; i} \mathbb{R}^{>} \tmop{supp} \psi_i})_{i ;}\\
     R_i^{[3]} & \assign & \left\{\begin{array}{ll}
       \varnothing & \left. \text{if } \varphi_{i + 1} = 0 \right.\\
       & \text{or } \sigma_{; i + 1} \varepsilon_{i + 1} = 1\\
       (\mathcal{L}_{\alpha_i} \varphi_{i + 1})_{i + 1 ;} & \text{otherwise}
     \end{array}\right.\\
     R_i & \assign &
     R_i^{[1]} \cup R_i^{[2]} \cup R_i^{[3]}\\
     R & \assign & \bigcup_{i
     \in \mathbb{N}} R_i .
   \end{array} \]
Note that
\begin{eqnarray*}
  \varphi_i = 0 & \Longleftrightarrow & L_i^{[1]} = R_i^{[1]} = \varnothing
  \text{\quad and}\\
  \psi_i = 0 & \Longleftrightarrow & L_i^{[2]} = R_i^{[2]} = \varnothing .
\end{eqnarray*}
The following lemma generalizes~{\cite[Lemma~8.1]{BvdH19}}.

\begin{lemma}
  If $a \in (L \divides R)$, then $a_{; i}$ is well defined for all $i \in
  \mathbb{N}$.
\end{lemma}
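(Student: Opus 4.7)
I proceed by induction on $i$, the base case $i=0$ being trivial since $a_{;0}=a$ by the convention that the empty composition is the identity. For the inductive step, suppose $a_{;i}$ has been defined; I will establish that $a_{;i+1}$ is well-defined, which amounts to verifying that $a_{;i}$ lies in the image of the partial function $\Phi$ of level $i$ so that its inverse can be applied.

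From the explicit form $\varphi_i+\varepsilon_i\mathe^{\psi_i}(E_{\alpha_i}b)^{\iota_i}$ used to pass between successive levels, a preimage $b$ exists exactly when three conditions hold on $a_{;i}$: (a) $\varphi_i$ is a truncation of $a_{;i}$, so that $\xi\assign a_{;i}-\varphi_i$ satisfies $\tmop{supp}\xi\prec\tmop{supp}\varphi_i$; (b) $\varepsilon_i\xi$ is positive infinite, so that $\log(\varepsilon_i\xi)$ is well-defined; (c) $\iota_i(\log(\varepsilon_i\xi)-\psi_i)$ is positive infinite, so that applying $L_{\alpha_i}\circ\exp$ yields an element of $\mathbf{No}^{>,\succ}$. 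Once (a)--(c) are in place, $a_{;i+1}$ is unambiguously produced.

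The essential step is to transport the hypothesis $L<a<R$ through the strictly monotone partial function $\Phi_{;i}$, whose sign on its domain is $\sigma_{;i}$, so as to obtain inequalities on $a_{;i}$. The sign factors $\sigma_{;i}$ and $\varepsilon_i\sigma_{;i}$ built into the definitions of $L_i^{[1]},R_i^{[1]}$ and $L_i^{[2]},R_i^{[2]}$ are calibrated exactly so that after this transport we recover
\[
\varphi_i-\mathbb{R}^{>}\tmop{supp}\varphi_i\;<\;a_{;i}\;<\;\varphi_i+\mathbb{R}^{>}\tmop{supp}\varphi_i
\]
from $L_i^{[1]},R_i^{[1]}$, and $\mathe^{\psi_i-\mathbb{R}^{>}\tmop{supp}\psi_i}<\varepsilon_i\xi<\mathe^{\psi_i+\mathbb{R}^{>}\tmop{supp}\psi_i}$ from $L_i^{[2]},R_i^{[2]}$. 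The first pair yields (a) and (b) as soon as $\varphi_i\neq 0$, while the second translates logarithmically into (c). The degenerate cases $\varphi_i=0$ or $\psi_i=0$, in which the associated $L_i^{[j]}$ and $R_i^{[j]}$ are empty, are handled directly using Definition~\ref{def-coding-sequence}, whose clauses force $\xi$ to already exhibit the required shape.

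The contributions $L_i^{[3]},R_i^{[3]}$, whenever non-empty, enforce that $a_{;i+1}$ is separated from $\varphi_{i+1}$ across the $\mathcal{L}_{\alpha_i}$-class with the correct sign; this is precisely what will be needed at the next inductive step so that $\varphi_{i+1}$ can serve as the truncation of $a_{;i+1}$ in clause~(a) for level $i+1$. The principal technical obstacle will be the careful bookkeeping of the sign $\sigma_{;i}$ across iterated applications and the handling of the degenerate cases where some $\varphi_i$ or $\psi_i$ vanishes. Once this sign calculus is laid out and the cases are tabulated, each of (a)--(c) is read off mechanically from the translated inequalities, closing the induction.
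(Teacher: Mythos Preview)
There is a genuine gap in your argument. Your plan is to pass from $a_{;i}$ to $a_{;i+1}$ using only the bounds $L_i^{[1]},R_i^{[1]},L_i^{[2]},R_i^{[2]}$ at level~$i$, but these are not strong enough to establish your condition~(c) (nor, in fact, your condition~(b)). The inequalities coming from $L_i^{[1]},R_i^{[1]}$ only yield $|a_{;i}-\varphi_i|\prec\tmop{supp}\varphi_i$; they say nothing about the sign of $\xi=a_{;i}-\varphi_i$ or whether $\varepsilon_i\xi$ is infinite. Likewise, the bounds from $L_i^{[2]},R_i^{[2]}$ give $|\log(\varepsilon_i\xi)-\psi_i|\prec\tmop{supp}\psi_i$, which constrains the magnitude of $\log(\varepsilon_i\xi\,\mathe^{-\psi_i})$ but not its sign, and in particular does not force $\iota_i\log(\varepsilon_i\xi\,\mathe^{-\psi_i})$ to be positive infinite. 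Without that, you cannot apply $L_{\alpha_i}$. Your appeal to Definition~\ref{def-coding-sequence} for the degenerate cases does not help either: the clauses of that definition constrain the coding sequence $(\varphi_i,\varepsilon_i,\psi_i,\iota_i,\alpha_i)$, not the number $a_{;i}$.

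The missing idea, which the paper's proof supplies, is to look \emph{ahead} to the least $j>i$ with $\varphi_j\neq 0$ or $\psi_j\neq 0$ (such $j$ exists by clause~(e)). Between levels $i$ and $j$ the intermediate $\varphi_k,\psi_k$ vanish, so by clause~(d) the $\alpha_k$ are non-increasing and the map from level $j$ back to level $i$ is a single hyperexponential $E_\gamma$ with $\gamma=\alpha_i+\cdots+\alpha_{j-1}$. Transporting the nontrivial bounds $L_j^{[1]},R_j^{[1]}$ (or $L_j^{[2]},R_j^{[2]}$) back to level~$i$ then sandwiches $\big((a_{;i}-\varphi_i)/(\varepsilon_i\mathe^{\psi_i})\big)^{\iota_i}$ between two values of $E_\gamma$, which is exactly what is needed to guarantee it lies in $\mathbf{No}^{>,\succ}$ and hence that $a_{;j}$ (and every intermediate $a_{;k}$) is well-defined.
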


\begin{proof}
  Let us prove the lemma by induction on $i$. The result clearly holds for~$i
  = 0$. Assuming that $a_{; i}$ is well defined, let $j > i$ be minimal such
  that $\varphi_j \neq 0$ or $\psi_j \neq 0$. Note that we have $\alpha_i
  \geqslant \alpha_{i + 1} \geqslant \cdots \geqslant \alpha_j$, so
  $E_{\alpha_i} \circ E_{\alpha_{i + 1}} \circ \cdots \circ E_{\alpha_j} =
  E_{\gamma}$ where $\gamma = \alpha_i + \alpha_{i + 1} + \cdots + {\nobreak}
  \alpha_j$. Applying~$\Phi_{; i}$ to the inequality
  \[ L_j < a < R_j \;, \]
  we obtain
  \[ \sigma_{; i}  (L_j)_{; i} < \sigma_{; i} a_{; i} < \sigma_{; i}  (R_j)_{;
     i} . \]
  Now if $\varphi_j \neq 0$, then
  \begin{eqnarray*}
    (L_j)_{; i} & \supseteq & \varphi_i + \varepsilon_i \mathe^{\psi_i} 
    (E_{\gamma} (\varphi_j - \sigma_{; j} \mathbb{R}^{>} \tmop{supp}
    \varphi_j))^{\iota_i}\\
    (R_j)_{; i} & \supseteq & \varphi_i + \varepsilon_i \mathe^{\psi_i} 
    (E_{\gamma} (\varphi_j + \sigma_{; j} \mathbb{R}^{>} \tmop{supp}
    \varphi_j))^{\iota_i},
  \end{eqnarray*}
  wso the number $z \assign \sigma_{; i} 
     \frac{a_{; i} - \varphi_i}{\varepsilon_i}$ satisfies 
  \[ \sigma_{; i} \mathe^{\psi_i}  (E_{\gamma} (\varphi_j - \sigma_{; j}
     \mathbb{R}^{>} \tmop{supp} \varphi_j))^{\iota_i} < z < \sigma_{; i} \mathe^{\psi_i} 
     (E_{\gamma} (\varphi_j + \sigma_{; j} \mathbb{R}^{>} \tmop{supp}
     \varphi_j))^{\iota_i} . \]
  Both in the cases when $\sigma_{; i} = 1$ and when $\sigma_{; i} = - 1$, it
  follows that the number $((a_{; i} - \varphi_i) / \varepsilon_i
  \mathe^{\psi_i})^{\iota_i}$ is bounded from below by the hyperexponential
  $E_{\gamma}$ of a number. Thus $a_{; j} = L_{\gamma} (((a_{; i} - \varphi_i)
  / (\varepsilon_i \mathe^{\psi_i}))^{\iota_i})$ is well defined and so is
  each $a_{; k}$ for $i \leqslant k < j$. If $\varphi_j = 0$, then we have
  $\psi_j \neq 0$ and

  \begin{eqnarray*}
    (L_j)_{; i} & \supseteq & \varphi_i + \varepsilon_i \mathe^{\psi_i} 
    (E_{\gamma} (\mathe^{\psi_j - \varepsilon_j \sigma_{; j} \mathbb{R}^{>}
    \tmop{supp} \psi_j}))^{\iota_i},\\
    (R_j)_{; i} & \supseteq & \varphi_i + \varepsilon_i \mathe^{\psi_i} 
    (E_{\gamma} (\mathe^{\psi_j + \varepsilon_j \sigma_{; j} \mathbb{R}^{>}
    \tmop{supp} \psi_j}))^{\iota_i} .
  \end{eqnarray*}
  Hence
  \[ \varepsilon_i \mathe^{\psi_i}  (E_{\gamma} (\mathe^{\psi_j -
     \varepsilon_j \sigma_{; j} \mathbb{R}^{>} \tmop{supp} \psi_j}))^{\iota_i}
     < a_{; i} - \varphi_i < \varepsilon_i \mathe^{\psi_i}  (E_{\gamma}
     (\mathe^{\psi_j + \varepsilon_j \sigma_{; j} \mathbb{R}^{>} \tmop{supp}
     \psi_j}))^{\iota_i} \]
  Both in the cases when $\varepsilon_i = 1$ and when $\varepsilon_i = - 1$,
  it follows that $((a_{; i} - \varphi_i) / \varepsilon_i
  \mathe^{\psi_i})^{\iota_i}$ is bounded from below by the hyperexponential
  $E_{\gamma}$ of a number, so $a_{; j}$ is well defined and so is each $a_{;
  k}$ for $i \leqslant k < j$.
\end{proof}

\subsection{Admissible sequences}\label{subsection-admissible-sequences}

\begin{definition}
  \label{def-admissible-sequence}Let $\Sigma \assign (\varphi_i,
  \varepsilon_i, \psi_i, \iota_i, \alpha_i)_{i \in \mathbb{N}}$ be a coding
  sequence and let $a \in \mathbf{No}$. We say that $a$ is
  $\Sigma$-{\tmstrong{{\tmem{admissible}}}}{\index{admissible number}} if
  $a_{; i}$ is well defined for all $i \in \mathbb{N}$ and
  \begin{eqnarray*}
    a_{; i} & = & \varphi_i \pplus \varepsilon_i \mathe^{\psi_i} 
    (E_{\alpha_i} a_{; i + 1})^{\iota_i},\\
    \tmop{supp} \psi_i & \succ & \log E_{\alpha_i} a_{; i + 1}, \text{ and}\\
    \varphi_{i + 1} & \vartriangleleft & \sharp_{\alpha_i} (a_{; i + 1})
    \text{\quad if $\varphi_{i + 1} \neq 0$} .
  \end{eqnarray*}
  We say that $\Sigma$ is {\tmstrong{{\tmem{admissible}}}}{\index{admissible
  sequence}} if there exists a $\Sigma$-admissible number.
\end{definition}

Note that we do not ask that $\mathe^{\psi_i}  (E_{\alpha_i} a_{; i +
1})^{\iota_i}$ be a hyperserial expansion, nor even that $E_{\alpha_i} a_{; i
+ 1}$ be a monomial. For the rest of the section, we fix a coding sequence
$\Sigma = {(\varphi_i, \varepsilon_i, \psi_i, \iota_i, \alpha_i)}_{i \in
\mathbb{N}}$. We write $\mathbf{Ad}$\label{autolab54} for the class of
$\Sigma$-admissible numbers. If $a \in \mathbf{Ad}$, then the definition of
$\mathbf{Ad}$ implicitly assumes that $a_{; i}$ is well defined for all $i \in
\mathbb{N}$. Note that if $\Sigma$ is admissible, then so is $\Sigma_{\nearrow
k}$ for $k \in \mathbb{N}$. We denote by $\mathbf{Ad}_{\nearrow
k}$\label{autolab55} the corresponding class of $\Sigma_{\nearrow
k}$\mbox{-}admissible numbers.

The main result of this subsection is the following generalization
of~{\cite[Proposition~8.2]{BvdH19}}:

\begin{proposition}
  \label{prop-Ad-identification}We have $\mathbf{Ad} = (L|R)$.
\end{proposition}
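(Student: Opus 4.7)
The plan is to prove the two inclusions $\mathbf{Ad} \subseteq (L|R)$ and $(L|R) \subseteq \mathbf{Ad}$ separately, with the preceding lemma already handling the well-definedness of $a_{;i}$ in the backward direction.

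For $\mathbf{Ad} \subseteq (L|R)$, fix $a \in \mathbf{Ad}$ and $i \in \mathbb{N}$. The three admissibility conditions each yield a family of strict inequalities bounding $a_{;i}$, which then pull back through $\Phi_{i;}$ to bound $a$. First, the expansion $a_{;i} = \varphi_i \pplus \varepsilon_i \mathe^{\psi_i}(E_{\alpha_i} a_{;i+1})^{\iota_i}$ means that $a_{;i} - \varphi_i$ is strictly dominated by $r\mathfrak{m}$ for every $r \in \mathbb{R}^{>}$ and $\mathfrak{m} \in \tmop{supp} \varphi_i$, giving $\varphi_i - r\mathfrak{m} < a_{;i} < \varphi_i + r\mathfrak{m}$. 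Second, the support condition $\tmop{supp}\psi_i \succ \log E_{\alpha_i} a_{;i+1}$ implies that $\mathe^{\psi_i}(E_{\alpha_i} a_{;i+1})^{\iota_i}$ lies strictly between $\mathe^{\psi_i - r\mathfrak{n}}$ and $\mathe^{\psi_i + r\mathfrak{n}}$ for every $r \in \mathbb{R}^{>}$ and $\mathfrak{n} \in \tmop{supp}\psi_i$. Third, when $\varphi_{i+1} \neq 0$, the truncation condition $\varphi_{i+1} \vartriangleleft \sharp_{\alpha_i}(a_{;i+1})$ places $a_{;i+1}$ inside the convex class $\mathcal{L}_{\alpha_i}[\varphi_{i+1}]$ and on one specified side of $\varphi_{i+1}$ within it; this side is controlled by $\varepsilon_{i+1} \iota_{i+1}$ at the next level. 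Pulling each inequality back through $\Phi_{i;}$, using that $\Phi_{i;}$ is order-preserving when $\sigma_{;i} = 1$ and order-reversing when $\sigma_{;i} = -1$, yields exactly $L_i^{[1]}, L_i^{[2]}, L_i^{[3]} < a < R_i^{[1]}, R_i^{[2]}, R_i^{[3]}$.

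For $(L|R) \subseteq \mathbf{Ad}$, fix $a \in (L|R)$. The preceding lemma gives that $a_{;i}$ is defined for all $i$; it remains to verify the three admissibility conditions. Applying $\Phi_{;i}$ to the inequalities $L_i^{[1]} < a < R_i^{[1]}$ and again tracking the sign $\sigma_{;i}$ shows that $|a_{;i} - \varphi_i|$ is dominated by every $r\mathfrak{m}$ with $r \in \mathbb{R}^{>}$ and $\mathfrak{m} \in \tmop{supp}\varphi_i$, which is precisely the $\pplus$-decomposition $a_{;i} = \varphi_i \pplus (a_{;i} - \varphi_i)$; combined with the definition $a_{;i} = \Phi_{i+1}(a_{;i+1}) = \varphi_i + \varepsilon_i \mathe^{\psi_i}(E_{\alpha_i} a_{;i+1})^{\iota_i}$, this gives the expansion formula. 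Similarly, $L_i^{[2]} < a < R_i^{[2]}$ transports to bounds forcing $\tmop{supp}\psi_i \succ \log E_{\alpha_i} a_{;i+1}$. Finally, when $\varphi_{i+1} \neq 0$, the inequality coming from $L_i^{[3]}$ or $R_i^{[3]}$ pushes through $\Phi_{i+1;}$ to show that $a_{;i+1}$ lies in the $\mathcal{L}_{\alpha_i}$-convex class of $\varphi_{i+1}$ on the correct side, which together with $\varphi_{i+1} \in \mathbf{No}_{\succ,\alpha_i}$ yields $\varphi_{i+1} \vartriangleleft \sharp_{\alpha_i}(a_{;i+1})$.

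The main obstacle is the sign bookkeeping: every composition $\Phi_{i;}$ may be order-preserving or order-reversing depending on $\prod_{k<i}\varepsilon_k\iota_k = \sigma_{;i}$, which is exactly why the factors $\sigma_{;i}$ appear in front of $\mathbb{R}^{>}\tmop{supp}\varphi_i$ and $\mathbb{R}^{>}\tmop{supp}\psi_i$ and why $L_i^{[3]}$ versus $R_i^{[3]}$ is selected by the value of $\sigma_{;i+1}\varepsilon_{i+1}$. A secondary technical point is interpreting the $L_i^{[3]}, R_i^{[3]}$ bounds correctly in terms of $\mathcal{L}_{\alpha_i}$-classes, using $\mathcal{L}_{\alpha_i}\varphi_{i+1} < a_{;i+1} < \varphi_{i+1}$ (or the reverse) to translate between the cut condition and the truncation relation $\varphi_{i+1} \vartriangleleft \sharp_{\alpha_i}(a_{;i+1})$; this equivalence follows from $\mathbf{No}_{\succ,\alpha_i} = \mathbf{Smp}_{\mathcal{L}_{\alpha_i}}$ and $\pi_{\mathbf{Smp}_{\mathcal{L}_{\alpha_i}}} = \sharp_{\alpha_i}$. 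Once these two sign/projection issues are cleanly handled, each of the three conditions at level $i$ corresponds bijectively to one of the three pieces $L_i^{[j]}, R_i^{[j]}$, and the equivalence $\mathbf{Ad} = (L|R)$ falls out.
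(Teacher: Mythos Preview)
Your overall plan matches the paper's: translate each of the three admissibility conditions at level $i$ into the inequalities $L_i^{[j]} < a < R_i^{[j]}$ via the monotonicity of $\Phi_{i;}$, tracking the sign $\sigma_{;i}$. The paper organises this as three biconditionals (working with $a \in \mathbf{Ad} \cup (L|R)$ at once) rather than two inclusions, but that is purely cosmetic.

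There is, however, a genuine slip in your treatment of $L_i^{[3]}$ and $R_i^{[3]}$. You say the truncation condition ``places $a_{;i+1}$ inside the convex class $\mathcal{L}_{\alpha_i}[\varphi_{i+1}]$'', and later write ``$\mathcal{L}_{\alpha_i}\varphi_{i+1} < a_{;i+1} < \varphi_{i+1}$ (or the reverse)''. These two statements are inconsistent: $\varphi_{i+1}$ itself belongs to the orbit $\mathcal{L}_{\alpha_i}\varphi_{i+1}$, so $\mathcal{L}_{\alpha_i}\varphi_{i+1} < a_{;i+1}$ already forces $\varphi_{i+1} < a_{;i+1}$. The correct reading is the opposite of what you wrote: the $[3]$-bound, once pushed through $\Phi_{;i+1}$, says that $a_{;i+1}$ lies strictly \emph{outside} the class $\mathcal{L}_{\alpha_i}[\varphi_{i+1}]$, on the side dictated by $\varepsilon_{i+1}$. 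Since $\varphi_{i+1}$ is $\alpha_i$-truncated and $\sharp_{\alpha_i}$ is non-decreasing, this yields $\sharp_{\alpha_i}(a_{;i+1}) \neq \varphi_{i+1}$ with the appropriate sign. To conclude $\varphi_{i+1} \vartriangleleft \sharp_{\alpha_i}(a_{;i+1})$ you still need $\varphi_{i+1} \trianglelefteqslant a_{;i+1}$, which is supplied by the $[1]$-bounds at level $i+1$; the paper makes this coupling explicit by handling $L_i^{[3]} \cup L_{i+1}^{[1]}$ and $R_i^{[3]} \cup R_{i+1}^{[1]}$ together. Once you reverse ``inside'' to ``outside'' and record that the $\trianglelefteqslant$-part comes from the next level's $[1]$-bounds, your argument goes through.
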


\begin{proof}
  Let $a \in \mathbf{Ad} \cup (L|R)$ and let $i \in \mathbb{N}$. We have $a_{;
  i} \in \mathbf{No}^{>, \succ}$. If $\sigma_{; i} = 1$, then $\Phi_{i ;}$ is
  strictly increasing so we have
  \begin{eqnarray*}
    L_i^{[1]} < a < R_i^{[1]} & \Longleftrightarrow & (L_i^{[1]})_{\smash{;
    i}} < a_{; i} < (R_i^{[1]})_{\smash{; i}}\\
    & \Longleftrightarrow & \varphi_i -\mathbb{R}^{>} \tmop{supp} \varphi_i <
    a_{; i} < \varphi_i +\mathbb{R}^{>} \tmop{supp} \varphi_i\\
    & \Longleftrightarrow & a_{; i} - \varphi_i \prec \tmop{supp} \varphi_i\\
    & \Longleftrightarrow & \varphi_i \trianglelefteqslant a_{; i} .
  \end{eqnarray*}
  If $\sigma_{; i} = - 1$, then $\Phi_{i ;}$ is strictly decreasing and
  likewise we obtain $L_{i ;} < a < R_{i ;} \Longleftrightarrow \varphi_i
  \trianglelefteqslant a_{; i}$.
  
  We have $\iota_i \log E_{\alpha_i} a_{; i + 1} = \iota_i  \left( \log
  \frac{a_{; i} - \varphi_i}{\varepsilon_i \mathe^{\psi_i}} \right)$. If
  $\sigma_{; i} = 1$, then $\Phi_{i ;}$ is strictly increasing so we have
  \begin{eqnarray*}
    L_i^{[2]} < a < R_i^{[2]} & \Longleftrightarrow & \varphi_i +
    \varepsilon_i \mathe^{\psi_i - \varepsilon_i \mathbb{R}^{>} \tmop{supp}
    \psi_i} < a_{; i} < \varphi_i + \varepsilon_i \mathe^{\psi_i +
    \varepsilon_i \mathbb{R}^{>} \tmop{supp} \psi_i}\\
    & \Longleftrightarrow & -\mathbb{R}^{>} \tmop{supp} \psi_i < \log
    \frac{a_{; i} - \varphi_i}{\varepsilon_i \mathe^{\psi_i}} <\mathbb{R}^{>}
    \tmop{supp} \psi_i\\
    & \Longleftrightarrow & \tmop{supp} \psi_i \succ \log \frac{a_{; i} -
    \varphi_i}{\varepsilon_i \mathe^{\psi_i}}\\
    & \Longleftrightarrow & \log E_{\alpha_i} a_{; i + 1} \prec \tmop{supp}
    \psi_i .
  \end{eqnarray*}
  Likewise, we have $L_i^{[2]} < a < R_i^{[2]} \Longleftrightarrow \log
  E_{\alpha_i} a_{; i + 1} \prec \tmop{supp} \psi_i$ if $\sigma_{; i} = - 1$.
  
  Assume that $\varphi_{i + 1} \neq 0$ and $\sigma_{; i + 1} = 1$. If
  $\varepsilon_{i + 1} = 1$, then we have $a_{; i + 1} > \varphi_{i + 1}$.
  Hence
  \begin{eqnarray*}
    L_i^{[3]} \cup L_{i + 1}^{[1]} < a < R_i^{[3]} \cup R_{i + 1}^{[1]} &
    \Longleftrightarrow & \mathcal{L}_{\alpha_i} \varphi_{i + 1} < a_{; i + 1}
    \wedge \varphi_{i + 1} \trianglelefteqslant a_{; i + 1}\\
    & \Longleftrightarrow & \varphi_{i + 1} < \sharp_{\alpha_i} (a_{; i + 1})
    \wedge \varphi_{i + 1} \trianglelefteqslant a_{; i + 1}\\
    & \Longleftrightarrow & \varphi_{i + 1} \vartriangleleft
    \sharp_{\alpha_i} (a_{; i + 1}) .
  \end{eqnarray*}
  If $\varepsilon_{i + 1} = - 1$, then we have $a_{; i + 1} > \varphi_{i +
  1}$, whence
  \begin{eqnarray*}
    L_i^{[3]} \cup L_{i + 1}^{[1]} < a < R_i^{[3]} \cup R_{i + 1}^{[1]} &
    \Longleftrightarrow & a_{; i + 1} <\mathcal{L}_{\alpha_i} \varphi_{i + 1}
    \wedge \varphi_{i + 1} \trianglelefteqslant a_{; i + 1}\\
    & \Longleftrightarrow & \sharp_{\alpha_i} (a_{; i + 1}) < \varphi_{i + 1}
    \wedge \varphi_{i + 1} \trianglelefteqslant a_{; i + 1}\\
    & \Longleftrightarrow & \varphi_{i + 1} \vartriangleleft
    \sharp_{\alpha_i} (a_{; i + 1}) .
  \end{eqnarray*}
  Symmetric arguments apply when $\varphi_{i + 1} \neq 0$ and $\sigma_{; i +
  1} = - 1$.
  
  We deduce by definition of $\mathbf{Ad}$ that $\mathbf{Ad} = \bigcap_{i \in
  \mathbb{N}} (L_i |R_i) = (L|R)$.
\end{proof}

As a consequence of this last proposition and
{\cite[Proposition~4.29(a)]{BvdH19}}, the class $\mathbf{Ad}$ is a surreal
substructure if and only if $\Sigma$ is admissible.

\begin{example}
  \label{ex-admissible}Consider the coding sequence $\Sigma_0 = (\varphi_i,
  \varepsilon_i, \iota_i, \psi_i, \alpha_i)_{i \in \mathbb{N}}$ where for all
  $i \in \mathbb{N}$, we have
  \begin{eqnarray*}
    \varphi_i & = & L_{\omega^{2 i}} \omega + L_{\omega^{2 i} 2} \omega +
    L_{\omega^{2 i} 3} \omega + \cdots,\\
    \varepsilon_i & = & 1,\\
    \psi_i & = & L_{\omega^{2 i + 1}} \omega + L_{\omega^{2 i + 1} 2} \omega +
    L_{\omega^{2 i + 1} 3} \omega + \cdots,\\
    \iota_i & = & - 1 \text{\quad and}\\
    \alpha_i & = & \omega^{2 i + 1} .
  \end{eqnarray*}
  We use the notations from Section~\ref{subsection-coding-sequences}. We
  claim that $\Sigma_0$ is admissible. Indeed for $i \in \mathbb{N}$, set
  \[ a_i \assign \varphi_0 + \mathe^{\psi_0}  \left( E_{\omega}^{\varphi_1 +
     \mathe^{\psi_1}  \left( E_{\small{\omega^3}}^{\udots^{\varphi_i}}
     \right)^{- 1}} \right)^{- 1} . \]
  Given $j \in \mathbb{N}$ and $i > j$, we have $L_j < a_i$ and $a_i < R_j$.
  We deduce that $L < R$, whence $\Sigma_0$ is admissible.
\end{example}

\begin{lemma}
  \label{lem-little-more}Let $a \in \mathbf{Ad}$ and $b \in \mathbf{No}$ be
  such that $a - \varphi_0$ and $b - \varphi_0$ have the same sign and the
  same dominant monomial. Then $b \in \mathbf{Ad}$.
\end{lemma}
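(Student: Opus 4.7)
The plan is to use \Cref{prop-Ad-identification} to reduce $b\in\mathbf{Ad}$ to the three clauses of \Cref{def-admissible-sequence} and verify them for $b$. I would construct $(b_{; i})_{i\in\mathbb{N}}$ recursively by the admissibility formula, then show by induction on $i$ that $b_{; i}$ is close enough to $a_{; i}$ for each clause to carry over from $a$ to $b$.

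At level $0$, set $\mu\assign(b-\varphi_0)/(a-\varphi_0)$; the hypothesis then reads $\mu = c \pplus \epsilon$ with $c\in\mathbb{R}^{>}$ and $\epsilon$ infinitesimal. The recursion forces
\[ b_{; 1}=L_{\alpha_0}(\mu^{\iota_0}E_{\alpha_0}a_{; 1}). \]
Expanding $L_{\alpha_0}$ at $E_{\alpha_0}a_{; 1}$ via~\Cref{eq-hyperlog-Taylor}, I would conclude that $b_{; 1}-a_{; 1}$ is a real number plus an infinitesimal when $\alpha_0=1$ and a pure infinitesimal when $\alpha_0>1$. From this closeness, the support condition $\tmop{supp}\psi_0\succ\log E_{\alpha_0}b_{; 1}$ follows because $\log E_{\alpha_0}b_{; 1}-\log E_{\alpha_0}a_{; 1}=\iota_0\log\mu$ is bounded and the monomials in $\tmop{supp}\psi_0$ are infinite; and the truncation condition $\varphi_1\vartriangleleft\sharp_{\alpha_0}(b_{; 1})$ holds because $b_{; 1}$ and $a_{; 1}$ lie in the same $\mathcal{L}_{\alpha_0}$-class (translations by reals lie in $\mathcal{L}_1$, and infinitesimal shifts in every $\mathcal{L}_{\alpha}$), so that $\sharp_{\alpha_0}(b_{; 1})=\sharp_{\alpha_0}(a_{; 1})$.

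For the inductive step I would apply the same argument to $\Sigma_{\nearrow 1}$ with $a_{; 1}, b_{; 1}$ in place of $a, b$, which demands that $a_{; 1}-\varphi_1$ and $b_{; 1}-\varphi_1$ share sign and dominant monomial. Since $a_{; 1}-\varphi_1=\varepsilon_1\mathe^{\psi_1}(E_{\alpha_1}a_{; 2})^{\iota_1}$ has an infinite dominant monomial whenever $\psi_1\neq 0$ or $\alpha_1>1$, the bounded correction $b_{; 1}-a_{; 1}$ cannot affect it in those cases. In the degenerate chain where \Cref{def-coding-sequence}(c) forces $\alpha_0=\alpha_1=\cdots=1$ and $\psi_1=\psi_2=\cdots=0$, I would carry the real correction forward level by level until the first index $k$ at which $\varphi_k\neq 0$ (guaranteed by \Cref{def-coding-sequence}(\ref{def-coding-sequence-e})), and verify there that the accumulated real shift does not annihilate the condition $\varphi_k\vartriangleleft\sharp_{\alpha_{k-1}}(b_{; k})$. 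This propagation analysis, together with uniform control on the error $b_{; i}-a_{; i}$ across all levels, is the main technical hurdle.
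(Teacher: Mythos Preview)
Your inductive invariant --- that $a_{;i}-\varphi_i$ and $b_{;i}-\varphi_i$ share sign and dominant monomial --- is exactly the one the paper uses, and you have already established the fact that closes the induction cleanly: $\sharp_{\alpha_0}(b_{;1})=\sharp_{\alpha_0}(a_{;1})$. The step you are missing is that this equality, together with the admissibility clause $\varphi_1\vartriangleleft\sharp_{\alpha_0}(a_{;1})$, \emph{directly} yields $b_{;1}-\varphi_1\sim a_{;1}-\varphi_1$: the dominant term of each is the dominant term of $\sharp_{\alpha_0}(a_{;1})-\varphi_1$, because $a_{;1}-\sharp_{\alpha_0}(a_{;1})$ and $b_{;1}-\sharp_{\alpha_0}(b_{;1})$ are both dominated by every monomial of $\sharp_{\alpha_0}(a_{;1})$. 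So the invariant passes to level $1$ without any error tracking. The paper runs exactly this argument: from $a_{;i}-\varphi_i\equiv b_{;i}-\varphi_i$ one gets $((b_{;i}-\varphi_i)/\varepsilon_i\mathe^{\psi_i})^{\iota_i}\in\mathcal{H}[E_{\alpha_i}a_{;i+1}]\subseteq\mathcal{E}_{\alpha_i}[E_{\alpha_i}a_{;i+1}]$, hence $b_{;i+1}\in\mathcal{L}_{\alpha_i}[a_{;i+1}]$, hence the same $\sharp_{\alpha_i}$, hence the same dominant term after subtracting $\varphi_{i+1}$.

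Your detour through a size estimate on $b_{;1}-a_{;1}$ and a case analysis on the dominant monomial of $a_{;1}-\varphi_1$ is therefore unnecessary, and the case analysis contains a genuine error. The claim that $a_{;1}-\varphi_1=\varepsilon_1\mathe^{\psi_1}(E_{\alpha_1}a_{;2})^{\iota_1}$ has infinite dominant monomial whenever $\psi_1\neq0$ or $\alpha_1>1$ is false: take $\psi_1$ purely large but negative (then $\mathe^{\psi_1}\prec 1$ and, since $\tmop{supp}\psi_1\succ\log E_{\alpha_1}a_{;2}$, the product $\mathe^{\psi_1}(E_{\alpha_1}a_{;2})^{\iota_1}$ is infinitesimal regardless of $\iota_1$), or take $\psi_1=0$ with $\iota_1=-1$. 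In either situation $a_{;1}-\varphi_1$ is infinitesimal, and comparing it to your bounded-or-infinitesimal correction $b_{;1}-a_{;1}$ is exactly the difficulty you flag as the ``main technical hurdle'' --- one that the $\sharp_{\alpha_0}$ argument dissolves uniformly. The degenerate-chain analysis you sketch only addresses the special case $\alpha_i=1$ for all $i$ and does not cover these configurations.
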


\begin{proof}
  For $x, y \in \mathbf{No}^{\neq}$, we write $x \equiv y$ if $x \asymp y$ and
  $x$ and $y$ have the same sign. Let us prove by induction on $i \in
  \mathbb{N}$ that $b_{; i}$ is defined and that $a_{; i} - \varphi_i \equiv
  b_{; i} - \varphi_i$. Since this implies that $\varphi_i
  \trianglelefteqslant b_{; i}$, that $\log \frac{b_{; i} -
  \varphi_i}{\varepsilon_i \mathe^{\psi_i}} \prec \tmop{supp} \psi_i$, and
  that $\varphi_i \vartriangleleft \sharp_{\alpha_{i - 1}} (b_{; i})$ if $i >
  0$, this will yield~$b \in \mathbf{Ad}$.
  
  The result follows from our hypothesis if $i = 0$. Assume now that $a_{; i}
  - \varphi_i \equiv b_{; i} - \varphi_i$ and let us prove that $a_{; i + 1} -
  \varphi_{i + 1} \equiv b_{; i + 1} - \varphi_{i + 1}$. Let
  \[ c_i \assign \left( \frac{b_{; i} - \varphi_i}{\varepsilon_i
     \mathe^{\psi_i}} \right) . \]
  We have $c_i \equiv \left( \frac{a_{; i} - \varphi_i}{\varepsilon_i
  \mathe^{\psi_i}} \right)^{\iota_i} = E_{\alpha_i} a_{; i + 1} \in
  \mathbf{No}^{>, \succ}$, so $b_{; i + 1} = L_{\alpha_i} (c_i)$ is defined.
  Moreover $c_i \in \mathcal{E}_{\alpha_i} [E_{\alpha_i} a_{; i + 1}]$ so
  $b_{; i + 1} \in \mathcal{L}_{\alpha_i} [a_{; i + 1}]$. Since $\varphi_{i +
  1} \vartriangleleft \sharp_{\alpha_i} (a_{; i + 1}) = \sharp_{\alpha_i}
  (b_{; i + 1})$, we deduce that ${b_{; i + 1} - \varphi_{i + 1}} \sim a_{; i
  + 1} - \varphi_{i + 1}$, whence in particular $b_{; i + 1} - \varphi_{i + 1}
  \equiv a_{; i + 1} - \varphi_{i + 1}$. This concludes the proof.
\end{proof}

\begin{corollary}
  \label{cor-little-more}We have $\mathbf{Ad}_{\nearrow 1}
  =\mathcal{L}_{\alpha_0} [\mathbf{Ad}_{\nearrow 1}]$.
\end{corollary}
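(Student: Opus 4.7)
The plan is to deduce the corollary from Lemma~\ref{lem-little-more} applied to the shifted coding sequence $\Sigma_{\nearrow 1}$. The inclusion $\mathbf{Ad}_{\nearrow 1} \subseteq \mathcal{L}_{\alpha_0}[\mathbf{Ad}_{\nearrow 1}]$ is immediate since $\mathrm{id}_{\mathbf{No}^{>,\succ}} \in \mathcal{L}_{\alpha_0}$.

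For the reverse inclusion, I fix $a \in \mathbf{Ad}_{\nearrow 1}$ and $b \in \mathcal{L}_{\alpha_0}[a]$. By the admissibility of $a$ with respect to $\Sigma_{\nearrow 1}$, I can write
\[
a = \varphi_1 \pplus \tau, \qquad \tau = \varepsilon_1 \mathe^{\psi_1}\bigl(E_{\alpha_1} a'_{;1}\bigr)^{\iota_1},
\]
so that $a - \varphi_1$ is the single term $\tau$ with sign $\varepsilon_1$ and dominant monomial $\mathfrak{d}_\tau = \mathe^{\psi_1}(E_{\alpha_1} a'_{;1})^{\iota_1}$. Setting $\eta := b - a$, I have $b - \varphi_1 = \tau + \eta$. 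By Lemma~\ref{lem-little-more} applied to $\Sigma_{\nearrow 1}$ (for which $\varphi_0' = \varphi_1$), it therefore suffices to show that $|\eta| \prec \mathfrak{d}_\tau$, since this guarantees that $b - \varphi_1$ has the same sign and dominant monomial as $\tau = a - \varphi_1$.

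To establish $|\eta| \prec \mathfrak{d}_\tau$, the idea is to combine two pieces of information. First, the membership $b \in \mathcal{L}_{\alpha_0}[a]$ places $b$ in the convex class described by \Cref{eq-L-class} (recall $\pi_{\mathbf{Smp}_{\mathcal{L}_{\alpha_0}}} = \sharp_{\alpha_0}$), which gives $|\eta| \prec (L_\gamma E_{\alpha_0} a)^{-1}$ for some $\gamma < \alpha_0$. Second, because $\varphi_1 \in \mathbf{No}_{\succ,\alpha_0}$ is $\alpha_0$-truncated and $\tau \prec \varphi_1$, we have $E_{\alpha_0} a \asymp E_{\alpha_0} \varphi_1$, so the above bound reads $|\eta| \prec (L_\gamma E_{\alpha_0} \varphi_1)^{-1}$. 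It then remains to prove that $\mathfrak{d}_\tau \succ (L_\gamma E_{\alpha_0} \varphi_1)^{-1}$ for every such $\gamma$. When $\iota_1 = +1$, $\mathfrak{d}_\tau$ is positive infinite and the inequality is trivial.

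The hard case is $\iota_1 = -1$, where $\mathfrak{d}_\tau = \mathe^{\psi_1}/E_{\alpha_1} a'_{;1}$ is infinitesimal and the desired inequality amounts to $E_{\alpha_1} a'_{;1} \prec \mathe^{\psi_1}\, L_\gamma E_{\alpha_0} \varphi_1$. I expect this to be the main technical obstacle, requiring a careful case analysis on the comparison of $\alpha_1$ and $\alpha_0$, and exploiting the admissibility constraint $\tmop{supp} \psi_1 \succ \log E_{\alpha_1} a'_{;1}$ together with the coding-sequence restrictions (Definition~\ref{def-coding-sequence}) to control the hyperexponential scale of $E_{\alpha_1} a'_{;1}$ against the hyperlogarithmic scale of $L_\gamma E_{\alpha_0} \varphi_1$. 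Once this scale comparison is secured, Lemma~\ref{lem-little-more} yields $b \in \mathbf{Ad}_{\nearrow 1}$, completing the proof.
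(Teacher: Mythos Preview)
Your reduction to Lemma~\ref{lem-little-more} is the right move, but you then work much harder than necessary and leave the case $\iota_1=-1$ unresolved. The paper's argument is a one-liner that sidesteps your obstacle entirely.

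You already note that $\pi_{\mathbf{Smp}_{\mathcal{L}_{\alpha_0}}}=\sharp_{\alpha_0}$; use this fact directly rather than passing through the explicit description~\eqref{eq-L-class}. For $b\in\mathcal{L}_{\alpha_0}[a]$ one has $\sharp_{\alpha_0}(b)=\sharp_{\alpha_0}(a)$. Since $\varphi_1\in\mathbf{No}_{\succ,\alpha_0}\cup\{0\}$ and $\varphi_1\trianglelefteqslant a$, we get $\varphi_1\trianglelefteqslant\sharp_{\alpha_0}(a)$; the paper records $\varphi_1\vartriangleleft\sharp_{\alpha_0}(a)=\sharp_{\alpha_0}(b)$. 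Because $\sharp_{\alpha_0}(a)\trianglelefteqslant a$ and $\sharp_{\alpha_0}(a)=\sharp_{\alpha_0}(b)\trianglelefteqslant b$, the dominant terms of $a-\varphi_1$ and $b-\varphi_1$ both equal the dominant term of $\sharp_{\alpha_0}(a)-\varphi_1$. Hence $b-\varphi_1\sim a-\varphi_1$, and Lemma~\ref{lem-little-more} applies immediately. There is no need to bound $|\eta|$ against $(L_\gamma E_{\alpha_0}a)^{-1}$ or to compare $\mathfrak{d}_\tau$ with such a quantity: routing the comparison through the common truncation $\sharp_{\alpha_0}(a)$ replaces your scale analysis by a triviality, and the sign of $\iota_1$ never enters.

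Two side remarks on your write-up. First, you treat $\tau=\varepsilon_1\mathe^{\psi_1}(E_{\alpha_1}a'_{;1})^{\iota_1}$ as a single term, but $E_{\alpha_1}a'_{;1}$ need not be a monomial (admissibility does not require it); only the dominant term of $a-\varphi_1$ matters for Lemma~\ref{lem-little-more}. Second, the step ``$E_{\alpha_0}a\asymp E_{\alpha_0}\varphi_1$'' is not justified (and is meaningless when $\varphi_1=0$): being in the same $\mathcal{L}_{\alpha_0}$-class gives equality of $\mathfrak{d}_{\alpha_0}$, not of dominant monomials.
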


\begin{proof}
  For $b \in \mathbf{Ad}_{\nearrow 1}$, and $c \in \mathcal{L}_{\alpha_0}
  [b]$, we have $\varphi_1 \vartriangleleft \sharp_{\alpha_0} (b) =
  \sharp_{\alpha_0} (c)$ so $c - \varphi_1 \sim b - \varphi_1$. We conclude
  with the previous lemma.
\end{proof}

\begin{lemma}
  \label{lem-admissible-ineq}For $a, b \in \mathbf{Ad}$ and $i \in
  \mathbb{N}^{>}$, we have $L_{\alpha_{i - 1}} a_{; i} <\mathcal{E}_{\alpha_{i
  - 1}} b_{; i}$.
\end{lemma}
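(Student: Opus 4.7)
The plan is to establish the inequality in two steps: first an ``orbit matching'' argument showing $a_{;i}$ and $b_{;i}$ sit in a common $\mathcal{L}_{\alpha_{i-1}}$-orbit, then a function-group inequality that pushes an extra $L_{\alpha_{i-1}}$-factor strictly below every element of $\mathcal{E}_{\alpha_{i-1}}$.

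\smallskip
\noindent\emph{Step 1 (orbit matching).} Unfolding the admissibility of $a$ and $b$ at level $i-1$ gives
\begin{align*}
a_{;i-1} & = \varphi_{i-1} \pplus \varepsilon_{i-1} \mathe^{\psi_{i-1}}(E_{\alpha_{i-1}} a_{;i})^{\iota_{i-1}},\\
b_{;i-1} & = \varphi_{i-1} \pplus \varepsilon_{i-1} \mathe^{\psi_{i-1}}(E_{\alpha_{i-1}} b_{;i})^{\iota_{i-1}},
\end{align*}
with the common support bound $\tmop{supp} \psi_{i-1} \succ \log E_{\alpha_{i-1}} a_{;i},\, \log E_{\alpha_{i-1}} b_{;i}$, and the truncation condition $\varphi_i \vartriangleleft \sharp_{\alpha_{i-1}}(a_{;i}),\, \sharp_{\alpha_{i-1}}(b_{;i})$ when $\varphi_i \neq 0$. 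Applying admissibility to the shifted sequence $\Sigma_{\nearrow i}$ further gives both $a_{;i}$ and $b_{;i}$ the common expansion $\varphi_i \pplus \varepsilon_i \mathe^{\psi_i}(E_{\alpha_i} \cdot)^{\iota_i}$. I claim this forces $\sharp_{\alpha_{i-1}}(a_{;i}) = \sharp_{\alpha_{i-1}}(b_{;i})$, so $a_{;i} \in \mathcal{L}_{\alpha_{i-1}}[b_{;i}]$. Using $\mathcal{L}_{\alpha_{i-1}} = L_{\alpha_{i-1}}\,\mathcal{E}_{\alpha_{i-1}}\, E_{\alpha_{i-1}}$, we may then write $a_{;i} = L_{\alpha_{i-1}} g_0 E_{\alpha_{i-1}}(b_{;i})$ for some $g_0 \in \mathcal{E}_{\alpha_{i-1}}$.

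\smallskip
\noindent\emph{Step 2 (function inequality).} Applying $L_{\alpha_{i-1}}$ to the previous identity:
\[
L_{\alpha_{i-1}} a_{;i} \;=\; L_{\alpha_{i-1}}^2\, g_0\, E_{\alpha_{i-1}}(b_{;i}).
\]
I will show that $L_{\alpha_{i-1}}^2 g_0 E_{\alpha_{i-1}} < g$ as functions on $\mathbf{No}^{>,\succ}$ for every $g \in \mathcal{E}_{\alpha_{i-1}}$. By the cofinality \eqref{ineq5} it suffices to take $g = E_\gamma H_r L_\gamma$ with $\gamma < \alpha_{i-1}$ and $r > 0$. Setting $y = g_0 E_{\alpha_{i-1}}(b_{;i})$, this reduces (after applying $L_\gamma$ and using $L_\gamma L_{\alpha_{i-1}} = L_{\alpha_{i-1} + \gamma}$, valid since $\gamma \lleq \alpha_{i-1}$) to $L_{\alpha_{i-1} + \gamma} y \prec L_\gamma y$, which is immediate. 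Combining with Step~1 gives $L_{\alpha_{i-1}} a_{;i} < g(b_{;i})$, i.e.\ $L_{\alpha_{i-1}} a_{;i} < \mathcal{E}_{\alpha_{i-1}} b_{;i}$.

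\smallskip
\noindent\emph{Main obstacle.} The core difficulty lies in Step~1, namely verifying $\sharp_{\alpha_{i-1}}(a_{;i}) = \sharp_{\alpha_{i-1}}(b_{;i})$. When $\varphi_i \neq 0$, the support condition on $\psi_i$ forces $a_{;i} \sim \varphi_i \sim b_{;i}$, and Lemma~\ref{lem-little-more} (applied to the shifted sequence) together with Corollary~\ref{cor-little-more} gives the orbit coincidence. When $\varphi_i = 0$, the common dominant monomial is $\mathe^{\psi_i}$, and a small case distinction on $\alpha_i$ versus $\alpha_{i-1}$ --- using the coding-sequence conditions (c) and (d) from Definition~\ref{def-coding-sequence} and the uniqueness of hyperserial expansions (Lemma~\ref{lem-st-expansion-unicity}) --- is needed to conclude that the $\alpha_{i-1}$-truncation of the common tail agrees.
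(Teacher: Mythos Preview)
Your Step~1 is the crux, and it does not go through. The claim $\sharp_{\alpha_{i-1}}(a_{;i}) = \sharp_{\alpha_{i-1}}(b_{;i})$ is \emph{not} what Corollary~\ref{cor-little-more} provides: that corollary says $\mathbf{Ad}_{\nearrow 1}$ is a \emph{union} of $\mathcal{L}_{\alpha_0}$-orbits, not a single orbit. The admissibility condition $\varphi_i \vartriangleleft \sharp_{\alpha_{i-1}}(a_{;i})$ forces $\sharp_{\alpha_{i-1}}(a_{;i})$ to strictly extend $\varphi_i$, and that extension is a truncation of $\varepsilon_i\mathe^{\psi_i}(E_{\alpha_i}a_{;i+1})^{\iota_i}$ --- which depends on $a_{;i+1}$, not just on the coding sequence. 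Your sketch for the case $\varphi_i = 0$ also breaks: the dominant monomial of $\mathe^{\psi_i}(E_{\alpha_i}a_{;i+1})^{\iota_i}$ is $\mathe^{\psi_i}$ multiplied by the dominant monomial of $(E_{\alpha_i}a_{;i+1})^{\iota_i}$, and the second factor varies with $a$. So the orbit-matching strategy cannot be made to work by a direct argument at level~$i$.

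The paper avoids this entirely by going \emph{up} rather than staying put. One picks the least $j>i$ with $\varphi_j \neq 0$ or $\psi_j \neq 0$; at that level both $a_{;j}$ and $b_{;j}$ lie in $\mathcal{P}[\varphi_j \pplus \varepsilon_j\mathe^{\psi_j}]$ (the common prefix forces this), so $\log a_{;j} \prec b_{;j}$. Since $\varphi_k = \psi_k = 0$ and hence $a_{;k} = E_{\alpha_k}a_{;k+1}$ with $\alpha_{k-1} \geqslant \alpha_k$ for $i \leqslant k < j$, one then iterates Lemma~\ref{lem-transitive-ineq} to push the inequality down from level~$j$ to level~$i$. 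The key ingredient you are missing is precisely this transitivity lemma: the inequality at level~$i$ is \emph{inherited} from a higher level where it is transparent, not established directly.

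Your Step~2 also has a bookkeeping slip: with $y = g_0 E_{\alpha_{i-1}}(b_{;i})$ one has $L_{\alpha_{i-1}}a_{;i} = L_{\alpha_{i-1}}^2 y = L_{2\alpha_{i-1}}y$, not $L_{\alpha_{i-1}}y$, and $b_{;i} = L_{\alpha_{i-1}}g_0^{-1}y$ rather than $L_{\alpha_{i-1}}y$; so the reduction you state does not match the actual computation.
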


\begin{proof}
  Let $j > i$ be minimal with $\varphi_j \neq 0$ or $\psi_j \neq 0$. We thus
  have $a_{; j}, b_{; j} \in \mathcal{P} [\varphi_j \pplus \varepsilon_j
  \mathe^{\psi_j}]$ so $\log a_{; j} \prec b_{; j}$. We have $a_{; i} =
  E_{\alpha_i + \cdots + \alpha_{j - 1}} a_{; j}$ and $b_{; i} = E_{\alpha_i +
  \cdots + \alpha_{j - 1}} b_{; j}$ where $\alpha_i \geqslant \cdots \geqslant
  \alpha_j \geqslant 1$. We deduce by induction using
  Lemma~\ref{lem-transitive-ineq} that $L_{\alpha_{i - 1}} a_{; i}
  <\mathcal{E}_{\alpha_{i - 1}} b_{; i}$.
\end{proof}

\subsection{Nested sequences}

In this subsection, we assume that $\Sigma$ is admissible. For $k \in
\mathbb{N}$ we say that a $\Sigma_{\nearrow k}$-admissible number $a$ is
$\Sigma_{\nearrow k}${\tmem{-nested}}{\index{nested number}} if we have
$E_{\alpha_{k + i}} a_{k ; i + 1} \in \mathbf{Mo}_{\alpha_{k + i}} \setminus
L_{< \alpha_{k + i}}  \mathbf{Mo}_{\alpha_{k + i} \omega}$ for all~$i \in
\mathbb{N}$. We write $\mathbf{Ne}_{\nearrow k}$\label{autolab56} for the
class of $\Sigma_{\nearrow k}$-nested numbers. For $k = 0$ we simply say that
$a$ is $\Sigma$-nested and we write $\mathbf{Ne} \assign \mathbf{Ne}_{\nearrow
0}$\label{autolab57}.

\begin{definition}
  \label{nested-seq-def}We say that $\Sigma$ is
  {\tmem{{\tmstrong{nested}}}}{\index{nested sequence}} if for all $k \in
  \mathbb{N}$, we have
  \[ \mathbf{Ad}_{\nearrow k} = \varphi_k + \varepsilon_k \mathe^{\psi_k} 
     (E_{\alpha_k}  \mathbf{Ad}_{\nearrow k + 1})^{\iota_k} . \]
\end{definition}

Note that the inclusion $\mathbf{Ad}_{\nearrow k} \subseteq \varphi_k +
\varepsilon_k \mathe^{\psi_k}  (E_{\alpha_k}  \mathbf{Ad}_{\nearrow k +
1})^{\iota_k}$ always holds. In {\cite[Section~8.4]{BvdH19}}, we gave examples
of nested and admissible non-nested sequences in the case of transseries, i.e.
with $\alpha_i = 1$ for all $i \in \mathbb{N}$. We next give an example in the
hyperserial case.

\begin{example}
  \label{ex-nested}We claim that the sequence $\Sigma_0$ from
  Example~\ref{ex-admissible} is nested. Indeed, let $k \in \mathbb{N}$ and $a
  \in \mathbf{Ad}_{\nearrow k + 1}$. We have $a = \varphi_{k + 1} \pplus
  \mathe^{\psi_{k + 1}}  (E_{\omega^{2 k + 3}} b)^{- 1}$ for a certain $b \in
  \mathbf{No}^{>, \succ}$ with~$b \asymp L_{\smash{\omega^{2 k + 4}}} \omega$.
  Let us check that the conditions of Definition~\ref{def-admissible-sequence}
  are satisfied for $c \assign \varphi_k + \mathe^{\psi_k}  (E_{\omega^{2 k +
  1}} a)^{- 1}$.
  
  First let $\mathfrak{m} \in \tmop{supp} \psi_k$. We want to prove that
  $\mathfrak{m} \succ \log E_{\omega^{2 k + 1}} a$. We have $\mathfrak{m}=
  L_{\smash{\omega^{2 k + 1}} n} \omega$ for a certain $n \in \mathbb{N}^{>}$.
  Now $a < 2 L_{\smash{\omega^{2 k + 2}}} \omega$, so $\log E_{\omega^{2 k +
  1}} a \prec E_{\omega^{2 k + 1} 2}^{L_{\smash{\omega^{\small{2 k + 2}}}}
  \omega} = L_{\smash{\omega^{2 k + 2}}} (\omega + 2) \prec \mathfrak{m}$.
  
  Secondly, let $\mathfrak{n} \in \tmop{supp} \varphi_k$. We want to prove
  that $\mathfrak{n} \succ \mathe^{\psi_k}  (E_{\omega^{2 k + 1}} a)^{- 1}$.
  We have $\mathfrak{n}= L_{\smash{\omega^{2 k}} n} \omega$ for a certain $n
  \in \mathbb{N}^{>}$. Then $\mathe^{\psi_k}  (E_{\omega^{2 k + 1}} a)^{- 1}
  \prec \mathe^{2 \psi_k}$ by the previous paragraph. Now $2 \psi_k
  +\mathbb{N}< 3 L_{\omega^{2 k + 1}} \omega$ so $\mathe^{2 \psi_k} \prec
  \mathe^{3 L_{\omega^{2 k + 1}} \omega} \prec \mathfrak{n}$.
  
  Finally, we claim that $\varphi_{k + 1} \vartriangleleft \sharp_{\omega^{2 k
  + 1}} (a)$. This is immediate since the dominant term $\tau$ of
  $\mathe^{\psi_{k + 1}}  (E_{\omega^{2 k + 3}} b)^{- 1}$ is positive
  infinite, so $\varphi_{k + 1} \vartriangleleft \varphi_{k + 1} \pplus \tau
  \trianglelefteqslant \sharp_{\omega^{2 k + 1}} (a)$. Therefore $\Sigma_0$ is
  nested.
\end{example}

A crucial feature of nested sequences is that they are sufficient to describe
nested expansions. This is the content of Theorem~\ref{th-eventually-nested}
below.

\begin{lemma}
  \label{lem-standard-form-nested}Let $b \in \mathbf{Ad}_{\nearrow 1}$. If
  $\alpha_0 > 1$, or $\alpha_0 = 1$ and $b_{\succ}$ is not tail-atomic, then
  the hyperserial expansion of $E_{\alpha_0} \sharp_{\alpha_0} (b)$ is
  \[ E_{\alpha_0} \sharp_{\alpha_0} (b) =
     E_{\alpha_0}^{\smash{\sharp_{\alpha_0} (b)}} \]
  If $\alpha_0 = 1$, $b_{\succ} = \psi \pplus \iota \mathfrak{b}$ is
  tail-atomic, and $\mathe^{\mathfrak{b}} = L_{\beta} E_{\alpha}^u$ is a
  hyperserial expansion, then $\psi \in \mathbf{Ad}_{\nearrow 1}$ and the
  hyperserial expansion of $\exp b_{\succ}$ is
  \[ \exp b_{\succ} = \mathe^{\psi}  (L_{\beta} E_{\alpha}^u)^{\iota} . \]
\end{lemma}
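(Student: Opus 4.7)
The plan is to prove each of the two conclusions of the lemma in turn, in both cases by verifying the defining conditions of a hyperserial expansion directly.

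For the first conclusion, set $v \assign \sharp_{\alpha_0}(b)$. Since $\sharp_{\alpha_0}(b) \in \mathbf{No}_{\succ,\alpha_0} = L_{\alpha_0}\mathbf{Mo}_{\alpha_0}$, the number $E_{\alpha_0}^v$ lies in $\mathbf{Mo}_{\alpha_0}$. I claim it is its own hyperserial expansion of type~I with $\psi = 0$, $\iota = 1$, $\beta = 0$, $\alpha = \alpha_0$, $u = v$. The requirements $\iota \in \{-1,1\}$ and $\beta\omega = 0 < \alpha_0$ are automatic, and when $\alpha_0 = 1$ condition~(c) of Definition~\ref{def-coding-sequence} forces $\psi_0 = 0$, so $v = \sharp_1(b) = b_\succ$ and the side requirement ``$u$ not tail-atomic'' is precisely the hypothesis. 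It remains to show $E_{\alpha_0}^v \notin L_{<\alpha_0}\mathbf{Mo}_{\alpha_0\omega}$. Suppose $E_{\alpha_0}^v = L_\gamma\mathfrak{c}$ with $\gamma < \alpha_0$ and $\mathfrak{c} \in \mathbf{Mo}_{\alpha_0\omega}$. Applying $L_{\alpha_0}$ yields $v = L_{\alpha_0+\gamma}\mathfrak{c} \in \mathbf{Mo}$, so $v$ is a single monomial.

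When $\alpha_0 = 1$, $v = b_\succ$ being a log-atomic monomial in $\mathbf{Mo}_\omega$ makes $b_\succ$ trivially tail-atomic (take $\psi = 0$, $\iota = 1$, $\mathfrak{a} = v$), contradicting the hypothesis. When $\alpha_0 > 1$, Corollary~\ref{cor-alpha-alphaomega} forces $\alpha_0 = \omega^\mu$ to be a successor and $\gamma = \alpha_0/\omega \cdot n$. Since $b = \varphi_1 \pplus \varepsilon_1\mathfrak{n}$ with $\mathfrak{n} = \mathe^{\psi_1}(E_{\alpha_1}b_{1;1})^{\iota_1}$ and $\sharp_{\alpha_0}(b) \in \{\varphi_1,\; \varphi_1 + \varepsilon_1\mathfrak{n}\}$, the identity $v \in \mathbf{Mo}$ combined with the uniqueness of hyperserial expansions (Lemma~\ref{lem-st-expansion-unicity}) propagates downward through the admissibility recursion, forcing $\varphi_j = \psi_j = 0$ for all sufficiently large~$j$, in violation of Definition~\ref{def-coding-sequence}(\ref{def-coding-sequence-e}).

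For the second conclusion, suppose $\alpha_0 = 1$ with $b_\succ = \psi \pplus \iota\mathfrak{b}$ tail-atomic and $\mathe^{\mathfrak{b}} = L_\beta E_\alpha^u$ the given hyperserial expansion of $\mathe^{\mathfrak{b}}$. Then
\[
\exp b_\succ \;=\; \mathe^\psi \cdot \mathfrak{b}^\iota \;=\; \mathe^\psi (L_\beta E_\alpha^u)^\iota.
\]
This is a valid type~I hyperserial expansion: all structural conditions on $(\beta,\alpha,u)$ (including $\beta\omega < \alpha$, membership $E_\alpha^u \in \mathbf{Mo}_\alpha \setminus L_{<\alpha}\mathbf{Mo}_{\alpha\omega}$, and the side condition when $\alpha = 1$) are inherited from the given expansion of $\mathfrak{b}$, while $\tmop{supp}\psi \succ L_{\beta+1}E_\alpha^u = \log\mathfrak{b}$ follows from $\tmop{supp}\psi \succ \mathfrak{b}$ (a consequence of the $\pplus$) together with $\log\mathfrak{b} \prec \mathfrak{b}$.

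To establish $\psi \in \mathbf{Ad}_{\nearrow 1}$, I will apply Lemma~\ref{lem-little-more} to the shifted sequence $\Sigma_{\nearrow 1}$. First rule out (exactly as in Part~1) the scenario in which $\mathfrak{n}$ is infinite and log-atomic: this would force $\psi_1 = 0$ and $\alpha_1 = 1$ by condition~(c), propagating to trivialize every subsequent entry and contradicting condition~(e). Hence $b_\succ$ and $\psi$ share the same purely large head above $\mathfrak{b}$, and a direct comparison shows that $\psi - \varphi_1$ and $b - \varphi_1 = \varepsilon_1\mathfrak{n}$ have the same sign and the same dominant monomial. Lemma~\ref{lem-little-more} then yields $\psi \in \mathbf{Ad}_{\nearrow 1}$. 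The main obstacle is this last step: carefully tracking how the tail-atomic decomposition of $b_\succ$ lines up with the rigid admissibility tail $\varepsilon_1\mathfrak{n}$ imposed by $\Sigma_{\nearrow 1}$, so that Lemma~\ref{lem-little-more}'s dominant-monomial hypothesis is genuinely satisfied.
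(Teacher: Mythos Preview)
Your overall strategy matches the paper's, but there is a genuine computational error in Part~1 that invalidates your argument in the case $\gamma>0$. You write that applying $L_{\alpha_0}$ to $E_{\alpha_0}^v = L_\gamma\mathfrak{c}$ gives $v = L_{\alpha_0+\gamma}\mathfrak{c}\in\mathbf{Mo}$. But the composition rule $L_{\gamma+\rho}=L_\rho\circ L_\gamma$ requires $\rho\lleq\gamma$, and here the order is reversed: you need $L_{\alpha_0}\circ L_\gamma$ with $\gamma<\alpha_0$. When $\alpha_0=\omega^{\mu+1}$ and $\gamma=\omega^\mu n$ (the case forced by Corollary~\ref{cor-alpha-alphaomega}), the functional equation~(\ref{functional-equation}) gives instead
\[
v \;=\; L_{\alpha_0}(L_{\omega^\mu n}\mathfrak{c}) \;=\; L_{\alpha_0}\mathfrak{c}-n,
\]
so $v$ is \emph{not} a single monomial when $n>0$. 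The paper treats this as a separate case: one gets $b=(b+n)\pplus(-n)$ with $b+n\in\mathbf{Mo}_{\alpha_0\omega}$, and then $\varphi_1\vartriangleleft b$ forces $\varphi_1=b+n$, whence $b-\varphi_1=-n$ and $b_{1;2}$ is undefined --- the required contradiction. Your argument simply skips this case.

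Even in the case $n=0$ (where $v$ genuinely is a monomial), your phrase ``propagates downward through the admissibility recursion, forcing $\varphi_j=\psi_j=0$ for all sufficiently large~$j$'' is too vague to count as a proof. The paper makes this explicit: from $b\in\mathbf{Mo}_{\alpha_0\omega}$ one deduces $\varphi_1=\psi_1=0$, then takes the minimal $j>1$ with $\varphi_j\neq 0$ or $\psi_j\neq 0$, observes that $b_{1;j}=L_{\alpha_1+\cdots+\alpha_{j-1}}b$ is still log-atomic, and derives a contradiction against $\varphi_j\vartriangleleft b_{1;j}$ or against $\log b_{1;j}$ being a monomial.

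For Part~2, your displayed equation has a typo (you wrote $\mathfrak{b}^\iota$ where you mean $(\mathe^{\mathfrak{b}})^\iota$), and your argument for $\psi\in\mathbf{Ad}_{\nearrow 1}$ is again hand-wavy. The paper's route is sharper: either $\mathfrak{b}\notin\mathe^{\psi_1}(E_{\alpha_1}\mathbf{Ad}_{\nearrow 2})^{\iota_1}$, in which case $\varphi_1\vartriangleleft\psi$ and Lemma~\ref{lem-little-more} applies directly; or $\mathfrak{b}$ does lie in that class, which (via $\psi_1=0$, $\alpha_1=1$) forces $\mathfrak{b}=\exp(b_{1;2})$ with $b_{1;2}$ log-atomic, and then the claim already proved for Part~1 (applied to $\Sigma_{\nearrow 1}$) gives $E_1^{b_{1;2}}\notin\mathbf{Mo}_\omega$, contradicting $\mathfrak{b}\in\mathbf{Mo}_\omega$.
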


\begin{proof}
  Recall that $\sharp_1 (b) = b_{\succ}$. By Corollary~\ref{cor-little-more},
  we have $\sharp_{\alpha_0} (b) \in \mathbf{Ad}_{\nearrow 1}$,. So we may
  assume without loss of generality that $b = \sharp_{\alpha_0} (b)$.
  
  We claim that $E_{\alpha_0}^b \in \mathbf{Mo}_{\alpha_0} \setminus L_{<
  \alpha_0}  \mathbf{Mo}_{\alpha_0 \omega}$. Assume for contradiction that
  $E_{\alpha_0}^b \in L_{< \alpha_0}  \mathbf{Mo}_{\alpha_0 \omega}$ and write
  $E_{\alpha_0}^b = L_{\gamma} \mathfrak{a}$ accordingly. Then
  Corollary~\ref{cor-alpha-alphaomega} implies that $\gamma = 0$, in which
  case we define $n \assign 0$, or $\alpha_0 = \omega^{\mu + 1}$ for some
  ordinal $\mu$ and $\gamma = (\alpha_0)_{/ \omega} n$ for some $n \in
  \mathbb{N}^{>}$. Therefore $E_{\alpha_0}^{b + n} \in \mathbf{Mo}_{\alpha_0
  \omega}$, so ${b + n} \in \mathbf{Mo}_{\alpha_0 \omega}$. This implies that
  \[ b = (b + n) \pplus (- n) . \]
  Recall that $\varphi_1 \vartriangleleft b$. Assume that $n = 0$, so
  $\varphi_1 = 0$. Since $b$ is $\log$-atomic, we also have $\psi_1 = 0$. Let
  $j > 1$ be minimal with $\varphi_j \neq 0$ or $\psi_j \neq 0$. We have
  $\alpha_1 \geqslant \cdots \geqslant \alpha_{j - 1}$ and $b_{1 ; j} =
  L_{\alpha_1 + \cdots + \alpha_{j - 1}} b \in \mathbf{Mo}_{\alpha_{j - 1}
  \omega}$. In particular, the number $b_{1 ; j}$ is log-atomic. If $\varphi_j
  \neq 0$, this contradicts the fact that $\varphi_j \vartriangleleft b_{1 ;
  j}$. If $\psi_j \neq 0$, then $\tmop{supp} \psi_j \succ \log ((b_{1 ; j}
  \mathe^{- \psi_j})^{\iota_j})$ implies
  \[ \log b_{1 ; j} = \psi_j \pplus \log ((b_{1 ; j} \mathe^{-
     \psi_j})^{\iota_j}) . \]
  But then $\log b_{1 ; j}$ is not a monomial: a contradiction. Assume now
  that $n > 0$. So $\varphi_1 = b + n$ and $b = \varphi_1 \pplus (- n)$. But
  then $b_{1 ; 2}$ is not defined: a contradiction. We conclude that
  $E_{\alpha_0}^b \nin L_{< \alpha_0}  \mathbf{Mo}_{\alpha_0 \omega}$.
  
  If $\alpha_0 > 1$, or if $\alpha_0 = 1$ and $b$ is not tail-atomic, then our
  claim yields the result. Assume now that $\alpha_0 = 1$ and that $b = \psi
  \pplus \iota \mathfrak{b}$ is tail-atomic where $\iota \in \{ - 1, 1 \},
  \psi \in \mathbf{No}_{\succ}$, and {$\mathe^{\mathfrak{b}} = L_{\beta}
  E_{\alpha}^u \in \mathbf{Mo}_{\omega}$} is a hyperserial expansion. Then the
  hyperserial expansion of $\exp b$ is $\exp b = \mathe^{\psi}  (L_{\beta}
  E_{\alpha}^u)^{\iota}$.
  
  We next show that $\psi \in \mathbf{Ad}_{\nearrow 1}$. If $\mathfrak{b} \nin
  \mathe^{\psi_1}  (E_{\alpha_1}  \mathbf{Ad}_{\nearrow 2})^{\iota_1}$,
  then~$\varphi_1 \vartriangleleft \psi$, and we conclude with
  Lemma~\ref{lem-little-more} that $\psi \in \mathbf{Ad}_{\nearrow 1}$. Assume
  for contradiction that~$\mathfrak{b} \in \mathe^{\psi_1}  (E_{\alpha_1} 
  \mathbf{Ad}_{\nearrow 2})^{\iota_1}$. Since $\mathfrak{b}$ is log-atomic, we
  must have $\psi_1 = 0$. By the definition of coding sequences, this implies
  that $\iota_1 = 1$ and $\alpha_1 = 1$. So $b = \varphi_1 \pplus
  \varepsilon_1 \exp (b_{1 ; 2})$, whence $\psi = \varphi_1$, $\iota =
  \varepsilon_1$, and $\mathfrak{b}= \exp (b_{1 ; 2})$. In particular the
  number $b_{1 ; 2}$ is $\log$\mbox{-}atomic, hence tail-atomic. Since $b_{1 ;
  2} \in \mathbf{Ad}_{\nearrow 2}$, the claim in the second paragraph of the
  proof, applied to $\Sigma_{\nearrow 1}$, gives $E_1^{b_{1 ; 2}} \nin
  \mathbf{Mo}_{\omega}$. But then also $\mathfrak{b} \nin
  \mathbf{Mo}_{\omega}$: a contradiction.
\end{proof}

We pursue with two auxiliary results that will be used order to construct a
infinite path required in the proof of Theorem~\ref{th-eventually-nested}
below.

\begin{lemma}
  \label{lem-required-paths}For $a \in \mathbf{Ad}$, there is a finite path
  $P$ in $a$ with $u_{P, | P |} \in \mathbf{Ad}_{\nearrow 1} -\mathbb{N}$ or
  {$\psi_{P, | P |} \in \mathbf{Ad}_{\nearrow 1} -\mathbb{N}$}.
\end{lemma}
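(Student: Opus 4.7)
The plan is to construct an explicit finite path in $a$ that follows the admissible structure of $a$ using its expansion $a = \varphi_0 \pplus \varepsilon_0 \mathe^{\psi_0}(E_{\alpha_0} a_{;1})^{\iota_0}$. By the Taylor formula~(\ref{eq-hyperexp-general}) together with the dominance relation~(\ref{eq-trianglehere}), the dominant monomial of $E_{\alpha_0} a_{;1}$ is $E_{\alpha_0}^{\sharp_{\alpha_0}(a_{;1})}$, so the term $\tau_{P,0} \assign \varepsilon_0 \mathfrak{m}_{P,0}$ with $\mathfrak{m}_{P,0} \assign \mathe^{\psi_0}(E_{\alpha_0}^{\smash{\sharp_{\alpha_0}(a_{;1})}})^{\iota_0}$ lies in $\tmop{term} a$, and I would take it as the first term of the path.

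I would then identify the hyperserial expansion of $\mathfrak{m}_{P,0}$ by applying Lemma~\ref{lem-standard-form-nested} to $b = a_{;1} \in \mathbf{Ad}_{\nearrow 1}$. In the first case, $\alpha_0 > 1$, or $\alpha_0 = 1$ with $\sharp_1(a_{;1})$ not tail-atomic, so $E_{\alpha_0}^{\sharp_{\alpha_0}(a_{;1})} \in \mathbf{Mo}_{\alpha_0} \setminus L_{<\alpha_0} \mathbf{Mo}_{\alpha_0 \omega}$ is itself a hyperserial expansion. Consequently $\mathfrak{m}_{P,0} = \mathe^{\psi_0}(L_0 E_{\alpha_0}^{\sharp_{\alpha_0}(a_{;1})})^{\iota_0}$ is a type~I hyperserial expansion with $u_{P,1} = \sharp_{\alpha_0}(a_{;1})$. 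By Corollary~\ref{cor-little-more}, $\sharp_{\alpha_0}(a_{;1}) \in \mathcal{L}_{\alpha_0}[a_{;1}] \subseteq \mathbf{Ad}_{\nearrow 1}$, so the length-one path $P = (\tau_{P,0})$ satisfies $u_{P,|P|} \in \mathbf{Ad}_{\nearrow 1}$, which is contained in $\mathbf{Ad}_{\nearrow 1} - \mathbb{N}$.

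In the remaining case, $\alpha_0 = 1$ and $\sharp_1(a_{;1}) = \psi \pplus \iota \mathfrak{b}$ is tail-atomic, and Lemma~\ref{lem-standard-form-nested} gives both $\psi \in \mathbf{Ad}_{\nearrow 1}$ and the hyperserial expansion $\exp \sharp_1(a_{;1}) = \mathe^{\psi}(L_\beta E_\alpha^u)^\iota$, where $\exp \mathfrak{b} = L_\beta E_\alpha^u$. Raising to the $\iota_0$-th power yields $\mathfrak{m}_{P,0} = \mathe^{\iota_0 \psi}(L_\beta E_\alpha^u)^{\iota_0 \iota}$, which is the hyperserial expansion of $\mathfrak{m}_{P,0}$ since the support constraint $\tmop{supp} \psi \succ \mathfrak{b} = L_{\beta+1} E_\alpha^u$ is preserved under negation of $\psi$. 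Thus $\psi_{P,1} = \iota_0 \psi$. When $\iota_0 = 1$, $\psi_{P,1} = \psi \in \mathbf{Ad}_{\nearrow 1}$ and the length-one path works. When $\iota_0 = -1$ (and similarly when $\sharp_1(a_{;1})$ is not tail-atomic with $\iota_0 = -1$), we have $\psi_{P,1} = -\psi < 0$, so I would extend the path by one more step: choose $\tau_{P,1}$ as a term from $u_{P,1}$ whose monomial $\mathfrak{m}_{P,1}$ has a hyperserial expansion recovering a positive admissible number (up to an integer shift coming from the functional equation~(\ref{functional-equation}), which is precisely what the ``$-\mathbb{N}$'' in the conclusion accommodates).

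The main obstacle is the $\iota_0 = -1$ sub-case: one must choose the extending term $\tau_{P,1}$ carefully, tracking the exact log-atomic structure of $\mathfrak{b}$ via its own hyperserial expansion, so that $u_{P,2}$ or $\psi_{P,2}$ lands in $\mathbf{Ad}_{\nearrow 1} - \mathbb{N}$. Once this finite-length bound (at most two, in all cases) is established, the lemma follows.
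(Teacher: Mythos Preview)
Your argument contains a genuine gap in the case $\alpha_0>1$, and it misidentifies the origin of the ``$-\mathbb{N}$'' in the conclusion.

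The claim that the dominant monomial of $E_{\alpha_0}a_{;1}$ equals $E_{\alpha_0}^{\sharp_{\alpha_0}(a_{;1})}$ is not what~(\ref{eq-trianglehere}) says. That relation only gives $L_{\gamma}E_{\alpha_0}^{\sharp_{\alpha_0}(a_{;1})}\trianglelefteqslant L_{\gamma}E_{\alpha_0}a_{;1}$ for the specific $\gamma<\alpha_0$ appearing in~(\ref{eq-hyperexp-general}), and nothing forces $\gamma=0$: for that one would need $a_{;1}-\sharp_{\alpha_0}(a_{;1})\prec\ell_{\alpha_0}'\circ E_{\alpha_0}^{\sharp_{\alpha_0}(a_{;1})}$, which admissibility does not guarantee. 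So in general your proposed first term $\varepsilon_0\mathe^{\psi_0}(E_{\alpha_0}^{\sharp_{\alpha_0}(a_{;1})})^{\iota_0}$ need not lie in $\tmop{term} a$ at all, and the length-one path you describe does not exist.

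The paper deals with this by working inside $E_{\alpha_0}a_{;1}$ and splitting on whether $\alpha_0=\omega^{\mu}$ has $\mu$ limit or successor. In the limit case one uses the hyperserial expansion $L_{\lambda}E_{\alpha_0}^{\sharp_{\alpha_0}(a_{;1})}$ directly and lifts a short path back up via Lemma~\ref{lem-hyperexp-subpath}. In the successor case one may take $\lambda=\omega^{\mu_-}n$, so that $L_{\lambda}E_{\alpha_0}^{\sharp_{\alpha_0}(a_{;1})}=E_{\alpha_0}^{\sharp_{\alpha_0}(a_{;1})-n}$ by the functional equation; the resulting path then has $u_{P,|P|}=\sharp_{\alpha_0}(a_{;1})-n\in\mathbf{Ad}_{\nearrow 1}-\mathbb{N}$. \emph{This} is where the integer shift comes from --- it has nothing to do with the $\iota_0=-1$ sub-case you flagged as the ``main obstacle''. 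Your proposed one-step extension in that sub-case is both aimed at the wrong difficulty and left unspecified; meanwhile the genuinely delicate case $\alpha_0>1$ is the one you treated as immediate.
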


\begin{proof}
  By Lemma~\ref{lem-subpath-formula}, it is enough to find such a path in
  $E_{\alpha_0} a_{; 1}$. Write~$\alpha_0 \backassign \omega^{\mu}$. Assume
  first that $\mu = 0$, so $\alpha_0 = 1$ and $\psi_0 = 0$. If $(a_{;
  1})_{\succ}$ is not tail-atomic, then the hyperserial expansion of $\exp
  (a_{; 1})_{\succ}$ is $\exp (a_{; 1})_{\succ} = E_1^{\smash{(a_{;
  1})_{\succ}}}$ and $rE_1^{\smash{(a_{; 1})_{\succ}}}$ is the dominant term
  of $\exp a_{; 1}$ for some $r \in \mathbb{R}^{\neq}$. Then the path $P$ with
  $| P | = 1$ and $\tau_{P, 0} \assign rE_1^{\smash{(a_{; 1})_{\succ}}}$
  satisfies $u_{P, | P |} = (a_{; 1})_{\succ} \in \mathbf{Ad}_{\nearrow 1}$.
  If $(a_{; 1})_{\succ}$ is tail-atomic, then there exist $\psi \in
  \mathbf{Ad}_{\nearrow 1}$, $\iota \in \{ - 1, 1 \}$ and $\mathfrak{a} \in
  \mathbf{Mo}_{\omega}$ such that the hyperserial expansion of $\exp (a_{;
  1})_{\succ}$ is $\exp (a_{; 1})_{\succ} = \mathe^{\psi}
  \mathfrak{a}^{\iota}$. Let $r \mathe^{\psi} \mathfrak{a}^{\iota}$ be a term
  in~$\exp a_{; 1}$ with $r \in \mathbb{R}^{\neq}$. Then the path $P$ with $|
  P | = 1$ and $P (0) \assign r \mathe^{\psi} \mathfrak{a}^{\iota}$ satisfies
  $\psi_{P, | P |} = \psi \in \mathbf{Ad}_{\nearrow 1} -\mathbb{N}$.
  
  Assume now that $\mu > 0$. In view of~(\ref{eq-hyperexp-general}), we recall
  that there are an ordinal $\lambda < \alpha_0$ and a number $\delta$ with
  \[ E_{\alpha_0} a_{; 1} = E_{\lambda} \left( L_{\lambda}
     E_{\alpha_0}^{\smash{\sharp_{\alpha_0} (a_{; 1})}} \pplus \delta \right)
     . \]
  If $\mu$ is a limit ordinal, then by Lemma~\ref{lem-standard-form-nested},
  we have a hyperserial expansion $\mathfrak{m} \assign L_{\lambda}
  E_{\alpha_0}^{\smash{\sharp_{\alpha_0} (a_{; 1})}}$. Let $\tau \in
  \tmop{term} \sharp_{\alpha_0} (a_{; 1})$ and set $Q (0) =\mathfrak{m}$ and
  $Q (1) \assign \tau$, so that $Q$ is a path in $\mathfrak{m}$. By
  Lemma~\ref{lem-hyperexp-subpath}, there is a subpath in $E_{\alpha_0} a_{;
  1}$, hence also a path $P$ in $E_{\alpha_0} a_{; 1}$, with $\tau_{P, | P | -
  1} =\mathfrak{m}$. So $u_{P, | P |} = \sharp_{\alpha_0} (a_{; 1}) \in
  \mathbf{Ad}_{\nearrow 1}$. If $\mu$ is a successor ordinal, then we may
  choose $\lambda = \omega^{\mu_-} n$ for a certain $n \in \mathbb{N}$. By
  Lemma~\ref{lem-standard-form-nested}, we have a hyperserial expansion
  $\mathfrak{m} \assign E_{\alpha_0}^{\smash{\sharp_{\alpha_0} (a_{; 1}) -
  n}}$. As in the previous case, there is a path $P$ in $E_{\alpha_0} a_{; 1}$
  with $\tau_{P, | P |} =\mathfrak{m}$, whence $u_{P, | P |} =
  \sharp_{\alpha_0} (a_{; 1}) - n \in \mathbf{Ad}_{\nearrow 1} -\mathbb{N}$.
\end{proof}

\begin{corollary}
  \label{cor-required-path}For $a \in \mathbf{Ad}$ and $k \in \mathbb{N}$,
  there is a finite path $P$ in $a$ with $| P | \geqslant k$ and $u_{P, | P |}
  \in \mathbf{Ad}_{\nearrow k} -\mathbb{N}$ or $\psi_{P, | P |} \in
  \mathbf{Ad}_{\nearrow k} -\mathbb{N}$.
\end{corollary}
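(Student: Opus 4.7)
The plan is to prove the corollary by induction on $k$, with \Cref{lem-required-paths} providing both the base case and the inductive engine. The base case $k = 1$ is exactly \Cref{lem-required-paths}, which produces a finite path $P$ in $a$ of length $\geq 1$ whose endpoint $u_{P,|P|}$ or $\psi_{P,|P|}$ lies in $\mathbf{Ad}_{\nearrow 1} -\mathbb{N}$. (The case $k = 0$ is vacuous under the natural convention.)

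For the inductive step, assume the result for some $k \geq 1$ and let $P$ be a corresponding path in $a$ whose endpoint $b$ (equal to $u_{P,|P|}$ or to $\psi_{P,|P|}$) decomposes as $b = c - n$ with $c \in \mathbf{Ad}_{\nearrow k}$ and $n \in \mathbb{N}$. Since $\Sigma_{\nearrow k}$ is an admissible coding sequence with $c$ admissible for it, \Cref{lem-required-paths} applied relative to $\Sigma_{\nearrow k}$ produces a finite path $Q$ in $c$ whose endpoint $u_{Q,|Q|}$ or $\psi_{Q,|Q|}$ lies in $\mathbf{Ad}_{\nearrow k+1} -\mathbb{N}$. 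The extended path in $a$ will then be the concatenation $P \ast Q$, which has length $|P| + |Q| \geq k + 1$ and inherits its endpoint from $Q$.

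The sole technical point is that this concatenation is well-defined, i.e.\ that $Q(0) \in \tmop{supp}(u_{P,|P|}) \cup \tmop{supp}(\psi_{P,|P|}) = \tmop{supp}(b)$. Inspection of the three cases in the proof of \Cref{lem-required-paths} shows that $Q(0) = r\mathfrak{q}$ with $\mathfrak{q} \in \mathbf{Mo}^{\succ}$: namely $\mathfrak{q} = E_1^{\chi}$ for some positive infinite purely large $\chi$, or $\mathfrak{q} = \mathe^{\psi} \mathfrak{a}^{\iota}$ with $\mathfrak{a} \in \mathbf{Mo}_{\omega}$, or $\mathfrak{q}$ is a dominant monomial of a hyperexponential of $c$. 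Since $c$ and $b = c - n$ differ only in the coefficient at the monomial $1 \in \mathbf{Mo}$, their supports agree on $\mathbf{Mo}^{\succ}$ and the coefficient at $\mathfrak{q}$ is the same in both; hence $Q(0) \in \tmop{supp}(b)$, legitimizing $P \ast Q$.

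The main obstacle, and really the only one, is this last verification that every path produced by \Cref{lem-required-paths} begins with an infinite-monomial term. Once this case analysis is written out, the rest of the argument is a routine bookkeeping of the sequences $u_{P \ast Q, \cdot}$ and $\psi_{P \ast Q, \cdot}$, which coincide with those of $P$ on its initial segment and with those of $Q$ thereafter, so that $P \ast Q$ ends at the same location as $Q$.
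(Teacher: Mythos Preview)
Your inductive strategy is the same as the paper's: apply \Cref{lem-required-paths} at each step and concatenate. The difference lies in how you bridge the gap between the endpoint $b = c - n$ (with $c \in \mathbf{Ad}_{\nearrow k}$) and the path $Q$ produced at the next level.

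Your case analysis for $\mathfrak{q} \in \mathbf{Mo}^{\succ}$ is misdirected. The three cases you cite from the proof of \Cref{lem-required-paths} describe the first term of the path constructed \emph{in $E_{\alpha_k} c_{;1}$}, not the first term of the lifted path in $c$ itself; the proof opens with ``it is enough to find such a path in $E_{\alpha_0} a_{;1}$'' and never makes explicit what $Q(0)$ looks like after lifting via \Cref{lem-subpath-formula}. In particular, when $\iota_k = -1$ or $\psi_k$ is large negative, the dominant monomial of $c - \varphi_k$ can be infinitesimal, so $\mathfrak{q} \succ 1$ need not hold. What you actually need is only $\mathfrak{q} \neq 1$, and while this is true (a path starting at a real term has length~$1$ and trivial $u,\psi$ at the next level, which cannot lie in $\mathbf{Ad}_{\nearrow k+1} - \mathbb{N}$), your argument does not establish it.

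The paper sidesteps this entirely: rather than applying \Cref{lem-required-paths} to $c$ and then worrying about whether $Q(0)$ survives in $b$, it singles out the dominant term $\tau$ of $b - \varphi_k$, uses \Cref{lem-little-more} to place $\tau$ in $\varepsilon_k \mathe^{\psi_k}(E_{\alpha_k}\mathbf{Ad}_{\nearrow k+1})^{\iota_k}$, observes that $\tau$ is already a term of $b$, and only then invokes \Cref{lem-required-paths} to produce a path $Q$ in $\tau$. Since $\tau$ is a single term, $Q(0) = \tau \in \tmop{term}(b)$ is automatic and the concatenation $P \ast Q$ is immediate. This is cleaner than inspecting the internals of the previous lemma's proof.
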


\begin{proof}
  This is immediate if $k = 0$. Assume that the result holds at $k$ and pick a
  corresponding path $P$ with $u_{P, | P |} \in \mathbf{Ad}_{\nearrow k}
  -\mathbb{N}$ (resp. $\psi_{P, | P |} \in \mathbf{Ad}_{\nearrow k}
  -\mathbb{N}$). Note that the dominant term $\tau$ of $u_{P, | P |} -
  \varphi_k$ (resp. $\psi_{P, | P |} - \varphi_k$) lies in $\varepsilon_k
  \mathe^{\psi_k}  (E_{\alpha_k}  \mathbf{Ad}_{\nearrow k + 1})^{\iota_k}$ by
  Lemma~\ref{lem-little-more}. Moreover $\tau$ is a term of $u_{P, | P
  |}$~(resp. $\psi_{P, | P |}$). By the previous lemma, there is a path $Q$ in
  $\tau$ with $u_{Q, | Q |} \in \mathbf{Ad}_{\nearrow k + 1} -\mathbb{N}$ or
  $\psi_{Q, | Q |} \in \mathbf{Ad}_{\nearrow k + 1} -\mathbb{N}$, so $(P (0),
  \ldots, P (| P | - 1), Q (0)) \ast Q$ satisfies the conditions.
\end{proof}

\begin{theorem}
  \label{th-eventually-nested}There is a $k \in \mathbb{N}$ such that
  $\Sigma_{\nearrow k}$ is nested.
\end{theorem}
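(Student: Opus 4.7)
The plan is to derive the statement from Theorem~\ref{th-well-nested} (well-nestedness of every surreal number), applied to a single $\Sigma$-admissible witness. First I fix some $a \in \mathbf{Ad}$, which exists because $\Sigma$ is admissible. By a straightforward iteration of Lemma~\ref{lem-required-paths} and Corollary~\ref{cor-required-path} -- at each stage concatenating the current finite path with a new finite path (of length $\geq 1$) living inside its terminal $u$- or $\psi$-value -- I construct an infinite path $P$ in $a$ together with a strictly increasing sequence of indices $\ell_0 < \ell_1 < \cdots$ such that, for every $i \in \mathbb{N}$, either $u_{P,\ell_i}$ or $\psi_{P,\ell_i}$ lies in $\mathbf{Ad}_{\nearrow i} - \mathbb{N}$. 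The point of the sequence $(\ell_i)_i$ is to record the correspondence between levels of the infinite path and levels of the coding sequence $\Sigma$.

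Since $a$ is well-nested by Theorem~\ref{th-well-nested}, the path $P$ is good: only finitely many indices of $P$ are bad for $(P,a)$. I then pick $K$ large enough that every index $\geq \ell_K$ is good. For each $i \geq K$, the four bad conditions of Definition~\ref{def-good-path} all fail at $\ell_i$, giving simultaneously: $\mathfrak{m}_{P,\ell_i}$ is the $\preccurlyeq$-minimum of $\tmop{supp} u_{P,\ell_i}$, $\beta_{P,\ell_i}=0$, $r_{P,\ell_i}\in\{-1,1\}$, and $\mathfrak{m}_{P,\ell_i}\notin\tmop{supp}\psi_{P,\ell_i}$. Combined with the information that $u_{P,\ell_i}$ (or $\psi_{P,\ell_i}$) differs from an element of $\mathbf{Ad}_{\nearrow i}$ by a natural number, the uniqueness of hyperserial expansions (Lemma~\ref{lem-st-expansion-unicity}) forces the expansion of $\mathfrak{m}_{P,\ell_i}$ to be $\mathe^{\psi_i}(E_{\alpha_i}^{c})^{\iota_i}$, with no outer hyperlogarithm wrapper and with $c$ itself an element of $\mathbf{Ad}_{\nearrow i+1}$.

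This structural alignment is exactly what is needed to prove the nontrivial inclusion $\mathbf{Ad}_{\nearrow i}\supseteq\varphi_i+\varepsilon_i\mathe^{\psi_i}(E_{\alpha_i}\mathbf{Ad}_{\nearrow i+1})^{\iota_i}$ for every $i\geq K$, which together with the always-true reverse inclusion yields the nestedness of $\Sigma_{\nearrow K}$. For an arbitrary $b\in\mathbf{Ad}_{\nearrow i+1}$ and $a':=\varphi_i+\varepsilon_i\mathe^{\psi_i}(E_{\alpha_i}b)^{\iota_i}$, the recurrence and the higher-level admissibility conditions are automatic from $b\in\mathbf{Ad}_{\nearrow i+1}$; only the level-$0$ conditions $\tmop{supp}\psi_i\succ\log E_{\alpha_i}b$ and $\varphi_{i+1}\vartriangleleft\sharp_{\alpha_i}(b)$ (when $\varphi_{i+1}\neq0$) remain. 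I plan to extract the first from Lemma~\ref{lem-admissible-ineq} applied to the witness $c$, which already satisfies the corresponding bound and for which all elements of $\mathbf{Ad}_{\nearrow i+1}$ share a common $\mathcal{L}_{\alpha_i}$-orbit so that $\log E_{\alpha_i}$ admits a uniform upper bound below $\tmop{supp}\psi_i$; and the second from the facts that $\varphi_{i+1}$ is $\alpha_i$-truncated (condition b of Definition~\ref{def-coding-sequence}) and is a truncation of every $b\in\mathbf{Ad}_{\nearrow i+1}$, while property e of Definition~\ref{def-coding-sequence} forbids $\sharp_{\alpha_i}(b)=\varphi_{i+1}$.

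The main obstacle will be this last transfer step: extracting from a pointwise good-path condition at a single witness $a$ a \emph{uniform} admissibility statement over the whole class $\mathbf{Ad}_{\nearrow i+1}$. It is precisely here that the shift by $K$ is essential: only once one has passed all bad indices of the particular path $P$ do the hyperserial expansions at that level of nesting align well enough across the entire admissibility class for the asymptotic comparisons of Lemma~\ref{lem-admissible-ineq} and the $\mathcal{L}_{\alpha_i}$-orbit control to close the argument.
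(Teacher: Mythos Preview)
Your approach reverses the logical direction of the paper's argument, and the transfer step you flag as ``the main obstacle'' is a genuine gap that does not close.

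The paper's proof runs by contradiction: assuming the set $\Delta$ of indices $d$ at which nestedness fails is infinite, it exploits, at each $d\in\Delta$, a \emph{specific} witness $u\in\mathbf{Ad}_{\nearrow d+1}$ for which $\varphi_d+\varepsilon_d\mathe^{\psi_d}(E_{\alpha_d}u)^{\iota_d}\notin\mathbf{Ad}_{\nearrow d}$. The failure of one of the three admissibility conditions at this $u$ is then parlayed (via a three-case analysis and Lemmas~\ref{lem-hyperlog-subpath}--\ref{lem-subpath-formula}, Corollary~\ref{cor-required-path}) into a finite path extension that creates a bad index. Iterating yields an infinite path with cofinally many bad indices, contradicting Theorem~\ref{th-well-nested}. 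The logical shape is: failure of nestedness $\Rightarrow$ bad path.

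You attempt the converse: good path $\Rightarrow$ nestedness. But goodness of $P$ at $\ell_i$ only constrains the single witness $c$ that $P$ actually visits; it says nothing about an arbitrary $b\in\mathbf{Ad}_{\nearrow i+1}$. Your proposed transfers both fail. First, the claim that ``all elements of $\mathbf{Ad}_{\nearrow i+1}$ share a common $\mathcal{L}_{\alpha_i}$-orbit'' is not what Corollary~\ref{cor-little-more} says (it asserts invariance under the orbit, not that the class is a single orbit), and Lemma~\ref{lem-admissible-ineq} bounds $L_{\alpha_{i-1}}a_{;i}$ against $\mathcal{E}_{\alpha_{i-1}}b_{;i}$ for $a,b\in\mathbf{Ad}$, which does not give a uniform bound $\tmop{supp}\psi_i\succ\log E_{\alpha_i}b$ over all $b\in\mathbf{Ad}_{\nearrow i+1}$; indeed the existence of $b$ violating this bound is exactly the first case in the paper's proof. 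Second, for $b\in\mathbf{Ad}_{\nearrow i+1}$ you only know $\varphi_{i+1}\trianglelefteqslant b$, not $\varphi_{i+1}\vartriangleleft\sharp_{\alpha_i}(b)$; the latter is an extra condition needed for membership in $\mathbf{Ad}_{\nearrow i}$, and condition~(\ref{def-coding-sequence-e}) of Definition~\ref{def-coding-sequence} does not preclude $\sharp_{\alpha_i}(b)=\varphi_{i+1}$ for a particular $b$ (this is the third case in the paper's proof). In short, the information carried by a single good path is too local to force the uniform inclusion $\mathbf{Ad}_{\nearrow i}\supseteq\varphi_i+\varepsilon_i\mathe^{\psi_i}(E_{\alpha_i}\mathbf{Ad}_{\nearrow i+1})^{\iota_i}$; the contrapositive direction, building a bad path out of each nestedness failure, is what actually works.
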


\begin{proof}
  Assume for contradiction that this is not the case. This means that the set
  $\Delta$ of indices $d \in \mathbb{N}$ such that
  $\mathbf{Ad}_{\nearrow d} \neq \varphi_d + \varepsilon_d \mathe^{\psi_d} 
  (E_{\alpha_d}  \mathbf{Ad}_{\nearrow d + 1})^{\iota_d}$ is infinite. We
  write $\Delta = \{ d_i \suchthat i \in \mathbb{N} \}$ where $d_0 < d_1 <
  \cdots$. Fix $a \in \mathbf{Ad}$ and let $d \assign d_i \in \Delta$. Let $u
  \in \mathbf{Ad}_{\nearrow d + 1}$ such that
  \begin{equation}
    \varphi_d + \varepsilon_d \mathe^{\psi_d}  (E_{\alpha_d} u)^{\iota_d} \nin
    \mathbf{Ad}_{\nearrow d}, \label{eq-notin-Ad}
  \end{equation}
  let $n \in \mathbb{N}$ and let $P$ be any finite path with
  \[ u_{P, | P |} = \varphi_d + \varepsilon_d \mathe^{\psi_d}  (E_{\alpha_d}
     u)^{\iota_d} - n. \]
  We claim that we can extend $P$ to a path $Q$ with $| Q | > | P |$, $u_{Q, |
  Q |} \in \mathbf{Ad}_{\nearrow d_{i + 3}} -\mathbb{N}$ and such that $| P |$
  is a bad index in $Q$. Indeed, in view of \Cref{def-admissible-sequence} for
  $\mathbf{Ad}_{\nearrow d}$, the relation (\ref{eq-notin-Ad}) translates into
  the following three possibilities:
  \begin{itemizedot}
    \item There is an $\mathfrak{n} \in \tmop{supp} \psi_d$ with $\mathfrak{n}
    \preccurlyeq \log E_{\alpha_d} u$. We then have $\log E_{\alpha_d} a_{; d
    + 1} \prec \mathfrak{n} \preccurlyeq \log E_{\alpha_d} u$. By
    Lemma~\ref{lem-little-more} and the convexity of $\mathbf{Ad}_{\nearrow d
    + 1}$, we deduce that $\iota_d  (\psi_d)_{\mathfrak{n}} \mathfrak{n}$ lies
    in $\iota_d \log E_{\alpha_d}  \mathbf{Ad}_{\nearrow d + 1}$, so
    $\mathe^{(\psi_d)_{\mathfrak{n}} \mathfrak{n}} \in (E_{\alpha_d} 
    \mathbf{Ad}_{\nearrow d + 1})^{\iota_d}$. By
    Corollary~\ref{cor-required-path} for the admissible sequence starting
    with $(0, 1, 0, \iota_d, \alpha_d)$ and followed by $\Sigma_{\nearrow d +
    1}$, there is a finite path $R_0$ in $\mathe^{(\psi_d)_{\mathfrak{n}}
    \mathfrak{n}}$ with $| R_0 | \geqslant d_{i + 3} - d > 2$ and~$u_{R_0, |
    R_0 |} \in \mathbf{Ad}_{\nearrow d_{i + 3}} -\mathbb{N}$. Taking the
    logarithm and using Lemma~\ref{lem-hyperlog-subpath}, we obtain a finite
    path~$R_1$ in $(\psi_d)_{\mathfrak{n}} \mathfrak{n}$, hence in $\psi_d$,
    with $| R_1 | \geqslant 2$ and $u_{R_1, | R_1 |} = u_{R_0, | R_0 |} \in
    \mathbf{Ad}_{\nearrow d_{i + 3}} -\mathbb{N}$. Write $(E_{\alpha_d} a_{; d
    + 1})^{\iota_d} = r\mathfrak{m} \pplus \rho$ where $r \in
    \mathbb{R}^{\neq}$ and $\mathfrak{m} \in \mathbf{Mo}^{\neq}$. Then $\log
    \mathfrak{m} \asymp E_{\alpha_d} a_{; d + 1} \prec \tmop{supp} \psi_d$, so
    the hyperserial expansion of $\mathe^{\psi_d} \mathfrak{m}$ has one of the
    following forms
    \begin{eqnarray*}
      \mathe^{\psi_d} \mathfrak{m} & = & \mathe^{\psi_d \pplus \delta} 
      (L_{\beta} E_{\alpha}^u)^{\iota} \text{\quad or}\\
      \mathe^{\psi_d} \mathfrak{m} & = & (E_1^{\psi_d \pplus \delta})^{\iota}
    \end{eqnarray*}
    where $(L_{\beta} E_{\alpha}^u)^{\iota}$ is a hyperserial expansion and
    $\delta$ is purely large. In both cases, the path $R = (\varepsilon_d r
    \mathe^{\psi_d} \mathfrak{m}) \ast R_1$ is a finite path $R$ in
    $\varepsilon_d \mathe^{\psi_d}  (E_{\alpha_d} a_{; d + 1})^{\iota_d}$ with
    $u_{R, | R |} = u_{R_1, | R_1 |} \in {\mathbf{Ad}_{\nearrow d_{i + 3}}
    -\mathbb{N}}$. Since $R (0)$ is a term in $u_{P, | P |}$, we may consider
    the path $Q \assign P \ast R$. Moreover, since $\tau_{Q, | P |}$ is a term
    in $\psi_d = \psi_{Q, | P |}$, the index $| P |$ is bad for~$Q$.
    
    \item We have $\log E_{\alpha_d} u \prec \tmop{supp} \psi_d$, but there is
    an $\mathfrak{m} \in \tmop{supp} \varphi_d$ with $\mathfrak{m}
    \preccurlyeq \mathe^{\psi_d}  (E_{\alpha_d} u)^{\iota_d}$. We then have
    $\mathe^{\psi_d}  (E_{\alpha_d} a_{; d + 1})^{\iota_d} \prec
    \varphi_{\mathfrak{m}} \mathfrak{m} \preccurlyeq \mathe^{\psi_d} 
    (E_{\alpha_d} u)^{\iota_d}$. By Lemma~\ref{lem-little-more} and the
    convexity of $\mathbf{Ad}_{\nearrow d + 1}$, we deduce that
    $(\varphi_d)_{\mathfrak{m}} \mathfrak{m}$ lies in $\mathe^{\psi_d} 
    (E_{\alpha_d}  \mathbf{Ad}_{\nearrow d + 1})^{\iota_d}$. So $L_{\alpha_d}
    ((\mathe^{- \psi_d}  (\varphi_d)_{\mathfrak{m}} \mathfrak{m})^{\iota_d})$
    lies in $\mathbf{Ad}_{\nearrow d + 1}$. But then also $v \assign
    \sharp_{\alpha_d} (L_{\alpha_d} ((\mathe^{- \psi_d} 
    (\varphi_d)_{\mathfrak{m}} \mathfrak{m})^{\iota_d}))$ lies in
    $\mathbf{Ad}_{\nearrow d + 1}$ by Corollary~\ref{cor-little-more}. By
    Corollary~\ref{cor-required-path}, there is a finite path $R_0$ in $v$
    with $| R_0 | > 2$ and $u_{R_0, | R_0 |} \in {\mathbf{Ad}_{\nearrow d_{i +
    3}} -\mathbb{N}}$. Applying Lemma~\ref{lem-hyperexp-subpath} to this path
    $R_0$ in $v$, we obtain is a finite path $R_1$ in $(\mathe^{- \psi_d} 
    (\varphi_d)_{\mathfrak{m}} \mathfrak{m})^{\iota_d}$ with $u_{R_1, | R_1 |}
    \in \mathbf{Ad}_{\nearrow d_{i + 3}} -\mathbb{N}$. Since
    $(\varphi_d)_{\mathfrak{m}} \mathfrak{m} \in \mathe^{\psi_d} 
    (E_{\alpha_d}  \mathbf{Ad}_{\nearrow d + 1})^{\iota_d}$, we have
    $\tmop{supp} \psi_d \succ \mathe^{- \psi_d}  (\varphi_d)_{\mathfrak{m}}
    \mathfrak{m}$. So Lemma~\ref{lem-subpath-formula} implies that there is a
    finite path $R$ in $(\varphi_d)_{\mathfrak{m}} \mathfrak{m}$, hence in
    $\varphi_d$, with $u_{R, | R |} \in \mathbf{Ad}_{\nearrow d_{i + 3}}
    -\mathbb{N}$. We have $R (0) \in \tmop{term} \varphi_d \setminus
    \mathbb{R} \subseteq \tmop{term} u_{P, | P |}$, so $Q \assign P \ast R$ is
    a path. Write $\tau$ for the dominant term of $\varepsilon_d
    \mathe^{\psi_d}  (E_{\alpha_d} u')^{\iota_d}$. The index $| P |$ is a bad
    in $Q$ because $\tau_{Q, | P |}$ and $\tau$ both lie in $\tmop{term} a_{Q,
    | P |}$, and $\tau_{Q, | P |} \succ \tau$.
    
    \item We have $\log E_{\alpha_d} u \prec \tmop{supp} \psi_d$ and
    $\tmop{supp} \varphi_d \succ \mathe^{\psi_d}  (E_{\alpha_d} u)^{\iota_d}$,
    but $\varphi_{d + 1} = \sharp_{\alpha_d} (\varphi_{d + 1} \pplus
    \varepsilon_{d + 1} \mathe^{\psi_{d + 1}}  (E_{\alpha_{d + 1}}
    u)^{\iota_{d + 1}})$. By the definition of $\alpha_d$-truncated numbers,
    there is a $\beta < \alpha_d$ with
    \[ \mathe^{\psi_{d + 1}}  (E_{\alpha_{d + 1}} u)^{\iota_{d + 1}} \prec
       \frac{1}{L_{\beta} E_{\alpha_d}^{\varphi_{d + 1}}} \prec
       \mathe^{\psi_{d + 1}}  (E_{\alpha_{d + 1}} a_{; d + 2})^{\iota_{d + 1}}
       . \]
    By convexity of $\mathbf{Ad}_{\nearrow d + 2}$, we get
    $L_{\beta} E_{\alpha_d}^{\varphi_{d + 1}} \in \mathe^{- \psi_{d + 1}} 
    (E_{\alpha_{d + 1}}  \mathbf{Ad}_{\nearrow d + 2})^{- \iota_{d + 1}}$. By
    similar arguments as above (using Corollary~\ref{cor-required-path} and
    Lemmas~\ref{lem-hyperexp-subpath} and~\ref{lem-hyperlog-subpath}), we
    deduce that there is a finite path $R$ in $\varphi_{d + 1}$ with $u_{R, |
    R |} \in \mathbf{Ad}_{\nearrow d_{i + 2}} -\mathbb{N}$. As in the previous
    case $Q \assign P \ast R$ is a path and $| P |$ is a bad index in~$Q$.
  \end{itemizedot}
  Consider a $b \in \mathbf{Ad}_{\nearrow d_1 - 1}$ and the path $P_0 \assign
  \left( \tau_{a - \varphi_{d_0}} \right)$ in $b$. So $P$ is a finite path
  with $u_{P_0, | P_0 |} \in \mathbf{Ad}_{\nearrow d_1}$. Thus there exists a
  path $P_1$ which extends $P_0$ with $u_{P_1, | P_1 |} \in
  \mathbf{Ad}_{\nearrow d_3}$, where $| P_0 |$ is a bad index in $P$.
  Repeating this process iteratively for $i = 2, 3, \ldots$, we construct a
  path $P_i$ that extends $P_{i - 1}$ and such that $u_{P_i, | P_i |} \in
  \mathbf{Ad}_{\nearrow d_{2 i + 1}}$ and such that $| P_{i - 1} |$ is a bad
  index in $P_i$. At the limit, this yields an infinite path $Q$ in $a$ that
  extends each of the paths $P_i$. This path~$Q$ has a cofinal set of bad
  indices, which contradicts Theorem~\ref{th-well-nested}. We conclude that
  there is a $k \in \mathbb{N}$ such that $\Sigma_{\nearrow k}$ is nested.
\end{proof}

\begin{lemma}
  \label{lem-preliminary-group}If $\Sigma$ is nested, then
  $\mathbf{Ad} = \varphi_0 + \varepsilon_0 \mathe^{\psi_0} 
  (\mathcal{E}_{\alpha_0} [E_{\alpha_0}  \mathbf{Ad}_{\nearrow
  1}])^{\iota_0}$.
\end{lemma}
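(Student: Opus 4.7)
The plan is to establish the two inclusions separately, both of which reduce to a purely group-theoretic identification between the actions of $\mathcal{L}_{\alpha_0}$ and $\mathcal{E}_{\alpha_0}$ via the bijection $E_{\alpha_0}$. The forward inclusion $\mathbf{Ad} \subseteq \varphi_0 + \varepsilon_0 \mathe^{\psi_0}(\mathcal{E}_{\alpha_0}[E_{\alpha_0}\mathbf{Ad}_{\nearrow 1}])^{\iota_0}$ is immediate: nestedness of $\Sigma$ (applied at $k=0$) gives $\mathbf{Ad} = \varphi_0 + \varepsilon_0 \mathe^{\psi_0}(E_{\alpha_0}\mathbf{Ad}_{\nearrow 1})^{\iota_0}$, and trivially $E_{\alpha_0}\mathbf{Ad}_{\nearrow 1} \subseteq \mathcal{E}_{\alpha_0}[E_{\alpha_0}\mathbf{Ad}_{\nearrow 1}]$.

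For the reverse inclusion, the first key step is to observe that $E_{\alpha_0}$ conjugates $\mathcal{L}_{\alpha_0}$-orbits to $\mathcal{E}_{\alpha_0}$-orbits. Indeed, by the very definition $\mathcal{L}_{\alpha_0} = L_{\alpha_0}\mathcal{E}_{\alpha_0}E_{\alpha_0}$, every $h \in \mathcal{L}_{\alpha_0}$ can be written as $h = L_{\alpha_0}fE_{\alpha_0}$ for a unique $f \in \mathcal{E}_{\alpha_0}$; using $E_{\alpha_0}L_{\alpha_0} = \mathrm{id}$ on $\mathbf{No}^{>,\succ}$, this gives $E_{\alpha_0}(hz) = f(E_{\alpha_0}z)$ for every $z \in \mathbf{No}^{>,\succ}$. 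Since $E_{\alpha_0}$ and $L_{\alpha_0}$ are strictly increasing bijections on $\mathbf{No}^{>,\succ}$, the identity $E_{\alpha_0}(\mathcal{L}_{\alpha_0}[z]) = \mathcal{E}_{\alpha_0}[E_{\alpha_0}z]$ follows directly from the definition of the convex classes $\mathcal{G}[z]$.

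The second key step is to invoke Corollary~\ref{cor-little-more}, which tells us that $\mathbf{Ad}_{\nearrow 1}$ is already saturated under the $\mathcal{L}_{\alpha_0}$-action, namely $\mathcal{L}_{\alpha_0}[\mathbf{Ad}_{\nearrow 1}] = \mathbf{Ad}_{\nearrow 1}$. Taking the union of the conjugation identity over all $z \in \mathbf{Ad}_{\nearrow 1}$, we therefore obtain
\[ \mathcal{E}_{\alpha_0}[E_{\alpha_0}\mathbf{Ad}_{\nearrow 1}] = E_{\alpha_0}(\mathcal{L}_{\alpha_0}[\mathbf{Ad}_{\nearrow 1}]) = E_{\alpha_0}\mathbf{Ad}_{\nearrow 1}. \]
A second application of the nestedness equality then identifies the right-hand side of the target formula with $\mathbf{Ad}$, finishing the proof.

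No real obstacle is anticipated: unlike the delicate well-nestedness arguments of Section~\ref{section-well-nestedness}, this statement is essentially a bookkeeping consequence of the conjugacy between $\mathcal{L}_{\alpha_0}$ and $\mathcal{E}_{\alpha_0}$ combined with Corollary~\ref{cor-little-more}. The only point requiring a brief verification is the compatibility of $E_{\alpha_0}$ with the convex-hull operation $\mathcal{G}[\,\cdot\,]$, which is automatic from the monotonicity of $E_{\alpha_0}$ and $L_{\alpha_0}$.
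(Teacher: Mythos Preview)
Your proof is correct and follows essentially the same approach as the paper: the paper's two-line argument simply records the conjugation identity $\mathcal{E}_{\alpha_0}[E_{\alpha_0}\mathbf{Ad}_{\nearrow 1}] = E_{\alpha_0}\mathcal{L}_{\alpha_0}[\mathbf{Ad}_{\nearrow 1}]$ and then invokes Corollary~\ref{cor-little-more} together with the nestedness hypothesis. Your version spells out the conjugation step and splits the equality into two inclusions, but the content is identical.
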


\begin{proof}
  Note that $\mathcal{E}_{\alpha_0} [E_{\alpha_0}  \mathbf{Ad}_{\nearrow 1}] =
  E_{\alpha_0} \mathcal{L}_{\alpha_0} [\mathbf{Ad}_{\nearrow 1}]$. The result
  thus follows from Corollary~\ref{cor-little-more} and the assumption that
  $\Sigma$ is nested.
\end{proof}

\begin{lemma}
  \label{lem-simplicity}Assume that $\Sigma$ is nested. Let $k \in
  \mathbb{N}$, $a \in \mathbf{Ad}$ and $c_k \in \mathbf{No}$ with
  \begin{equation}
    c_k = \varphi_k \pplus \varepsilon_k \mathe^{\psi_k}
    \mathfrak{p}^{\iota_k} \label{eq-ck}
  \end{equation}
  for a certain $\mathfrak{p} \in \mathbf{Mo}^{\succcurlyeq}$ with
  $\mathfrak{p} \sqsubseteq E_{\alpha_j} a_{; k + 1}$ and $\mathfrak{p} \in
  \mathcal{E}_{\omega} [E_{\alpha_k} a_{; k + 1}]$ whenever $\psi_k = 0$. If
  $c_k \in \mathbf{Ad}_{\nearrow k}$, then we have
  \[ (c_k)_{k ;} \sqsubseteq a. \]
\end{lemma}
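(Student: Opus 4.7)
The plan is to prove, by backward induction on $j$ from $k$ down to $0$, that the sequence defined by
\[ c_j \assign \varphi_j + \varepsilon_j \mathe^{\psi_j} (E_{\alpha_j}^{c_{j + 1}})^{\iota_j} \quad (j < k), \]
together with the given $c_k$, satisfies simultaneously (a) $c_j \in \mathbf{Ad}_{\nearrow j}$, and (b) $c_j \sqsubseteq a_{; j}$. Once these are established at $j = 0$, the identity $c_0 = (c_k)_{k ;}$, which holds by construction, delivers the desired conclusion $(c_k)_{k ;} \sqsubseteq a$.

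The base case $j = k$ is essentially by hypothesis. Property (a) is given, and for (b) we feed $\mathfrak{p} \sqsubseteq E_{\alpha_k} a_{; k + 1}$ successively through Lemma~\ref{inv-dec-lem} (to obtain $\mathfrak{p}^{\iota_k} \sqsubseteq (E_{\alpha_k} a_{; k + 1})^{\iota_k}$), then Lemma~\ref{exp-dec-lem} when $\psi_k \neq 0$ (whose side conditions $\log \mathfrak{p}, \log E_{\alpha_k} a_{; k + 1} \prec \tmop{supp} \psi_k$ follow from the admissibility of $c_k$ and $a_{; k}$), and finally Lemmas~\ref{sign-dec-lem} and~\ref{imm-dec-lem} to introduce the sign $\varepsilon_k$ and the common head $\varphi_k$.

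For the inductive step at $j < k$, property (a) for $j$ combines property (a) for $j + 1$ with the admissibility conditions of $a$ at level $j$. Both $c_{j + 1}$ and $a_{; j + 1}$ are $\alpha_j$-truncated by the nestedness of $\Sigma$ (which yields $E_{\alpha_j} c_{j + 1} \in \mathbf{Mo}_{\alpha_j}$), so $\sharp_{\alpha_j} (c_{j + 1}) = c_{j + 1}$, and the truncation condition $\varphi_{j + 1} \vartriangleleft \sharp_{\alpha_j} (c_{j + 1})$ follows from the $\pplus$-structure of $c_{j + 1}$. The condition $\tmop{supp} \psi_j \succ \log E_{\alpha_j}^{c_{j + 1}}$ transfers from its analogue for $a_{; j + 1}$ via Lemma~\ref{lem-admissible-ineq} applied to $c_{j + 1}, a_{; j + 1} \in \mathbf{Ad}_{\nearrow j + 1}$. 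For (b), we reduce through Lemmas~\ref{inv-dec-lem}, \ref{exp-dec-lem}, and~\ref{imm-dec-lem} to the relation $E_{\alpha_j}^{c_{j + 1}} \sqsubseteq E_{\alpha_j}^{a_{; j + 1}}$; when $\alpha_j > 1$, Lemma~\ref{hypexp-mon-lem} delivers this from $c_{j + 1} \sqsubseteq a_{; j + 1}$ (the inductive hypothesis) together with $L_{\alpha_j} E_{< \alpha_j} c_{j + 1} < a_{; j + 1}$, the latter being a consequence of Lemma~\ref{lem-admissible-ineq} combined with~(\ref{ineq2}), exactly as in the corresponding step of the proof of Lemma~\ref{lem-subpath-descent}.

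The principal obstacle is the case $\alpha_j = 1$, which by the coding sequence axioms forces $\psi_j = 0$ and can cascade into a streak of indices with $\alpha_d = 1$ and $\psi_d = 0$. Here Lemma~\ref{hypexp-mon-lem} is unavailable, and we instead run an auxiliary backward induction on $d$ in the cascade, mirroring the treatment of (\ref{induc-1})\tmrsub{$k - 1$} in the proof of Lemma~\ref{lem-subpath-descent}. Setting $\mathfrak{c}_d \assign \mathfrak{d}_{c_d - \varphi_d}$ and $\mathfrak{u}_d \assign \mathfrak{d}_{a_{; d} - \varphi_d}$, we verify that $\mathfrak{c}_d \sqsubseteq \mathfrak{u}_d$ and $\mathfrak{c}_d \in \mathcal{E}_\omega [\mathfrak{u}_d]$ for each such $d$; Lemma~\ref{prop-full-exp-dec} then yields $\mathe^{c_{j + 1}} \sqsubseteq \mathe^{a_{; j + 1}}$, which is exactly what is needed to establish (b) at $j$. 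The base case $d = k$ of this auxiliary induction is precisely where the extra hypothesis $\mathfrak{p} \in \mathcal{E}_\omega [E_{\alpha_k} a_{; k + 1}]$, imposed in the statement whenever $\psi_k = 0$, comes into play, while the propagation down the cascade relies only on the admissibility data already secured by the primary induction.
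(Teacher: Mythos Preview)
Your backward induction on $j$ with invariants $c_j \in \mathbf{Ad}_{\nearrow j}$ and $c_j \sqsubseteq a_{;j}$ is exactly the paper's structure, and your base case matches. There is, however, a gap in the inductive step for (b).

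You write that ``both $c_{j+1}$ and $a_{;j+1}$ are $\alpha_j$-truncated by the nestedness of $\Sigma$'' and then invoke Lemma~\ref{hypexp-mon-lem} with $a_{;j+1}$ in the role of $\psi$. But nestedness is the equality $\mathbf{Ad}_{\nearrow j} = \varphi_j + \varepsilon_j \mathe^{\psi_j}(E_{\alpha_j}\mathbf{Ad}_{\nearrow j+1})^{\iota_j}$; it says nothing about $\sharp_{\alpha_j}(a_{;j+1}) = a_{;j+1}$ for an arbitrary $a \in \mathbf{Ad}$. Admissibility only gives $\varphi_{j+1} \vartriangleleft \sharp_{\alpha_j}(a_{;j+1})$, not that this truncation equals $a_{;j+1}$. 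So $E_{\alpha_j} a_{;j+1}$ need not lie in $\mathbf{Mo}_{\alpha_j}$, and Lemma~\ref{hypexp-mon-lem} does not directly apply. The paper repairs this with an intermediate step you omit: from $c_{j+1} \sqsubseteq a_{;j+1}$ together with $c_{j+1} \in \mathbf{No}_{\succ,\alpha_j}$ one first deduces $c_{j+1} \sqsubseteq \sharp_{\alpha_j}(a_{;j+1})$; then Lemma~\ref{hypexp-mon-lem}, fed by Lemma~\ref{lem-admissible-ineq}, gives $E_{\alpha_j}^{c_{j+1}} \sqsubseteq E_{\alpha_j}^{\sharp_{\alpha_j}(a_{;j+1})}$; and the chain closes via $E_{\alpha_j}^{\sharp_{\alpha_j}(a_{;j+1})} = \mathfrak{d}_{\alpha_j}(E_{\alpha_j} a_{;j+1}) \sqsubseteq E_{\alpha_j} a_{;j+1}$.

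Your separate treatment of $\alpha_j = 1$ via Lemma~\ref{prop-full-exp-dec} is more explicit than the paper's (which invokes Lemma~\ref{hypexp-mon-lem} uniformly even though that lemma is stated for $\alpha > 1$); but this extra care does not bypass the truncation issue above, which arises regardless of whether $\alpha_j = 1$.
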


\begin{proof}
  The proof is similar to the proof of Lemma~\ref{lem-subpath-descent}. We
  have $a_{; k} = \varphi_k \pplus \varepsilon_k \mathe^{\psi_k} 
  (E_{\alpha_k} a_{; k + 1})^{\iota_k}$ and we must have $\tmop{supp} \psi_k
  \succ \log \mathfrak{p}$ since $c_k = \varphi_k \pplus \varepsilon_k
  \mathe^{\psi_k} \mathfrak{p}^{\iota_k} \in \mathbf{Ad}_{\nearrow k}$. If
  follows from the deconstruction lemmas in \Cref{subsection-decomposition}
  that $c_k \sqsubseteq a_{; k}$. This proves the result in the case when~$k =
  0$.
  
  Now assume that $k > 0$. Setting $c_{k - p} \assign \Phi_{k - p ; k} (c_k)$,
  let us prove by induction on {$p \leqslant k$}~that
  \begin{eqnarray*}
    c_{k - p} & \in & \mathbf{Ad}_{\nearrow k - p}\\
    c_{k - p} & \in & \mathbf{No}_{\succ, \alpha_{k - p - 1}}\\
    c_{k - p} & \sqsubseteq & a_{; k - p} .
  \end{eqnarray*}
  For $p = k$, the last relation yields the desired result.
  
  If $p = 0$, then we have $c_k \in \mathbf{Ad}_{\nearrow k}$ by assumption
  and we have shown above that {$c_k \sqsubseteq a_{; k}$}. We have $\varphi_k
  \vartriangleleft \sharp_{\alpha_{k - 1}} (c_k)$ and $\mathe^{\psi_k}
  \mathfrak{p}^{\iota_j}$ is a~monomial, so (\ref{eq-ck}) yields {$c_k =
  \sharp_{\alpha_{k - 1}} (c_k) \in \mathbf{No}_{\succ, \alpha_{k - 1}}$}.
  This deals with the case $p = 0$. In addition, we have {$c_k > 0$} because
  $k > 0$ and $c_k \in \mathbf{Ad}_{\nearrow k}$. Let us show that
  \begin{equation}
    \log c_k \prec a_{; k} . \label{eq-logck}
  \end{equation}
  If $\varphi_k \neq 0$, then this follows from the facts that $\varphi_k
  \vartriangleleft a_{; k}$ and $\varphi_k \vartriangleleft c_k$. If
  $\varphi_k = 0$ and $\psi_k \neq 0$, then $\log (c_k / \varepsilon_k) \sim
  \psi_k \sim \log (a_{; k} / \varepsilon_k) \prec a_{; k}$. If $\varphi_k =
  \psi_k = 0$, then $a_{; k} = E_{\alpha_k} a_{; k + 1}$ and $c_k
  =\mathfrak{p} \in \mathcal{E}_{\omega} [a_{; k}]$, so $\log c_k \prec a_{;
  k}$.
  
  Assume now that $0 < p \leqslant k$ and that the induction hypothesis holds
  for all smaller $p$. We have
  \begin{equation}
    \label{ckp} c_{k - p} = \Phi_{k - p} (c_{k - p + 1}) = \varphi_k +
    \varepsilon_k \mathe^{\psi_k}  \left( E_{\alpha_{k - p}}^{\smash{c_{k - p
    + 1}}} \right)^{\iota_k}
  \end{equation}
  Since $\Sigma$ is nested, we immediately obtain $\varphi_{k - p}
  \vartriangleleft \sharp_{\alpha_{k - p - 1}} (c_{k - p})$, whence $c_{k - p}
  \in \mathbf{No}_{\succ, \alpha_{k - p - 1}}$ as above. Since $c_{k - p - 1}
  \in \mathbf{Ad}_{\nearrow (k - p - 1)}$ and $\Sigma$ is nested, we have
  $c_{k - p} \in \mathbf{Ad}_{\nearrow (k - p)}$.
  Using~(\ref{ckp}),~(\ref{eq-logck}), and the decomposition lemmas, we
  observe that the relation $c_{k - p} \sqsubseteq a_{; k - p}$ is equivalent
  to
  \begin{equation}
    E_{\alpha_{k - p}}^{\smash{c_{k - p + 1}}} \sqsubseteq E_{\alpha_{k - p}}
    a_{; k - p + 1} . \label{eq-simpl}
  \end{equation}
  We have $c_{k - p + 1} \sqsubseteq a_{; k - p + 1}$, so $c_{k - p + 1}
  \sqsubseteq \sharp_{\alpha_{k - p}} (a_{; k - p + 1})$. Note that
  \[ E_{\alpha_{k - p}}^{\smash{\sharp_{\alpha_{k - p}} (a_{; k - p + 1})}}
     =\mathfrak{d}_{\alpha_{k - p}} (E_{\alpha_{k - p}} a_{; k - p + 1})
     \sqsubseteq E_{\alpha_{k - p}} a_{; k - p + 1} . \]
  So it is enough, in order to derive \Cref{eq-simpl}, to prove that
  $E_{\alpha_{k - p}}^{\smash{c_{k - p + 1}}} \sqsubseteq E_{\alpha_{k -
  p}}^{\smash{\sharp_{\alpha_{k - p}} (a_{; k - p + 1})}}$. Now
  \[ L_{\alpha_{k - p}} c_{k - p + 1} <\mathcal{E}_{\alpha_{k - p}}
     \sharp_{\alpha_{k - p}} (a_{; k - p + 1}) \]
  by Lemma~\ref{lem-admissible-ineq}, whence $E_{\alpha_{k - p}}^{\smash{c_{k
  - p + 1}}} \sqsubseteq E_{\alpha_{k - p}}^{\smash{\sharp_{\alpha_{k - p}}
  (a_{; k - p + 1})}}$ by Lemma~\ref{hypexp-mon-lem}.
\end{proof}

For $i \in \mathbb{N}$, $g \in \mathcal{E}_{\alpha_i}$ and $a \in
\mathbf{Ad}$, we have $\varphi_i + \varepsilon_i \mathe^{\psi_i} g
(E_{\alpha_i} a_{; i + 1})^{\iota_i} \in \mathbf{Ad}_{\nearrow i}$ by
Lemma~\ref{lem-preliminary-group}. We may thus consider the strictly
increasing bijection
\[ \Psi_{i, g} \assign \mathbf{Ad} \longrightarrow \mathbf{Ad} ; a \longmapsto
   (\varphi_i + \varepsilon_i \mathe^{\psi_i} g (E_{\alpha_i} a_{; i +
   1})^{\iota_i})_{i ;} . \]
We will prove Theorem~\ref{th-nested-numbers} by proving that the function
group $\mathcal{G} \assign \{ \Psi_{i, g} \suchthat i \in \mathbb{N}, g \in
\mathcal{E}_{\alpha_i} \}$ on $\mathbf{Ad}$ generates the class $\mathbf{Ne}$,
i.e. that we have $\mathbf{Ne} = \mathbf{Smp}_{\mathcal{G}}$. We first need
the following inequality:

\begin{lemma}
  \label{lem-reduction}Assume that $\Sigma$ is nested. Let $i, j \in
  \mathbb{N}$ with $i < j$ and let $g \in \mathcal{E}_{\alpha_i}$. On
  $\mathbf{Ad}$, we have $\Psi_{i, g} < \Psi_{j, H_2}$ if $\sigma_{j + 1 ; i +
  1} = 1$ and $\Psi_{i, g} < \Psi_{j, H_{1 / 2}}$ if $\sigma_{j + 1 ; i + 1} =
  - 1$.
\end{lemma}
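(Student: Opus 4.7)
The plan is to reduce the claim to a comparison at level $i+1$, and then show that the modification $H_2$ at level $j$, propagated upward through the admissibility maps $\Phi_{j-1},\Phi_{j-2},\ldots,\Phi_{i+1}$, lies strictly outside the orbit $\mathcal{E}_{\alpha_i}[E_{\alpha_i} a_{;i+1}]$, with direction controlled by $\sigma_{j+1;i+1}$.

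First, I would unwind definitions: for any $a\in\mathbf{Ad}$ and $g\in\mathcal{E}_{\alpha_i}$, admissibility gives $\Psi_{i,g}(a)_{;i+1} = L_{\alpha_i} g E_{\alpha_i}(a_{;i+1})$, hence $E_{\alpha_i}\Psi_{i,g}(a)_{;i+1} = g(E_{\alpha_i} a_{;i+1}) \in \mathcal{E}_{\alpha_i}[E_{\alpha_i}a_{;i+1}]$. Meanwhile, at level $j$ we have $\Psi_{j,H_2}(a)_{;j} = \varphi_j + 2^{\iota_j}(a_{;j}-\varphi_j)$, i.e.\ a rescaling of the tail $a_{;j}-\varphi_j$ by $H_{2^{\iota_j}}$. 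Since the map $\Phi_{i+1;}$ has monotonicity $\sigma_{;i+1}$ and the target sign at level $i+1$ of $\Psi_{j,H_2}(a)_{;i+1}-a_{;i+1}$ accumulates to $\sigma_{j+1;i+1}$ as one moves up level by level (each step contributing $\varepsilon_k\iota_k$), a careful tracking shows that the lemma at level $0$ is equivalent to: $E_{\alpha_i}\Psi_{j,H_2}(a)_{;i+1}$ dominates (resp.\ is dominated by) every element of $\mathcal{E}_{\alpha_i}[E_{\alpha_i}a_{;i+1}]$ precisely in the case $\sigma_{j+1;i+1}=1$ (resp.\ $-1$).

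Second, I would propagate the $H_2$-modification upward by a downward induction on $k$ from $k=j$ to $k=i+1$, showing at each step that $E_{\alpha_{k-1}}\Psi_{j,H_2}(a)_{;k} = h_{k-1}(E_{\alpha_{k-1}} a_{;k})$ for some $h_{k-1}$ built from conjugates $E_{\alpha_{k-1}} H_r L_{\alpha_{k-1}}$, hence $h_{k-1}\in\mathcal{E}_{\alpha_{k-1}\omega}$. Applying Lemma~\ref{lem-transitive-ineq} at the passage from level $k$ to level $k-1$ (with $\alpha=\alpha_k$ and $\alpha'=\alpha_{k-1}$, using the fact that $\alpha_{k-1}\ge\alpha_k$ whenever $\varphi_{k}=\psi_{k}=0$, and otherwise using directly the admissibility and nestedness of $\Sigma$), one propagates the strict separation all the way up. At the final stage, the key ingredient is inequality (\ref{ineq1}), namely $\mathcal{E}_{\alpha_i\omega}<E_{\alpha_i}$: the propagated perturbation acts on $E_{\alpha_i}a_{;i+1}$ as an element of a function group strictly larger than $\mathcal{E}_{\alpha_i}$, so it escapes the $\mathcal{E}_{\alpha_i}$-orbit from above or below according to the sign $\sigma_{j+1;i+1}$ computed in the first step.

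Finally, applying $L_{\alpha_i}$ (increasing) to the comparison at the level of monomials and then $\Phi_{i+1;}$ to return to level $0$, one obtains the desired strict inequality $\Psi_{i,g}<\Psi_{j,H_2}$ or $\Psi_{i,g}<\Psi_{j,H_{1/2}}$, depending on $\sigma_{j+1;i+1}$.

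The main obstacle is the sign bookkeeping: each of the $j-i$ steps upward introduces a monotonicity factor $\varepsilon_k\iota_k$, and one must reconcile this accumulating factor with the monotonicity $\sigma_{;i+1}$ of $\Phi_{i+1;}$ so that the \emph{same} choice of $H_2$ (versus $H_{1/2}$) works uniformly for all $g\in\mathcal{E}_{\alpha_i}$. The uniform escape of the $H_2$-modification from $\mathcal{E}_{\alpha_i}$ is delivered by the strict inequality (\ref{ineq1}) combined with iterated Lemma~\ref{lem-transitive-ineq}, but verifying the correct side of the inequality requires tracking signs through each admissibility step.
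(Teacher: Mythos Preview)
Your high-level reduction is sound: the inequality at level $0$ is equivalent to showing that $(\Psi_{j,H_2}(a))_{;i+1}$ lies strictly outside the class $\mathcal{L}_{\alpha_i}[a_{;i+1}]$ on the correct side, while $(\Psi_{i,g}(a))_{;i+1}$ always stays inside that class. But the mechanism you propose for the ``escape'' does not work as written.

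The inductive propagation ``$E_{\alpha_{k-1}}\Psi_{j,H_2}(a)_{;k}=h_{k-1}(E_{\alpha_{k-1}}a_{;k})$ with $h_{k-1}$ built from conjugates $E_{\alpha_{k-1}}H_rL_{\alpha_{k-1}}$'' is only valid when all intermediate $\varphi_k,\psi_k$ vanish. As soon as some $\varphi_k$ or $\psi_k$ is nonzero, the map $a_{;k}\mapsto\Psi_{j,H_2}(a)_{;k}$ is an affine-type perturbation of the tail, not a conjugate of a homothety, so no such $h_{k-1}$ exists. You acknowledge this with ``otherwise using directly the admissibility and nestedness of $\Sigma$'', but this is precisely where the entire argument lives, and you give no indication of how nestedness is to be used. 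Moreover, the appeal to (\ref{ineq1}) is misdirected: $\mathcal{E}_{\alpha\omega}<E_\alpha$ says that $E_\alpha$ dominates $\mathcal{E}_{\alpha\omega}$, not that an arbitrary element of $\mathcal{E}_{\alpha\omega}$ dominates $\mathcal{E}_\alpha$; membership in a larger function group does not by itself imply escape from the smaller orbit.

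The paper's proof avoids these difficulties by first reducing to $j=i+1$ (via a straightforward chaining argument, since $H_2,H_{1/2}\in\mathcal{E}_{\alpha_{i+1}}$), and then arguing directly with $\sharp_{\alpha_i}$-truncations rather than with function-group inequalities. The point is that $(\Psi_{i,g}(a))_{;i+1}\in\mathcal{L}_{\alpha_i}[a_{;i+1}]$ has the same $\sharp_{\alpha_i}$-truncation as $a_{;i+1}$, whereas $a':=(\Psi_{i+1,H_2}(a))_{;i+1}$ does not: writing $\tau$ for the dominant term of $E_{\alpha_{i+1}}a_{;i+2}$, nestedness gives $\varphi_{i+1}\vartriangleleft\sharp_{\alpha_i}(a')$, and Lemma~\ref{lem-truncated-truncation} shows that $\varphi_{i+1}+\varepsilon_{i+1}\mathe^{\psi_{i+1}}(2\tau)^{\iota_{i+1}}$ is itself $\alpha_i$-truncated, so $\sharp_{\alpha_i}(a')$ and $\sharp_{\alpha_i}(a_{;i+1})$ differ already at the term after $\varphi_{i+1}$. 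This truncation comparison is the missing idea in your sketch; neither Lemma~\ref{lem-transitive-ineq} nor (\ref{ineq1}) substitutes for it.
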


\begin{proof}
  It is enough to prove the result for $j = i + 1$. Assume that $\sigma_{i + 2
  ; i + 1} = 1$. Let $a \in \mathbf{Ad}$ and set $a' \assign (\Psi_{i + 1,
  H_2} (a))_{i + 1 ;}$, so that
  \begin{eqnarray*}
    a_{; i + 1} & = & \varphi_{i + 1} + \varepsilon_{i + 1} \mathe^{\psi_{i +
    1}}  (E_{\alpha_{i + 1}} a_{; i + 2})^{\iota_{i + 1}}\\
    a' & = & \varphi_{i + 1} + \varepsilon_{i + 1} \mathe^{\psi_{i + 1}}  (2
    E_{\alpha_{i + 1}} a_{; i + 2})^{\iota_{i + 1}} .
  \end{eqnarray*}
  Note that
  \begin{eqnarray*}
    (\Psi_{i, g} (a))_{i + 1 ;} & \in & \mathcal{T}_{\alpha_i} [a_{; i + 1}] .
  \end{eqnarray*}
  If $\sigma_{; i + 1} = 1$, then $\varepsilon_{i + 1} \iota_{i + 1} =
  \sigma_{; i + 2} / \sigma_{; i + 1} = 1$ and $\Psi_{; i + 1}$ is strictly
  increasing. So we only need to prove that $\mathcal{T}_{\alpha_i} [a_{; i +
  1}] < a'$, which reduces to proving that $\sharp_{\alpha_i} (a_{; i + 1}) <
  \sharp_{\alpha_i} (a')$. Let $\tau$ be the dominant term of $E_{\alpha_{i +
  1}} a_{; i + 2}$. Our assumption that $\Sigma$ is nested gives $\varphi_i +
  \varepsilon_i \mathe^{\psi_i}  (E_{\alpha_i} a')^{\iota_i} \in
  \mathbf{Ad}_{\nearrow i}$, whence $\varphi_{i + 1} \vartriangleleft
  \sharp_{\alpha_i} (a')$. We deduce that $\varphi_{i + 1} + \varepsilon_{i +
  1} \mathe^{\psi_{i + 1}}  (2 \tau)^{\iota_{i + 1}} \trianglelefteqslant
  \sharp_{\alpha_i} (a')$. \Cref{lem-truncated-truncation} implies that
  $\varphi_{i + 1} + \varepsilon_{i + 1} \mathe^{\psi_{i + 1}}  (2
  \tau)^{\iota_{i + 1}}$ is $\alpha_i$-truncated.
  \begin{eqnarray*}
    \sharp_{\alpha_i} (a_{; i + 1}) - \varphi_{i + 1} & \sim & \varepsilon_{i
    + 1} \mathe^{\psi_{i + 1}} \tau^{\iota_{i + 1}},\\
    \sharp_{\alpha_i} (a') - \varphi_{i + 1} & \sim & \varepsilon_{i + 1}
    \mathe^{\psi_{i + 1}}  (2 \tau)^{\iota_{i + 1}}
  \end{eqnarray*}
  and $\varepsilon_{i + 1} \iota_{i + 1} = 1$ implies that
  \[ \varepsilon_{i + 1} \mathe^{\psi_{i + 1}}  (2 \tau)^{\iota_{i + 1}} -
     \varepsilon_{i + 1} \mathe^{\psi_{i + 1}} \tau^{\iota_{i + 1}} \]
  is a strictly positive term. We deduce that $\sharp_{\alpha_i} (a_{; i + 1})
  - \varphi_{i + 1} < \sharp_{\alpha_i} (a') - \varphi_{i + 1}$, whence
  $\sharp_{\alpha_i} (a_{; i + 1}) < \sharp_{\alpha_i} (a')$. The other cases
  when $\sigma_{; i + 1} = - 1$ or when {$\sigma_{i + 2 ; i + 1} = - 1$} are
  proved similarly, using symmetric arguments.
\end{proof}

We are now in a position to prove the following refinement of
\Cref{th-nested-numbers}.

\begin{theorem}
  \label{th-3}If $\Sigma$ is nested, then $\mathbf{Ne}$ is a surreal
  substructure with $\mathbf{Ne} = \mathbf{Smp}_{\mathcal{G}}$.
\end{theorem}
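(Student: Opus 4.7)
The plan is to combine Proposition~\ref{prop-group-substructure} with the identification $\mathbf{Ne} = \mathbf{Smp}_{\mathcal{G}}$. First I would verify that $\mathcal{G}$ is a function group on the surreal substructure $\mathbf{Ad}$: each map $\Psi_{i,g}$ is a well-defined strictly increasing bijection of $\mathbf{Ad}$ onto itself by Lemma~\ref{lem-preliminary-group} (which uses the nestedness of $\Sigma$) and Corollary~\ref{cor-little-more}; closure under inversion and composition at a fixed level is automatic from $\Psi_{i,g}\circ\Psi_{i,h}=\Psi_{i,g\circ h}$ and $\Psi_{i,g}^{\tmop{inv}}=\Psi_{i,g^{\tmop{inv}}}$, while mixing of different levels is controlled by Lemma~\ref{lem-reduction}. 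Granted this, Proposition~\ref{prop-group-substructure} immediately yields that $\mathbf{Smp}_{\mathcal{G}}$ is a surreal substructure, so the remaining work is to prove the two inclusions of $\mathbf{Ne}=\mathbf{Smp}_{\mathcal{G}}$.

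For $\mathbf{Smp}_{\mathcal{G}}\subseteq\mathbf{Ne}$ I argue by contraposition. If $a\in\mathbf{Ad}$ is not nested, some level $i$ has $\mathfrak{p}_i:=E_{\alpha_i}a_{;i+1}\notin\mathbf{Mo}_{\alpha_i}\setminus L_{<\alpha_i}\mathbf{Mo}_{\alpha_i\omega}$. If $\mathfrak{p}_i$ is not a monomial I set $\mathfrak{p}:=\mathfrak{d}_{\mathfrak{p}_i}\in\mathcal{E}_{\omega}[\mathfrak{p}_i]$, which is strictly simpler than $\mathfrak{p}_i$; if $\mathfrak{p}_i$ is a monomial lying in $L_{<\alpha_i}\mathbf{Mo}_{\alpha_i\omega}$, I take a strictly simpler $\mathfrak{p}\in\mathbf{Mo}_{\alpha_i}$ in its $\mathcal{E}_{\alpha_i}$-orbit (using $\mathbf{Mo}_{\alpha_i}=\mathbf{Smp}_{\mathcal{E}_{\alpha_i}}$). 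In either case, Lemma~\ref{lem-simplicity} applied to $c_i:=\varphi_i+\varepsilon_i\mathe^{\psi_i}\mathfrak{p}^{\iota_i}$ yields $(c_i)_{i;}\sqsubset a$, and by construction $(c_i)_{i;}\in\mathcal{G}[a]$, so $a$ is not $\mathcal{G}$-simple.

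For $\mathbf{Ne}\subseteq\mathbf{Smp}_{\mathcal{G}}$ let $a\in\mathbf{Ne}$ and $b\in\mathcal{G}[a]$; I show $a\sqsubseteq b$. Using Lemma~\ref{lem-reduction}, I reduce to the case $\Psi_{i,g_L}(a)\leqslant b\leqslant\Psi_{i,g_R}(a)$ for a common level $i$ and some $g_L,g_R\in\mathcal{E}_{\alpha_i}$. Both bounds share the coordinates $a_{;k}$ for $k<i$, so the admissibility of $b$ together with Lemma~\ref{lem-little-more} forces $b_{;k}=a_{;k}$ for all $k<i$ and $b_{;i}$ to differ from $a_{;i}$ only by replacement of $\mathfrak{p}_i$ by some $\mathfrak{q}\in\mathcal{E}_{\alpha_i}[\mathfrak{p}_i]$. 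Since $a$ is nested we have $\mathfrak{p}_i\in\mathbf{Mo}_{\alpha_i}=\mathbf{Smp}_{\mathcal{E}_{\alpha_i}}$, hence $\mathfrak{p}_i\sqsubseteq\mathfrak{q}$; applying Lemmas~\ref{exp-dec-lem}, \ref{inv-dec-lem} and \ref{imm-dec-lem} inductively from level $i$ down to $0$ (mirroring the induction at the end of the proof of Lemma~\ref{lem-simplicity}) propagates $a_{;i}\sqsubseteq b_{;i}$ all the way back to $a\sqsubseteq b$.

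The main obstacle I expect is the structural claim in the previous paragraph: that an admissible $b$ trapped between $\Psi_{i,g_L}(a)$ and $\Psi_{i,g_R}(a)$ really inherits the exact coordinates $b_{;k}=a_{;k}$ at all levels $k<i$, rather than lying in some fuzzier convex neighborhood. Pinning this down requires the convexity of $\mathbf{Ad}$ packaged in Lemma~\ref{lem-little-more} and careful tracking of how $\Phi_{i;}$ transports equalities of level-$i$ data back down to level $0$. The delicate subcase $\alpha_i=1$ (where $\psi_i=0$ and $\mathfrak{p}_i$ need only be controlled in $\mathcal{E}_{\omega}[\mathfrak{p}_i]$) must then be handled through the corresponding clause of Lemma~\ref{lem-simplicity}.
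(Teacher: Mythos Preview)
Your overall structure matches the paper's, but there is a genuine gap in each inclusion.

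For $\mathbf{Smp}_{\mathcal{G}}\subseteq\mathbf{Ne}$, your handling of the case $\mathfrak{p}_i=E_{\alpha_i}^{a_{;i+1}}\in\mathbf{Mo}_{\alpha_i}\cap L_{<\alpha_i}\mathbf{Mo}_{\alpha_i\omega}$ does not work. In that case $\mathfrak{p}_i$ already lies in $\mathbf{Mo}_{\alpha_i}=\mathbf{Smp}_{\mathcal{E}_{\alpha_i}}$, so it \emph{is} the $\mathcal{E}_{\alpha_i}$-simplest element of its orbit: there is no strictly simpler $\mathfrak{p}$ to choose at level $i$, and Lemma~\ref{lem-simplicity} applied at level $i$ gives nothing. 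The paper resolves this by moving to level $m=i+1$. Using Corollary~\ref{cor-alpha-alphaomega} one obtains $\alpha_i=\omega^{\mu+1}$ and $a_{;i+1}=L_{\alpha_i}\mathfrak{a}-n$ with $n\in\mathbb{N}^{>}$ and $\mathfrak{a}\in\mathbf{Mo}_{\alpha_i\omega}$; one then argues that $\varphi_{i+1}=0$ and takes $\mathfrak{p}:=\mathfrak{d}_{E_{\alpha_{i+1}}a_{;i+2}}$ at level $i+1$. The tail $-n$ forces $\mathfrak{p}\sqsubset E_{\alpha_{i+1}}a_{;i+2}$, which is exactly what Lemma~\ref{lem-simplicity} needs to produce a strictly simpler element of $\mathcal{G}[a]$. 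Your case analysis confined to level $i$ cannot see this.

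For $\mathbf{Ne}\subseteq\mathbf{Smp}_{\mathcal{G}}$, the obstacle you identify is real and the claim $b_{;k}=a_{;k}$ for $k<i$ is not true in general for an arbitrary $b\in\mathcal{G}[a]$. The paper avoids the issue entirely by a different argument that exploits the first inclusion: given $b\in\mathbf{Ne}$, set $c:=\pi_{\mathcal{G}}(b)\in\mathbf{Smp}_{\mathcal{G}}\subseteq\mathbf{Ne}$. Lemma~\ref{lem-reduction} gives a level $i$ with $c_{;i}-\varphi_i\asymp b_{;i}-\varphi_i$; since both $b$ and $c$ are nested, each side equals $\varepsilon_i$ times a monomial, hence $c_{;i}=b_{;i}$ and $b=c\in\mathbf{Smp}_{\mathcal{G}}$. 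No simplicity propagation from level $i$ down to $0$ is required.
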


\begin{proof}
  By Proposition~\ref{prop-group-substructure}, the class
  $\mathbf{Smp}_{\mathcal{G}}$ is a surreal substructure, so it is enough to
  prove the equality. We first prove that $\mathbf{Smp}_{\mathcal{G}}
  \subseteq \mathbf{Ne}$.
  
  Assume for contradiction that there are an $a \in
  \mathbf{Smp}_{\mathcal{G}}$ and a $k \in \mathbb{N}$, which we choose
  minimal, such that $a_{; k}$ cannot be written as $a_{; k} = \varphi_k
  \pplus \varepsilon_k \mathfrak{m}_k$ where $\mathfrak{m}_k = \mathe^{\psi_k}
  (E_{\alpha_k}^{a_{; k + 1}})^{\iota_k}$ is a hyperserial expansion. Set
  $\mathfrak{m} \assign \mathfrak{d}_{a_{; k} - \varphi_k}$, $r \assign (a_{;
  k})_{\mathfrak{m}}$ and $\delta \assign (a_{; k})_{\succ \mathfrak{m}}$.
  
  Our goal is to prove that there is a number $m \in \{ k, k + 1 \}$ and
  $\mathfrak{p} \in \mathbf{Mo}^{\succ}$ with
  \begin{equation}
    \begin{array}{ccl}
      \mathfrak{p} & \in & \mathcal{E}_{\alpha_m} [E_{\alpha_m} a_{; m + 1}]\\
      \mathfrak{p} & \sqsubseteq & E_{\alpha_m} a_{; m + 1}\\
      \mathfrak{p} & \sqsubset & E_{\alpha_m} a_{; m + 1}, \quad
      \text{whenever $\nobracket \delta = 0 \nobracket$ and $\nobracket r \in
      \{ - 1, 1 \} \nobracket$.}
    \end{array} \label{p-conds}
  \end{equation}
  Assume that this is proved and set $c_m \assign \varphi_m + \varepsilon_m
  \mathe^{\psi_m} \mathfrak{p}^{\iota_m}$. The first condition and
  Lemma~\ref{lem-preliminary-group} yield $c_m \in \mathbf{Ad}_{\nearrow m}$
  and the relations $\log \mathfrak{p} \prec \tmop{supp} \psi_m$ and
  $\mathe^{\psi_m} \mathfrak{p}^{\iota_m} \prec \tmop{supp} \varphi_m$. The
  second and third condition, together with Lemma~\ref{lem-simplicity}, imply
  $c \assign (c_m)_{m ;} \sqsubset a$. The first condition also implies that
  $c \in \mathcal{G} [a]$: a contradiction. Proving the existence of $m$
  and~$\mathfrak{p}$ is therefore sufficient.
  
  If $\mathfrak{m} \neq \min \tmop{supp} a_{; k}$ or $\mathfrak{m}= \min
  \tmop{supp} a_{; k}$ and $r \nin \{ - 1, 1 \}$, then $m \assign k$ and
  $\mathfrak{p} \assign \mathfrak{d}_{E_{\alpha_k} a_{; k + 1}}$
  satisfy~(\ref{p-conds}). Assume now that $\mathfrak{m}= \min \tmop{supp}
  a_{; k}$ and that $r \in \{ - 1, 1 \}$, whence $r = \varepsilon_k$. If $a_{;
  k + 1} \nin \mathbf{No}_{\succ, \alpha_k}$ then $m \assign k$ and
  $\mathfrak{p} \assign E_{\alpha_k}^{\smash{\sharp_{\alpha_k} (a_{; k +
  1})}}$ satisfy~(\ref{p-conds}). Assume therefore that $a_{; k + 1} \in
  \mathbf{No}_{\succ, \alpha_k}$. This implies that there exist $\gamma <
  \alpha_k$ and~$\mathfrak{a} \in \mathbf{Mo}_{\alpha_k \omega}$ with
  $E_{\alpha_k}^{\smash{a_{; k + 1}}} = L_{\gamma} \mathfrak{a}.$ By the
  definition of coding sequences, there is a least index $j > k$ with
  $\varphi_j \neq 0$ or $\psi_j \neq 0$, so
  \[ E_{\alpha_k}^{\smash{a_{; k + 1}}} = E_{\alpha_k + \cdots + \alpha_{j -
     1}} \left( \varphi_j \pplus \varepsilon_j \mathe^{\psi_j}  \left(
     E_{\alpha_j}^{\smash{a_{; j + 1}}} \right)^{\iota_j} \right) \nin
     \mathbf{Mo}_{\alpha_k \omega} . \]
  We have $\mathfrak{a} \in \mathbf{Mo}_{\alpha_k \omega}$ and $L_{\gamma}
  \mathfrak{a} \in \mathbf{Mo}_{\alpha_k} \setminus \mathbf{Mo}_{\alpha_k
  \omega}$. So by \Cref{cor-alpha-alphaomega}, we must have {$\alpha_k =
  \omega^{\mu + 1}$} for a certain $\mu \in \mathbf{On}$ and $\gamma =
  (\alpha_k)_{/ \omega} n$ for a certain $n \in \mathbb{N}^{>}$. Note
  that~$a_{; k + 1} = L_{\alpha_k} \mathfrak{a}- n$. Recall that $\varphi_{k +
  1} \vartriangleleft a_{; k + 1}$ and $L_{\alpha_k} \mathfrak{a} \in
  \mathbf{Mo}^{\succ}$, so $\varphi_{k + 1} \in \{ L_{\alpha_k} \mathfrak{a},
  0 \}$. The case $\varphi_{k + 1} = L_{\alpha_k} \mathfrak{a}$ cannot occur
  for otherwise
  \[ a_{; k + 2} = \left( \frac{a_{; k + 1} - \varphi_{k + 1}}{\varepsilon_{k
     + 1} \mathe^{\psi_{k + 1}}} \right)^{\iota_{k + 1}} = \frac{n^{\iota_{k +
     1}}}{\varepsilon_{k + 1} \mathe^{\psi_{k + 1}}} \]
  would not lie in $\mathbf{No}^{>, \succ}$. So $\varphi_{k + 1} = 0$. Let $m
  \assign k + 1$ and
  \[ \mathfrak{p} \assign \left( \frac{L_{\alpha_k}
     \mathfrak{a}}{\mathe^{\psi_{k + 1}}} \right)^{\iota_{k + 1}} = \left(
     \frac{\mathfrak{d}_{a_{; k + 1}}}{\mathe^{\psi_{k + 1}}}
     \right)^{\iota_{k + 1}} =\mathfrak{d}_{E_{\alpha_{k + 1}} a_{; k + 2}} .
  \]
  We have $\mathfrak{p} \in \mathcal{E}_{\alpha_{k + 1}} [E_{\alpha_{k + 1}}
  a_{; k + 2}]$ and $\mathfrak{p} \sqsubset E_{\alpha_{k + 1}} a_{; k + 2}$,
  so $m$ and $\mathfrak{p}$ satisfy~(\ref{p-conds}). We deduce that
  $\mathbf{Smp}_{\mathcal{G}}$ is a subclass of $\mathbf{Ne}$.
  
  Conversely, consider $b \in \mathbf{Ne}$ and set $c \assign
  \pi_{\mathcal{G}} [b]$. So there are $i_1, i_2 \in \mathbb{N}$ and $(g, h)
  \in \mathcal{E}_{\alpha_{i_1}}' \times \mathcal{E}_{\alpha_{i_2}}'$ with
  $\Psi_{i_1, g_1} (b) < c < \Psi_{i_2, g_2} (b)$. Let $i \assign \max (i_1 +
  1, i_2 + 1)$. By Lemma~\ref{lem-reduction}, there exist $d_1, d_2 \in
  \left\{ 1 / 2, 2 \right\}$ with $\Psi_{i_1, g_1} < \Psi_{i, H_{d_1}}$ and
  $\Psi_{i_2, g_2} < \Psi_{i, H_{d_2}}$, whence $\Psi_{i, H_{d_1^{- 1}}} (b) <
  c < \Psi_{i, H_{d_2}} (b)$. Since $\Phi_{; i}$ is strictly monotonous, we
  get $c_{; i} - \varphi_i \asymp b_{; i} - \varphi_i$. The numbers
  $\varepsilon_i  (c_{; i} - \varphi_i)$ and $\varepsilon_i  (b_{; i} -
  \varphi_i)$ are monomials, so $c_{; i} - \varphi_i = b_{; i} - \varphi_i$.
  Therefore~$b = c \in \mathbf{Smp}_{\mathcal{G}}$.
\end{proof}

In view of Theorem~\ref{th-3}, Lemma~\ref{lem-reduction}, and
Proposition~\ref{prop-group-substructure}, we have the following
parametrization of $\mathbf{Ne}$:
\[ \nobracket \forall z \in \mathbf{No} \nobracket, \quad \Xi_{\mathbf{Ne}} z
   = \{ L, \Psi_{\mathbb{N}, \mathcal{H}} \Xi_{\mathbf{Ne}} z_L |
   \Psi_{\mathbb{N}, \mathcal{H}} \Xi_{\mathbf{Ne}} z_R, R \} . \]
We conclude this section with a few remarkable identities for
$\Xi_{\mathbf{Ne}}$.

\begin{lemma}
  If $\Sigma$ is nested, then for $i \in \mathbb{N}$ and $a, b \in
  \mathbf{Ne}$, we have $a \sqsubseteq b \Longleftrightarrow a_{; i}
  \sqsubseteq b_{; i}$.
\end{lemma}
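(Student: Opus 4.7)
The plan is to proceed by induction on $i$, the base case $i = 0$ being trivial since $a_{;0} = a$ and $b_{;0} = b$. For the inductive step, observe that if $\Sigma$ is nested then so is $\Sigma_{\nearrow 1}$, as a direct consequence of \Cref{nested-seq-def} applied at each shift. Applying the inductive hypothesis to $\Sigma_{\nearrow 1}$ and to $a_{;1}, b_{;1} \in \mathbf{Ne}_{\nearrow 1}$ then reduces the problem to the case $i = 1$.

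For $i = 1$, since $a, b \in \mathbf{Ne}$ share the same coding sequence $\Sigma$, both admit hyperserial expansions $a = \varphi_0 \pplus \varepsilon_0 \mathfrak{m}_0^a$ and $b = \varphi_0 \pplus \varepsilon_0 \mathfrak{m}_0^b$, where $\mathfrak{m}_0^a = \mathe^{\psi_0} (E_{\alpha_0}^{a_{;1}})^{\iota_0}$ and $\mathfrak{m}_0^b = \mathe^{\psi_0} (E_{\alpha_0}^{b_{;1}})^{\iota_0}$, with $a_{;1}, b_{;1} \in \mathbf{No}_{\succ,\alpha_0}$. I would establish the equivalence $a \sqsubseteq b \Leftrightarrow a_{;1} \sqsubseteq b_{;1}$ by chaining five iff-steps. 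First, \Cref{imm-dec-lem} (an equivalence, trivial if $\varphi_0 = 0$) peels off $\varphi_0$ to give $\varepsilon_0 \mathfrak{m}_0^a \sqsubseteq \varepsilon_0 \mathfrak{m}_0^b$; second, \Cref{neg-dec-lem} strips the sign $\varepsilon_0$ to give $\mathfrak{m}_0^a \sqsubseteq \mathfrak{m}_0^b$; third, the factor $\mathe^{\psi_0}$ is removed by passing to logarithms and reapplying \Cref{imm-dec-lem}, since $\log \mathfrak{m}_0^a = \psi_0 \pplus \iota_0 \log E_{\alpha_0}^{a_{;1}}$ with $\tmop{supp} \psi_0 \succ \log E_{\alpha_0}^{a_{;1}}$, and analogously for $b$; fourth, \Cref{inv-dec-lem} peels off the exponent $\iota_0$; fifth, \Cref{hypexp-mon-lem} yields one direction of $E_{\alpha_0}^{a_{;1}} \sqsubseteq E_{\alpha_0}^{b_{;1}} \Leftrightarrow a_{;1} \sqsubseteq b_{;1}$, with its hypothesis $L_{\alpha_0} E_{<\alpha_0} a_{;1} < b_{;1}$ supplied by \Cref{lem-admissible-ineq}. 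Exchanging the roles of $a$ and $b$ provides the symmetric hypothesis and hence the reverse direction.

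The main obstacle will be justifying the reverse implications in the exponential and hyperexponential steps (third and fifth above), since \Cref{exp-dec-lem} and \Cref{hypexp-mon-lem} are stated as one-way implications. For the hyperexponential step, the resolution is to view $E_{\alpha_0}$ as an isomorphism of surreal substructures from $\mathbf{No}_{\succ,\alpha_0}$ onto $\mathbf{Mo}_{\alpha_0}$: this is guaranteed by the uniform cut equation~\eqref{eq-rich-hyperexp}, and any monotonous bijection satisfying a uniform cut equation preserves simplicity in both directions. For the exponential step, a parallel argument applies via the bijection $\exp$ between infinite purely large series and infinite monomials, which is a simplicity-preserving isomorphism of surreal substructures because of its own uniform cut equation; the sharing of the purely large part $\psi_0$ then guarantees that both $\log \mathfrak{m}_0^a$ and $\log \mathfrak{m}_0^b$ differ from $\psi_0$ by terms dominated by $\tmop{supp} \psi_0$, permitting the two-sided application of \Cref{imm-dec-lem} in the log picture.
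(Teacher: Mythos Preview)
Your outline is correct and parallels the paper's: both reduce to $i = 1$ by induction, and both establish the direction $a_{;1} \sqsubseteq b_{;1} \Rightarrow a \sqsubseteq b$ via the deconstruction lemmas (\Cref{imm-dec-lem}, \Cref{neg-dec-lem}, \Cref{inv-dec-lem}, \Cref{exp-dec-lem}, \Cref{hypexp-mon-lem}), with \Cref{lem-admissible-ineq} supplying the side hypothesis of \Cref{hypexp-mon-lem}.

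The paper handles the reverse direction differently and more economically. Rather than unwinding the iff-chain, it invokes {\cite[Lemma~4.5]{BvdH19}} at the outset: since $\Phi_{i;}$ is a strictly monotonous bijection between the surreal substructures $\mathbf{Ne}_{\nearrow i}$ and $\mathbf{Ne}$, proving that it preserves $\sqsubseteq$ already forces it to be \emph{the} unique isomorphism of surreal substructures, so its inverse $\Phi_{;i}$ preserves $\sqsubseteq$ as well. This collapses the reverse direction to a one-line citation, after which only the forward chain needs to be written out.

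Your workaround --- arguing that $E_{\alpha_0}:\mathbf{No}_{\succ,\alpha_0}\to\mathbf{Mo}_{\alpha_0}$ and $\exp:\mathbf{No}_\succ\to\mathbf{Mo}$ are themselves isomorphisms of surreal substructures --- is valid, but the justification you offer (``any monotonous bijection satisfying a uniform cut equation preserves simplicity in both directions'') is not a self-contained fact: it still rests on the uniqueness-of-isomorphism principle behind {\cite[Lemma~4.5]{BvdH19}}, simply applied one level down rather than at the top. There is also a small tangle in your steps three and four: once you pass to logarithms in step three, the exponent $\iota_0$ becomes a \emph{sign} on $\log E_{\alpha_0}^{a_{;1}}$, so \Cref{neg-dec-lem} (not \Cref{inv-dec-lem}) is the relevant lemma there, and you must exponentiate back before step five makes sense --- which means invoking the $\exp$ isomorphism a second time. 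None of this is fatal, but the paper's route via {\cite[Lemma~4.5]{BvdH19}} avoids the bookkeeping entirely.
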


\begin{proof}
  By {\cite[Lemma~4.5]{BvdH19}} and since the function $\Phi_{i ;}$ is
  strictly monotonous, it is enough to prove that~$\forall a, b \in
  \mathbf{Ne}, a \sqsubseteq b \Longleftarrow a_{; i} \sqsubseteq b_{; i}$. By
  induction, we may also restrict to the case when $i = 1$. So assume that
  $a_{; 1} \sqsubseteq b_{; 1}$. Recall that $L_{\alpha_0} a_{; 1} \prec
  \mathcal{E}_{\alpha_0} b_{; 1}$ by Lemma~\ref{lem-admissible-ineq}. Since
  $a_{; 1}, b_{; 1} \in \mathbf{No}_{\succ, \alpha_0}$, we deduce with
  Lemma~\ref{hypexp-mon-lem} that~$E_{\alpha_0}^{\smash{a_{; 1}}} \sqsubseteq
  E_{\alpha_0}^{\smash{b_{; 1}}}$. It follows using the decomposition lemmas
  that $a \sqsubseteq b$.
\end{proof}

\begin{proposition}
  If $\Sigma$ is nested, then $\mathbf{Ne} = (\mathbf{Ne}_{\nearrow
  1})_{1 ;} = \varphi_0 + \varepsilon_0 \mathe^{\psi_0} 
  (E_{\alpha_0}^{\mathbf{Ne}_{\nearrow 1}})^{\iota_0}$.
\end{proposition}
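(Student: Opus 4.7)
The plan is to prove the two asserted equalities separately, leveraging the fact that $\Phi_{1;} = \Phi_0$, so that for $b \in \mathbf{Ad}_{\nearrow 1}$ we have
\[
\Phi_{1;}(b) = \varphi_0 + \varepsilon_0 \mathe^{\psi_0}(E_{\alpha_0} b)^{\iota_0}.
\]
I would first establish $\mathbf{Ne} = \Phi_{1;}(\mathbf{Ne}_{\nearrow 1})$. For the inclusion $\supseteq$, take $a \in \mathbf{Ne}$: by definition the nestedness condition $E_{\alpha_i} a_{;i+1} \in \mathbf{Mo}_{\alpha_i} \setminus L_{<\alpha_i}\mathbf{Mo}_{\alpha_i \omega}$ holds for every $i \in \mathbb{N}$. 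Viewing $a_{;1}$ as an element of $\mathbf{Ad}_{\nearrow 1}$ with coordinates $(a_{;1})_{1;j} = a_{;j+1}$, the conditions at $i \geq 1$ are exactly the full nestedness of $a_{;1}$ relative to $\Sigma_{\nearrow 1}$, so $a_{;1} \in \mathbf{Ne}_{\nearrow 1}$ and $a = \Phi_{1;}(a_{;1})$ lies in $(\mathbf{Ne}_{\nearrow 1})_{1;}$.

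For the reverse inclusion, take $b \in \mathbf{Ne}_{\nearrow 1}$ and set $a \assign \Phi_{1;}(b)$. Since $\Sigma$ is nested (Definition~\ref{nested-seq-def}), $a \in \mathbf{Ad}$, and by construction $a_{;1} = b$. The nestedness of $a$ at any level $i \geq 1$ follows at once from the nestedness of $b$ at level $i - 1$ via the same index shift. The only level that needs work is $i = 0$, i.e., we must verify that $E_{\alpha_0} b \in \mathbf{Mo}_{\alpha_0} \setminus L_{<\alpha_0}\mathbf{Mo}_{\alpha_0 \omega}$. Once $b$ has been shown to be $\alpha_0$-truncated, so that $E_{\alpha_0} b$ coincides with the monomial $E_{\alpha_0}^b$, Lemma~\ref{lem-standard-form-nested} applied to $b = \sharp_{\alpha_0}(b)$ (in the subcase $\alpha_0 = 1$, tail-atomicity is handled via the alternative branch of that lemma, noting that the nested structure rules out the problematic degenerate case) produces precisely the hyperserial expansion $E_{\alpha_0}^b \in \mathbf{Mo}_{\alpha_0} \setminus L_{<\alpha_0}\mathbf{Mo}_{\alpha_0 \omega}$, as required.

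The second equality $(\mathbf{Ne}_{\nearrow 1})_{1;} = \varphi_0 + \varepsilon_0 \mathe^{\psi_0}(E_{\alpha_0}^{\mathbf{Ne}_{\nearrow 1}})^{\iota_0}$ is then immediate from the formula for $\Phi_{1;}$ together with the identification $E_{\alpha_0} b = E_{\alpha_0}^b$ for $b$ $\alpha_0$-truncated.

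The principal technical obstacle is therefore establishing that every $b \in \mathbf{Ne}_{\nearrow 1}$ is $\alpha_0$-truncated. My approach is to exploit the decomposition $b = \varphi_1 \pplus \varepsilon_1 \mathfrak{m}_1$, where $\mathfrak{m}_1 \assign \mathe^{\psi_1}(E_{\alpha_1}^{b_{1;1}})^{\iota_1}$ is genuinely a monomial because $b_{1;1}$ is $\alpha_1$-truncated (from the level-$0$ nestedness of $b$ under $\Sigma_{\nearrow 1}$). Combined with the coding-sequence axioms, which give $\varphi_1 \in \mathbf{No}_{\succ, \alpha_0} \cup \{0\}$ and link the values of $\alpha_0, \alpha_1, \psi_0, \psi_1$ through Definition~\ref{def-coding-sequence}\emph{(c)} and~\emph{(d)}, the remaining task is to show that $\mathfrak{m}_1$ respects the $\alpha_0$-truncation partition (\ref{eq-L-class}). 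This is a case analysis: when $\alpha_0 = 1$ one uses $\psi_0 = 0$ and the forced structure of the sequence to rule out the failure of pure largeness, while for $\alpha_0 > 1$ one compares $\mathfrak{m}_1^{-1}$ against the class boundaries $L_{\gamma} E_{\alpha_0}^{\varphi_1}$ using the inequalities of Lemma~\ref{lem-admissible-ineq} to place $\mathfrak{m}_1$ strictly past $L_{<\alpha_0}$ iterates, so that $b$ and $\varphi_1$ lie in the same $\mathcal{L}_{\alpha_0}$-class with $\varphi_1 = \sharp_{\alpha_0}(\varphi_1) \trianglelefteqslant b$, forcing $b = \sharp_{\alpha_0}(b)$.
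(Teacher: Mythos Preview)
Your overall plan is correct, and invoking Lemma~\ref{lem-standard-form-nested} for the condition $E_{\alpha_0}^b \in \mathbf{Mo}_{\alpha_0} \setminus L_{<\alpha_0}\mathbf{Mo}_{\alpha_0\omega}$ is a legitimate shortcut: the claim established in the second paragraph of that lemma's proof is exactly what is needed here, and the paper in fact re-derives the same argument (via Corollary~\ref{cor-alpha-alphaomega} and the coding-sequence axioms) in situ rather than citing the lemma. (Minor: your ``For the inclusion $\supseteq$'' should read~$\subseteq$.)

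Where you go astray is the argument that $b \in \mathbf{Ne}_{\nearrow 1}$ is $\alpha_0$-truncated. Your final paragraph proposes to show that $b$ and $\varphi_1$ lie in the \emph{same} $\mathcal{L}_{\alpha_0}$-class and to conclude $b = \sharp_{\alpha_0}(b)$, but this is backwards: if they lay in the same class, then $\sharp_{\alpha_0}(b) = \sharp_{\alpha_0}(\varphi_1) = \varphi_1 \neq b$, the opposite of what you want. The detour through Lemma~\ref{lem-admissible-ineq} and an explicit comparison of $\mathfrak{m}_1^{-1}$ against $L_\gamma E_{\alpha_0}^{\varphi_1}$ is also unnecessary. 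The paper's argument is essentially a two-liner. Since $\Sigma$ is nested, $a \assign \Phi_{1;}(b) \in \mathbf{Ad}$, so Definition~\ref{def-admissible-sequence} gives $\varphi_1 \vartriangleleft \sharp_{\alpha_0}(a_{;1}) = \sharp_{\alpha_0}(b)$ when $\varphi_1 \neq 0$. But $b$ is $\Sigma_{\nearrow 1}$-nested, so $b = \varphi_1 \pplus \tau$ for a single term~$\tau$; the only truncation of $b$ strictly extending $\varphi_1$ is $b$ itself, forcing $\sharp_{\alpha_0}(b) = b$. (When $\varphi_1 = 0$, the number $b = \tau$ is a single positive infinite monomial and is trivially $\alpha_0$-truncated since $\tmop{supp} b_\prec = \varnothing$.) This replaces your entire case analysis.
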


\begin{proof}
  We have $\mathbf{Ne} \subseteq (\mathbf{Ne}_{\nearrow 1})_{1 ;}$ by
  definition of $\mathbf{Ne}$. So we only need to prove that
  {$(\mathbf{Ne}_{\nearrow 1})_{1 ;} \subseteq \mathbf{Ne}$}. Consider~$b \in
  \mathbf{Ne}_{\nearrow 1}$. Since $\Sigma$ is nested, the number $a \assign
  \varphi_0 + \varepsilon_0 \mathe^{\psi_0}  (E_{\alpha_0} b)$ is
  $\Sigma$-admissible, so we need only justify that $E_{\alpha_0} b \in
  \mathbf{Mo}_{\alpha_0} \setminus L_{< \alpha_0}  \mathbf{Mo}_{\alpha_0
  \omega}$. Since $a$ is $\Sigma$-admissible, we have $\varphi_1
  \vartriangleleft \sharp_{\alpha_0} (b)$. But $b$ is $\Sigma_{\nearrow
  1}$-nested, so $b = \varphi_1 \pplus \tau$ for a certain term $\tau$. We
  deduce that $b = \sharp_{\alpha_0} (b) \in \mathbf{No}_{\succ, \alpha_0}$,
  whence $E_{\alpha_0} b \in \mathbf{Mo}_{\alpha_0}$.
  
  Assume for contradiction that $E_{\alpha_0}^b \in L_{< \alpha_0} 
  \mathbf{Mo}_{\alpha_0 \omega}$ and write $E_{\alpha_0}^b = L_{\gamma}
  \mathfrak{a}$ where $\mathfrak{a} \in \mathbf{Mo}_{\alpha_0 \omega}$ and
  $\gamma < \alpha_0$. Note that $\gamma \neq 0$: otherwise $\varphi_i$ and
  $\psi_i$ would be zero for all $i \geqslant 1$, thereby contradicting
  Definition~\ref{def-coding-sequence}(\ref{def-coding-sequence-e}). By
  Corollary~\ref{cor-alpha-alphaomega}, we must have $\alpha_0 = \omega^{\mu +
  1}$ for a certain ordinal~$\mu$ and $\gamma = \omega^{\mu} n$ for a certain
  $n \in \mathbb{N}^{>}$. Consequently, $b = L_{\alpha_0} \mathfrak{a}- n \in
  \mathbf{Mo} - n$. If {$\varphi_1 \neq 0$}, then the condition {$\varphi_1
  \vartriangleleft \sharp_{\alpha_0} (b)$} implies $\varphi_1 = b$, which
  leads to the contradiction that~$b_{1 ; 2} = 0 \nin \mathbf{No}^{>, \succ}$.
  If {$\varphi_1 = 0$}, then {$\mathbf{Ne}_{\nearrow 1} \subseteq
  \varepsilon_1  \mathbf{Mo}$}, whence {$n = 0$}: a contradiction.
\end{proof}

\begin{corollary}
  \label{cor-nested-translations}If $\Sigma$ is nested, then for $z \in
  \mathbf{No}$, we have
  \[ \Xi_{\mathbf{Ne}} z = \varphi_0 + \varepsilon_0 \mathe^{\psi_0}  \left(
     E_{\alpha_0}^{\smash{\Xi_{\mathbf{Ne}_{\nearrow 1}} \sigma_{; 1} z}}
     \right)^{\iota_0} . \]
\end{corollary}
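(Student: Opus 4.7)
The plan is to exploit uniqueness of the simplicity-preserving order isomorphism $\Xi$, combined with the structural information about $\Phi_{0;1}$ given by the preceding Proposition and Lemma. First I would observe that the preceding Proposition gives a bijection
\[ F : \mathbf{Ne}_{\nearrow 1} \longrightarrow \mathbf{Ne}, \qquad F(b) \;=\; \varphi_0 + \varepsilon_0 \mathe^{\psi_0} \bigl( E_{\alpha_0}^{b} \bigr)^{\iota_0}, \]
whose inverse is the restriction of $\Phi_{;1}$ to $\mathbf{Ne}$. The preceding Lemma (applied at $i=1$) tells me that $F$ and $F^{-1}$ preserve $\sqsubseteq$ in both directions. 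As for order, $F$ is strictly monotone with sign $\varepsilon_0 \iota_0 = \sigma_{;1}$: strictly increasing if $\sigma_{;1}=1$, strictly decreasing if $\sigma_{;1}=-1$.

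Next I would combine this with $\Xi_{\mathbf{Ne}_{\nearrow 1}}$. If $\sigma_{;1}=1$, the composition $F \circ \Xi_{\mathbf{Ne}_{\nearrow 1}} : \mathbf{No} \to \mathbf{Ne}$ is an isomorphism of $(\mathbf{No}, \leqslant, \sqsubseteq)$ with $(\mathbf{Ne}, \leqslant, \sqsubseteq)$, so by uniqueness of $\Xi_{\mathbf{Ne}}$ it must equal $\Xi_{\mathbf{Ne}}$, giving the desired identity. If $\sigma_{;1}=-1$, then $F \circ \Xi_{\mathbf{Ne}_{\nearrow 1}}$ is simplicity-preserving but order-reversing, so the composition $\Xi_{\mathbf{Ne}}^{-1} \circ F \circ \Xi_{\mathbf{Ne}_{\nearrow 1}}$ is a simplicity-preserving order-reversing bijection $\mathbf{No} \to \mathbf{No}$. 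The negation map $z \mapsto -z$ is such a map, and by uniqueness (applied to $(\mathbf{No}, \geqslant, \sqsubseteq)$ as a surreal substructure of $\mathbf{No}$ via negation, or directly from Lemma~\ref{neg-dec-lem}), it is the only one. Hence $F(\Xi_{\mathbf{Ne}_{\nearrow 1}}(-z)) = \Xi_{\mathbf{Ne}}(z)$, which rewrites as the claimed formula with $\sigma_{;1}=-1$.

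Unifying the two cases by inserting the factor $\sigma_{;1}$ in the argument of $\Xi_{\mathbf{Ne}_{\nearrow 1}}$ yields
\[ \Xi_{\mathbf{Ne}}(z) \;=\; \varphi_0 + \varepsilon_0 \mathe^{\psi_0} \Bigl( E_{\alpha_0}^{\Xi_{\mathbf{Ne}_{\nearrow 1}}(\sigma_{;1} z)} \Bigr)^{\iota_0}, \]
as desired. The one point deserving care, and the only real obstacle, is justifying the uniqueness of the order-reversing simplicity-preserving self-bijection of $\mathbf{No}$: this follows from the fact that negation is such a bijection and any two such bijections differ by a simplicity-preserving order-preserving self-bijection of $\mathbf{No}$, which must be the identity by the uniqueness clause in the definition of $\Xi$. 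Everything else is a formal consequence of the Proposition and Lemma immediately preceding the Corollary.
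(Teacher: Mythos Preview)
Your proposal is correct and takes essentially the same approach as the paper, which leaves the corollary unproved as an immediate consequence of the preceding Lemma and Proposition. Your argument---showing that $\Phi_{1;}$ restricts to a $\sqsubseteq$-preserving, $\sigma_{;1}$-monotone bijection $\mathbf{Ne}_{\nearrow 1}\to\mathbf{Ne}$ and then invoking uniqueness of $\Xi$ (precomposed with negation in the order-reversing case)---is exactly the intended derivation.
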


\begin{corollary}
  \label{cor-nested-k}If $\Sigma$ is nested and $k \in \mathbb{N}$, then
  \[ \Xi_{\mathbf{Ne}} = \Phi_{k ;} \circ \Xi_{\mathbf{Ne}_{\nearrow k}} \circ
     H_{\sigma_{; k}} . \]
\end{corollary}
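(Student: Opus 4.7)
The plan is to proceed by induction on $k$, using Corollary~\ref{cor-nested-translations} both as the base case and as the inductive engine. For $k=0$, we have that $\Phi_{0;} = \Phi_{0;0}$ is the empty composition, hence the identity on $\mathbf{No}$; moreover $\mathbf{Ne}_{\nearrow 0} = \mathbf{Ne}$ and $\sigma_{;0} = 1$, so $H_{\sigma_{;0}} = \mathrm{id}$, and the right-hand side collapses to $\Xi_{\mathbf{Ne}}$.

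For the inductive step from $k$ to $k+1$, the first observation is that $\Sigma_{\nearrow k}$ is itself a nested coding sequence: Definition~\ref{nested-seq-def} imposes $\mathbf{Ad}_{\nearrow j} = \varphi_j + \varepsilon_j \mathe^{\psi_j}(E_{\alpha_j}\mathbf{Ad}_{\nearrow j+1})^{\iota_j}$ for all $j \in \mathbb{N}$, so in particular for all $j \geqslant k$, which is exactly the nestedness condition for $\Sigma_{\nearrow k}$. Applying Corollary~\ref{cor-nested-translations} to $\Sigma_{\nearrow k}$, whose index-zero data is $(\varphi_k, \varepsilon_k, \psi_k, \iota_k, \alpha_k)$ and whose $\sigma_{;1}$ equals $\varepsilon_k \iota_k$, yields
\[ \Xi_{\mathbf{Ne}_{\nearrow k}} \;=\; \Phi_k \circ \Xi_{\mathbf{Ne}_{\nearrow k+1}} \circ H_{\varepsilon_k \iota_k}. \]

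Substituting this into the inductive hypothesis $\Xi_{\mathbf{Ne}} = \Phi_{k;} \circ \Xi_{\mathbf{Ne}_{\nearrow k}} \circ H_{\sigma_{;k}}$ gives
\[ \Xi_{\mathbf{Ne}} \;=\; \Phi_{k;} \circ \Phi_k \circ \Xi_{\mathbf{Ne}_{\nearrow k+1}} \circ H_{\varepsilon_k \iota_k} \circ H_{\sigma_{;k}}. \]
The proof then closes using two trivial identities: by the definition $\Phi_{j;i} = \Phi_i \circ \cdots \circ \Phi_{j-1}$, one has $\Phi_{k;} \circ \Phi_k = \Phi_{k+1;0} = \Phi_{k+1;}$; and since all $\varepsilon_l, \iota_l$ lie in $\{-1,1\}$ and act as scalar multiplications, $H_{\varepsilon_k \iota_k} \circ H_{\sigma_{;k}} = H_{\sigma_{;k}\, \varepsilon_k \iota_k} = H_{\sigma_{;k+1}}$. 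Combining these gives the desired equality at $k+1$.

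There is no real obstacle here: the content is bookkeeping between the two index conventions (the $\Phi_i$ carry data at level $i$, while $\sigma_{;k+1} = \sigma_{;k}\cdot \varepsilon_k\iota_k$), and the only non-formal input is the fact that nestedness passes to tails, which is immediate from the definition. The corollary is really just the iterated form of Corollary~\ref{cor-nested-translations}.
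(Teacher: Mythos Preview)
Your proof is correct and is exactly the intended argument: the paper states this as an immediate corollary of Corollary~\ref{cor-nested-translations} without giving a proof, and the induction you wrote out is the natural way to unpack it. The only remark is that the paper's displayed definition of $\Phi_i$ carries a typo in the subscripts on $\psi,\alpha,\iota$; your reading $\Phi_k(b)=\varphi_k+\varepsilon_k\mathe^{\psi_k}(E_{\alpha_k}b)^{\iota_k}$ is the one consistent with Definition~\ref{def-admissible-sequence} and with Corollary~\ref{cor-nested-translations}, so your identification $\Phi_{k;}\circ\Phi_k=\Phi_{k+1;}$ goes through as written.
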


\begin{proposition}
  Assume that $\Sigma$ is nested with $(\varphi_0, \varepsilon_0, \psi_0,
  \iota_0) = (0, 1, 0, 1)$, assume that {$\alpha_0 \in \omega^{\mathbf{On} +
  1}$} and write $\beta \assign (\alpha_0)_{/ \omega}$. Consider the coding
  sequence $\Sigma'$ with $(\varphi'_i, \varepsilon'_i, \psi'_i, \iota'_i,
  \alpha'_i) = (\varphi_i, \varepsilon_i, \psi_i, \iota_i, \alpha_i)$ for all
  $i \in \mathbb{N}$, with the only exception that
  \[ \varphi_1' = \varphi_1 - n. \]
  If $\psi_1 < 0$, or $\psi_1 = 0$ and $\iota_1 = - 1$, then $\Sigma'$ is
  nested and we have
  \[ \Xi_{\smash{\mathbf{Ne}'}} = L_{\beta n} \circ \Xi_{\mathbf{Ne}}, \]
  where $\mathbf{Ne}'$ is the class of $\Sigma^{[n]}$-nested numbers.
\end{proposition}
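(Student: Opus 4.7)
The plan is to establish that $L_{\beta n}$ restricts to an order- and simplicity-preserving bijection $\mathbf{Ne} \to \mathbf{Ne}'$, from which the identity $\Xi_{\mathbf{Ne}'} = L_{\beta n} \circ \Xi_{\mathbf{Ne}}$ will follow by uniqueness of the isomorphism $\mathbf{No} \to \mathbf{S}$ granted by Proposition~\ref{prop-group-substructure}.

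For $a \in \mathbf{Ne}$, the initial data $(\varphi_0, \varepsilon_0, \psi_0, \iota_0) = (0, 1, 0, 1)$ together with nestedness force $a = E_{\alpha_0} a_{; 1} \in \mathbf{Mo}_{\alpha_0}$. Writing $\alpha_0 = \omega^{\mu + 1}$, iterating the functional equation from \Cref{functional-equation} yields $L_{\alpha_0}(L_{\beta n}(a)) = a_{; 1} - n$, and the identity $L_{\gamma + \rho} = L_\rho \circ L_\gamma$ (valid for $\rho \lleq \gamma$), applied to $\gamma < \alpha_0$ in Cantor normal form, shows $L_{\beta n}(a) \in \mathbf{Mo}_{\alpha_0}$.

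The hypothesis on $(\psi_1, \iota_1)$ is what makes the subtraction of $n$ absorb cleanly at the $\varphi_1$-level. Indeed, the tail $\varepsilon_1 \mathe^{\psi_1}(E_{\alpha_1} a_{; 2})^{\iota_1}$ is strictly infinitesimal: when $\psi_1 = 0$ and $\iota_1 = -1$, the tail equals $\varepsilon_1 (E_{\alpha_1} a_{; 2})^{-1}$; when $\psi_1 < 0$, the admissibility condition $\tmop{supp} \psi_1 \succ \log E_{\alpha_1} a_{; 2}$ forces $\psi_1 + \iota_1 \log E_{\alpha_1} a_{; 2}$ to be purely large and negative. Consequently,
\[ a_{; 1} - n = (\varphi_1 - n) \pplus \varepsilon_1 \mathe^{\psi_1}(E_{\alpha_1} a_{; 2})^{\iota_1}, \]
so $L_{\beta n}(a) \in \mathbf{Ad}'$ with identical coordinates $(L_{\beta n}(a))_{; i} = a_{; i}$ at depths $i \geq 2$. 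Since every hyperserial expansion condition at depth $\geq 2$ is preserved and the new depth-$1$ expansion fits the $\Sigma'$-pattern, $L_{\beta n}(a) \in \mathbf{Ne}'$. A symmetric computation using $E_{\beta n}$ furnishes the reverse inclusion, so $L_{\beta n} : \mathbf{Ne} \to \mathbf{Ne}'$ is a bijection and, in particular, $\Sigma'$ is admissible and nested.

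Finally, for compatibility with $\Xi$, I observe that the generators $\Psi_{i, g}$ (with $i \in \mathbb{N}$, $g \in \mathcal{E}_{\alpha_i}$) of the function group $\mathcal{G}$ from \Cref{th-3} modify only coordinates $a_{; j}$ for $j \geq 1$, and these coordinates are unchanged for $j \geq 2$ or undergo the same shift by $-n$ for $j = 1$ under $L_{\beta n}$. Hence $L_{\beta n}$ conjugates $\mathcal{G}$ onto $\mathcal{G}'$ and transports the cut representation of $\Xi_{\mathbf{Ne}}$ recorded after \Cref{th-3} into that of $\Xi_{\mathbf{Ne}'}$; an induction on the $\sqsubseteq$-rank of $z \in \mathbf{No}$ then yields $\Xi_{\mathbf{Ne}'}(z) = L_{\beta n}(\Xi_{\mathbf{Ne}}(z))$. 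The main technical obstacle is making this transport rigorous for the left/right classes: one needs to verify that $L_{\beta n}$ sends each component $L_i^{[1]}, L_i^{[2]}, L_i^{[3]}$ (and the analogous $R_i^{[k]}$) of the cut defining $\mathbf{Ad}$ to the corresponding component defining $\mathbf{Ad}'$, using the fact that their definitions only depend on coordinates $j \geq 1$.
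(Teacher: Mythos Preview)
Your overall plan---show $L_{\beta n}$ is an order- and simplicity-preserving bijection $\mathbf{Ne}\to\mathbf{Ne}'$---matches the paper exactly, and your treatment of the bijection part is essentially the paper's: the hypothesis on $(\psi_1,\iota_1)$ makes the tail $\varepsilon_1\mathe^{\psi_1}(E_{\alpha_1}a_{;2})^{\iota_1}$ infinitesimal, so $a_{;1}-n=(\varphi_1-n)\pplus(\text{same tail})$ and the coordinates at depth $\geq 2$ are unchanged. The paper also checks the nestedness condition of Definition~\ref{nested-seq-def} for $\Sigma'$ level by level (you assert this rather than verify it, but it is routine).

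The real divergence is in the simplicity step, and here your argument has a gap. Your induction on the $\sqsubseteq$-rank of $z$ needs, at the inductive step, that $L_{\beta n}$ carries the \emph{simplest} element of a cut in $\mathbf{Ad}$ to the simplest element of the image cut in $\mathbf{Ad}'$; but that is precisely $\sqsubseteq$-preservation, so the induction does not close by itself. (Your conjugation claim is also shaky at level $i=0$: since $(\varphi_0,\varepsilon_0,\psi_0,\iota_0)=(0,1,0,1)$, one has $\Psi_{0,g}=g$ for both $\Sigma$ and $\Sigma'$, and $L_{\beta n}\,g\,E_{\beta n}$ is not in general equal to any $\Psi'_{0,h}$; Lemma~\ref{lem-reduction} lets you dodge this by passing to $i\geq 1$, but that still leaves the circularity above.)

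The paper breaks the circularity differently: it invokes the explicit cut equation for $L_\beta$ on $\mathbf{Mo}_{\alpha_0}$ from \cite[identity~(6.3)]{vdH:hypno},
\[
L_\beta\mathfrak{a}=\bigl\{\,L_\beta\mathfrak{a}_L^{\mathbf{Mo}_{\alpha_0}}\ \big|\ L_\beta\mathfrak{a}_R^{\mathbf{Mo}_{\alpha_0}},\ \mathfrak{a}\,\bigr\}_{\mathbf{Mo}_{\alpha_0}},
\]
so that $a\sqsubseteq b$ in $\mathbf{Mo}_{\alpha_0}$ implies $L_\beta a\sqsubseteq L_\beta b$ once one checks that the sole extra right option~$b$ never obstructs. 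This reduces everything (after inducting on $n$) to the single inequality $L_\beta b<a$ for all $a,b\in\mathbf{Ne}$, which follows immediately from the infinitesimal-tail observation: $L_\beta b=E_{\alpha_0}^{b_{;1}-1}$, $a=E_{\alpha_0}^{a_{;1}}$, and $b_{;1}-a_{;1}\prec 1$ gives $b_{;1}-1<a_{;1}$. That external cut equation is the missing ingredient your transport argument would need.
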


\begin{proof}
  Assume that $\psi_1 < 0$, or $\psi_1 = 0$ and $\iota_1 = - 1$. In
  particular, if $a$ is $\Sigma$-admissible, then $a_{; 1} - \varphi_1 \prec
  1$, so $a_{; 1} - \varphi_1 \prec \tmop{supp} \varphi_1'$. For $b \in
  \mathbf{No}^{>, \succ}$, it follows that $E_{\alpha_0} (b - n)$ is
  $\Sigma^{[n]}$-admissible if and only if $E_{\alpha_0} b$ is
  $\Sigma$-admissible. Let $\mathbf{Ad}_{\nearrow i}'$ be the class of
  $\Sigma'_{\nearrow i}$\mbox{-}admissible numbers, for each $i \in
  \mathbb{N}$. We have $L_{\beta n}  \mathbf{Ad} = \mathbf{Ad}'$ by the
  previous remarks, and $\Sigma'$ is admissible. For $i > 1$, we have
  $\Sigma'_{\nearrow i} = \Sigma_{\nearrow i}$, so
  \[ \mathbf{Ad}'_{\nearrow i} = \mathbf{Ad}_{\nearrow i} \supseteq \varphi'_i
     + \varepsilon'_i \mathe^{\psi'_i}  (E_{\alpha'_i}  \mathbf{Ad}'_{\nearrow
     i + 1})^{\iota'_i} . \]
  Moreover, $\mathbf{Ad}'_{\nearrow 1} = \mathbf{Ad}_{\nearrow 1} - n$, so
  \begin{eqnarray*}
    \mathbf{Ad}' & \supseteq & L_{\beta n}  \mathbf{Ad} \supseteq L_{\beta n}
    E_{\alpha_0}  \mathbf{Ad}_{\nearrow 1} = L_{\beta n} E_{\alpha_0} 
    (\mathbf{Ad}'_{\nearrow 1} + n) = E_{\alpha_0^{[n]}} 
    \mathbf{Ad}'_{\nearrow 1}\\
    \mathbf{Ad}'_{\nearrow 1} & \supseteq & \varphi_1 - n + \varepsilon_1
    \mathe^{\psi_1}  (E_{\alpha_1}  \mathbf{Ad}_{\nearrow 2})^{\iota_1} =
    \varphi'_1 + \varepsilon'_1 \mathe^{\psi'_1}  (E_{\alpha_1^{[n]}} 
    \mathbf{Ad}'_{\nearrow 2})^{\iota'_1} .
  \end{eqnarray*}
  So $\Sigma'$ is nested. We deduce that $L_{\beta n}  \mathbf{Ne} =
  \mathbf{Ne}'$, that is, we have a strictly increasing bijection $L_{\beta n}
  : \mathbf{Ne} \longrightarrow \mathbf{Ne}'$. It is enough to prove that for
  $a, b \in \mathbf{Ne}$ with $a \sqsubseteq b$, we have $L_{\beta n} a
  \sqsubseteq L_{\beta n} b$. Proceeding by induction on $n$, we may assume
  without loss of generality that $n = 1$. By~{\cite[identity
  (6.3)]{vdH:hypno}}, the function $L_{\beta}$ has the following equation on
  $\mathbf{Mo}_{\alpha_0}$:
  \[ \nobracket \forall \mathfrak{a} \in \mathbf{Mo}_{\alpha_0} \nobracket,
     \quad L_{\beta} \mathfrak{a}= \left\{ L_{\beta}
     \mathfrak{a}_L^{\mathbf{Mo}_{\alpha_0}} | L_{\beta}
     \mathfrak{a}_R^{\mathbf{Mo}_{\alpha_0}}, \mathfrak{a}
     \right\}_{\mathbf{Mo}_{\alpha_0}} . \]
  So it is enough to prove that $L_{\beta} b < a$. Note that $L_{\beta} b =
  E_{\alpha_0}^{b_{; 1} - 1}$ and $a = E_{\alpha_0}^{a_{; 1}}$ where $b_{; 1}
  - \varphi_1, a_{; 1} - \varphi_1 \prec 1$. So $b_{; 1} - a_{; 1} \prec 1$,
  whence $b_{; 1} - 1 < a_{; 1}$. This concludes the proof.
\end{proof}

\subsection{Pre-nested and nested numbers}

Let $a \in \mathbf{No}$ be a number. We say that~$a$ is {\tmem{pre-nested}} if
there exists an infinite path $P$ in $a$ without any bad index for $a$. In
that case, \Cref{lem-path-sequence} yields a coding sequence~$\Sigma_P$ which
is admissible due to the fact that $a \in (L|R)$ with the notations from
\Cref{section-nested-series}. By Theorem~\ref{th-eventually-nested}, we get a
smallest $k \in \mathbb{N}$ such that $(\Sigma_P)_{\nearrow k}$ is nested. If
$k = 0$, then we say that $a$ is {\tmem{nested}}. In that case,
Theorem~\ref{th-3} ensures that the class $\mathbf{Ne}$ of $\Sigma_P$-nested
numbers forms a surreal substructure, so $a$ can uniquely be written as $a =
\Xi_{\mathbf{Ne}} (c)$ for some surreal parameter $c \in \mathbf{No}$.

One may wonder whether it could happen that $k > 0$. In other words: do there
exist pre-nested numbers that are not nested? For this, let us now describe an
example of an admissible sequence $\Sigma^{\ast}$ such that the class
$\mathbf{Ne}_{\Sigma^{\ast}}$ of $\Sigma^{\ast}$-nested numbers contains
a~smallest element $b$. This number $b$ is pre-nested, but cannot be nested by
Theorem~\ref{th-3}. Note that our example is ``transserial'' in the sense that
it does not involve any hyperexponentials.

\begin{example}
  \label{ex-nested-min}Let $\Sigma = (\varphi_i, \varepsilon_i, 0, 1, 1)_{i
  \in \mathbb{N}}$ be a nested sequence with $\varepsilon_1 = - 1$. Let $a$ be
  the simplest $\Sigma$-nested number. We define a coding sequence
  $\Sigma^{\ast} = (\varphi_i^{\ast}, \varepsilon_i^{\ast}, 0, 1, 1)_{i \in
  \mathbb{N}}$ by
  \begin{eqnarray*}
    \varepsilon_0^{\ast} & \assign & - 1\\
    \varphi_0^{\ast} & \assign & \mathe^{\varphi_1 - \frac{1}{2} \mathe^{a_{;
    2}}}\\
    (\varphi_i^{\ast}, \varepsilon_i^{\ast}) & \assign & (\varphi_i,
    \varepsilon_i) \text{\qquad for all $\nobracket i > 0 \nobracket$.}
  \end{eqnarray*}
  Note that
  \[ a_{; 1} = \varphi_1 - \mathe^{a_{; 2}} = \varphi_1 \pplus \varepsilon_1
     \mathe^{a_{; 2}}, \]
  where $\mathe^{a_{; 2}}$ is an infinite monomial, so $b \assign
  \varphi_0^{\ast} - \mathe^{a_{; 1}}$ is $\Sigma^{\ast}$-nested. In
  particular, the sequence $\Sigma^{\ast}$ is admissible.
  
  Assume for contradiction that there is a $\Sigma^{\ast}$-nested number $c$
  with $c < b$. Since {$\varepsilon_0^{\ast} = \varepsilon_1^{\ast} = - 1$},
  we have $c_{; 2} < b_{; 2}$. Recall that $c_{; 2}$ and $b_{; 2}$ are purely
  large, so $\mathe^{c_{; 2}} \prec \mathe^{b_{; 2}} = \mathe^{a_{; 2}}$. In
  particular
  \[ \mathe^{c_{; 1}} = \mathe^{\varphi_1 - \mathe^{c_{; 2}}} \succcurlyeq
     \mathe^{\varphi_1 - \frac{1}{2} \mathe^{a_{; 2}}} = \varphi_0^{\ast}, \]
  which contradicts the assumption that $c$ is $\Sigma^{\ast}$-nested. We
  deduce that $b$ is the minimum of the class $\mathbf{Ne}_{\Sigma^{\ast}}$ of
  $\Sigma^{\ast}$-nested numbers. In view of Theorem~\ref{th-3}, the sequence
  $\Sigma^{\ast}$ cannot be nested.
\end{example}

The above examples shows that there exist admissible sequences that are not
nested. Let us now construct an admissible sequence $\Sigma^{\varnothing}$
such that the class $\mathbf{Ne}_{\Sigma^{\varnothing}}$ of
$\Sigma^{\varnothing}$-nested numbers is actually empty.

\begin{example}
  We use the same notations as in \Cref{ex-nested-min}. Set
  $(\varphi_0^{\varnothing}, \varepsilon_0^{\varnothing}) \assign {(\mathe^b,
  1)}$ and $(\varphi_i^{\varnothing}, \varepsilon_i^{\varnothing}) \assign
  (\varphi_{i - 1}^{\ast}, \varepsilon_{i - 1}^{\ast})$ for all $i > 0$. We
  claim that the coding sequence $\Sigma^{\varnothing} \assign
  {(\varphi_i^{\varnothing}, \varepsilon_i^{\varnothing}, 0, 1, 1)}_{i \in
  \mathbb{N}}$ is admissible. In order to see this, let {$\psi \assign 1 / 2
  \mathe^{b_{; 1}}$}. Then
  \[ \mathe^{\varphi_1^{\varnothing} \pplus \varepsilon_1 \psi} =
     \mathe^{\varphi_0^{\ast} \pplus \varepsilon_0^{\ast} \psi} \prec
     \mathe^{\varphi_0^{\ast} \pplus \varepsilon_0^{\ast} \mathe^{b_{; 1}}} =
     \mathe^b . \]
  Since $\varphi_1^{\varnothing} \pplus \varepsilon_1 \psi$ is
  $(\Sigma^{\varnothing})_{\nearrow 1}$-admissible (i.e.
  $\Sigma^{\ast}$-admissible), we deduce that $\mathe^b +
  \mathe^{\varphi_1^{\varnothing} \pplus \varepsilon_1 \psi}$ is
  $\Sigma^{\varnothing}$-admissible, whence $\Sigma^{\varnothing}$ is
  admissible. Assume for contradiction that
  $\mathbf{Ne}_{\Sigma^{\varnothing}}$ is non-empty, and let $\mathe^b \pplus
  \mathfrak{m} \in \mathbf{Ne}_{\Sigma^{\varnothing}}$. Then $\log
  \mathfrak{m}$ is $\Sigma^{\ast}$-nested, so $\log \mathfrak{m} \geqslant b$,
  whence $\mathfrak{m} \succcurlyeq \mathe^b$: a contradiction.
\end{example}

\section{Numbers as hyperseries}\label{section-numbers-as-hyperseries}

Traditional transseries in $x$ can be regarded as infinite expressions that
involve $x$, real constants, infinite summation, exponentiation and
logarithms. It is convenient to regard such expressions as infinite labeled
trees. In this section, we show that surreal numbers can be represented
similarly as infinite expressions in $\omega$ that also involve
hyperexponentials and hyperlogarithms. One technical difficulty is that the
most straightforward way to do this leads to ambiguities in the case of nested
numbers. These ambiguities can be resolved by associating a surreal number to
every infinite path in the tree. In view of the results from
\Cref{section-nested-series}, this will enable us to regard any surreal number
as a unique hyperseries in $\omega$.

\begin{remark}
  In the case of ordinary transseries, our notion of tree expansions below is
  slightly different from the notion of tree representations that was used
  in~{\cite{vdH:phd,Schm01}}. Nevertheless, both notions coincide modulo
  straightforward rewritings.
\end{remark}

\subsection{Introductory example}

Let us consider the monomial $\mathfrak{m}= \exp \left( 2 E_{\omega} \omega -
\sqrt{\omega} + L_{\omega + 1} \omega \right)$ from Example~\ref{example-st}.
We may recursively expand $\mathfrak{m}$ as
\[ \mathfrak{m}= \mathe^{2 E_{\omega^{\small{2}}}^{L_{\omega^{\small{2}}}
   \omega + 1} - E_1^{\frac{1}{2} L_1 \omega}}  (L_{\omega} \omega) . \]
In order to formalize the general recursive expansion process, it is more
convenient to work with the unsimplified version of this expression
\[ \mathfrak{m}= \mathe^{2 \cdot \mathe^0 \cdot \left(
   E_{\omega^{\small{2}}}^{1 \cdot \mathe^0 \cdot \left(
   L_{\omega^{\small{2}}} \omega \right)^1 + 1 \cdot 1} \right)^1 + (- 1)
   \cdot \mathe^{_0} \cdot \left( E_1^{1 / 2 \cdot \mathe^0 \cdot (L_1
   \omega)^1} \right)^1}  (L_{\omega} \omega)^1 . \]
Introducing $\wp_c : x \longmapsto x^c$ as a notation for the ``power''
operator, the above expression may naturally be rewritten as a tree:
\[ 
   \raisebox{-0.5\height}{\includegraphics[width=8.74298176570904cm,height=8.83497310770038cm]{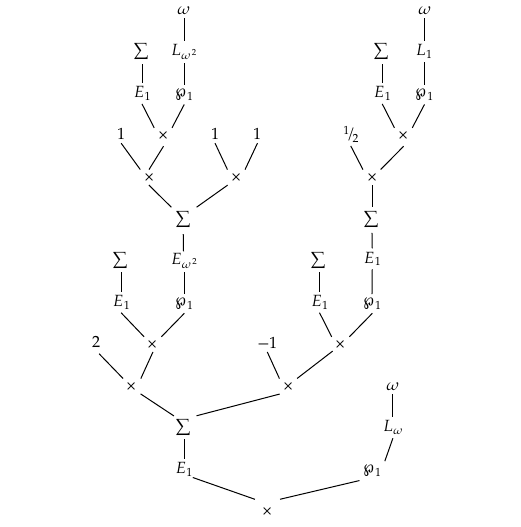}}
\]
In the next subsection, we will describe a general procedure to expand surreal
monomials and numbers as trees.

\subsection{Tree expansions}\label{tree-expansion-sec}

In what follows, a {\tmem{tree}} $T$ is a set of nodes $N_T$ together with a
function that associates to each node $\nu \in N_T$ an {\tmem{arity}}
$\ell_{\nu} \in \mathbf{On}$ and a sequence $(\nu [\alpha])_{\alpha <
\ell_{\nu}} \in N_T^{\ell_{\nu}}$ of {\tmem{children}}; we write $C_{\nu}
\assign \{ \nu [\alpha] \suchthat \alpha < \ell_{\nu} \}$ for the set of
children of $\nu$. Moreover, we assume that $N_T$ contains a~special element
$\rho_T$, called the {\tmem{root}} of $T$, such that for any $\nu \in N_T$
there exist a~unique~$h$ (called the {\tmem{height}} of $\nu$ and also denoted
by $h_{\nu}$) and unique nodes {$\nu_0, \ldots, \nu_h$} with $\nu_0 = \rho_T$,
$\nu_h = \nu$, and $\nu_i \in C_{\nu_{i - 1}}$ for~{$i = 1, \ldots, h$}. The
height $h_T$ of the tree $T$ is the maximum of the heights of all nodes; we
set $h_T \assign \omega$ if there exist nodes of arbitrarily large heights.

Given a class $\tmmathbf{\Sigma}$, an {\tmem{$\tmmathbf{\Sigma}$-labeled
tree}} is a tree together with a map $\lambda : N_T \longrightarrow
\tmmathbf{\Sigma}; \nu \longmapsto \lambda_{\nu}$, called the
{\tmem{labeling}}. Our final objective is to express numbers using
$\tmmathbf{\Sigma}$-labeled trees, where
\begin{eqnarray*}
  \tmmathbf{\Sigma} & \assign & \mathbb{R}^{\neq} \cup \left\{ \omega, \sum,
  \times, \wp_{- 1}, \wp_1 \right\} \cup L_{\omega^{\mathbf{On}}} \cup
  E_{\omega^{\mathbf{On}}} .
\end{eqnarray*}
Instead of computing such expressions in a top-down manner (from the leaves
until the root), we will compute them in a bottom-up fashion (from the root
until the leaves). For this purpose, it is convenient to introduce a separate
formal symbol $\open_c$ for every $c \in \mathbf{On}$, together with the
extended signature
\begin{eqnarray*}
  \tmmathbf{\Sigma}^{\sharp} & \assign & \tmmathbf{\Sigma} \cup \left\{
  \open_c \suchthat c \in \mathbf{No} \right\} .
\end{eqnarray*}
We use $\open_c$ as a placeholder for a tree expression for~$c$ whose
determination is postponed to a later stage.

Consider a $\tmmathbf{\Sigma}^{\sharp}$-labeled tree $T$ and a map $v : N_T
\longrightarrow \mathbf{On}$. We say that $v$ is an {\tmem{evaluation}} of~$T$
if for each node $\nu \in N_T$ one of the following statements holds:
\begin{description}
  \item[E1] \label{E1-rule}$\lambda_{\nu} \in \mathbb{R}^{\neq} \cup \{ \omega
  \}$, $\ell_{\nu} = 0$, and $v (\nu) = \lambda_{\nu}$;
  
  \item[E2] $\lambda_{\nu} = \op{\sum}$, the family $(v (\nu
  [\alpha]))_{\alpha < \ell_{\nu}}$ is well based and $v (\nu) = \sum_{\alpha
  < \ell_v} v (\nu [\alpha])$;
  
  \item[E3] $\lambda_{\nu} = \op{\times}$, $\ell_{\nu} = 2$, and $v (\nu) = v
  (\nu [0]) v (\nu [1])$;
  
  \item[E4] $\lambda_{\nu} = \wp_{\iota}$, $\iota \in \{ - 1, 1 \}$,
  $\ell_{\nu} = 1$, and $v (\nu) = v (\nu [0])^{\iota}$;
  
  \item[E5] $\lambda_{\nu} = L_{\omega^{\mu}}$, $\ell_{\nu} = 1$, and $v (\nu)
  = L_{\omega^{\mu}} v (\nu [0])$;
  
  \item[E6] $\lambda_{\nu} = E_{\omega^{\mu}}$, $\ell_{\nu} = 1$, and $v (\nu)
  = E_{\omega^{\mu}} v (\nu [0])$;
  
  \item[E7] \label{E7-rule}$\lambda_{\nu} = \open_{\alpha}$, $\ell_{\nu} = 0$,
  and $v (\nu) = \alpha$.
\end{description}
We call $v (\rho_T)$ the {\tmem{value}} of $T$ via $v$. We say that $a \in
\mathbf{No}$ is {\tmem{a value}} of $T$ if there exists an evaluation of $T$
with $a = v (\rho_T)$.

\begin{lemma}
  \label{eval-uniqueness-lem}{\tmdummy}
  
  \begin{enumeratealpha}
    \item \label{finite-height-case}If $T$ has finite height, then there
    exists at most one evaluation of $T$.
    
    \item \label{v-vprime-case}Let $v$ and $v'$ be evaluations of $T$ with $v
    (\rho_T) = v' (\rho_T)$. Then $v = v'$.
  \end{enumeratealpha}
\end{lemma}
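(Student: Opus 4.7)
The plan for part (a) is to proceed by well-founded induction on the strict descendant relation on $N_T$, which is well-founded precisely because $T$ has finite height. At a leaf $\nu$, only rules E1 and E7 can apply, and they force $v(\nu) = v'(\nu)$ directly from the label $\lambda_\nu$ (the value being $\lambda_\nu$ itself when $\lambda_\nu \in \mathbb{R}^{\neq} \cup \{\omega\}$, and $\alpha$ when $\lambda_\nu = \open_\alpha$). At an internal node, exactly one of the rules E2--E6 applies, and it expresses $v(\nu)$ as an explicit function of $(v(\nu[\beta]))_{\beta < \ell_\nu}$; the induction hypothesis supplies agreement of these children values, whence $v(\nu) = v'(\nu)$.

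For part (b), the child relation on $N_T$ is no longer well-founded when $T$ has infinite height, so a bottom-up induction is unavailable. The plan is instead to proceed by induction on the depth $d \in \mathbb{N}$ of nodes in $T$, noting that every node has a well-defined finite depth even when $T$ is of infinite height. The base case $d = 0$ is exactly the hypothesis $v(\rho_T) = v'(\rho_T)$. For the inductive step, I would assume $v(\mu) = v'(\mu)$ at a node $\mu$ of depth $d$ and aim to conclude $v(\mu[\alpha]) = v'(\mu[\alpha])$ for every child $\mu[\alpha]$. The cases E1 and E7 are vacuous, and the cases E4, E5, E6 are immediate from the injectivity of $\wp_{\pm 1}$ and of the bijections $L_{\omega^\rho}, E_{\omega^\rho} \colon \mathbf{No}^{>, \succ} \to \mathbf{No}^{>, \succ}$ recalled in Section~\ref{subsection-surreal-numbers}.

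The main obstacle lies in the binary cases E2 ($\sum$) and E3 ($\times$), where agreement at the parent does not by itself pin down the children's values in a generic labeled tree. To handle these, I would exploit the structural features that make the family $(v(\mu[\alpha]))_\alpha$ well-based in the sense of Section~\ref{subsection-well-based-series}: concretely, the uniqueness of the decomposition of a well-based series in $\mathbb{R}[[\mathbf{Mo}]]$ into its terms for $\sum$-nodes, and the cancellativity of multiplication in $\mathbf{No}$ for $\times$-nodes. Combined with the fact that the arity of each node and the ordering of its children by the ordinals $\alpha < \ell_\mu$ are fixed by the tree structure, these uniqueness properties should propagate agreement from $v(\mu) = v'(\mu)$ to agreement on each individual child. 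The technical heart of the argument, and the point where I expect the most care to be required, is the verification that the ordering of children at a $\sum$-node tracks the monomial ordering of the corresponding terms, so that matching the sums indeed forces matching of each term; this rigidity is already present in the trees produced by the expansion procedure of Section~\ref{tree-expansion-sec}, and once it is in place the top-down induction closes and establishes $v = v'$ on all of $N_T$.
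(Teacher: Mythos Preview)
Your approach for part~(\ref{finite-height-case}) is correct and matches the paper's one-line proof: bottom-up induction on the (well-founded) descendant relation, with leaf values forced by E1/E7 and internal values computed from the children via E2--E6.

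For part~(\ref{v-vprime-case}) you take the same top-down direction as the paper, and you are right to isolate the $\sum$ and $\times$ cases as the real obstacle. In fact the difficulty you flag is genuine: as stated for arbitrary $\tmmathbf{\Sigma}^{\sharp}$-labeled trees, part~(\ref{v-vprime-case}) fails. Take $\rho_T$ labeled $\sum$ with two children $\mu_0,\mu_1$, each the root of an infinite unary chain of $\sum$-nodes; for any constants with $c_0+c_1=0$, the assignment $v\equiv c_i$ along the $i$-th chain is a valid evaluation with $v(\rho_T)=0$, yet different choices of $(c_0,c_1)$ give distinct evaluations. The paper's terse proof does not address this. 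What you propose---appealing to the structural rigidity of the trees actually produced in Section~\ref{tree-expansion-sec}, where a $\sum$-node's children are the terms of its value in decreasing monomial order, the outer $\times$ splits a term into a real leaf and a monomial, and the inner $\times$ splits a monomial via the unique hyperserial expansion of Lemma~\ref{lem-st-expansion-unicity}---is precisely the extra input needed to make the top-down step go through. In practice the lemma is only applied to such trees, so your instinct to lean on that structure is the right one.

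One small correction: cancellativity of multiplication alone does not handle E3, since neither factor is known in advance. What works in the trees of interest is that one child of each $\times$-node is determined independently (a real-constant leaf, or via uniqueness of the hyperserial expansion); cancellativity then yields the other.
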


\begin{proof}
  This is straightforward, by applying the rules
  \hyperref[E1-rule]{$\mathbf{E1}$}--\hyperref[E7-rule]{$\mathbf{E7}$} recursively (from the
  leaves to the root in the case of ({\tmem{\ref{finite-height-case}}}) and
  the other way around for ({\tmem{\ref{v-vprime-case}}})).
\end{proof}

Although evaluations with a given end-value are unique for a fixed tree $T$,
different trees may produce the same value. Our next aim is to describe a
standard way to expand numbers using trees. Let us first consider the case of
a monomial $\mathfrak{m} \in \mathbf{Mo}$. If $\mathfrak{m}= 1$, then the
{\tmem{standard monomial expansion}} of $\mathfrak{m}$ is the
$\tmmathbf{\Sigma}^{\sharp}$-labeled tree $T$ with $N_T = \{ \rho_T \}$ and
$\lambda_{\rho_T} = 1$. Otherwise, we may write $\mathfrak{m}= \mathe^{\psi} 
(L_{\beta} g)^{\iota}$ with $g = \omega$ or $g = E_{\alpha}^u$. Depending on
whether {$g = \omega$} or $g = E_{\alpha}^u$, we respectively take
\[ T \assign
   \raisebox{-0.5\height}{\includegraphics[width=2.99170274170274cm,height=3.37154007608553cm]{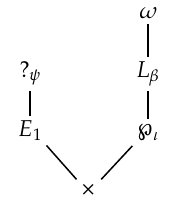}}
   \qquad \infixor \qquad T \assign
   \raisebox{-0.5\height}{\includegraphics[width=2.99170274170274cm,height=4.46225239407058cm]{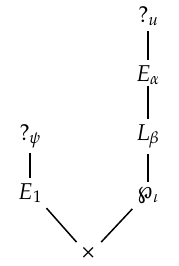}}
\]
and call $T$ the {\tmem{standard monomial expansion}} of $\mathfrak{m}$. Let
us next consider a general number {$a \in \mathbf{No}$} and let $\ell \in
\mathbf{On}$ be the ordinal size of its support. Then we may write $a =
\sum_{\alpha < \ell} c_{\alpha} \mathfrak{m}_{\alpha}$ for a~sequence
$(c_{\alpha})_{\alpha < \ell} \in (\mathbb{R}^{\neq})^{\ell}$ and
a~$\prec$\mbox{-}decreasing sequence $(\mathfrak{m}_{\alpha})_{\alpha < \ell}
\in \mathbf{Mo}^{\ell}$. For each {$\alpha < \ell$}, let~$T_{\alpha}$ be the
standard monomial expansion of $\mathfrak{m}_{\alpha}$. Then we define the
$\tmmathbf{\Sigma}^{\sharp}$-labeled~tree
\[ T \assign
   \raisebox{-0.5\height}{\includegraphics[width=6.7597730552276cm,height=2.77653154925882cm]{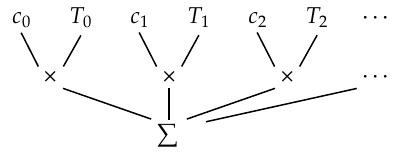}}
\]
and call it the {\tmem{standard expansion}} of $a$. Note that the height of
$T$ is at most $6$, there exists a unique evaluation $v : N_T \longrightarrow
\mathbf{No}$ of $T$, and $v (\rho_T) = a$.

Now consider two trees $T$ and~$T'$ with respective labelings \ $\lambda : N_T
\longrightarrow \tmmathbf{\Sigma}^{\sharp}$ and {$\lambda' : N_{T'}
\longrightarrow \tmmathbf{\Sigma}^{\sharp}$}. We say that $T'$
{\tmem{refines}} $T$ if $N_{T'} \supseteq N_T$ and there exist evaluations {$v
: N_T \longrightarrow \mathbf{No}$} and {$v' : N_{T'} \longrightarrow
\mathbf{No}$} such that $v (\nu) = v' (\nu)$ for all $\nu \in N_T$ and
$\lambda_{\nu} = \lambda'_{\nu}$ whenever $\lambda_{\nu} \nin
\open_{\mathbf{No}}$. Now assume that $v (\rho_T) = a$ for some evaluation $v
: N_T \longrightarrow \mathbf{No}$. Then we say that $T$ is a {\tmem{tree
expansion}} of $a$ if for every $\nu \in N_T$ with $\lambda_{\nu} =
\op{\sum}$, the subtree $T'$ of $T$ with root $\nu$ refines the standard
expansion of $v (\nu)$. In particular, a tree expansion $T$ of a number $a \in
\mathbf{No}$ with $\lambda_{\rho_T} \nin \open_{\mathbf{No}}$ always refines
the standard expansion of $a$.

\begin{lemma}
  \label{unique-tree-expansion-lem}Any $a \in \mathbf{No}$ has a unique tree
  expansion with labels in $\tmmathbf{\Sigma}$.
\end{lemma}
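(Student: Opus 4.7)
The plan is to construct $T$ by transfinite refinement of the standard expansion of $a$, replacing each $\open_c$-placeholder leaf by the standard expansion of $c$, and then to show uniqueness by observing that any candidate tree expansion must arise from the same forced procedure.

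\textbf{Existence.} We define a nested sequence $T_0 \subseteq T_1 \subseteq \cdots$ of $\tmmathbf{\Sigma}^{\sharp}$-labelled trees together with evaluations $v_n : N_{T_n} \to \mathbf{No}$ such that $v_n(\rho_{T_n}) = a$. Let $T_0$ be the standard expansion of $a$, which has finite height and admits a unique evaluation $v_0$ by Lemma~\ref{eval-uniqueness-lem}(\ref{finite-height-case}). Given $(T_n, v_n)$, form $T_{n+1}$ by simultaneously replacing every leaf $\nu$ of $T_n$ with $\lambda_\nu = \open_c$ (where necessarily $c = v_n(\nu)$) by the standard expansion of $c$ inserted at $\nu$, and extend $v_n$ to an evaluation $v_{n+1}$ of $T_{n+1}$ in the obvious way. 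Since every standard expansion has height bounded by a small absolute constant, each $\open$-labelled node at stage $n$ is re-labelled in $\tmmathbf{\Sigma}$ at stage $n+1$. Set $T \assign \bigcup_{n} T_n$ with the induced labels and evaluation $v \assign \bigcup_n v_n$. Any node of $T$ appears at some finite stage $n$ and is labelled in $\tmmathbf{\Sigma}$ by stage $n+1$ at the latest, so all labels of $T$ lie in $\tmmathbf{\Sigma}$. Moreover $v(\rho_T) = a$, and at every $\sum$-node $\nu$ of $T$, the subtree at $\nu$ was inserted as the standard expansion of $v(\nu)$ at some stage, and subsequent stages only refine its $\open$-leaves; so that subtree refines the standard expansion of $v(\nu)$. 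Thus $T$ is a tree expansion of $a$ with labels in $\tmmathbf{\Sigma}$.

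\textbf{Uniqueness.} Let $T'$ be any tree expansion of $a$ with labels in $\tmmathbf{\Sigma}$. The remark following the definition of tree expansion shows that $T'$ refines the standard expansion $T_0$ of $a$, so $N_{T_0} \subseteq N_{T'}$ with matching labels at the non-$\open$ nodes of $T_0$. At each $\open_c$-leaf $\nu$ of $T_0$, the subtree $S'_\nu$ of $T'$ rooted at $\nu$ has labels in $\tmmathbf{\Sigma}$, evaluates to $c$ (by Lemma~\ref{eval-uniqueness-lem}(\ref{v-vprime-case})), and inherits the tree-expansion property at its $\sum$-nodes, so it is itself a tree expansion of $c$ and therefore refines the standard expansion of $c$. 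Proceeding by induction on $n$, the portion of $T'$ consisting of nodes at heights covered by $T_n$ coincides with $T_n$, labels and all. Since every node of a tree has finite height, the union of these agreements over $n$ gives $T' = T$.

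\textbf{Main obstacle.} The delicate point is managing the potentially infinite branches of $T$ arising from nested hyperserial expansions (cf.~Section~\ref{section-nested-series}): one must argue that the inductive identification of $S'_\nu$ with the corresponding subtree of $T$ propagates all the way down, even though the depth of $T$ is unbounded. This reduces to the finite-height property of each individual node, combined with the uniqueness of hyperserial expansions (\Cref{lem-st-expansion-unicity}), which makes the standard expansion at every node well-defined and the forced refinement truly unique.
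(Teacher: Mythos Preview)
Your approach is essentially the same as the paper's: both build the tree expansion as the union of a sequence of finite-height $\tmmathbf{\Sigma}^{\sharp}$-labelled approximations obtained by iteratively replacing $\open$-placeholder leaves with standard expansions, and both prove uniqueness by showing inductively that any $\tmmathbf{\Sigma}$-labelled tree expansion refines every finite stage (the paper packages this as condition \textbf{S4}) and hence coincides with the union. The only cosmetic differences are that the paper starts from the single-node tree $\open_a$ rather than the full standard expansion of $a$, and at each stage replaces only the $\open$-nodes at a fixed level rather than all of them simultaneously; neither change affects the argument.
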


\begin{proof}
  Given $n \in \mathbb{N}$, we say that an
  $\tmmathbf{\Sigma}^{\sharp}$-labeled tree $T$ is {\tmem{$n$-settled}} if
  $\lambda_{\nu} \nin \open_{\mathbf{No}}$ for all nodes $\nu \in N_T$ of
  height $< n$. Let us show how to construct a sequence $(T_n)_{n \in
  \mathbb{N}}$ of $\tmmathbf{\Sigma}^{\sharp}$-labeled tree expansions of $a$
  such that the following statements hold for each $n \in \mathbb{N}$:
  \begin{description}
    \item[S1] \label{S1-cond}$T_n$ is an $n$-settled and of finite height;
    
    \item[S2] \label{S2-cond}$v_n (\rho_{T_n}) = a$ for some (necessarily
    unique) evaluation {$v_n : N_{T_n} \longrightarrow \mathbf{No}$} of $T_n$;
    
    \item[S3] \label{S3-cond}If $n > 0$, then $T_n$ refines $T_{n - 1}$;
    
    \item[S4] \label{S4-cond}If $T$ is a tree expansion of $a$ with labels in
    $\tmmathbf{\Sigma}$, then $T$ refines $T_n$.
  \end{description}
  We will write $\lambda_n : N_{T_n} \longrightarrow
  \tmmathbf{\Sigma}^{\sharp}$ for the labeling of $T_n$.
  
  We take $T_0$ such that $N_{T_0} = \{ \rho_{T_0} \}$ and
  $\lambda_{\rho_{T_0}} = \open_a$. Setting $v_0 (\rho_{T_0}) \assign a$, the
  conditions~\hyperref[S1-cond]{$\mathbf{S1}$}--\hyperref[S4-cond]{$\mathbf{S4}$} are naturally
  satisfied.
  
  Assume now that $T_n$ has been constructed and let us show how to construct
  $T_{n + 1}$. Let~$S$ be the subset of $N_{T_n}$ of nodes $\nu$ of level $n$
  with $v_n (\nu) \in \open_{\mathbf{No}}$. Given $\nu \in S$, let $T_{\nu}$
  be the standard expansion of $v_n (\nu)$ and let $v_{\nu}$ be the unique
  evaluation of $T_{\nu}$. We define $T_{n + 1}$ to be the tree that is
  obtained from $T_n$ when replacing each node $\nu \in S$ by the
  tree~$T_{\nu}$.
  
  Since each tree $T_{\nu}$ is of height at most $6$, the height of~$T_{n +
  1}$ is finite. Since $T_{n + 1}$ is clearly $(n + 1)$-settled, this proves
  \hyperref[S1-cond]{$\mathbf{S1}$}. We define an evaluation $v_{n + 1} : N_{T_{n +
  1}} \longrightarrow \tmmathbf{\Sigma}^{\sharp}$ by setting $v_{n + 1}
  (\sigma) = {\nobreak} v_n (\sigma)$ for any {$\sigma \in N_{T_n}$} and $v_{n
  + 1} (\sigma) = v_{\nu} (\sigma)$ for any~$\nu \in S$ and~$\sigma \in
  N_{T_{\nu}}$ (note that $v_{n + 1}$ is well defined since $v_{\nu}
  (\rho_{T_{\nu}}) = (\lambda_n)_{\nu} = {\nobreak} v_n (\nu)$ for all~{$\nu
  \in S$}). We have $v_{n + 1} (\rho_{T_{n + 1}}) = v_n (\rho_{T_n}) = a$, so
 \hyperref[S2-cond]{$\mathbf{S2}$} holds for $v_{n + 1}$. By construction, $N_{T_{n
  + 1}} \supseteq N_{T_n}$ and the evaluations $v_n$ and~$v_{n + 1}$ coincide
  on $N_{T_n}$; this proves \hyperref[S3-cond]{$\mathbf{S3}$}. Finally, let $T$ be a
  tree expansion of $a$ with labels in~$\tmmathbf{\Sigma}$ and let $v$ be the
  unique evaluation of $T$ with $v (\rho_T) = a$. Then $T$ refines~$T_n$, so
  $v$ coincides with $v_n$ on $N_{T_n}$. Let $\nu \in S$. Since $T$ is a tree
  expansion of $a$, the subtree~$T'$ of $T$ with root $\nu$ refines $T_{\nu}$,
  whence $N_T \supseteq N_{T_{\nu}}$. Moreover, \ $v (\nu) = v_{n + 1} (\nu)$,
  so $v$ coincides with $v_{\nu}$ on $T_{\nu}$. Altogether, this shows that
  $T$ refines $T_{n + 1}$.
  
  Having completed the construction of our sequence, we next define a
  $\tmmathbf{\Sigma}$-labeled tree~$T_{\infty}$ and a map $v_{\infty} :
  N_{T_{\infty}} \longrightarrow \mathbf{No}$ by taking $N_{T_{\infty}} =
  \bigcup_{n \in \mathbb{N}} N_{T_n}$ and by setting $(\lambda_{\infty})_{\nu}
  \assign (\lambda_n)_{\nu}$ and $v_{\infty} (\nu) = v_n (\nu)$ for any $n \in
  \mathbb{N}$ and $\nu \in N_{T_n}$ such that $(\lambda_n)_{\nu} \nin
  \open_{\mathbf{No}}$. By construction, we have $v_{\infty}
  (\rho_{T_{\infty}}) = a$ and $T_{\infty}$~refines~$T_n$ for every $n \in
  \mathbb{N}$.
  
  We claim that $T_{\infty}$ is a tree expansion of $a$. Indeed, consider a
  node $\nu \in N_{T_{\infty}}$ of height $n$ with $\lambda_{\nu} =
  \op{\sum}$. Then $\nu \in N_{T_{n + 1}}$ and {$(\lambda_{n + 1})_{\nu} =
  \op{\sum}$}, since $T_{n + 1}$ is $(n + 1)$-settled. Consequently, the
  subtree of $T_{n + 1}$ with root $\nu$ refines the standard expansion of
  $v_{n + 1} (\nu)$. Since~$T_{\infty}$ refines~$T_{n + 1}$, it follows that
  the subtree of $T_{\infty}$ with root $\nu$ also refines the standard
  expansion of $v_{\infty} (\nu) = v_{n + 1} (\nu)$. This completes the proof
  of our claim.
  
  It remains to show that $T_{\infty}$ is the unique tree expansion of $a$
  with labels in $\tmmathbf{\Sigma}$. So let~$T$ be any tree expansion of $a$
  with labeling $\lambda : N_T \longrightarrow \tmmathbf{\Sigma}$. For every
  $n \in \mathbb{N}$, it follows from \hyperref[S4-cond]{$\mathbf{S4}$} that $N_T
  \supseteq N_{T_n}$. Moreover, since $T_n$ is $n$-settled, $\lambda$
  coincides with both $\lambda_n$ and $\lambda_{\infty}$ on those nodes in
  $N_{T_n}$ that are of height $< n$. Consequently, $N_T \supseteq
  N_{T_{\infty}}$ and $\lambda$ coincides with $\lambda_{\infty}$
  on~$N_{T_{\infty}}$. Since every node in $N_T$ has finite height, we
  conclude that $T = T_{\infty}$.
\end{proof}

\subsection{Hyperserial descriptions}

From now on, we only consider tree expansions with labels in
$\tmmathbf{\Sigma}$, as in \Cref{unique-tree-expansion-lem}. Given a class
$\mathbf{Ne}$ of nested numbers as in~\Cref{section-nested-series}, it can be
verified that every element in $\mathbf{Ne}$ has the same tree expansion. We
still need a notational way to distinguish numbers with the same expansion.

Let $a \in \mathbf{No}$ be a pre-nested number. By
Theorem~\ref{th-eventually-nested}, we get a smallest $k \in \mathbb{N}$ such
that $(\Sigma_P)_{\nearrow k}$ is nested. Hence $a_{P, k} \in \mathbf{Ne}$ for
the class $\mathbf{Ne}$ of $(\Sigma_P)_{\nearrow k}$-nested numbers.
\Cref{th-3} implies that there exists a unique number $c$ with $a_{P, k} =
\Xi_{\mathbf{Ne}} (\sigma_{; k} c)$. We call $c$ the {\tmem{nested rank}}
of~$a$ and write~$\xi_a \assign c$. By \Cref{cor-nested-k}, we note that
$\xi_{u_{P, i}} = \sigma_{; i} \xi_a$ for all $i \in \mathbb{N}$. Given an
arbitrary infinite path $P$ in a number $a \in \mathbf{No}$, there exists a $k
> 0$ such that $P_{\nearrow k}$ has no bad indices for $a_{P, k}$ (modulo a
further increase of $k$, we may even assume $a_{P, k}$ to be nested). Let
$\sigma_{P, k} \assign \tmop{sign} (r_{P, 0} \cdots r_{P, k - 1}) \iota_{P, 0}
\cdots \iota_{P, k - 1} \in \{ - 1, 1 \}$. We call $\xi_P \assign \sigma_{P,
k} \xi_{u_{P, k}}$ the {\tmem{nested rank}} of $P$, where we note that the
value of $\sigma_{P, k} \xi_{u_{P, k}}$ does not depend on the choice~of~$k$.

Let $T$ be the tree expansion of a number $a \in \mathbf{No}$ and let $v : N_T
\longrightarrow \mathbf{No}$ be the evaluation with $a = v (\rho_T)$. An
{\tmem{infinite path}} in $T$ is a sequence {$\nu_0, \nu_1, \ldots$} of nodes
in $N_T$ with $\nu_0 = \rho_T$ and $\nu_{i + 1} \in C_{\nu_i}$ for all $i \in
\mathbb{N}$. Such a path induces an infinite path $P$ in $a$: let $i_1 < i_2 <
\cdots$ be the indices with $\lambda_{\nu_{i_k}} = \op{\sum}$; then we take
$\tau_{P, k} = v (\nu_{i_k + 1})$ for each $k \in \mathbb{N}$. It is easily
verified that this induces a one-to-one correspondence between the infinite
paths in $T$ and the infinite paths in $a$. We call $\xi_{\nu} \assign \xi_P$
the {\tmem{nested rank}} of the infinite path $\nu = (\nu_n)_{n \in
\mathbb{N}}$ in $T$. Denoting by~$I_T$ the set of all infinite paths in $T$,
we thus have a map $\xi : I_T \longrightarrow \mathbf{No} ; \nu \longmapsto
\xi_{\nu}$. We call~$(T, \xi)$ the {\tmem{hyperserial description}} of $a$.

We are now in a position to prove the final theorem of this paper.

\begin{proof*}{Proof of \Cref{th-hyperserial-representation}}
  Consider two numbers $a, a' \in \mathbf{No}$ with the same hyperserial
  description {$(T, \xi)$} and let $v, v' : N_T \longrightarrow \mathbf{No}$
  be the evaluations of $T$ with $v (\rho_T) = a$ and {$v' (\rho_T) = a'$}. We
  need to prove that $a = a'$. Assume for contradiction that $a \neq a'$. We
  define an infinite path $\nu_0, \nu_1, \ldots$ in $T$ with $v (\nu_n) \neq
  v' (\nu_n)$ for all $n$ by setting $\nu_0 \assign \rho_T$ and {$\nu_{n + 1}
  \assign \nu_n [m]$}, where $m \in \mathbb{N}$ is minimal such that $v (\nu_n
  [m]) \neq v' (\nu_n [m])$. (Note that such a number $m$ indeed exists, since
  otherwise $v (\nu_n) = v' (\nu_n)$ using the rules
  \hyperref[E1-rule]{$\mathbf{E1}$}--\hyperref[E7-rule]{$\mathbf{E7}$}.) This infinite path
  also induces infinite paths $P$ and $P'$ in $a$ and $a'$ with $a_{P, n} = v
  (\nu_{i_n})$ and $a_{P', n} = v' (\nu_{i_n})$ for a certain sequence $i_1 <
  i_2 < \cdots$ and all $n \in \mathbb{N}$. Let $n > 0$ be such that
  $P_{\nearrow n}$ and $P'_{\nearrow n}$ have no bad indices for $a_{P, n}$
  and $a_{P', n}$. The way we chose $\nu_0, \nu_1, \ldots$ ensures that the
  coding sequences associated to the paths $P_{\nearrow n}$ and $P'_{\nearrow
  n}$ coincide, so they induce the same nested surreal
  substructure~$\mathbf{Ne}$. It follows that $v (\nu_{i_n}) = a_{P, n} =
  \Xi_{\mathbf{Ne}} (\sigma_{; n} \xi_{\nu}) = a_{P', n} = v' (\nu_{i_n})$,
  which contradicts our assumptions. We conclude that $a$ and $a'$ must be
  equal.
\end{proof*}

\printindex

\section*{Glossary}

\begin{theglossary}{gly}
  \glossaryentry{$\mathbb{R} [[\mathfrak{M}]]$}{field of well-based series
  with real coefficients over $\mathfrak{M}$}{\pageref{autolab1}}
  
  \glossaryentry{$\tmop{supp} f$}{support of a series}{\pageref{autolab2}}
  
  \glossaryentry{$\tmop{term} f$}{set of terms of a
  series}{\pageref{autolab3}}
  
  \glossaryentry{$\mathfrak{d}_f$}{$\max \tmop{supp} f$}{\pageref{autolab4}}
  
  \glossaryentry{$f_{\succ \mathfrak{m}}$}{truncation $\sum_{\mathfrak{n}
  \succ \mathfrak{m}} f_{\mathfrak{n}} \mathfrak{n}$ of
  $f$}{\pageref{autolab5}}
  
  \glossaryentry{$f_{\succ}$}{$f_{\succ 1}$}{\pageref{autolab6}}
  
  \glossaryentry{$h = f \pplus g$}{$h = f + g$ and $\tmop{supp} f \succ
  g$}{\pageref{autolab7}}
  
  \glossaryentry{$f \trianglelefteqslant g$}{$\tmop{supp} f \succ g -
  f$}{\pageref{autolab8}}
  
  \glossaryentry{$f \prec g$}{$\mathbb{R}^{>}  | f | < | g
  |$}{\pageref{autolab9}}
  
  \glossaryentry{$f \preccurlyeq g$}{$\exists r \in \mathbb{R}^{>}, | f | < r
  | g |$}{\pageref{autolab10}}
  
  \glossaryentry{$f \asymp g$}{$f \preccurlyeq g$ and $g \preccurlyeq
  f$}{\pageref{autolab11}}
  
  \glossaryentry{$\mathbb{S}_{\succ}$}{series $f \in \mathbb{S}$ with
  $\tmop{supp} f \succ 1$}{\pageref{autolab12}}
  
  \glossaryentry{$\mathbb{S}^{\prec}$}{series $f \in \mathbb{S}$ with $f \prec
  1$}{\pageref{autolab13}}
  
  \glossaryentry{$\mathbb{S}^{>, \succ}$}{series $f \in \mathbb{S}$ with $f
  \geqslant 0$ and $f \succ 1$}{\pageref{autolab14}}
  
  \glossaryentry{$\mathbb{L}$}{field of logarithmic
  hyperseries}{\pageref{autolab15}}
  
  \glossaryentry{$\mathfrak{L}_{< \alpha}$}{group of logarithmic
  hypermonomials of force $< \: \alpha$}{\pageref{autolab16}}
  
  \glossaryentry{$\mathbb{L}_{< \alpha}$}{field of logarithmic hyperseries of
  force $< \: \alpha$}{\pageref{autolab17}}
  
  \glossaryentry{$g^{\uparrow \gamma}$}{unique series in $\mathbb{L}$ with $g
  = \left( g^{\uparrow \gamma} \right) \circ
  \ell_{\gamma}$}{\pageref{autolab18}}
  
  \glossaryentry{${\text{\tmtextrm{\tmtextbf{\tmtextup{On}}}}}$}{class of
  ordinals}{\pageref{autolab19}}
  
  \glossaryentry{$\sqsubseteq$}{simplicity relation}{\pageref{autolab20}}
  
  \glossaryentry{$\alpha \dotplus \beta$}{ordinal sum of $\alpha$ and
  $\beta$}{\pageref{autolab21}}
  
  \glossaryentry{$a \dottimes \beta$}{ordinal product of $\alpha$ and
  $\beta$}{\pageref{autolab22}}
  
  \glossaryentry{$\omega^{\gamma}$}{ordinal exponentiation with base $\omega$
  at $\gamma$}{\pageref{autolab23}}
  
  \glossaryentry{$\rho \ll \sigma$}{$\rho \prec \omega^{\eta}$ for each
  exponent $\eta$ of $\sigma$}{\pageref{autolab24}}
  
  \glossaryentry{$\rho \lleq \sigma$}{$\rho \preccurlyeq \omega^{\eta}$ for
  each exponent $\eta$ of $\sigma$}{\pageref{autolab25}}
  
  \glossaryentry{$\gamma < \beta$}{$\gamma n < \beta$ for all $n \in
  \mathbb{N}$}{\pageref{autolab26}}
  
  \glossaryentry{$\gamma \preccurlyeq \beta$}{$\exists n \in \mathbb{N},
  \gamma \leqslant \beta n$}{\pageref{autolab27}}
  
  \glossaryentry{$\gamma \asymp \beta$}{$\gamma \preccurlyeq \beta
  \preccurlyeq \gamma$}{\pageref{autolab28}}
  
  \glossaryentry{$\mu_-$}{$\mu = \mu_- + 1$ if $\mu$ is a successor and $\mu_-
  = \mu$ if $\mu$ is a limit}{\pageref{autolab29}}
  
  \glossaryentry{$\alpha_{/ \omega}$}{$\omega^{\mu_-}$ for $\alpha =
  \omega^{\mu}$}{\pageref{autolab30}}
  
  \glossaryentry{$\circ$}{composition law $\circ : \mathbb{L} \times
  {\text{\tmtextrm{\tmtextbf{\tmtextup{No}}}}}^{>, \succ} \longrightarrow
  {\text{\tmtextrm{\tmtextbf{\tmtextup{No}}}}}$}{\pageref{autolab31}}
  
  \glossaryentry{$\mathbb{T}_{\succ, \beta}$}{class of $\beta$-truncated
  series}{\pageref{autolab32}}
  
  \glossaryentry{$\sharp_{\beta} (s)$}{$\trianglelefteqslant$-maximal
  $\beta$-truncated truncation of $s$}{\pageref{autolab33}}
  
  \glossaryentry{$\mathcal{G} [a]$}{class of numbers $b$ with $\exists g, h
  \in \mathcal{G}, g a \leqslant b \leqslant h a$}{\pageref{autolab34}}
  
  \glossaryentry{${\text{\tmtextrm{\tmtextbf{\tmtextup{Smp}}}}}_{\mathcal{G}}$}{class
  of $\mathcal{G}$-simple elements}{\pageref{autolab35}}
  
  \glossaryentry{$\pi_{\mathcal{G}}$}{projection
  ${\text{\tmtextrm{\tmtextbf{\tmtextup{S}}}}} \longrightarrow
  {\text{\tmtextrm{\tmtextbf{\tmtextup{Smp}}}}}_{\mathcal{G}}$}{\pageref{autolab36}}
  
  \glossaryentry{$\leqangle$}{comparison between sets of strictly increasing
  bijections}{\pageref{autolab37}}
  
  \glossaryentry{$X \legeangle Y$}{$X$ and $Y$ are mutually pointwise
  cofinal}{\pageref{autolab38}}
  
  \glossaryentry{$\langle X \rangle$}{function group generated by
  $X$}{\pageref{autolab39}}
  
  \glossaryentry{$T_r$}{translation $a \longmapsto a +
  r$}{\pageref{autolab40}}
  
  \glossaryentry{$H_s$}{homothety $a \longmapsto sa$}{\pageref{autolab41}}
  
  \glossaryentry{$P_s$}{power function $a \longmapsto
  a^s$}{\pageref{autolab42}}
  
  \glossaryentry{$\mathcal{T}$}{function group $\{ T_r \suchthat r \in
  \mathbb{R} \}$}{\pageref{autolab43}}
  
  \glossaryentry{$\mathcal{H}$}{function group $\{ H_s \suchthat s \in
  \mathbb{R}^{>} \}$}{\pageref{autolab44}}
  
  \glossaryentry{$\mathcal{P}$}{function group $\{ P_s \suchthat s \in
  \mathbb{R}^{>} \}$}{\pageref{autolab45}}
  
  \glossaryentry{$\mathcal{E}'$}{function group $\langle E_n H_s L_n : n \in
  \mathbb{N}, s \in \mathbb{R}^{>} \rangle$}{\pageref{autolab46}}
  
  \glossaryentry{$\mathcal{E}^{\ast}$}{function group $\{ E_n, L_n \suchthat n
  \in \mathbb{N} \}$}{\pageref{autolab47}}
  
  \glossaryentry{$\tau_{P, i}$}{value $\tau_{P, i} = P (i)$ of the path $P$ at
  $i < 1 + | P |$}{\pageref{autolab48}}
  
  \glossaryentry{$\mathfrak{m}_{P, i}$}{dominant monomial of $\tau_{P,
  i}$}{\pageref{autolab49}}
  
  \glossaryentry{$r_{P, i}$}{constant coefficient of $\tau_{P,
  i}$}{\pageref{autolab50}}
  
  \glossaryentry{$| P |$}{length of a path $P \in (\mathbb{R}^{\neq} 
  {\text{\tmtextrm{\tmtextbf{\tmtextup{Mo}}}}})^{1 + | P
  |}$}{\pageref{autolab51}}
  
  \glossaryentry{$P \ast Q$}{concatenation of paths}{\pageref{autolab52}}
  
  \glossaryentry{$\nonconverted{blacktrianglelefteqslant}_{\tmop{BM}}$}{Berarducci
  and Mantova's nested truncation relation}{\pageref{autolab53}}
  
  \glossaryentry{${\text{\tmtextrm{\tmtextbf{\tmtextup{Ad}}}}}$}{class of
  admissible numbers}{\pageref{autolab54}}
  
  \glossaryentry{${\text{\tmtextrm{\tmtextbf{\tmtextup{Ad}}}}}_{\nearrow
  k}$}{class of $\Sigma_{\nearrow k}$-admissible numbers}{\pageref{autolab55}}
  
  \glossaryentry{${\text{\tmtextrm{\tmtextbf{\tmtextup{Ne}}}}}$}{class of
  $\Sigma_{\nearrow k}$-nested numbers}{\pageref{autolab56}}
  
  \glossaryentry{${\text{\tmtextrm{\tmtextbf{\tmtextup{Ne}}}}}$}{class of
  $\Sigma$-nested numbers}{\pageref{autolab57}}
\end{theglossary}

\end{document}